\documentclass[reqno,12pt]{amsart}
\usepackage{bbm}
\usepackage[all,pdf]{xy}
\usepackage{epsfig}
\usepackage{amsmath,amssymb,amscd,mathtools}
\usepackage{mathrsfs}
\usepackage{graphicx}
\usepackage{color}
\usepackage{mathptmx}
\usepackage[top=30mm, bottom=30mm, left=30mm, right=30mm]{geometry}
\usepackage{aliascnt}
\usepackage[colorlinks=true,citecolor=blue,linkcolor=blue,urlcolor=blue]{hyperref}
\usepackage[nameinlink,capitalize]{cleveref}
\allowdisplaybreaks[4]
\makeatletter
\@namedef{subjclassname@2020}{%
	\textup{2020} Mathematics Subject Classification}
\makeatother

\newtheorem{theorem}{Theorem}[section]
\newaliascnt{proposition}{theorem}
\newtheorem{proposition}[proposition]{Proposition}
\aliascntresetthe{proposition}

\newaliascnt{lemma}{theorem}
\newtheorem{lemma}[lemma]{Lemma}
\aliascntresetthe{lemma}

\newaliascnt{corollary}{theorem}
\newtheorem{corollary}[corollary]{Corollary}
\aliascntresetthe{corollary}

\newaliascnt{question}{theorem}
\newtheorem{question}[question]{Question}
\aliascntresetthe{question}

\newaliascnt{claim}{theorem}

\aliascntresetthe{claim}

\theoremstyle{definition}
\newaliascnt{definition}{theorem}
\newtheorem{definition}[definition]{Definition}
\aliascntresetthe{definition}

\crefname{theorem}{Theorem}{Theorems}
\Crefname{theorem}{Theorem}{Theorems}
\crefname{proposition}{Proposition}{Propositions}
\Crefname{proposition}{Proposition}{Propositions}
\crefname{lemma}{Lemma}{Lemmas}
\Crefname{lemma}{Lemma}{Lemmas}
\crefname{corollary}{Corollary}{Corollaries}
\Crefname{corollary}{Corollary}{Corollaries}
\crefname{question}{Question}{Questions}
\Crefname{question}{Question}{Questions}
\crefname{claim}{Claim}{Claims}
\Crefname{claim}{Claim}{Claims}
\crefname{definition}{Definition}{Definitions}
\Crefname{definition}{Definition}{Definitions}
\crefname{section}{Section}{Sections}
\Crefname{section}{Section}{Sections}
\crefname{equation}{equation}{equations}
\Crefname{equation}{Equation}{Equations}

\numberwithin{equation}{section}

\DeclareMathOperator{\E}{\mathbb E}
\newcommand{\N}{\mathbb N}
\newcommand{\Z}{\mathbb Z}
\newcommand{\R}{\mathbb R}

\newcommand{\cX}{\mathcal X}
\providecommand{\X}{\mathcal X}

\newcommand{\cI}{\mathcal I}

\newcommand{\cP}{\mathcal P}
\newcommand{\cY}{\mathcal Y}

\newcommand{\bd}{d^*}
\newcommand{\one}{\mathbf 1}
\newcommand{\norm}[1]{\left\lVert #1\right\rVert}

\title[]{A three-dimensional corner configuration involving the Omega function}
\author[]{Zhuowen Guo, Rongzhong Xiao, and Shuhao Zhang}

\address[Zhuowen Guo, Rongzhong Xiao, and Shuhao Zhang]{School of Mathematical Sciences, University of Science and Technology of China, Hefei, Anhui 230026, P.R. China}
\email{guozw0920@mail.ustc.edu.cn}

\email{xiaorz@mail.ustc.edu.cn}

\email{yichen12@mail.ustc.edu.cn}

\date{\today}
\subjclass[2020]{Primary: 37A30; Secondary: 37A44, 11N37.}
\keywords{Commuting transformations, Corner configurations, Ergodic theorems, Omega function}

\begin{document}
	
	\begin{abstract}
		Let \(\Omega(n)\) denote the number of prime factors of \(n\), counted with
		multiplicity.  We prove that if \(A\subset\mathbb N^3\) has positive upper
		Banach density, then there are $(x,y,z)\in \N^3, d\in \N$ such that
		$$(x,y,z),(x+d,y,z),(x,y+d,z),(x,y,z+\Omega(d))\in A.$$
		To establish the above result, we give an \(L^2\)-decoupling theorem for the triple ergodic averages
		\[
		\frac1N\sum_{n=1}^N T_1^n f_1\,T_2^n f_2\,S^{\Omega(n)}g
		\]
		associated with three commuting transformations by isotropy factors and nilpotent structures in $\Z^{2}$-actions.
	\end{abstract}
	\maketitle
	\section{Introduction}
	\subsection{Background and motivation}
	Let \(\Omega(n)\) be the number of prime factors of \(n\), counted with
	multiplicity.  Bergelson and Richter \cite{BergelsonRichter} developed a dynamical approach to this
	function by proving that, in every uniquely ergodic system, any orbit sampled
	at the times \(\Omega(n)\) equidistributes with continuous observations according to the invariant measure.  
	More recently, Bergelson, Reilly, and Richter \cite{BergelsonReillyRichter2026} developed a common
	weighted distribution framework for a broad class of arithmetic functions
	satisfying a Gaussian distribution condition; the class includes
	\(\Omega\), \(\omega\), squarefree indexed prime factor counts, and digit sum
	functions, and yields a corresponding uniquely ergodic orbit theorem.  
	For various variants of the above results, one can see Loyd \cite{Loyd2023}, Charamaras \cite{Charamaras2025}, Loyd-Mondal \cite{LoydMondal2025}, Wang-Wei-Yan-Yi \cite{WangWeiYanYi2025}, Xiao \cite{Xiao2026}, C\' espedes-Donoso \cite{CespedesDonoso2026} and so on
	for more details.
	
	The present paper concerns additive configurations in which two directions
	have a common linear step and a third direction has step \(\Omega(d)\).  For
	\(A\subset\mathbb Z^k\), recall that
	\[
	d^*(A):=\sup_{\Phi}\limsup_{N\to\infty}
	\frac{|A\cap\Phi_N|}{|\Phi_N|},
	\]
	where the supremum is taken over all F\o lner sequences
	\(\Phi=(\Phi_N)_{N\geq1}\) in \(\mathbb Z^k\).  We say that \(A\) has
	positive upper Banach density if \(d^*(A)>0\).  
	
	In 2025, Charamaras proved the following Roth-type result:
	\begin{theorem}[Charamaras, {\cite[Corollary~1.37]{Charamaras2025}}]
		\label{thm1-1}
		If \(E\subset\mathbb N\) has positive upper Banach density, then there are
		\(m,n\in\mathbb N\) such that
		\[
		m,m+n,m+\Omega(n)\in E.
		\]
	\end{theorem}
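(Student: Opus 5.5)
We would prove \cref{thm1-1} through the Furstenberg correspondence principle, which turns it into a multiple recurrence statement on a single measure preserving system. Given \(E\subset\mathbb N\) with \(d^*(E)>0\), the correspondence principle produces an invertible measure preserving system \((X,\mathcal B,\mu,T)\) and a set \(B\in\mathcal B\) with \(\mu(B)=d^*(E)>0\), such that for all \(n_1,n_2\in\mathbb Z\)
\[
d^*\bigl(E\cap(E-n_1)\cap(E-n_2)\bigr)\ \ge\ \mu\bigl(B\cap T^{-n_1}B\cap T^{-n_2}B\bigr).
\]
Hence it suffices to show
\[
\liminf_{N\to\infty}\frac1N\sum_{n=1}^N \mu\bigl(B\cap T^{-n}B\cap T^{-\Omega(n)}B\bigr)>0 .
\]
A positive average forces some \(n\) with a positive summand, and positive density of the triple intersection then gives the required \(m\).

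\textbf{Step 1: reduce \(T^{\Omega(n)}\) to the invariant factor.} We use the Bergelson--Richter mechanism. For \(L^2\) functions, \(\frac1N\sum_n F(n)\,T^{\Omega(n)}g\) behaves as if \(\Omega(n)\) were replaced by a random walk that is independent of \(n\) at the linear scale. The key input is the Bergelson--Richter identity
\[
\frac1N\sum_n a(\Omega(n)) \approx \frac1N\sum_n a(\Omega(n)+1),
\]
valid up to \(o(1)\) along the relevant averages and uniformly enough in \(n\). Combining it with the \(L^2\) ergodic theorem for the \(T\)-action, we aim to show that in
\[
\frac1N\sum_n T^n f\cdot T^{\Omega(n)} g
\]
one may replace \(g\) by \(\E(g\mid\mathcal I_T)\), the projection onto \(T\)-invariant functions, without changing the \(L^2\) limit.

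Concretely, write \(g=\E(g\mid\mathcal I_T)+h\). By the mean ergodic theorem, \(h\) is an \(L^2\)-limit of coboundaries \(u-Tu\). For a coboundary,
\[
T^{\Omega(n)}u - T^{\Omega(n)+1}u .
\]
Controlling the sum of these against \(T^nf\) requires a disjointness statement: the weighted averages of \(T^{\Omega(n)}\) and \(T^{\Omega(n)+1}\) against the sequence \(T^n f\) must coincide in the limit. We would prove this as Charamaras does. Split \(n=pm\) with \(p\) ranging over a set of primes of a fixed size. A Tur\'an--Kubilius / Daboussi-type decomposition then gives
\[
\Omega(pm)=\Omega(m)+1 .
\]
This leaves multiplicative averages of the form \(\frac1{|P|}\sum_{p}T^{pm}f\). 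Their differences from \(\frac1{|Q|}\sum_q T^{qm}f\) are controlled by the orthogonality of primes across distinct scales in nilsystems, that is, by Host--Kra characteristic factors for \(T^{pm}\) versus \(T^{qm}\).

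\textbf{Step 2: positivity.} Once \(g\) has been replaced by \(\bar g:=\E(1_B\mid\mathcal I_T)\), which is \(T\)-invariant, the average becomes
\[
\frac1N\sum_n\int 1_B\cdot T^n1_B\cdot \bar g\,d\mu .
\]
Since \(\bar g\) is \(T\)-invariant, we can pass to the ergodic decomposition. On each ergodic component \(\bar g\) is the constant \(\mu_x(B)\), so the limit equals
\[
\int \mu_x(B)^3\,d\mu(x)\ \ge\ \mu(B)^3>0
\]
by Jensen's inequality.

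The main obstacle is Step 1: showing that the twisted averages with \(T^{\Omega(n)}\) see only the invariant factor when they are coupled with \(T^n f\). Here the nonlinear term and the linear term use the same transformation, so the cheap independence argument fails. The approach would be to run the prime-splitting with a van der Corput estimate, reducing to uniformity of \(T^{pn}f\) against nilsequences. Failing that, we would simply cite the characteristic-factor result in \cite{Charamaras2025}.
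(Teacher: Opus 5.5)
Your outer frame is the one the paper uses in Section~3 for \Cref{cor:fixed-directions}. The paper only quotes this theorem, but it is the case $\ell=1$, $u_1=u_2=u_3=1$ of that corollary. Step~0 is fine. Step~2 is correct and even simpler than the paper's Furstenberg--Katznelson argument: with a single linear iterate, the mean ergodic theorem gives the limit $\int\E_\mu(\one_B\mid\mathcal I(T))^3\,d\mu\ge\mu(B)^3$ directly.

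The gap is Step~1. It carries the whole theorem and is not actually argued. Bergelson--Richter get the shift $\Omega\mapsto\Omega+1$ by comparing dilations by primes with dilations by integers having a different number of prime factors (e.g.\ semiprimes). With the weight $T^nf$ present, such a comparison turns $T^nf$ into $T^{pm}f$ on one side and $T^{qm}f$ on the other. You simply assert that the discrepancy is controlled by ``orthogonality of primes across scales''.
\begin{itemize}
\item If $P$ and $Q$ both consist of primes, no shift of $\Omega$ is produced, so the coboundary $T^{\Omega(n)}u-T^{\Omega(n)+1}u$ is never reached.
\item If $Q$ consists of semiprimes, you need an unproved joint equidistribution statement for prime versus semiprime dilates of $T^mf$, coupled with $T^{\Omega(m)}u$ and at the mismatched scales $N/p$ and $N/q$.
\item If ``primes of a fixed size'' means a single size range, Tur\'an--Kubilius fails outright, since it needs $\sum_{p\in P}1/p$ large.
\end{itemize}
The fallback of citing \cite{Charamaras2025} just assumes the result being proved. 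Moreover, as the introduction notes, the decoupling there is stated for ergodic systems.

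The missing idea is to use two distinct primes to \emph{cancel} $\Omega$ rather than to shift it. Apply \Cref{lem:orthogonality} exactly as in \Cref{lem:prime-dilation-to-three-direction}. For $|g|\equiv1$ and $v_n=T^nf\,T^{\Omega(n)}g$, the identity $\Omega(pn)=\Omega(qn)$ gives
\[
\langle v_{pn},v_{qn}\rangle=\int T^{(p-q)n}f\cdot\overline{f}\,d\mu,\qquad
\frac1N\sum_{n=1}^N\langle v_{pn},v_{qn}\rangle\longrightarrow\bigl\|\E_\mu(f\mid\mathcal I(T^{p-q}))\bigr\|_{L^2(\mu)}^2 .
\]
So the averages tend to $0$ for every bounded $g$ as soon as $\E_\mu\bigl(f\mid\bigvee_{k\ge1}\mathcal I(T^k)\bigr)=0$. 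Only the mean ergodic theorem for $T^{p-q}$ is used here, with no Host--Kra input.

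If instead $f\in L^\infty(\mathcal I(T^k))$, then $n\mapsto T^nf(x)$ is $k$-periodic, hence a nilsequence. \Cref{lem:xiao-nilsequence-family}, which rests on \Cref{thm:beg-omega-nilsequence}, then yields $\frac1N\sum_n T^nf\,T^{\Omega(n)}g-\bigl(\frac1N\sum_nT^nf\bigr)\E_\mu(g\mid\mathcal I(T))\to0$ in $L^2(\mu)$. Approximate $\E_\mu\bigl(f\mid\bigvee_k\mathcal I(T^k)\bigr)$ by $\E_\mu(f\mid\mathcal I(T^{k!}))$ via the martingale theorem, and use that both averages are $L^2$-continuous in $f$ for bounded $g$. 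This completes the decoupling, and your Steps~0 and~2 then finish the proof.

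This is exactly the paper's proof of \Cref{thm:main-decoupling} collapsed to one dimension. The isotropy factors become $\mathcal I(T^k)$, the residue-class coding becomes periodicity in $n$, and Austin's nilpotent part disappears.
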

	
	The second author subsequently obtained a multi-dimensional polynomial version of \Cref{thm1-1}.
	\begin{theorem}[Xiao, {\cite[Proposition~1.11]{Xiao2026}}]
		\label{thm1-2}
		Let \(P_1,\ldots,P_k\in\mathbb Z[n]\) have zero constant terms and be
		pairwise independent, meaning that no non-trivial integer linear combination
		of any $P_i,P_j(i\neq j)$ is constant.  Assume also that at least one \(P_i\)
		is not of the form \(cn^r\).  If \(A\subset\mathbb N^{k+1}\) has positive
		upper Banach density, then there are \(a\in\mathbb N^{k+1}\) and
		\(d\in\mathbb N\) such that
		\[
		a,a+P_1(d)e_1,\ldots,a+P_k(d)e_k,
		a+\Omega(d)e_{k+1}\in A,
		\]
		where $e_i,1\le i\le k+1$ is the standard basis of $\R^{k+1}$.
	\end{theorem}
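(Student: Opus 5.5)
The plan is to go through the Furstenberg correspondence principle and reduce to a multiple recurrence statement for commuting measure-preserving transformations along a weighted average. Given \(A\subset\mathbb N^{k+1}\) with \(d^*(A)>0\), correspondence produces a probability space \((X,\mathcal B,\mu)\), commuting invertible measure-preserving maps \(T_1,\ldots,T_k,S\) (one for each coordinate direction), and a set \(B\) with \(\mu(B)\ge d^*(A)>0\). These satisfy
\[
d^*\Bigl(A\cap(A-P_1(d)e_1)\cap\cdots\cap(A-\Omega(d)e_{k+1})\Bigr)\ge \mu\Bigl(B\cap T_1^{-P_1(d)}B\cap\cdots\cap S^{-\Omega(d)}B\Bigr).
\]
It therefore suffices to show
\[
\liminf_{N\to\infty}\frac1N\sum_{d=1}^N \mu\bigl(B\cap T_1^{-P_1(d)}B\cap\cdots\cap T_k^{-P_k(d)}B\cap S^{-\Omega(d)}B\bigr)>0 .
\]
A positive average gives some \(d\) with a nonempty intersection, and hence the desired configuration.

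The key step is to decouple the \(\Omega\)-direction from the polynomial directions. Following Bergelson--Richter, I use that \(\Omega\) is asymptotically equidistributed along the shifts \(\Omega(pn)=\Omega(n)+1\). Concretely, via the Bergelson--Richter/Charamaras approach, the map \(S\) effectively enters only through its Kronecker-type (in fact invariant) factor. Averaging \(\Omega(n)\) against \(S\) gives \(\frac1N\sum_n S^{\Omega(n)}g\to \E(g\mid \mathcal I(S))\) in \(L^2\), since \(\Omega\) acts like a random walk with unbounded variance that kills all nontrivial eigenvalues. For the joint average, I would show
\[
\frac1N\sum_{n=1}^N \prod_i T_i^{P_i(n)}f_i\cdot S^{\Omega(n)}g-\frac1N\sum_{n=1}^N \prod_i T_i^{P_i(n)}f_i\cdot \E(g\mid\mathcal I(S))\to0
\]
in \(L^2\). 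The hypothesis that some \(P_i\) is not a monomial \(cn^r\) enters here. The identity \(\Omega(pn)=\Omega(n)+1\) is exploited through a Tur\'an--Kubilius/Daboussi-type averaging over primes \(p\), which replaces \(n\) by \(pn\). This changes \(P_i(n)\) to \(P_i(pn)\), and one needs the polynomial averages along \(pn\) and \(qn\) to decorrelate for distinct primes. For monomials, \(P(pn)=p^rP(n)\) is a rescaling and can create correlations, whereas the non-monomial condition together with pairwise independence makes the families \(\{P_i(pn)\}\) and \(\{P_i(qn)\}\) sufficiently independent. Using PET/van der Corput and characteristic nilfactors for polynomial averages of commuting transformations, I then reduce to showing that the relevant correlations vanish.

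Once \(g\) is replaced by \(\tilde g=\E(\one_B\mid\mathcal I(S))\ge0\), the average becomes
\[
\int \one_B\cdot \tilde g\cdot \frac1N\sum_n\prod_iT_i^{P_i(n)}\one_B\,d\mu .
\]
One cannot simply factor \(\tilde g\) out, because the \(T_i\) may not preserve \(\mathcal I(S)\). Still, \(\mathcal I(S)\) is invariant under all the \(T_i\), since the maps commute, so \(\tilde g\) is \(T_i\)-invariant. Setting \(C=B\cap\{\tilde g>\delta\}\), which has positive measure for small \(\delta\) because \(\int_B\tilde g=\int \tilde g^2>0\), the integral is at least \(\delta\) times the multidimensional polynomial Szemer\'edi average for \(C\). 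That average has positive \(\liminf\) by Bergelson--Leibman (in its commuting, zero-constant-term form).

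I expect the main obstacle to be the decoupling step. Making the prime-dilation argument work requires quantitative control of \(\frac1N\sum_n \prod_i T_i^{P_i(pn)}f_i\,\overline{\prod_i T_i^{P_i(qn)}f_i}\) for \(p\ne q\). My first attempt would be to prove that these joint averages are controlled by the Host--Kra-type seminorms of the \(f_i\) with respect to the \(T_i\), and then show by equidistribution on nilmanifolds that the limits for \(p\ne q\) factor. The non-monomial hypothesis is what rules out rational coincidences between \(P_i(pn)\) and \(P_i(qn)\).
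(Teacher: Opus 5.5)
This statement is not proved in the paper; it is quoted from \cite{Xiao2026}. The closest in-paper argument is the proof of \Cref{thm:main-decoupling} and \Cref{cor:fixed-directions}. Your outer skeleton matches it: correspondence, an $L^2$-decoupling that replaces $S^{\Omega(n)}g$ by $\E(g\mid\mathcal I(S))$, then multiple recurrence for $C=B\cap\{\tilde g>\delta\}$.

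The gap is in how you propose to obtain the decoupling. In the prime-dilation criterion (\Cref{lem:orthogonality}) with $|g|\equiv1$, one has $\langle v_{pn},v_{qn}\rangle=\int\prod_i T_i^{P_i(pn)}f_i\,\overline{T_i^{P_i(qn)}f_i}\,d\mu$, in which $g$ and $S$ have disappeared entirely. These correlations therefore cannot vanish for all $f_i$. If they did, the criterion would give $\frac1N\sum_{n}\prod_iT_i^{P_i(n)}f_i\to0$ already for $g\equiv1$, which is false for $f_i\equiv1$. So ``reducing to showing that the correlations vanish'', or showing that their limits factor, can only dispose of the case where some $f_i$ is orthogonal to a characteristic factor of the $2k$-term family $\{T_i^{P_i(pn)},T_i^{P_i(qn)}\}$. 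That is exactly the role of \Cref{lem:prime-dilation-to-three-direction} in the paper. It says nothing about the structured components, and that is precisely where $\E(g\mid\mathcal I(S))$ has to come from.

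The missing idea is a second, genuinely different input for those components. You need that, for $f_i$ measurable with respect to the characteristic factors, $n\mapsto\prod_iT_i^{P_i(n)}f_i(x)$ is a nilsequence for a.e.\ $x$, up to an $L^2$-error that is small uniformly in $n$. You then need a nilsequence-weighted Bergelson--Richter theorem (\Cref{thm:beg-omega-nilsequence}), transferred to non-ergodic $S$ through Jewett--Krieger and the ergodic decomposition as in \Cref{lem:xiao-nilsequence-family}. The unweighted limit $\frac1N\sum_nS^{\Omega(n)}g\to\E(g\mid\mathcal I(S))$ you invoke does not suffice. This also obliges you to produce characteristic factors of nilpotent type for the specific correlation family, which for commuting transformations is not automatic (compare the isotropy factors in \Cref{thm:austin-pleasant}).

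Your account of the non-monomial hypothesis does not hold up either. It constrains a single index, so for every monomial $P_j=cn^r$ the pair $P_j(pn)=(p/q)^rP_j(qn)$ remains dependent. Such dependence is also not by itself an obstruction: \Cref{cor:fixed-directions} with $\ell=2$, $u_1=u_2=e_1$, $u_3=e_2$ covers the case $k=1$, $P_1(n)=n$.

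Finally, a minor slip: $\tilde g$ is not $T_i$-invariant; only the $\sigma$-algebra $\mathcal I(S)$ is (take $S=\mathrm{id}$). You do not need this, since $\one_B\,\tilde g\prod_iT_i^{P_i(n)}\one_B\geq\delta\,\one_C\prod_iT_i^{P_i(n)}\one_C$ holds pointwise, after which Bergelson--Leibman applies.
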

	
	The non-degeneracy assumption in \Cref{thm1-2} excludes the common linear
	family \(P_1=\cdots=P_k=n\).  This led to the following question, which motivates the paper.
	\begin{question}[Xiao, {\cite[Question~7.1]{Xiao2026}}]
		\label{ques1-1}
		Fix \(k\geq2\).  If \(A\subset\mathbb N^{k+1}\) has positive upper Banach
		density, must there exist \(a\in\mathbb N^{k+1}\) and \(d\in\mathbb N\)
		such that
		\[
		a,a+d e_1,\ldots,a+d e_k,
		a+\Omega(d)e_{k+1}\in A?
		\]
	\end{question}
	\subsection{Main results}
	In this paper, we answer \Cref{ques1-1} for \(k=2\) positively by the following.
	\begin{theorem}\label{thm:main-corners}
		For every \(A\subset\mathbb N^3\) with \(d^*(A)>0\), there are
		\(a\in\mathbb N^3\) and \(d\in\mathbb N\) such that
		\[
		a,a+d e_1,a+d e_2,
		a+\Omega(d)e_3\in A.
		\]
	\end{theorem}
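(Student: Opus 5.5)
We will deduce the statement from an ergodic recurrence result via the Furstenberg correspondence principle for $\Z^3$-actions. Given $A\subset\N^3$ with $d^*(A)>0$, the correspondence principle yields a probability space $(X,\mathcal B,\mu)$ with commuting invertible measure preserving transformations $T_1,T_2,S$ (the shifts in the directions $e_1,e_2,e_3$) and a set $B\in\mathcal B$ with $\mu(B)=d^*(A)$, such that
\[
d^*\bigl(A\cap(A-de_1)\cap(A-de_2)\cap(A-\Omega(d)e_3)\bigr)\ge \mu\bigl(B\cap T_1^{-d}B\cap T_2^{-d}B\cap S^{-\Omega(d)}B\bigr).
\]
It therefore suffices to prove that
\[
\liminf_{N\to\infty}\frac1N\sum_{n=1}^N\mu\bigl(B\cap T_1^{-n}B\cap T_2^{-n}B\cap S^{-\Omega(n)}B\bigr)>0 .
\]
Any $n$ with a positive term produces $a,d$ as required. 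One has to take a little care with translating back from $\N^3$ to $\Z^3$, but this can be handled by restricting to a Følner sequence inside $\N^3$.

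The key input is the $L^2$-decoupling theorem announced in the abstract. I expect it to say that
\[
\frac1N\sum_{n\le N}T_1^nf_1\,T_2^nf_2\,S^{\Omega(n)}g-\Bigl(\frac1N\sum_{n\le N}T_1^nf_1\,T_2^nf_2\Bigr)\cdot\Bigl(\frac1N\sum_{n\le N}S^{\Omega(n)}g\Bigr)\to0
\]
in $L^2(\mu)$, possibly after first projecting $g$ onto an isotropy factor or a factor that is invariant under suitable combinations such as $ST_1^{-1}$ and $ST_2^{-1}$. The mechanism is that $\Omega(n)$ and $\Omega(n+1)$ behave independently, since $\Omega$ has a Gaussian distribution and is asymptotically independent of the linear phases. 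I would prove this by the Bergelson–Richter approach: compare $\Omega(pn)=\Omega(n)+1$ over primes $p$ lying in a set of dyadic blocks, and use a Turán–Kubilius type identity to replace the average with weights $\Omega(n)$ by a double average over $n$ and $p$. The $p$-average of $T_1^{pn}f_1T_2^{pn}f_2$ is then controlled by nilsequences coming from the $\Z^2$-action $(T_1,T_2)$. This is where the nilpotent structure of the $\Z^2$-action enters, giving characteristic pronilfactors for corners. The isotropy factor absorbs the directions in which $S$ correlates with $T_1,T_2$.

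\textbf{Main obstacle.} Proving the decoupling itself is the hardest step. Specifically, one must show that the averages $\frac1N\sum_n T_1^nf_1T_2^nf_2$ along primes and along multiplicatively shifted progressions $pn$ agree with the full average. This requires equidistribution of nilsequences along primes (Green–Tao) together with the structure theory of the corner averages.

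Once decoupling is available, positivity follows. By Bergelson–Richter, $\frac1N\sum S^{\Omega(n)}g$ converges to $E(g\mid \mathcal I_S)$ (more precisely, to the projection onto the $S$-invariant functions, suitably adapted), so with $f_0=f_1=f_2=g=\one_B$ the limit becomes
\[
\int \one_B\,\Bigl(\lim\frac1N\sum T_1^n\one_B\,T_2^n\one_B\Bigr)\,E(\one_B\mid\mathcal I)\,d\mu .
\]
If the projection is onto a factor $\mathcal I$ invariant under all three maps in the relevant sense, we can write $E(\one_B\mid\mathcal I)\ge c\,\one_{B'}$ with $B'\subset B$ $\mathcal I$-measurable-adjacent. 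Concretely, take $C=\{E(\one_B\mid\mathcal I)>\mu(B)/2\}$ and $B''=B\cap C$, which has positive measure. The integral is then bounded below by $\tfrac{\mu(B)}2\int \one_{B''}T_1^n\one_{B''}T_2^n\one_{B''}$ averaged over $n$, using that $C$ is invariant so that the $T_i^n\one_C$ factors pass through. This is positive by the multidimensional Szemerédi (corners) theorem of Furstenberg–Katznelson, which completes the proof.
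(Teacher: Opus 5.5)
Your deduction of the corners statement from a decoupling theorem is exactly the paper's argument: correspondence principle, decoupling with $q=\E_\mu(\one_B\mid\mathcal I(S))$, restriction to $B\cap\{q\ge\mu(B)/2\}$, then Furstenberg--Katznelson. Your main guess for the decoupling is also the right one. Since $\frac1N\sum_{n\le N}S^{\Omega(n)}g\to\E_\mu(g\mid\mathcal I(S))$ in $L^2(\mu)$ and the double averages are uniformly bounded, your form is equivalent to $\frac1N\sum_{n\le N}T_1^nf_1\,T_2^nf_2\,S^{\Omega(n)}\bigl(g-\E_\mu(g\mid\mathcal I(S))\bigr)\to0$, and no factor like $\mathcal I(ST_1^{-1})$ enters.

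The positivity step needs two small repairs. First, $\mu(B\cap C)>0$ needs the observation $\int_B q\,d\mu=\int q^2\,d\mu\ge\mu(B)^2$. Second, the invariance you invoke does not help, since $C$ is only $S$-invariant and $T_i^n\one_C\neq\one_C$. It is also unnecessary, because $\one_B\,q\,T_1^n\one_B\,T_2^n\one_B\ge\frac{\mu(B)}2\,\one_{B''}\,T_1^n\one_{B''}\,T_2^n\one_{B''}$ holds pointwise by monotonicity.

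The genuine gap is that the decoupling theorem, which is the entire content of the paper, is only assumed. The route you sketch for it would not produce it, and two ideas are missing.

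\emph{First missing idea: how $S$ is eliminated.} Your Tur\'an--Kubilius step is fine in spirit, since it underlies the K\'atai/Bourgain--Sarnak--Ziegler criterion. The next move is where it goes wrong: you should not try to relate $T_1^{pn}f_1T_2^{pn}f_2$ to $T_1^nf_1T_2^nf_2$. That comparison fails for individual $p$: take $T_1=T_2$ the rotation $x\mapsto x+1$ of $\Z/3\Z$, $f_1=f_2=e^{2\pi i x/3}$, and $p=3$. Instead, apply the criterion to $v_n=T_1^nf_1\,T_2^nf_2\,S^{\Omega(n)}g$. Since $\Omega(pn)=\Omega(qn)$, the $S$-part cancels exactly in $\langle v_{pn},v_{qn}\rangle$. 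What remains are the correlations $\int T^{n(p,-q)}f_1\,T^{n(q,-q)}\overline{f_1}\,T^{n(0,p-q)}f_2\,\overline{f_2}\,d\mu$ of the $\Z^2$-action $T^{(a,b)}=T_1^aT_2^b$. Austin's theorem then applies, on an extension built so that $S$ lifts with the same invariant factor. It reduces everything to $f_i$ measurable with respect to the join of a $2$-step pronil factor with isotropy factors $\mathcal I(T^w)$ of $T$, not of ``directions where $S$ correlates with $T_1,T_2$''.

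\emph{Second missing idea: the structured case.} This case is not ``controlled by nilsequences'', because isotropy components are not nilsequence weights. If $f_1$ is $T_1T_2^{-1}$-invariant, then $T_1^nf_1\,T_2^nf_2=T_2^n(f_1f_2)$. You must then still decouple $T_2^nF\cdot S^{\Omega(n)}g$ for essentially arbitrary $F$. The paper handles this in four moves:
\begin{itemize}
\item it rewrites each $\mathcal I(T^w)$-measurable term as an affine iterate of a transverse $U=T^v$ on residue classes modulo $|\det(v,w)|$;
\item it passes to a cyclic tower to get a single transformation;
\item it treats the nilfactor parts as Eisner--Zorin-Kranich regular weights;
\item it proves a one-action decoupling: the part orthogonal to $Z_\infty$ is killed by the same orthogonality criterion plus van der Corput and Host--Kra seminorm estimates, and the $Z_\infty$ part by tame-nilfunction approximation and the nilsequence-weighted Bergelson--Richter theorem.
\end{itemize}
As it stands, your proposal proves only that decoupling implies the corners statement.
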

	\begin{proof}
		See \Cref{cor:fixed-directions}.
	\end{proof}
	
	Conventionally, by Furstenberg's correspondence principle, the above additive configurations can be derived from the following decoupling result, which is the main ergodic theoretic conclusion of the paper.
	\begin{theorem}\label{thm:main-decoupling}
		Let \(T_1,T_2,S\) be three commuting invertible measure-preserving transformations
		of a standard probability space \((X,\mathcal X,\mu)\).  Then, for all
		\(f_1,f_2,g\in L^\infty(\mu)\),
		\begin{equation}\label{eq:main-decoupling}
			\lim_{N\to\infty}
			\left\|
			\frac1N\sum_{n=1}^N
			T_1^n f_1\,T_2^n f_2\,
			S^{\Omega(n)}\bigl(g-\E_{\mu}(g\mid\mathcal I(S))\bigr)
			\right\|_{L^2(\mu)}=0,
		\end{equation}
		where $\mathcal I(S)$ is the sub-$\sigma$-algebra of $\X$ generated by all $S$-invariant sets.
	\end{theorem}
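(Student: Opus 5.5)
We may assume \(\E_\mu(g\mid\mathcal I(S))=0\), so we must show the averages tend to \(0\) in \(L^2(\mu)\). The plan is to exploit the near-independence of \(\Omega(n)\) from the linear part \(n\). Following Bergelson--Richter, we use the identity \(\Omega(pn)=\Omega(n)+1\) together with a Tur\'an--Kubilius/Daboussi type comparison. For a set \(P\) of primes with \(\sum_{p\in P}1/p=\infty\), one has, uniformly in bounded sequences,
\[
\frac1N\sum_{n\le N} a(n)\approx \frac{1}{\log_P}\sum_{p\in P}\frac1p\,\frac{p}{N}\sum_{n\le N/p} a(pn).
\]
Apply this with \(a(n)=T_1^nf_1\,T_2^nf_2\,S^{\Omega(n)}g\), and use Hilbert-space valued versions of the comparison (Loyd/Charamaras style). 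The averages then become approximately
\[
\frac1N\sum_{n} T_1^{pn}f_1\,T_2^{pn}f_2\,S^{\Omega(n)}(Sg),
\]
averaged over \(p\). Iterating \(k\) times replaces \(g\) by an average of \(S^{j}g\) over \(j\) distributed like a binomial-type random walk. Since \(\E(g\mid\mathcal I(S))=0\), the von Neumann ergodic theorem makes \(\frac1H\sum_{j} S^jg\) small. The catch is that the primes also rescale the linear part, so we need the contribution of \(T_1^{pn}f_1T_2^{pn}f_2\) to be controlled uniformly in \(p\).

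The second step is to reduce \(f_1,f_2\) to a structured factor. Write the \(\Z^2\)-action generated by \(T_1,T_2\), and note the corner average \(\frac1N\sum T_1^nf_1T_2^nf_2\) is the same as \(\frac1N\sum (T_1T_2^{-1})^n f_1\cdots\) up to a diagonal shift. By the known characteristic factors for two commuting transformations, we may replace \(f_i\) by their projections onto factors built from the isotropy factor \(\mathcal I(T_1T_2^{-1})\) and a \(2\)-step nilfactor (Host's magic extension / Tao's pair of commuting shifts). To make the \(S^{\Omega(n)}g\) weight harmless in this reduction, we apply van der Corput in \(n\) after the prime-dilation trick. This gives a seminorm bound of the form
\[
\limsup_N \Bigl\|\frac1N\sum T_1^nf_1T_2^nf_2 S^{\Omega(n)}g\Bigr\|\ll \min_i \|f_i\|_{\text{box}}
\]
uniformly over the bounded weights \(S^{\Omega(n)}g\), via a Cauchy--Schwarz argument over the auxiliary variable. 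Once \(f_1,f_2\) are measurable with respect to the isotropy factor and the nilpotent structure, \(T_1^nf_1T_2^nf_2\) becomes a nilsequence-like object in \(n\). Concretely, we pass to the extension in which \(T_1=T_2\) on \(\mathcal I(T_1T_2^{-1})\), so the pair behaves like a single transformation evaluated along a \(\Z^2\)-nilsystem orbit.

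The final step, which I expect to be the main obstacle, is to show
\[
\frac1N\sum_n \psi(n)\,S^{\Omega(n)}g\to0
\]
for \(L^2\)-valued nilsequences \(\psi\) that commute with \(S\). Here \(\psi\) carries its own \(S\)-dependence through the joint \(\Z^3\)-action. The approach is to decompose \(g\) along the \(\Z^3\) action into the part with \(S\)-rational spectrum and the part with weak-mixing type behaviour relative to the \(\Z^2\) nilfactor. For the latter, Bergelson--Richter's result that \(\Omega(n)\) mod \(q\), and more generally \(\Omega(n)\) jointly with nilsequences in \(n\), is equidistributed (their nilsystem orbit theorem applied to the product system) gives the vanishing. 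For eigenfunctions of \(S\) with eigenvalue \(e(\theta)\), \(\theta\ne0\), we need \(\frac1N\sum\psi(n)e(\theta\Omega(n))\to0\). This follows from the Erd\H{o}s--K\'ac/Hal\'asz-type fact that \(e(\theta\Omega(n))\) is a multiplicative function with mean zero that is orthogonal to nilsequences (Frantzikinakis--Host for aperiodic multiplicative functions; \(e(\theta\Omega)\) is aperiodic for \(\theta\notin\Z\)). Handling the relative, non-ergodic joining of the \(S\)-spectrum with the \(\Z^2\)-structure is where the care lies: we would use the disintegration over \(\mathcal I\)-type fibres and Frantzikinakis--Host uniformly in fibres.
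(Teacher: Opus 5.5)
There is a genuine gap, and it sits in your second and third steps.

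\textbf{The characteristic factor is misidentified.} Your first step, the Tur\'an--Kubilius iteration, is blocked (as you note) by the rescaled term $T_1^{pn}f_1T_2^{pn}f_2$, and it plays no role afterwards. What actually removes $S$ is the orthogonality criterion. Since $\Omega(pn)=\Omega(qn)$, the correlation $\langle v_{pn},v_{qn}\rangle$ contains no $S$ at all. It becomes a three-direction average of the $\Z^2$-action along $(p,-q),(q,-q),(0,p-q)$. The characteristic factors of these averages come from Austin's theorem, on a pleasant extension to which $S$ must be lifted with $\mathcal I(\widehat S)=\rho^{-1}\mathcal I(S)$. They are the $2$-step nil piece joined with isotropy factors $\mathcal I(T^w)$, for $w$ ranging over $(p,-q),(p-q,0),(p,-p),\dots$ and over \emph{all} pairs of primes. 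Your factor ``$\mathcal I(T_1T_2^{-1})$ plus a $2$-step nilfactor'', with a bound uniform in the weight $S^{\Omega(n)}g$, is false. Take $T_1=\mathrm{id}$, $T_2$ weakly mixing, $S=\mathrm{id}$, $f_2=g=1$ and $f_1$ of mean zero. Then the average equals $f_1$, although $\mathcal I(T_1T_2^{-1})\vee Z_2(T)$ is trivial. Host's magic extension handles two-term averages, not the three-term correlations that actually arise.

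\textbf{The structured part is not a nilsequence.} This is the decisive missing idea, in your third step. Functions measurable with respect to isotropy factors do not make $T_1^nf_1T_2^nf_2$ ``nilsequence-like''. If $T_1^ph=T_2^qh$, then for $n=pm+r$ one has $T_1^nh=T_2^{qm}T_1^rh$, which is an orbit of an arbitrary transformation. A product of functions invariant under several distinct $T^w$ produces a genuine multiple ergodic average $\prod_j U^{a_jm+b_j}h_j$ of one transverse transformation $U=T^v$ with several slopes, multiplied by a weight coming from the nil part. So the $\psi$ in your last step is not an $L^2$-valued nilsequence, and neither Bergelson--Richter nor Frantzikinakis--Host applies to it.

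The paper supplies exactly this middle layer:
\begin{enumerate}
\item Choose $v$ with $\det(v,w)\neq0$ for the finitely many directions present, and code each isotropy-invariant function as an affine iterate of $U$ on residue classes.
\item Absorb the residue classes by a cyclic tower.
\item Prove a one-action decoupling theorem. A \emph{second} application of the orthogonality criterion, together with a van der Corput induction using Eisner--Zorin-Kranich bounds for the nilsequence weights, disposes of the components orthogonal to $Z_\infty(U)$.
\item Replace only the remaining $Z_\infty$ components by tame nilfunctions. The integrand is then a nilsequence in $n$ for a.e.\ $x$.
\end{enumerate}
At that point no spectral decomposition of $g$ and no aperiodicity input are needed. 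Bergelson--Richter's nilsequence-weighted theorem, applied pointwise on uniquely ergodic models of the $S$-ergodic components, already gives the limit $M_b\,\E_\mu(g\mid\mathcal I(S))$ for every bounded $g$.
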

	\begin{proof}
		\Cref{prop:prime-dilation-austin-reduction} and \Cref{prop:structured-decoupling} give it.
	\end{proof}
	Note that by Tao \cite{Tao}, the unweighted double commuting averages
	$$\frac{1}{N}\sum_{n\leq N}T_1^n f_1T_2^n f_2$$ converge in \(L^2\)-norm. Therefore, \Cref{thm:main-decoupling} identifies the existence of \(L^2\)-limit of
	the triple ergodic average
	\[
	\frac1N\sum_{n=1}^N T_1^n f_1\,T_2^n f_2\,S^{\Omega(n)}g.
	\]
	When \(T_1=T_2\), our decoupling result follows from
	\cite[Equation~(5.9)]{Xiao2026}; under the ergodicity
	assumption, it is also contained in \cite[Corollary~1.33]{Charamaras2025}.
	
	Next, we introduce some natural extensions or variants of \Cref{thm:main-corners} and \Cref{thm:main-decoupling}. The first one is a weighted form of \Cref{thm:main-decoupling}. Recall that a bounded sequence \(b:\mathbb Z\to\mathbb C\) is
	\emph{Besicovitch almost periodic} if, for every \(\varepsilon>0\), there is a
	trigonometric polynomial $P(n)$
	such that
	\[
	\limsup_{N\to\infty}\frac1N\sum_{n=1}^N|b(n)-P(n)|^2
	<\varepsilon^2.
	\]
	\begin{corollary}
		\label{cor:fixed-powers-coefficients}
		Let \(b:\mathbb Z\to\mathbb C\) be bounded and Besicovitch almost periodic.  Let \(T_1,T_2,S\) be three commuting invertible measure-preserving transformations
		of a standard probability space \((X,\mathcal X,\mu)\).  Then, for all
		\(f_1,f_2,g\in L^\infty(\mu)\),
		\[
		\frac1N\sum_{n=1}^N b(n)\,T_1^{n}f_1\,T_2^{n}f_2\,
		S^{\Omega(n)}
		\bigl(g-\E_{\mu}(g\mid\mathcal I(S))\bigr)
		\longrightarrow0
		\]
		in \(L^2(\mu)\) as $N\to\infty$.
	\end{corollary}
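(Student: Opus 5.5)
By a standard approximation we reduce to the case where \(b\) is a single character \(n\mapsto e(n\alpha)\). Write \(\tilde g=g-\E_\mu(g\mid\mathcal I(S))\) and
\[
A_N(b)=\frac1N\sum_{n\le N}b(n)\,T_1^nf_1\,T_2^nf_2\,S^{\Omega(n)}\tilde g .
\]
All factors are bounded by \(C=\|f_1\|_\infty\|f_2\|_\infty\|\tilde g\|_\infty\). For any bounded \(b,b'\), Cauchy--Schwarz gives
\[
\|A_N(b)-A_N(b')\|_{L^2(\mu)}\le C\Bigl(\frac1N\sum_{n\le N}|b(n)-b'(n)|^2\Bigr)^{1/2}.
\]
Given \(\varepsilon>0\), pick a trigonometric polynomial \(P\) with \(\limsup_N\frac1N\sum_{n\le N}|b(n)-P(n)|^2<\varepsilon^2\). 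Then \(\limsup_N\|A_N(b)\|\le C\varepsilon+\limsup_N\|A_N(P)\|\). Since \(P\) is a finite linear combination of characters \(e(n\alpha)\), it suffices to prove \(A_N(e(\cdot\,\alpha))\to0\) for each \(\alpha\in\R\), and then let \(\varepsilon\to0\).

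To handle a character, we absorb it into the dynamics by passing to the product space
\[
Y=X\times\mathbb T,\qquad \nu=\mu\times m_{\mathbb T}.
\]
On \(Y\) define \(\tilde T_1(x,t)=(T_1x,t+\alpha)\), \(\tilde T_2=T_2\times\mathrm{id}\) and \(\tilde S=S\times\mathrm{id}\). These are commuting invertible measure-preserving transformations of a standard probability space. Set \(F_1(x,t)=e(t)f_1(x)\), \(F_2=f_2\otimes1\) and \(G=g\otimes1\). Then
\[
\tilde T_1^nF_1(x,t)=e(t)\,e(n\alpha)\,T_1^nf_1(x).
\]
Hence
\[
\frac1N\sum_{n\le N}\tilde T_1^nF_1\,\tilde T_2^nF_2\,\tilde S^{\Omega(n)}G'
= e(t)\,A_N(e(\cdot\,\alpha))(x),
\]
where \(G'=G-\E_\nu(G\mid\mathcal I(\tilde S))\). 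The \(L^2(\nu)\) norm of the left side equals \(\|A_N(e(\cdot\,\alpha))\|_{L^2(\mu)}\).

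The one point to check is that \(\E_\nu(g\otimes1\mid\mathcal I(S\times\mathrm{id}))=\E_\mu(g\mid\mathcal I(S))\otimes1\). This follows from the mean ergodic theorem. Conditional expectation onto the invariant \(\sigma\)-algebra is the \(L^2\)-limit of the ergodic averages \(\frac1N\sum_{n\le N}(S\times\mathrm{id})^n(g\otimes1)\), and these averages equal \(\bigl(\frac1N\sum_{n\le N}S^ng\bigr)\otimes1\), which converge to \(\E_\mu(g\mid\mathcal I(S))\otimes1\). Therefore \(G'=\tilde g\otimes1\).

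Applying \Cref{thm:main-decoupling} to \((Y,\nu,\tilde T_1,\tilde T_2,\tilde S)\) with \(F_1,F_2,G\) gives \(\|A_N(e(\cdot\,\alpha))\|_{L^2(\mu)}\to0\), which completes the proof. There is no substantial obstacle here: the only delicate point is the identification of the invariant \(\sigma\)-algebra of the extended \(S\), handled above via the mean ergodic theorem.
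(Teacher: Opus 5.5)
Your proof is correct and follows the paper's route: reduce to a single character $e^{2\pi i n\alpha}$ by Besicovitch approximation and linearity, absorb it through the product with the circle rotation in the $T_1$-coordinate, and apply \Cref{thm:main-decoupling} to $e^{2\pi i t}f_1(x)$, $f_2$, $g$. Your explicit Cauchy--Schwarz estimate and your mean-ergodic-theorem computation of $\E_\nu(g\otimes 1\mid\mathcal I(\widetilde S))$ spell out details the paper leaves implicit; the paper instead cites $\mathcal I(\widetilde S)=\mathcal I(S)\otimes\mathcal B(\mathbb R/\mathbb Z)$.
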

	\begin{proof}
		By the definition of bounded Besicovitch almost periodic sequence and the linear property of ergodic averages, it suffices to
		treat the case where \(b(n)=e^{2\pi i n\alpha}\).  On
		\(X\times(\mathbb R/\mathbb Z)\), 
		we define
		$
		\widetilde T_1(x,t)=(T_1x,t+\alpha),
		\widetilde T_2(x,t)=(T_2x,t),$ and $
		\widetilde S(x,t)=(Sx,t).
		$ Let $m$ be the Haar measure on \(\mathbb R/\mathbb Z\). Then $\widetilde T_1,\widetilde T_2,\widetilde S$ are three commuting invertible measure-preserving transformations
		of \((X\times (\mathbb R/\mathbb Z),\mathcal X\otimes \mathcal B(\mathbb R/\mathbb Z),\mu\times m)\).
		Applying \Cref{thm:main-decoupling} to  $\widetilde T_1,\widetilde T_2,\widetilde S$ and
		$
		\widetilde f_1(x,t):=e^{2\pi i t}f_1(x),
		\widetilde f_2(x,t):=f_2(x),
		\widetilde g(x,t):=g(x)
		$ and the fact that 
		$
		\mathcal I(\widetilde S)
		=\mathcal I(S)\otimes\mathcal B(\mathbb R/\mathbb Z)
		$ gives the Corollary.
	\end{proof}
	The second one is the general form of \Cref{thm:main-corners}. Its proof is presented in \Cref{sec:corners}.
	\begin{corollary}
		\label{cor:fixed-directions}
		Given \(\ell\geq 1\), let \(u_1,u_2,u_3\in\mathbb Z^\ell\). Assume that they are non-zero. For any 
		\(A\subset\mathbb Z^\ell\) with positive upper Banach density, the set
		of \(d\in\mathbb N\) for which
		\[
		d^*\bigl(A\cap(A-du_1)\cap(A-du_2)
		\cap(A-\Omega(d)u_3)\bigr)>0
		\]
		has positive upper Banach density.
	\end{corollary}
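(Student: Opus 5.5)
We derive \Cref{cor:fixed-directions} from \Cref{thm:main-decoupling} via Furstenberg's correspondence principle applied to the $\mathbb Z^\ell$-action by translations. Given $A\subset\mathbb Z^\ell$ with $d^*(A)>0$, the correspondence principle gives a probability space $(X,\mathcal X,\mu)$, a measure-preserving $\mathbb Z^\ell$-action $(R_v)_{v\in\mathbb Z^\ell}$ and a set $E\in\mathcal X$ with $\mu(E)=d^*(A)$. For all $v_1,\dots,v_k\in\mathbb Z^\ell$ it satisfies
\[
d^*\bigl(A\cap(A-v_1)\cap\cdots\cap(A-v_k)\bigr)\geq \mu\bigl(E\cap R_{v_1}^{-1}E\cap\cdots\cap R_{v_k}^{-1}E\bigr).
\]
We may take $X$ standard. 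Set $T_1=R_{u_1}$, $T_2=R_{u_2}$, $S=R_{u_3}$; these are commuting invertible measure-preserving maps. Let $f=\one_E$. It then suffices to show that the sequence
\[
a(d):=\int f\cdot T_1^d f\cdot T_2^d f\cdot S^{\Omega(d)}f\,d\mu
\]
is positive on a set of $d$ of positive upper Banach density. For this it is enough that $\liminf_N \frac1N\sum_{d\le N} a(d)>0$, or at least that the $\limsup$ is positive. Indeed, $0\le a\le 1$, so a positive upper average forces $\{d:a(d)>0\}$ to have positive upper density, hence positive upper Banach density.

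The key steps are as follows.

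\textbf{Step 1: remove the $\Omega$ term.} Write $f=\E(f\mid\cI(S))+(f-\E(f\mid\cI(S)))$. By \Cref{thm:main-decoupling} and Cauchy--Schwarz, the contribution of the second part to $\frac1N\sum_{d\le N}a(d)$ tends to $0$. Since $\tilde f:=\E(f\mid\cI(S))$ is $S$-invariant, $S^{\Omega(d)}\tilde f=\tilde f$. Hence
\[
\frac1N\sum_{d\le N}a(d)=\frac1N\sum_{d\le N}\int f\tilde f\cdot T_1^d f\cdot T_2^d f\,d\mu+o(1).
\]

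\textbf{Step 2: a lower bound for the linear average.} We need
\[
\liminf_N \frac1N\sum_{d\le N}\int f\tilde f\,T_1^d f\,T_2^d f\,d\mu>0.
\]
Here $f\tilde f\ge0$, and $\int f\tilde f\,d\mu=\int\tilde f^2\,d\mu\ge\mu(E)^2>0$. The natural route is to reduce to the multiple recurrence phenomenon for two commuting transformations (Furstenberg--Katznelson), but with a nonnegative weight $h=f\tilde f$ in place of $\one_E$. Two cases need handling.

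The main obstacle is that $h$ is not an indicator of a subset of $E$ in an obvious way, and $h$ is not controlled by $f$. I would first try the following. The set $E'=E\cap\{\tilde f\ge \mu(E)/2\}$ has positive measure, since $\int_E\tilde f=\int\tilde f^2\ge\mu(E)^2$ forces $\mu(E')\ge\mu(E)^2/2$. Then $h\ge \frac{\mu(E)}2\one_{E'}$, and $f\ge\one_{E'}$. By positivity, the average is at least
\[
\frac{\mu(E)}2\cdot\frac1N\sum_{d\le N}\mu\bigl(E'\cap T_1^{-d}E'\cap T_2^{-d}E'\bigr),
\]
where we use the sign convention $T^d f=f\circ T^d$ throughout. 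The Furstenberg--Katznelson theorem, in its uniform Ces\`aro form for commuting transformations (limit existing by Tao, positivity of the $\liminf$ by Furstenberg--Katznelson), shows this is bounded below by a positive constant. This also covers degenerate cases such as $u_1=u_2$ or $u_1$ parallel to $u_2$, since the theorem holds for arbitrary commuting $T_1,T_2$, including equal or trivial ones.

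Combining Steps 1 and 2 gives $\liminf_N\frac1N\sum_{d\le N}a(d)>0$. Together with the correspondence inequality, this shows that the set of $d$ with $d^*(A\cap(A-du_1)\cap(A-du_2)\cap(A-\Omega(d)u_3))>0$ has positive upper Banach density.
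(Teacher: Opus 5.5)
Your proposal is correct and follows essentially the same route as the paper. Both arguments use the correspondence principle, then apply \Cref{thm:main-decoupling} with $f_1=f_2=g=\one_E$ to replace $S^{\Omega(d)}\one_E$ by the $S$-invariant $\E_\mu(\one_E\mid\cI(S))$, restrict to $E\cap\{\E_\mu(\one_E\mid\cI(S))\ge\mu(E)/2\}$, and conclude with Furstenberg--Katznelson; the appeal to Tao's convergence theorem is unnecessary, since Furstenberg--Katznelson already gives a positive $\liminf$.
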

	The third one demonstrates  arbitrarily prescribed divisibility of the common differences in \Cref{cor:fixed-directions}.
	\begin{corollary}
		\label{cor:divisible-common-differences}
		Given \(\ell\geq 1\), let \(u_1,u_2,u_3\in\mathbb Z^\ell\). Assume that they are non-zero. For any 
		\(A\subset\mathbb Z^\ell\) with positive upper Banach density and all
		\(Q\in\mathbb N\), the set of \(d\in Q\mathbb N\) for which
		\[
		d^*\bigl(A\cap(A-du_1)\cap(A-du_2)
		\cap(A-\Omega(d)u_3)\bigr)>0
		\]
		has positive upper Banach density.
	\end{corollary}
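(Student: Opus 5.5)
We will derive \Cref{cor:divisible-common-differences} from \Cref{thm:main-decoupling} and a Furstenberg correspondence argument. First apply Furstenberg's correspondence principle for $\mathbb Z^\ell$-actions. This gives a probability space $(X,\mathcal X,\mu)$, commuting invertible measure-preserving transformations $T_1,T_2,S$ with $T_i=R^{u_i}$ and $S=R^{u_3}$ for a $\mathbb Z^\ell$-action $R$, and a set $B$ with $\mu(B)=d^*(A)>0$, such that for every $d$
\[
d^*\bigl(A\cap(A-du_1)\cap(A-du_2)\cap(A-\Omega(d)u_3)\bigr)\ge \mu\bigl(B\cap T_1^{-d}B\cap T_2^{-d}B\cap S^{-\Omega(d)}B\bigr).
\]
It then suffices to show that the averages over $d\in Q\mathbb N$ of the right-hand side stay bounded below along some sequence of intervals. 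Passing to $\limsup$ over intervals $[M,M+N)$ rather than $[1,N]$ is harmless for the Banach-density conclusion, but it is simplest to prove a positive lower bound for
\[
\liminf_{N\to\infty}\frac1N\sum_{n=1}^N \mu\bigl(B\cap T_1^{-Qn}B\cap T_2^{-Qn}B\cap S^{-\Omega(Qn)}B\bigr).
\]
A positive lower bound gives positive upper density of the good $d\in Q\mathbb N$, hence positive upper Banach density.

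Write $\Omega(Qn)=\Omega(Q)+\Omega(n)$ whenever $\gcd(n,Q)=1$. This is not true in general, so we use the multiplicative weight $b(n)=\mathbf 1_{\gcd(n,Q)=1}$. This weight is periodic, hence Besicovitch almost periodic. Restricting to such $n$ loses only a constant factor $\varphi(Q)/Q$ in the average, provided we get a positive lower bound for the weighted average. Since the summands are nonnegative, we may lower bound the full average by the weighted one. Put $g=\one_B$ and $h=\E(\one_B\mid\mathcal I(S))$. By \Cref{cor:fixed-powers-coefficients}, applied to $T_1^Q,T_2^Q,S$ (which still commute), the weighted average of $T_1^{Qn}\one_B\,T_2^{Qn}\one_B\,S^{\Omega(n)}S^{\Omega(Q)}(g-h)$ tends to $0$ in $L^2$. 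Since $S^{\Omega(Q)}$ preserves $\mathcal I(S)$ and commutes with everything, we may also replace $g$ by $S^{\Omega(Q)}g$ in that corollary. Because $S^{m}h=h$, the weighted average reduces to
\[
\frac1N\sum_{n\le N} b(n)\int \one_B\, T_1^{Qn}\one_B\,T_2^{Qn}\one_B\, h\,d\mu+o(1).
\]

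It remains to bound this from below. The function $h$ satisfies $h\ge0$ and $\int \one_B h=\int h^2\ge\mu(B)^2>0$. Let $C=B\cap\{h>\delta\}$ with $\delta$ small, so that $\mu(C)>0$. Then the integrand is at least $\delta\,\one_C\,T_1^{Qn}\one_C\,T_2^{Qn}\one_C$. We then need a lower bound for
\[
\liminf_N \frac1N\sum_{n\le N}\mathbf 1_{\gcd(n,Q)=1}\,\mu\bigl(C\cap T_1^{-Qn}C\cap T_2^{-Qn}C\bigr)>0.
\]
This is a corners-type multiple recurrence along $n\equiv$ residues coprime to $Q$. We expect this to be the main obstacle. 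We would first try the uniform version of the Furstenberg–Katznelson theorem: there is $c(\mu(C))>0$ and $L$ such that for every $L'$ the set $\{m\le L': \mu(C\cap T_1'^{-m}C\cap T_2'^{-m}C)>c\}$ has density bounded below. Apply it with $T_i'=T_i^{Q}$ restricted to $m\equiv1\pmod Q$. Concretely, we would use $T_i''=T_i^{Q^2}$ and $n=1+Qm$ after shifting, or simply use the standard fact that for commuting systems the set of $n$ with $\mu(C\cap T_1^{-n}C\cap T_2^{-n}C)>c$ is syndetic, hence contains many multiples of $Q$. Handling the coprimality constraint precisely is the delicate step. A cleaner alternative is to replace $b$ by the indicator of $n\equiv 1\pmod Q$, restricted to $n$ with $\gcd(n,Q)=1$, and note that then $\Omega(Qn)=\Omega(Q)+\Omega(n)$ only needs coprimality. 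We would use the $\mathbb Z^2$ IP/syndetic version of Furstenberg–Katznelson applied to $T_1^{Q}, T_2^{Q}$ along $n\in Q'\mathbb N+1$, by positivity of averages of the Furstenberg–Katznelson limit along arithmetic progressions with step coprime to the relevant period. Combining these steps gives the corollary.
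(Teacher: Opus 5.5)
\textbf{There is a genuine gap at the recurrence step.} It comes from an arithmetic slip. $\Omega$ counts prime factors with multiplicity, so it is completely additive: $\Omega(Qn)=\Omega(Q)+\Omega(n)$ holds for \emph{every} $n$. Coprimality is only needed for $\omega$.

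The weight $b(n)=\one_{\gcd(n,Q)=1}$ is therefore unnecessary, and it is actually fatal. The inequality you reduce to,
\[
\liminf_{N\to\infty}\frac1N\sum_{n\le N}\one_{\gcd(n,Q)=1}\,\mu\bigl(C\cap T_1^{-Qn}C\cap T_2^{-Qn}C\bigr)>0,
\]
is false in general. Take $Q\ge 2$, $X=\mathbb Z/Q^2\mathbb Z$ with normalized counting measure, $T_1=T_2=S$ the rotation by $1$, and $B=\{0\}$. This is the correspondence system of $A=Q^2\mathbb Z\subset\mathbb Z$ with $u_1=u_2=u_3=1$. Then $h\equiv Q^{-2}$, and $C=B$ for small $\delta$. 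Moreover $C\cap T_1^{-Qn}C\neq\varnothing$ only if $Q\mid n$, so every term with $\gcd(n,Q)=1$ vanishes.

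Combinatorially, $A\cap(A-Qn)=\varnothing$ whenever $\gcd(n,Q)=1$ in this example. So restricting to $n$ coprime to $Q$ throws away every good $d$, and no lower bound for the restricted average can exist. Your proposed repairs fail for the same reason:
\begin{itemize}
\item multiple-recurrence return times need not meet the class $n\equiv 1\pmod Q$, and here they do not;
\item a syndetic set need not contain multiples of $Q$ (e.g.\ the odd numbers).
\end{itemize}

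\textbf{The repair is to drop the weight.} For all $n$, $S^{\Omega(Qn)}\one_B=S^{\Omega(n)}(S^{\Omega(Q)}\one_B)$. Also $\E_\mu(S^{\Omega(Q)}\one_B\mid\mathcal I(S))=h$ is $S$-invariant. Apply \Cref{thm:main-decoupling} to $T_1^Q,T_2^Q,S$ with $f_1=f_2=\one_B$ and $g=S^{\Omega(Q)}\one_B$. This turns your average into
\[
\frac1N\sum_{n\le N}\int\one_B\,h\,T_1^{Qn}\one_B\,T_2^{Qn}\one_B\,d\mu+o(1).
\]
Then \Cref{thm:fk}, applied to $T_1^Q,T_2^Q$ and $C$, bounds this below by a positive constant.

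With that change your argument is a valid variant of the paper's. The paper keeps the parametrization $d=n$ and inserts the periodic weight $\one_{Q\mid n}$ through \Cref{cor:fixed-powers-coefficients}. It then applies \Cref{thm:fk} to $R_1^Q,R_2^Q$ on the multiples of $Q$. Your reparametrization $d=Qn$, together with complete additivity of $\Omega$, avoids the weighted corollary entirely.
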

	\begin{proof}
		It is easy to see that $\one_{Q\mid n}$ is a bounded Besicovitch almost periodic sequence. The rest proof is the repeats of the proof of \Cref{cor:fixed-directions} except a minor adjustment that we use \Cref{cor:fixed-powers-coefficients} instead of \Cref{thm:main-decoupling}.
	\end{proof}

	\subsection{A brief outline of the proof of \Cref{thm:main-decoupling}}
	Put \(T^{(a,b)}=T_1^aT_2^b\), \(e_1=(1,0)\), and \(e_2=(0,1)\).
	The proof consists of a characteristic factor reduction followed by a
	one-action decoupling argument for the resulting structured functions.
	
	\smallskip
	\noindent\emph{Characteristic factor reduction.}
	For distinct primes \(p,q\), the identity
	\(\Omega(pn)=\Omega(qn)=\Omega(n)+1\) cancels the \(S\)-coordinate in
	the prime dilated correlations. Thus
	\Cref{lem:prime-dilation-to-three-direction} reduces the required
	vanishing to characteristic factors for certain three-direction linear
	averages of the \(\mathbb Z^2\)-action \(T\).  Austin's pleasant extension
	theorem \Cref{thm:austin-pleasant}, together with
	\Cref{prop:austin-pronil-domination,lem:S-preserving-lift}, shows that
	these factors are contained in joins of a \(T\)-invariant subfactor
	\(\mathcal N_X\subseteq Z_2(T)\) and countably many isotropy factors
	\(\cI(T^w)\).  This gives the first reduction,
	\Cref{prop:prime-dilation-austin-reduction}.
	
	\smallskip
	\noindent\emph{Finite structured functions.}
	The Martingale Theorem reduces functions
	measurable with respect to these joins to finite sums of terms of the form
	$
	H_i\prod_{w\in W_i}h_{i,w},
	H_i\in L^\infty(X,\mathcal N_X,\mu),
	h_{i,w}\in L^\infty(X,\cI(T^w),\mu).
	$
	Since \(\mathcal N_X\subseteq Z_2(T)\),
	\Cref{lem:smooth-bundle-approx} approximates the finitely many functions
	\(H_i\) by tame $2$-step nilfunctions.  It therefore remains to prove the
	claim for the elementary finite structured functions of
	\Cref{def:finite-structured-algebra}.
	
	\smallskip
	\noindent\emph{Reduction to one action.}
	Choose \(v\in\mathbb Z^2\) transverse to all of the finitely many isotropy
	directions and set \(U=T^v\).  On each residue class modulo
	\(\lvert\det(v,w)\rvert\), \Cref{lem:isotropy-coding} rewrites every term
	\(T^{ne_i}h_{i,w}\) as an affine iterate of \(U\).  At the same time,
	\Cref{lem:bundle-weight-ezk-stable} shows that the product of the translated
	tame nilfunctions is an Eisner--Zorin-Kranich regular weight for \(U\).
	After passing to a common residue modulus, the cyclic tower construction
	turns all remaining terms into affine iterates of a single transformation.
	\Cref{prop:cyclic-tower-one-action-decoupling} then separates
	\(S^{\Omega(n)}g\) from the other factors, leaving precisely
	\(\E_\mu(g\mid\cI(S))\).  In its proof, the orthogonal part is
	handled by \Cref{prop:multislope-nilsequence-estimate}, while the structured
	part is handled by tame nilfunction approximation and \Cref{lem:xiao-nilsequence-family}, based on the
	Bergelson--Richter's ergodic theorem \Cref{thm:beg-omega-nilsequence}.
	This proves the finite structured result
	\Cref{prop:finite-nil-decoupling}.
	
	Finally, the Martingale Theorem and tame nilfunction
	approximations pass this conclusion to the full characteristic factors,
	giving \Cref{prop:structured-decoupling}.  Combining
	\Cref{prop:structured-decoupling} with
	\Cref{prop:prime-dilation-austin-reduction} proves
	\Cref{thm:main-decoupling}. 
	
	\subsection*{Organization of the paper}
	\Cref{sec:preliminaries} collects some notions, notations, and results.  \Cref{sec:corners} proves \Cref{cor:fixed-directions} assuming \Cref{thm:main-decoupling}. \Cref{sec:orthogonality-austin} gives the first reduction of \Cref{thm:main-decoupling} by determining the related characteristic factors. \Cref{sec:nil-one-action} contain an approximation by tame nilfunctions and norm convergence results of two classes of weighted multiple one-action ergodic averages. \Cref{Sec6} focuses on cyclic tower action and its properties. \Cref{sec:isotropy-completion} gives the second reduction of \Cref{thm:main-decoupling} by elementary finite structured functions. \Cref{sec:further-questions} contains some further discussions and directions. \Cref{app:covering-presentation} lifts linear orbit on nilmanifolds to polynomial orbits on ``admissible" nilmanifolds. \Cref{appB} focuses on Eisner--Zorin-Kranich regular weights.
	\subsection*{Acknowledgements}
	The authors would like to thank Professors Jiahao Qiu and Leiye Xu, for many helpful discussions and valuable suggestions. Rongzhong Xiao is supported by the National Natural Science Foundation of China (No. 123B2007, 12426201) and the Postdoctoral Fellowship Program of CPSF under Grant Number GZB20260746. Shuhao Zhang is supported by National Key R\&D Program of China (No. 2024YFA1013602, 2024YFA1013600) and NNSF of China (12031019, 12371197, 12426201).
	\section{Preliminaries}
	\label{sec:preliminaries}
	
	\subsection{Actions, factors, inverse limits, and characteristic factors}
	\label{subsec:actions-factors-notation}
	
	Let \(G\) be a group.  A \emph{measure-preserving \(G\)-system} is a standard
	probability space \((X,\mathcal X,\mu)\) together with a group homomorphism
	\(T:G\to\operatorname{Aut}(X,\mu)\), where $\operatorname{Aut}(X,\mu)$ is the group of invertible measure-preserving transformations of \((X,\mathcal X,\mu)\). The image of
	\(g\in G\) is written \(T^g\).  Thus \(T^{gh}=T^g\circ T^h\) for all
	\(g,h\in G\).  Such a system is denoted by \((X,\mathcal X,\mu,(T^g)_{g\in G})\), or simply
	\((X,\mu,(T^g)_{g\in G})\) when the \(\sigma\)-algebra is clear. For a $G$-system \((X,\mathcal X,\mu,(T^g)_{g\in G})\), we let $\mathcal{I}((T^g)_{g\in G})=\bigcap_{g\in G}\mathcal{I}(T^g)$, where $\mathcal{I}(T^g)$ is the the sub-$\sigma$-algebra of $\X$ generated by all $T^g$-invariant sets.
	If $\mathcal{I}((T^g)_{g\in G})$ is trivial, we say that \((X,\mathcal X,\mu,(T^g)_{g\in G})\) is {\em ergodic}. 
	
	{\bf All actions appearing in this paper are \(\mathbb Z^d\)-actions or restrictions
		to subgroups thereof.}  
	When $d=1$, we use $T$ to denote the map $T^{1}$ and use $(X,\X,\mu,T)$ to denote the $\Z$-system \((X,\mathcal X,\mu,(T^n)_{n\in \Z})\) . Then $\mathcal{I}(T^{1})=\mathcal{I}(T)=\mathcal{I}((T^n)_{n\in \Z})$. 
	%
	
	If \((Y,\mathcal Y,\nu,(S^g)_{g\in G})\) is another \(G\)-system, a measurable map
	$
	\pi:(Y,\mathcal Y,\nu,(S^g)_{g\in G})\to (X,\mathcal X,\mu,(T^g)_{g\in G})
	$
	is called a \emph{(measurable) factor map} if \(\pi_*\nu=\mu\) and
	\(\pi\circ S^g = T^g\circ\pi\) \(\nu\)-a.e. for every \(g\in G\). At this point, we say that $(X,\mathcal X,\mu,(T^g)_{g\in G})$ is a (measurable) factor of \((Y,\mathcal Y,\nu,(S^g)_{g\in G})\).or \((Y,\mathcal Y,\nu,(S^g)_{g\in G})\) is a (measurable) extension of $(X,\mathcal X,\mu,(T^g)_{g\in G})$.
	An \emph{isotropy factor} is a factor of the form \(\mathcal I(T^{w})\) for
	some non-zero \(w\in\mathbb Z^d\).
	
	Next, we introduce inverse limits. Let $I$ be a countable directed set. For each $i\in I$, assign a $G$-system $(X_i,\mathcal X_i,\mu_i,(T^g)_{g\in G})$ and for all $i,j\in I$ with $i\le j$, assign a factor map $\pi_{i,j}:X_j\to X_i$ such that for all $i\le j\le \ell\in I$, $\pi_{i,\ell}=\pi_{i,j}\circ \pi_{j,\ell}$. At this point, we say that the pair $((X_i,\mathcal X_i,\mu_i,(T^g)_{g\in G})_{i\in I},(\pi_{i,j})_{i,j\in I,i\le j})$ forms an {\em inverse system}. An {\em inverse limit} of this inverse system is defined abstractly to be a $G$-system $(X,\mathcal X,\mu,(T^g)_{g\in G})$ endowed with factor maps $\pi_{i}:X\to X_i$ satisfying (i) $\pi_{i}=\pi_{i,j}\circ \pi_{j}$ for all $i\le j\in I$ and the universal property: (ii) If $(Y,\mathcal Y,\nu,(T^g)_{g\in G})$ is a $G$-system and for $i\in I$, $p_i:Y\to X_i$ is a factor map such that $p_{i}=\pi_{i,j}\circ p_{j}$ for all $i\le j\in I$, then there is a unique factor map $p:Y\to X$ such that for all $i\in I$, $p_i=\pi_{i}\circ p$.
	
	In this paper, we also use characteristic factors. Fix a $G$-system $(X,\mathcal X,\mu,(T^g)_{g\in G})$ and sequences $a_1,\ldots,a_k:\Z\to G$. We say that the ergodic averages
	$$\frac{1}{N}\sum_{n=1}^{N}T^{a_1(n)}f_1\cdots T^{a_k(n)}f_k$$ admit the {\em characteristic factor} $\mathcal{A}$ in the $i$-th coordinates if for all $f_1,\ldots,f_k\in L^{\infty}(\mu)$ with $\E_{\mu}(f_i|\mathcal{A})=0$, then
	$$\lim_{N\to\infty}\frac{1}{N}\sum_{n=1}^{N}T^{a_1(n)}f_1\cdots T^{a_k(n)}f_k=0$$ in $L^{2}(\mu)$.
	\subsection{Disintegration of measures}
	\label{subsec:measure-disintegration}
	
	Let \((Y,\mathcal Y,\nu)\) be a standard probability space and \(\mathcal C\subset\mathcal Y\) a sub-\(\sigma\)-algebra.  
	A {\em disintegration of \(\nu\) over \(\mathcal C\)} is a family \(\{\nu_{y}\}_{y\in Y}\) of probability measures on \((Y,\mathcal Y)\) such that for every bounded measurable \(f:Y\to\mathbb C\) the map \(y\mapsto\int_Y f\,d\nu_{y}\) is \(\mathcal C\)-measurable,
	\[
	\int_Y f\,d\nu = \int_Y\Bigl(\int_Y f\,d\nu_{y}\Bigr)d\nu(y),
	\]
	and \(\E_\nu(f\mid\mathcal C)(y) = \int_Y f\,d\nu_{y}\) for \(\nu\)-a.e. \(y\).  
	For \(\nu\)-a.e. \(y\), the measure \(\nu_{y}\) is concentrated on the atom of \(\mathcal C\) containing \(y\).
	
	When \(\mathcal C\) is generated by a measurable map \(\pi:(Y,\mathcal Y)\to(\Omega,\mathcal B)\) with \(\Omega\) a standard Borel space, the disintegration is written
	\[
	\nu = \int_\Omega \nu_\omega\,d\beta(\omega),\qquad \beta = \pi_*\nu,
	\]
	where for \(\beta\)-a.e. \(\omega\), the probability measure \(\nu_\omega\) is supported on \(\pi^{-1}\{\omega\}\).
	
	For a \(G\)-system \((X,\mathcal X,\mu,(T^g)_{g\in G})\) and a \(G\)-invariant sub-\(\sigma\)-algebra \(\mathcal C\), the disintegration of \(\mu\) over \(\mathcal C\) gives conditional measures \(\mu_x\) for which 
	\[
	(T^g)_*\mu_x = \mu_{T^g x}\ \text{for }\mu\text{-a.e. }x,\ \text{and all}\ g\in G.
	\]
	If, in addition, \(\mathcal C\subseteq\mathcal{I}((T^g)_{g\in G})\), then \(\mu_x\) is \(G\)-invariant for \(\mu\)-a.e. \(x\).
	In particular, when \(\mathcal C = \mathcal{I}((T^g)_{g\in G})\), this is the \emph{ergodic decomposition} of $\mu$ (with respect to $T$) and the conditional measures are ergodic. 
	%
	\subsection{Host-Kra seminorms}
	\label{subsec:hk-cube-measures}
	
	Fix $(X,\cX,\mu,(T^v)_{v\in \Z^d})$.  For
	$j\geq0$, set
	$
	X^{[j]}=X^{\{0,1\}^j},
	\cX^{[j]}=\cX^{\otimes\{0,1\}^j},
	$
	with $X^{[0]}=X$ and $\cX^{[0]}=\cX$.  Define the {\em cube measures}
	$\mu_X^{[j]}$ inductively.  Put $\mu_X^{[0]}=\mu$.  If $\mu_X^{[j]}$ has
	been defined, let $T_{[j]}$ be the diagonal $\Z^d$-action on $X^{[j]}$ as follow:
	\[
	T_{[j]}^t\bigl((x_\omega)_{\omega\in\{0,1\}^j}\bigr)
	:=
	\bigl(T^t x_\omega\bigr)_{\omega\in\{0,1\}^j},
	t\in\Z^d.
	\]
	Let $\cI_X^{[j]}=\cI((T_{[j]}^t)_{t\in \Z^d})$.  Identify
	$X^{[j+1]}$ with $X^{[j]}\times X^{[j]}$ by the following map pair
	\[
	(x_\eta)_{\eta\in\{0,1\}^{j+1}}
	\longleftrightarrow
	\bigl((x_{\omega0})_{\omega\in\{0,1\}^j},
	(x_{\omega1})_{\omega\in\{0,1\}^j}\bigr).
	\]
	Then $\mu_X^{[j+1]}$ is the relatively independent self-joining of
	$\mu_X^{[j]}$ over $\cI_X^{[j]}$. That is, for all
	$F,G\in L^\infty(\mu_X^{[j]})$,
	\[
	\int_{X^{[j+1]}}F(x')G(x'')\,d\mu_X^{[j+1]}(x',x'')
	=
	\int_{X^{[j]}}
	\E_{\mu_X^{[j]}}(F\mid\cI_X^{[j]})\E_{\mu_X^{[j]}}(G\mid\cI_X^{[j]})\,d\mu_X^{[j]}.
	\]
	For $f\in L^\infty(\mu)$ and $j\geq1$, the {\em $j$-th Host-Kra seminorm} of $f$  
	\[
	\|f\|_{U^j(X,\X,\mu,(T^v)_{v\in \Z^d})}^{2^j}
	=
	\int_{X^{[j]}}
	\prod_{\omega\in\{0,1\}^j}\mathcal C^{|\omega|}f(x_\omega)
	\,d\mu_X^{[j]}((x_\omega)_{\omega\in\{0,1\}^j}),
	\]
	where $\mathcal C z=\overline z$ and
	$|\omega|=\omega_1+\cdots+\omega_j$. When the bottom system (space) is clear, we will omit it and  write $\|f\|_{U^j}$ ($\|f\|_{U^j}(T)$) simply.
	
	For each $s\geq0$, there is a factor $Z_s(X,\cX,\mu,(T^v)_{v\in \Z^d})$ characterized by
	\[
	\|f\|_{U^{s+1}}=0
	\quad\Longleftrightarrow\quad
	\E(f\mid Z_s(X,\cX,\mu,(T^v)_{v\in \Z^d}))=0
	\]
	for all bounded measurable $f$.
	Set
	$\displaystyle Z_\infty(X,\cX,\mu,(T^v)_{v\in \Z^d})=\bigvee_{s\ge0}Z_s(X,\cX,\mu,(T^v)_{v\in \Z^d})$. When the bottom space is clear, we will omit it and  write $Z_{s}(T)$ simply.
	
	Fix $\Lambda\le \Z^d$. For a subaction
	$T|_\Lambda$, the notation $\mu_{X,T|_\Lambda}^{[j]}$,
	$\|\cdot\|_{U^j(T|_\Lambda)}$, and $Z_s(T|_\Lambda)$ refers to the same
	construction applied to the restricted system.
	
	Next, we record the following two lemmas for later use. The first one may be known. Since we can not find a precise and direct citation, we restate it and provide a proof here.
	\begin{lemma}
		\label{lem:one-dimensional-power-comparison}
		Let 
		\((X,\mathcal X,\mu,U)\) be a $\Z$-system, \(a\in\mathbb Z\setminus\{0\}\), and
		$1$-bounded \(H\in L^\infty(\mu)\).  Then, for every \(m\ge2\),
		\[
		\|H\|_{U^m(U^a)}
		\le
		C_{a,m}\|H\|_{U^m(U)} .
		\]
		Moreover, for every \(r\ge1\),
		\[
		\limsup_{H_0\to\infty}\frac1{H_0}\sum_{h=1}^{H_0}
		\|U^{ah}H\cdot\overline H\|_{U^r(U)}^{2^r}
		\le
		|a|\|H\|_{U^{r+1}(U)}^{2^{r+1}} .
		\]
	\end{lemma}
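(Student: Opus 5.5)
We prove both inequalities by passing to the one-dimensional Host--Kra cube description through ergodic averages. The basic tool is the inductive formula
\[
\|H\|_{U^{j+1}(V)}^{2^{j+1}}=\lim_{N\to\infty}\frac1N\sum_{h=1}^{N}\|V^hH\cdot\overline H\|_{U^j(V)}^{2^j},
\]
valid for any $\Z$-system $(X,\mathcal X,\mu,V)$ and $j\ge1$. Its base case is $\|F\|_{U^1(V)}^2=\|\E(F\mid\cI(V))\|_2^2$. Replacing $U$ by $U^{-1}$ changes nothing, so we may assume $a>0$.

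\emph{The second inequality.} Fix $r\ge1$. Each term $\|U^{ah}H\cdot\overline H\|_{U^r(U)}^{2^r}$ is nonnegative. Hence
\[
\frac1{H_0}\sum_{h=1}^{H_0}\|U^{ah}H\,\overline H\|_{U^r(U)}^{2^r}
\le\frac{aH_0}{H_0}\cdot\frac1{aH_0}\sum_{k=1}^{aH_0}\|U^{k}H\,\overline H\|_{U^r(U)}^{2^r},
\]
because the left-hand sum runs over the subset $k\in a\N$ of the indices $k\le aH_0$. The Ces\`aro average on the right converges to $\|H\|_{U^{r+1}(U)}^{2^{r+1}}$ by the inductive formula with $V=U$. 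Taking $\limsup$ gives the claimed bound with constant $|a|$. This part is essentially free.

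\emph{The first inequality.} We first show by induction on $j\ge1$ that for all $1$-bounded $F$,
\[
\|F\|_{U^j(U^a)}\le C_{a,j}\|F\|_{U^j(U)},
\]
and then specialize to $j=m$. Nothing is lost by allowing $j=1$.

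For the base case $j=1$, note that $\cI(U)\subseteq\cI(U^a)$. The space $L^2(\cI(U^a))$ decomposes under $U$, which acts there as an action of $\Z/a\Z$. Write $P_a$ for the projection onto $L^2(\cI(U^a))$. Then
\[
P_aF=\sum_{\zeta^a=1}P_\zeta F,
\]
where $P_\zeta$ is the projection onto the $\zeta$-eigenspace of $U$. The $\zeta=1$ term is $\E(F\mid\cI(U))$, but the other terms are not controlled by $\|F\|_{U^1(U)}$. For instance, with $U$ the rotation on $\Z/2$ and $a=2$, the function $F=(-1)^x$ has $\|F\|_{U^1(U)}=0$ while $\|F\|_{U^1(U^2)}=1$. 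So the comparison genuinely fails at $j=1$. This is presumably why the statement restricts to $m\ge2$, and it means the induction cannot start at $j=1$.

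For the base case $j=2$, we use the spectral formula
\[
\|F\|_{U^2(V)}^4=\sum_\lambda|\sigma_F(\{\lambda\})|^2
\]
over eigenvalues $\lambda$ of the measure-preserving system. Applied to $V=U^a$, the spectral measure of $F$ pushes forward under $t\mapsto at$. An atom at $\beta$ for $U^a$ collects the mass of at most $a$ atoms $t$ with $at=\beta$. By Cauchy--Schwarz, $\bigl(\sum_{i\le a}x_i\bigr)^2\le a\sum_i x_i^2$. Therefore
\[
\|F\|_{U^2(U^a)}^4\le a\|F\|_{U^2(U)}^4.
\]

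For the inductive step $j\to j+1$ with $j\ge2$, apply the inductive formula with $V=U^a$:
\[
\|F\|_{U^{j+1}(U^a)}^{2^{j+1}}=\lim_{N}\frac1N\sum_{h\le N}\|U^{ah}F\,\overline F\|_{U^j(U^a)}^{2^j}.
\]
The induction hypothesis bounds each term by $C_{a,j}^{2^j}\|U^{ah}F\,\overline F\|_{U^j(U)}^{2^j}$. The averaging argument from the second inequality (with $r=j$) then bounds the $\limsup$ by $a\|F\|_{U^{j+1}(U)}^{2^{j+1}}$. This yields
\[
C_{a,j+1}=\bigl(aC_{a,j}^{2^j}\bigr)^{2^{-(j+1)}}.
\]

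The main obstacle is the base case. The $U^2$ spectral computation must be made rigorous for non-ergodic $U$, where one works fiberwise over the ergodic decomposition or through the formula $\|F\|_{U^2}^4=\lim\frac1N\sum_h|\langle U^hF,F\rangle|^2$. If that route is awkward, an alternative is Wiener's lemma applied to the spectral measure $\sigma_F$ directly:
\[
\lim_N\frac1N\sum_{h\le N}|\hat\sigma_F(ah)|^2=\sum_\beta\Bigl|\sum_{at=\beta}\sigma_F(\{t\})\Bigr|^2,
\]
which avoids ergodicity entirely.
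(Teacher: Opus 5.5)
Your proof of the second inequality is exactly the paper's argument: nonnegativity of the terms, comparison of the $a$-sparse average with the full Ces\`aro average up to $aH_0$, and the Host--Kra formula. For the first inequality the paper simply cites Frantzikinakis--Kuca. Your inductive step $j\to j+1$ is sound, and so is your remark that the comparison fails at $j=1$.

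The gap is the base case $j=2$, and it is not only a matter of non-ergodic $U$. The identity $\|F\|_{U^2(V)}^4=\sum_\lambda|\sigma_F(\{\lambda\})|^2$, equivalently $\|F\|_{U^2(V)}^4=\lim_N\frac1N\sum_{h\le N}|\langle V^hF,F\rangle|^2$, requires $V$ to be ergodic. But $V=U^a$ is typically non-ergodic even when $U$ is ergodic. In fact your claimed bound $\|F\|_{U^2(U^a)}^4\le a\|F\|_{U^2(U)}^4$ is false. Take $X=\Z/2\Z$ with uniform measure, $Ux=x+1$, $a=2$, and $F=\one_{\{0\}}$. Then $U^2=\mathrm{id}$, so $\cI(U^2)$ is everything and $\|F\|_{U^2(U^2)}^4=\int|F|^4\,d\mu=1/2$. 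On the other hand, $U$ is ergodic with coefficients $c_0=c_{1/2}=1/2$, so $\|F\|_{U^2(U)}^4=1/8$ and $a\|F\|_{U^2(U)}^4=1/4<1/2$. Your push-forward computation yields exactly $1/4=\lim_H\frac1H\sum_{h\le H}|\langle U^{2h}F,F\rangle|^2$, which is not the seminorm. Neither proposed repair helps. The formula via $|\langle V^hF,F\rangle|^2$ is false for non-ergodic $V$: already for $V=\mathrm{id}$ it gives $\|F\|_2^4$ instead of $\int|F|^4\,d\mu$. Applying Wiener's lemma to $\sigma_F$ just recomputes that same quantity, so it does not avoid ergodicity.

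To repair the base case, first reduce to ergodic $U$. Apply Host--Kra's decomposition lemma to both $U$ and $U^a$; the $U^a$-ergodic decomposition refines the $U$-ergodic one, so $\|F\|_{U^2(\mu,U^a)}^4=\int\|F\|_{U^2(\mu_\omega,U^a)}^4\,d\lambda(\omega)$. For ergodic $U$ you must then account for $\cI(U^a)$. Its $L^2$-space is spanned by the unimodular eigenfunctions $e_\theta$ of $U$ with $a\theta\equiv 0$, of which there are at most $a$. Hence
\[
\|F\|_{U^2(U^a)}^4=\lim_{H\to\infty}\frac1H\sum_{h=1}^{H}\sum_{\theta}\bigl|\langle U^{ah}F,Fe_\theta\rangle\bigr|^2 .
\]
Now apply Wiener's lemma to the push-forwards under $t\mapsto at$ of the cross-spectral measures $\sigma_{F,Fe_\theta}$. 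Their atoms have modulus $|c_t||c_{t-\theta}|$, because eigenspaces of an ergodic system are one-dimensional. Cauchy--Schwarz over the at most $a$ preimages, together with $|c_t|^2|c_{t-\theta}|^2\le\frac12(|c_t|^4+|c_{t-\theta}|^4)$, gives $\|F\|_{U^2(U^a)}^4\le a^2\|F\|_{U^2(U)}^4$ for $a>0$. This constant is sharp in the example above. With this base case your induction goes through and gives $C_{a,m}^{2^m}=a^m$. Alternatively, cite the reference as the paper does.
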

	
	\begin{proof}
		The first assertion follows from \cite[Lemma~3.1]{FrantzikinakisKucaJoint}.
		For the second assertion, we may assume that \(a>0\) and set
		\(A_n=\|U^nH\cdot \overline H\|_{U^r(U)}^{2^r}\).  By
		\cite[Chapter~8, Proposition 16]{HostKraBook},
		\[
		\|H\|_{U^{r+1}(U)}^{2^{r+1}}
		=
		\lim_{M\to\infty}\frac1M\sum_{n=1}^M A_n .
		\]
		Since \(A_n\ge0\),
		\[
		\frac1N\sum_{h=1}^N A_{ah}
		\le
		a\frac1{aN}\sum_{n=1}^{aN}A_n .
		\]
		Taking limsup gives the claim for \(a>0\).  The case \(a<0\) follows from the
		case \(a>0\) applied to \(U^{-1}\). This finishes the proof.
	\end{proof}
	The second one is a local property of Host-Kra seminorms.
	\begin{lemma}[Host--Kra, {\cite[Chapter 8, Proposition 18]{HostKraBook}}]
		\label{lem:HK-seminorm-ergodic-decomposition}
		Let \((X,\mathcal X,\mu,T)\) be a $\Z$-system,
		and let
		\(
		\mu=\int_\Omega \mu_\omega\,d\lambda(\omega)
		\)
		be its ergodic decomposition with respect to \(T\).  Then, for every
		\(m\geq1\) and all \(F\in L^\infty(\mu)\),
		\[
		\|F\|_{U^m(X,\mu,T)}^{2^m}
		=
		\int_\Omega
		\|F\|_{U^m(X,\mu_\omega,T)}^{2^m}
		\,d\lambda(\omega).
		\]
	\end{lemma}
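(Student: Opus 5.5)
The statement to be proved is \Cref{lem:HK-seminorm-ergodic-decomposition}, the locality of Host--Kra seminorms under the ergodic decomposition. The plan is to prove by induction on $j\ge 0$ that the cube measure decomposes along the ergodic decomposition:
\[
\mu_X^{[j]}=\int_\Omega \mu_{\omega,X}^{[j]}\,d\lambda(\omega),
\]
where $\mu_{\omega,X}^{[j]}$ is the cube measure built from the ergodic system $(X,\mu_\omega,T)$. Integrating $\prod_{\omega'}\mathcal C^{|\omega'|}F(x_{\omega'})$ against both sides with $j=m$ then gives the claimed identity directly. The case $j=0$ is the ergodic decomposition itself.

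The case $j=1$ shows the mechanism. By definition $\mu^{[1]}$ is the relatively independent self-joining of $\mu$ over $\mathcal I(T)$, so it equals $\int \mu_x\times\mu_x\,d\mu(x)$, where $\mu_x$ is the disintegration over $\mathcal I(T)$. These conditional measures are exactly the ergodic components, so $\mu^{[1]}=\int_\Omega \mu_\omega\times\mu_\omega\,d\lambda(\omega)$. Since each $\mu_\omega$ is ergodic, $\mu_\omega^{[1]}=\mu_\omega\times\mu_\omega$, which settles this case.

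For the inductive step, assume $\mu^{[j]}=\int \mu_\omega^{[j]}\,d\lambda$. The key point is to identify the invariant $\sigma$-algebra $\mathcal I^{[j]}$ of $T_{[j]}$ on $(X^{[j]},\mu^{[j]})$. Let $\pi:X\to\Omega$ be the $\mathcal I(T)$-measurable map producing the decomposition. Then $\pi(x_{\omega'})$ is $\mu^{[j]}$-a.e.\ independent of $\omega'$, since every $\mu_\omega^{[j]}$ is concentrated on $\pi^{-1}\{\omega\}^{\{0,1\}^j}$. Hence the function $\pi\circ p_{\vec 0}$ is $T_{[j]}$-invariant, and the decomposition $\int \mu_\omega^{[j]}\,d\lambda$ is a decomposition into $T_{[j]}$-invariant measures. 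These measures need not be ergodic, so instead of ergodicity I would use the following conditional-expectation identity, which holds because the $\mu_\omega^{[j]}$ live on disjoint fibers:
\[
\E_{\mu^{[j]}}(F\mid\mathcal I^{[j]})(\mathbf x)=\E_{\mu^{[j]}_{\pi(x_{\vec0})}}(F\mid\mathcal I^{[j]}_{\pi(x_{\vec0})})(\mathbf x)\quad\text{a.e.}
\]
To prove it, first check that the pointwise ergodic averages of $F$ under $T_{[j]}$ converge a.e.\ for $\mu^{[j]}$. Then note that on each fiber these averages compute the fiberwise conditional expectation, because a $\mu^{[j]}$-null set is $\mu^{[j]}_\omega$-null for $\lambda$-a.e.\ $\omega$. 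Substituting this identity into the defining formula of the relatively independent joining gives
\[
\int F\otimes G\,d\mu^{[j+1]}=\int_\Omega\int \E_{\omega}(F\mid\mathcal I_\omega^{[j]})\,\E_\omega(G\mid\mathcal I_\omega^{[j]})\,d\mu_\omega^{[j]}\,d\lambda=\int_\Omega\int F\otimes G\,d\mu_\omega^{[j+1]}\,d\lambda.
\]
Products $F\otimes G$ span a dense set, so this completes the induction.

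The main obstacle is this identification of conditional expectations over $\mathcal I^{[j]}$ fiberwise. It requires some measurability care: the fiberwise conditional expectations must be chosen jointly measurable in $\omega$, and exceptional null sets must be handled uniformly in $\omega$. Using the pointwise ergodic theorem, which produces a canonical version of the conditional expectation, should resolve both issues cleanly. Alternatively, I could cite Host--Kra's own argument, since the paper attributes the result to \cite{HostKraBook}.
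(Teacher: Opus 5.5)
Your argument is correct. The paper gives no proof of this statement and simply quotes it from Host--Kra. What you supply is a self-contained proof, in fact of the stronger measure-level identity $\mu_X^{[j]}=\int_\Omega\mu_{\omega,X}^{[j]}\,d\lambda(\omega)$; the seminorm formula follows by integrating $\prod_{\epsilon}\mathcal C^{|\epsilon|}F(x_\epsilon)$.

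Two remarks on economy. First, the fibrewise identity indexed by $\pi(x_{\vec 0})$ is more than the induction needs. Let $F^\sharp$ be an everywhere-defined bounded Borel version of the pointwise limit of the Birkhoff averages of $F$ under $T_{[j]}$ (say via $\limsup$ of real and imaginary parts). The pointwise ergodic theorem makes $F^\sharp$ simultaneously a version of $\E_{\mu^{[j]}}(F\mid\mathcal I^{[j]})$ and of $\E_{\mu^{[j]}_\omega}(F\mid\mathcal I^{[j]}_\omega)$ for every $\omega$ with $\mu^{[j]}_\omega$ invariant. Hence
\[
\int F\otimes G\,d\mu^{[j+1]}=\int F^\sharp G^\sharp\,d\mu^{[j]}=\int_\Omega\int F^\sharp G^\sharp\,d\mu^{[j]}_\omega\,d\lambda(\omega)=\int_\Omega\int F\otimes G\,d\mu^{[j+1]}_\omega\,d\lambda(\omega).
\]
This uses no disjointness of fibres; it holds for any decomposition into invariant measures. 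Measurability of $\omega\mapsto\mu^{[j+1]}_\omega$ also propagates directly from that of $\omega\mapsto\mu^{[j]}_\omega$.

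Second, if only the seminorm identity is wanted, you can induct on the seminorms themselves. The case $m=1$ reads $\int|\E_\mu(F\mid\mathcal I(T))|^2\,d\mu=\int_\Omega\bigl|\int F\,d\mu_\omega\bigr|^2\,d\lambda(\omega)$. For the step, use the recursion $\|F\|_{U^{m+1}}^{2^{m+1}}=\lim_{N\to\infty}\frac1N\sum_{n=1}^N\|T^nF\cdot\overline F\|_{U^m}^{2^m}$, which is the Host--Kra identity the paper uses in the proof of \Cref{lem:one-dimensional-power-comparison}. It holds for $\mu$ and for every $\mu_\omega$, so you apply the inductive hypothesis to each $T^nF\cdot\overline F$ and pass the limit through $\int_\Omega$ by dominated convergence. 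That route avoids cube measures and disintegration altogether, while yours gives the finer statement about $\mu^{[j]}$.
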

	\subsection{Nilsystems and nilsequences}
	\label{subsec:nilpotent-preliminaries}
	
	Here, we use the standard nilmanifold and nilsequence terminology from
	\cite{HostKra,GreenTaoNilorbits}; for polynomial sequences adapted to
	filtrations, see \cite{LeibmanPolynomialSequences,GreenTaoNilorbits}.
	
	A {\em \(k\)-step nilmanifold} is a compact homogeneous space \(G/\Gamma\), where
	\(G\) is a \(k\)-step nilpotent Lie group and \(\Gamma\) is a discrete
	co-compact subgroup of $G$.
	
	A \emph{prefiltration} \(G_\bullet\) of degree at most \(k\) is a
	decreasing sequence of closed Lie groups
	\[
	G_0\geq G_1\geq\cdots\geq G_{k+1}=\{e\},
	[G_i,G_j]\subseteq G_{i+j},
	\]
	with \(G_m=\{e\}\) for \(m>k\).  It is a \emph{filtration} on a
	nilpotent Lie group \(G\) if \(G_0=G_1=G\).  If \(G/\Gamma\) is fixed, a
	closed subgroup \(H\leq G\) is called {\em \(\Gamma\)-rational} if
	\(H\cap\Gamma\) is co-compact in \(H\).  A filtration \(G_\bullet\) is
	called {\em \(\Gamma\)-rational} if every \(G_i\) is \(\Gamma\)-rational.
	
	For \(g:\mathbb Z\to G_0\), use the standard discrete derivative convention, 
	\[
	D_hg(n):=g(n)^{-1}g(n+h).
	\]
	For a prefiltration \(G_\bullet\), write
	\(G_{\bullet+1}:=(G_{i+1})_{i\geq0}\), which is again a
	prefiltration.  Following \cite[Definition~2.10]{EisnerZorinKranich},
	polynomiality is defined recursively in the degree of a prefiltration:
	for the 
	prefiltration with \(G_0=\{e\}\), the only
	polynomial sequence is the constant identity sequence; otherwise a map
	\(g:\mathbb Z\to G_0\) is \(G_\bullet\)-polynomial if, for every
	\(h\in\mathbb Z\), the derivative \(D_hg\) is
	\(G_{\bullet+1}\)-polynomial.  The set of all such maps is denoted by
	\(P(\mathbb Z,G_\bullet)\).
	
	A {\em \(k\)-step \(\Lambda\)-nilsystem} is a nilmanifold \(G/\Gamma\)
	equipped with the Haar measure and a group homomorphism \(\rho:\Lambda\to G\); each
	\(v\in\Lambda\) acts by \(x\Gamma\mapsto\rho(v)x\Gamma\).  
	
	A {\em basic polynomial nilsequence} is a sequence \(n\mapsto F(g(n)\Gamma)\), where
	\(G/\Gamma\) is a nilmanifold equipped with a \(\Gamma\)-rational filtration
	\(G_\bullet\), \(g\in P(\mathbb Z,G_\bullet)\), and
	\(F\in C(G/\Gamma)\).  A {\em nilsequence} is a uniform limit of basic polynomial
	nilsequences. By \cite[Chapter 14, Corollary 16]{HostKraBook}, any nilsequence admits the Ces\`aro mean along any Følner sequence.
	\subsection{Eisner--Zorin-Kranich estimate}
	\label{subsec:ezk-estimate}
	
	We shall use \cite[Theorem~4.1]{EisnerZorinKranich}.  The statement in
	that paper is formulated using a presentation of the nilmanifold for which
	connected component of the identity of the acting nilpotent Lie group is simply
	connected (see \cite[The fourth paragraph of Section 1]{EisnerZorinKranich}). We will keep this convention in the quoted theorem.  
	
	Whenever a {\em \(C^K\)-norm} is used on a nilmanifold \(M\), we fix a
	smooth Riemannian metric $g$ on \(M\) and set
	\[
	\|\phi\|_{C^K(M)}
	=
	\sum_{j=0}^{K}\sup_{y\in M}|\nabla^j\phi(y)|_{g},
	\]
	where \(\nabla^j\) denotes the \(j\)-th covariant derivative.
	
	A filtered nilmanifold \((G/\Gamma,G_\bullet)\) is called
	\emph{Eisner--Zorin-Kranich admissible} if it satisfies the following: (1) the connect component of the identity  \(G^\circ\) is simply connected;
	(2) \(G_\bullet\) is {\(\Gamma\)-rational}; (3)
	a Mal'cev basis
	\(\mathcal M:=\{X_1,\dots,X_d\}\) for
	\(G^\circ/(\Gamma\cap G^\circ)\) adapted to
	\(G^{\circ}_{\bullet}\), has been fixed (see \cite[Definition 2.15]{EisnerZorinKranich}).  
	
	Fix an Eisner--Zorin-Kranich admissible filtered nilmanifold $(G/\Gamma,G_\bullet)$. Then we get a  Mal'cev basis $\{X_1,\dots,X_d\}$ for the Lie algebra of $G$. Identify $X_i$ with their extensions to right vector fields on $G/\Gamma$. For
	\(j\in\mathbb N\) and \(1\leq p<\infty\), the {\em Sobolev norm}
	is defined by 
	\[
	\|F\|_{W^{j,p}(G/\Gamma)}^p
	=
	\sum_{a=0}^{j}
	\sum_{b_1,\dots,b_a=1}^{d}
	\|X_{b_1}\cdots X_{b_a}F\|_{L^p(G/\Gamma)}^p .
	\]
	
	A point \(x\) in an ergodic $\Z$-system \((X,\mu,T)\) is called {\em fully generic for
		\(f\in L^\infty(\mu)\) with respect to the  Følner sequence \(\Phi\)} if it is generic, along
	\(\Phi\), for every function in the (separable) \(T\)-invariant subalgebra
	generated by \(f\).
	
	\begin{theorem}[Eisner--Zorin-Kranich, {\cite[Theorem~4.1]{EisnerZorinKranich}}]
		\label{thm:ezk-original}
		Let \((X,\mu,T)\) be an ergodic $\Z$-system.  Let
		\(f\in L^\infty(\mu)\), and let \(x\) be fully generic for \(f\) with respect
		to a tempered Følner sequence \(\Phi\) in \(\mathbb Z\).  Then, for every
		\(\ell\in\mathbb N\) and all \(\varepsilon>0\), there exists \(N_0\) such
		that the following holds for all \(N\geq N_0\).
		
		For all Eisner--Zorin-Kranich admissible filtered nilmanifold \((G/\Gamma,G_\bullet)\) of
		filtration length \(\ell\), all \(F\in C^\infty(G/\Gamma)\), and all
		\(g\in P(\mathbb Z,G_\bullet)\),
		\[
		\left|
		\frac1{|\Phi_N|}
		\sum_{n\in\Phi_N}
		f(T^n x)F(g(n)\Gamma)
		\right|
		\lesssim_{G/\Gamma, G_\bullet}
		\|F\|_{W^{\kappa,2^\ell}(G/\Gamma)}
		\bigl(\|f\|_{U^{\ell+1}(X,\mu,T)}+\varepsilon\bigr),
		\]
		where
		\[
		\kappa
		=
		\sum_{r=1}^{\ell}
		(d_r-d_{r+1})\binom{\ell}{r-1},
		d_r=\dim G_r .
		\]
		The implied constant depends only on \(G/\Gamma\), \(G_\bullet\), and on
		the (fixed) Mal'cev basis implicit in the definition of Eisner--Zorin-Kranich admissibility.
	\end{theorem}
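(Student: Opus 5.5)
The plan is to prove the estimate by induction on the filtration length \(\ell\). The tools are a van der Corput differencing step and a Fourier decomposition of \(F\) along the last nontrivial filtration group \(G_\ell\). Since \(G_\ell\) is central and \(\Gamma\)-rational, \(T_\ell:=G_\ell/(G_\ell\cap\Gamma)\) is a torus. The ambiguous point is what "length \(\ell\)" means, i.e.\ whether \(G_{\ell+1}=\{e\}\); I take \(G_{\ell+1}=\{e\}\), which is consistent with the appearance of \(U^{\ell+1}\) and with the sum defining \(\kappa\) running over \(r\le\ell\).

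\emph{Base case \(\ell=1\).} Here \(G_1/\Gamma\) is essentially a compact abelian group on which \(g\) acts linearly, so \(F(g(n)\Gamma)=\sum_\chi \widehat F(\chi)\chi(g(0))e(n\alpha_\chi)\). The key input is the uniform Wiener--Wintner bound
\[
\limsup_N\sup_\alpha\Bigl|\tfrac1{|\Phi_N|}\sum_{n\in\Phi_N}f(T^nx)e(n\alpha)\Bigr|\lesssim\|f\|_{U^2}.
\]
I would obtain it by van der Corput and full genericity of \(x\), which identify the correlations \(\frac1{|\Phi_N|}\sum_n f(T^{n+h}x)\overline f(T^nx)\) with \(\int T^hf\,\overline f\,d\mu\) for each of finitely many \(h\le H\). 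Taking \(H=H(\varepsilon)\) makes the choice of \(N_0\) independent of \(\alpha\) and \(g\). Summing over \(\chi\) by Cauchy--Schwarz, with weights \((1+|\chi|)^{\pm s}\), bounds \(\sum_\chi|\widehat F(\chi)|\) by a Sobolev norm with \(d_1-d_2\) derivatives, which is exactly \(\kappa\) when \(\ell=1\).

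\emph{Inductive step.} Write \(F=\sum_{\xi}F_\xi\) with \(F_\xi(ux)=\xi(u)F_\xi(x)\) for \(u\in G_\ell\); this is the vertical Fourier series on the torus \(T_\ell\). For each \(\xi\), apply van der Corput to \(n\mapsto f(T^nx)F_\xi(g(n)\Gamma)\). The differenced expression is
\[
f_h(T^nx)\,\widetilde F_\xi\bigl(g^{(h)}(n)\widetilde\Gamma\bigr),\qquad f_h:=T^hf\cdot\overline f .
\]
Here \(g^{(h)}(n)=(g(n+h),g(n))\) lives in the relative square \(G\times_{G_2}G\) (or \(G^2\) quotiented by the diagonal of \(G_\ell\)), and \(\widetilde F_\xi=F_\xi\otimes\overline{F_\xi}\). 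Because the vertical characters cancel, \(\widetilde F_\xi\) is invariant under the diagonal action of \(G_\ell\), so it descends to a filtered nilmanifold of length \(\ell-1\). Its Sobolev norms are controlled by those of \(F\), at the cost of the extra derivatives that produce the binomial count in \(\kappa\). Full genericity of \(x\) for \(f\) gives genericity for every \(f_h\), so the inductive hypothesis applies for each \(h\le H\) and yields a bound \(\|f_h\|_{U^\ell}+\varepsilon\). Averaging over \(h\), using Cauchy--Schwarz and the Host--Kra identity \(\lim_H\frac1H\sum_h\|T^hf\overline f\|_{U^\ell}^{2^\ell}=\|f\|_{U^{\ell+1}}^{2^{\ell+1}}\) (as in \Cref{lem:one-dimensional-power-comparison}), gives \(\lesssim\|F_\xi\|\,(\|f\|_{U^{\ell+1}}+\varepsilon)\). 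The sum over \(\xi\) is then controlled by vertical Sobolev regularity, exactly as in the base case but in the \(d_\ell\) central directions.

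\emph{Main obstacle: uniformity.} The threshold \(N_0\) must be independent of the nilmanifold, of \(F\), and of \(g\), and only the implied constant may depend on \(G/\Gamma\). I would secure this by making all choices of \(N_0\) depend only on finitely many correlations \(\frac1{|\Phi_N|}\sum_n\prod_{\omega}T^{h\cdot\omega}f(T^nx)\) with \(h\in[1,H]^{\ell}\), where \(H\) depends on \(\varepsilon\) and \(\ell\). The nilmanifold enters only through the final Cauchy--Schwarz and Sobolev constants. The tempered Følner hypothesis is needed so that the shifted sums over \(\Phi_N+h\) stay close to the sums over \(\Phi_N\) and the genericity limits exist. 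Alternatively, one can first reduce to connected, simply connected \(G\) by passing to a finite-index lift of \(\Gamma\); this is why Eisner--Zorin-Kranich admissibility is assumed.
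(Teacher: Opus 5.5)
The paper gives no proof of this statement; it quotes it from Eisner--Zorin-Kranich. Your outline has the right architecture, which is the scheme that produces the stated $\kappa$: induction on the length, van der Corput, vertical Fourier series on the central group $G_\ell$, a Green--Tao relative square, and $\lim_H\frac1H\sum_{h\le H}\|T^hf\,\overline f\|_{U^\ell}^{2^\ell}=\|f\|_{U^{\ell+1}}^{2^{\ell+1}}$.

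\emph{The main gap: the length-reduction step is wrong as written.} The sequence $n\mapsto(g(n+h),g(n))$ does not take values in $G\times_{G_2}G$. Modulo $G_2$ one has $g(n)G_2=g(0)a^nG_2$, so $g(n)^{-1}g(n+h)\equiv a^h$, which is not in $G_2$ in general. Already for $\ell=1$ and $g(n)=a^n$ the relative square is $\Delta(G)$ and the pair lies outside it. Your fallback, $G^2$ modulo $\Delta(G_\ell)$ with the product filtration, does not lower the length at all: $(G_\ell\times G_\ell)/\Delta(G_\ell)\cong G_\ell$, and $F_\xi\otimes\overline{F_\xi}$ still carries the nontrivial vertical character $\xi$ there.

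The missing idea is to split off a horizontal constant and reduce it modulo $\Gamma$. Write $a^h=b_h\gamma_h$ with $b_h$ in a fixed compact fundamental domain and $\gamma_h\in\Gamma$, and put $g_h'(n)=b_h^{-1}g(n+h)\gamma_h^{-1}$. Then check that $n\mapsto(g_h'(n),g(n))$ is polynomial for the filtration $G_i\times_{G_{i+1}}G_i$. Its $\ell$-th term is exactly $\Delta(G_\ell)$, so the length drops after quotienting. Finally use $\widetilde F_{\xi,h}(x,y)=F_\xi(b_hx)\overline{F_\xi(y)}$.

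The boundedness of $b_h$ is essential. Left translation by $c$ replaces each Mal'cev vector field $X$ by $\operatorname{Ad}_cX$, so $\|F_\xi(c\,\cdot)\|_{W^{k,p}}$ grows with $c$. With $c=a^h$ depending on $g$ and on $h\le H(\varepsilon)$, you would lose exactly the uniformity in $g$, and the $\varepsilon$-independence of the implied constant, that the statement asserts.

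A related correction: passing from $F_\xi$ to $\widetilde F_{\xi,h}$ costs no derivatives, only Cauchy--Schwarz, because Haar measure on the relative square projects to Haar measure in each coordinate. The binomial count in $\kappa$ comes from the vertical sums instead. If $\widetilde G$ is the relative square modulo $\Delta(G_\ell)$, then $\dim\widetilde G_r-\dim\widetilde G_{r+1}=d_r-d_{r+2}$. Pascal's rule then gives $\kappa(G_\bullet,\ell)=\kappa(\widetilde G_\bullet,\ell-1)+d_\ell$, i.e.\ exactly $d_\ell$ derivatives per vertical decomposition.

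\emph{Two further points must be supplied.}

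First, the vertical summation is not ``exactly as in the base case.'' You need $\sum_\xi\|F_\xi\|_{W^{\kappa',2^\ell}}\lesssim\|F\|_{W^{\kappa'+d_\ell,2^\ell}}$ with $L^{2^\ell}$ norms, where Plancherel is unavailable. Plain integration by parts needs more than $d_\ell$ vertical derivatives, since $\sum_{\xi\in\mathbb Z^{d}}(1+|\xi|)^{-d}=\infty$, and would overshoot $\kappa$. Hausdorff--Young on each fibre of $G_\ell/(G_\ell\cap\Gamma)$, combined with Minkowski's inequality, works with any $s>d_\ell/2^\ell$ vertical derivatives. The other Mal'cev derivatives commute with $F\mapsto F_\xi$ because $G_\ell$ is central.

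Second, the inductive hypothesis applies only to admissible nilmanifolds, and $(G\times_{G_2}G)/\Delta(G_\ell)$ need not be one. For example, take $G=\mathbb R^3\rtimes\mathbb Z$ with a unipotent action and $G_2=\mathbb Re_1+\mathbb Ze_2$. Then $G_\ell\cap G^\circ$ is disconnected and the quotient's identity component is $\mathbb R^2\times(\mathbb R/\mathbb Z)$, which is not simply connected. You must re-present the quotient through a covering, as in \Cref{lem:simply-connected-algebraic-presentation}, and lift the filtration and the sequence. A finite-index lift of $\Gamma$ cannot make $G$ connected, since $G/G^\circ$ may be infinite.

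Also, temperedness of $\Phi$ is not what controls the shifted sums; the F\o lner property does. Temperedness only guarantees fully generic points, which are assumed here.
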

	Next, we apply \Cref{thm:ezk-original} to finitely many nilmanifolds.
	\begin{corollary}
		\label{cor:ezk-finite-Ck-family}
		Fix \(s\geq1\).  Let
		$
		\mathscr N:=
		\{(G_i/\Gamma_i,G_{i,\bullet},\mathcal M_i):1\leq i\leq L\}
		$
		be a finite family of Eisner--Zorin-Kranich admissible filtered nilmanifolds of degree at
		most \(s\).  Put
		$$
		\kappa_i(s)
		=
		\sum_{r=1}^{s}
		(d_{i,r}-d_{i,r+1})\binom{s}{r-1},
		d_{i,r}=\dim G_{i,r},
		$$
		where the filtration is extended by \(G_{i,r}=\{e\}\) for \(r>s\).  Let
		\(K\geq\max_i\kappa_i(s)\) and \(B>0\).  Then there exists
		\(C>0\), depending only on \(\mathscr N,s,B\), and \(K\), such
		that the following holds.
		
		Let \((X,\mu,T)\) be an ergodic $\Z$-system,
		\(f\in L^\infty(\mu)\), and \(x\) be fully generic for \(f\) along
		\(\{1,\dots,N\}\).  If
		$$
		b(n)=\sum_{i=1}^{L}F_i(g_i(n)\Gamma_i),$$ where $ 
		g_i\in P(\mathbb Z,G_{i,\bullet}),
		F_i\in C^\infty(G_i/\Gamma_i)\ \text{with}\ 
		\|F_i\|_{C^{K}(G_i/\Gamma_i)}\leq B,1\leq i\leq L,$
		then
		\[
		\limsup_{N\to\infty}
		\left|
		\frac1N\sum_{n=1}^{N} f(T^n x)b(n)
		\right|
		\leq
		C\|f\|_{U^{s+1}(X,\mu,T)} .
		\]
	\end{corollary}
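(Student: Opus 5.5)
The plan is to apply \Cref{thm:ezk-original} once for each nilmanifold in $\mathscr N$, with the filtration length fixed at $\ell=s$. The resulting bounds are then summed using the triangle inequality. Two details need care. First, the Sobolev norm must be controlled by the $C^K$-norm. Second, the $\varepsilon$ term must be removed by letting $N\to\infty$. Filtrations of degree less than $s$ are padded with trivial groups, $G_{i,r}=\{e\}$ for $r>\deg$. This keeps each family member Eisner--Zorin-Kranich admissible with filtration length $s$, and it gives $\kappa=\kappa_i(s)$ in the theorem.

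The steps are as follows.

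\emph{Step 1.} Fix $i$, and apply \Cref{thm:ezk-original} with $\Phi_N=\{1,\dots,N\}$. This sequence is tempered, and $x$ is fully generic along it. We get: for every $\varepsilon>0$ there is $N_0=N_0(\varepsilon,i)$ such that, for all $N\ge N_0$, all $F\in C^\infty(G_i/\Gamma_i)$ and all $g\in P(\mathbb Z,G_{i,\bullet})$,
\[
\Bigl|\frac1N\sum_{n=1}^N f(T^nx)F(g(n)\Gamma_i)\Bigr|\le A_i\|F\|_{W^{\kappa_i(s),2^s}}\bigl(\|f\|_{U^{s+1}}+\varepsilon\bigr).
\]
Here $A_i$ depends only on $(G_i/\Gamma_i,G_{i,\bullet},\mathcal M_i)$.

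\emph{Step 2.} We compare norms:
\[
\|F\|_{W^{\kappa_i(s),2^s}}\le D_i\|F\|_{C^{\kappa_i(s)}}\le D_i\|F\|_{C^K},
\]
using $K\ge\kappa_i(s)$. The first inequality holds because $G_i/\Gamma_i$ is compact with Haar probability measure, so $L^{2^s}$-norms are at most sup-norms. Moreover, each $X_{b_1}\cdots X_{b_a}F$ with $a\le\kappa_i(s)$ is a differential operator of order $a$ with smooth coefficients. Expressed via covariant derivatives of the fixed metric, its sup-norm is bounded by a constant times $\sum_{j\le a}\sup|\nabla^jF|_g$. This constant is finite by compactness, and depends only on the Mal'cev basis and the metric. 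If $G_i$ is disconnected, we work on each of the finitely many connected components of $G_i/\Gamma_i$ separately. I expect this comparison to be the only nonformal point. It is routine, but it has to be stated carefully so that $D_i$ depends only on $\mathscr N$ and $K$.

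\emph{Step 3.} For $b=\sum_iF_i(g_i(\cdot)\Gamma_i)$ with $\|F_i\|_{C^K}\le B$, the triangle inequality gives, for $N\ge\max_iN_0(\varepsilon,i)$,
\[
\Bigl|\frac1N\sum_{n\le N}f(T^nx)b(n)\Bigr|\le B\sum_{i=1}^LA_iD_i\bigl(\|f\|_{U^{s+1}}+\varepsilon\bigr).
\]
Take $\limsup_{N}$ and then let $\varepsilon\to0$. This gives the claim with $C=B\sum_iA_iD_i$, which depends only on $\mathscr N$, $s$, $B$, $K$.

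Uniformity in $g_i$ and $F_i$ is part of the quoted theorem, so $N_0$ does not depend on $b$. If $b$ is fixed, this uniformity is not even needed.
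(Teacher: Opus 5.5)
Your proposal is correct and follows the paper's proof essentially verbatim. Both pad each filtration to length $s$ and apply \Cref{thm:ezk-original} with $\ell=s$ to each term, so that the Sobolev order is $\kappa_i(s)$. Both then bound $\|F_i\|_{W^{\kappa_i(s),2^s}}\le D_i\|F_i\|_{C^K}\le D_iB$ using $K\ge\kappa_i(s)$, sum over $i$, and let $\varepsilon\downarrow0$. Your extra remarks are accurate: that $\{1,\dots,N\}$ is tempered, the explicit justification of the Sobolev-to-$C^K$ comparison, and that $N_0$ need not be uniform in $b$. They make explicit what the paper leaves implicit.
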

	
	\begin{proof}
		First pad every filtration of degree smaller than \(s\) by identity groups,
		so that all elements of $\mathscr N$ are regarded as filtered nilmanifolds of common length
		\(s\).  Apply \Cref{thm:ezk-original} with \(\ell=s\) to each term
		\(F_i(g_i(n)\Gamma_i)\).  The Sobolev order for the \(i\)-th term is
		\(\kappa_i(s)\).  Since \(K\geq\kappa_i(s)\), by the definitions of \(C^K\)-norm and Sobolev norm, there is $D_i>0$ such that 
		\[
		\|F_i\|_{W^{\kappa_i(s),2^s}(G_i/\Gamma_i)}
		\leq
		D_i\|F_i\|_{C^{K}(G_i/\Gamma_i)}
		\leq D_iB,
		\]
		where \(D_i\) depends only on the $i$-th Eisner--Zorin-Kranich admissible filtered nilmanifold and \(K,s\).  Summing the resulting estimates over
		\(i=1,\ldots,L\), and then letting \(\varepsilon\downarrow0\), finishes the
		proof.
	\end{proof}
	\subsection{Bundles of nilsystems}
	We need the bundle of nilmanifolds to deal with the non-ergodic cases. First, let us introduce the notion.
	\begin{definition}[{\cite[Definition~2.2]{JamneshanMachado2026}}]
		\label{def:jm-bundle-nilsystems}
		Let \((\Omega,\mathcal B,\beta)\) be a standard probability space.
		Choose:
		\begin{itemize}
			\item[(1)] a measurable partition \(\{\Omega_j\}_{j\ge0}\) of \(\Omega\);
			\item[(2)] compactly generated nilpotent Lie groups $N_j$ with discrete co-compact
			subgroups \(\Delta_j\leq N_j\) for all $j\ge 0$;
			\item[(3)]  measurable maps
			$
			\varphi_j:\Omega_j\to \operatorname{Hom}(\mathbb Z^r,N_j)
			$
			such that for every \(j\ge0\) and \(\beta\)-a.e. \(\omega\in\Omega_j\), the
			\(\mathbb Z^r\)-action, induced by
			\(\varphi_j(\omega)\), on \((N_j/\Delta_j,m_{N_j/\Delta_j})\)  is ergodic.
		\end{itemize}
		The space 
		$
		\displaystyle Y=\bigsqcup_{j\ge0}\Omega_j\times N_j/\Delta_j$ equipped with the probability measure
		$\displaystyle\nu=\sum_{j\ge0}\beta|_{\Omega_j}\otimes m_{N_j/\Delta_j},
		$
		and the action defined for all $j\ge 0,t\in \Z^r$ and $(w,y)\in \Omega_j\times N_j/\Delta_j$ by
		$$
		t\cdot(\omega,y)=(\omega,\varphi_j(\omega)(t)\cdot y)$$
		is called {\em a bundle of nilsystems with (the base space \(\Omega\))}, and is denoted by
		$
		\displaystyle \bigsqcup_{j\ge0}\Omega_j\ltimes_{\varphi_j} N_j/\Delta_j .
		$
		It has step at most \(k\) if every \(N_j\) is nilpotent of step at most
		\(k\).  The projection \(Y\to\Omega\) is called the {\em bundle projection}.
	\end{definition}
	
	Fix $\Lambda\le \Z^r$. A {\em \(k\)-step bundle nilfactor} of a \(\Lambda\)-system is a factor that is
	isomorphic to a bundle of \(k\)-step \(\Lambda\)-nilsystems. 
	A {\em \(k\)-step bundle pronilfactor} is an
	inverse limit of \(k\)-step bundle nilfactors.
	%
	
	Next, we introduce a structure theorem.
	\begin{theorem}[Jamneshan--Machado, {\cite[Theorem~1.2]{JamneshanMachado2026}}]
		\label{thm:jm-structure}
		Fix a not necessarily ergodic \((X,\X,\mu,(T^v)_{v\in \Z^r})\). Let $k\ge 1$. Then there exists an inverse
		system of factors
		\[
		Y_1\longleftarrow Y_2\longleftarrow\cdots\longleftarrow X
		\]
		such that:
		\begin{enumerate}
			\item each \(Y_n\) is a bundle of \(k\)-step nilsystems over the
			invariant factor \(\Omega\), and the bundle projection \(Y_n\to\Omega\)
			agrees with the projection on the invariant factor;
			\item \(Z_k(X,\X,\mu,(T^v)_{v\in \Z^r})\) is (measure-theoretically) isomorphic to the inverse
			limit of the \(Y_n\).
		\end{enumerate}
	\end{theorem}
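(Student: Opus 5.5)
The plan is to reduce \Cref{thm:jm-structure} to the ergodic structure theorem, applied fiber by fiber over the ergodic decomposition, and then make every choice measurable in the fiber parameter.

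Let \(\mu=\int_\Omega\mu_\omega\,d\beta(\omega)\) be the ergodic decomposition of \(\mu\) with respect to \((T^v)_{v\in\mathbb Z^r}\), as in \Cref{subsec:measure-disintegration}. Here \(\Omega\) is a model of \(\mathcal I((T^v)_{v\in\mathbb Z^r})\).

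\emph{Step 1: the factor \(Z_k\) localizes.} I will show that \(Z_k(X,\mathcal X,\mu,(T^v))\) contains \(\mathcal I((T^v))\). I will also show that its conditional measures over \(\Omega\) realize \(Z_k(X,\mathcal X,\mu_\omega,(T^v))\) for \(\beta\)-a.e. \(\omega\). This is the \(\mathbb Z^r\)-analogue of \Cref{lem:HK-seminorm-ergodic-decomposition}. The cube measures \(\mu^{[j]}_X\) are built by relatively independent joinings over invariant \(\sigma\)-algebras, and the first of these is \(\mathcal I((T^v))\) itself. It follows that \(\mu^{[k+1]}_X=\int_\Omega\mu_\omega^{[k+1]}\,d\beta\), and hence \(\|f\|_{U^{k+1}(\mu)}^{2^{k+1}}=\int\|f\|_{U^{k+1}(\mu_\omega)}^{2^{k+1}}\,d\beta\). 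Therefore \(f\) is orthogonal to \(Z_k(\mu)\) if and only if, for a.e. \(\omega\), \(f\) is orthogonal to \(Z_k(\mu_\omega)\).

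\emph{Step 2: the ergodic fibers.} For ergodic \(\mathbb Z^r\)-systems I take as known the Host--Kra structure theorem in its \(\mathbb Z^r\) form (Griesmer, or the nilspace approach of Candela--Szegedy). It says that \(Z_k(\mu_\omega)\) is an inverse limit of ergodic \(k\)-step \(\mathbb Z^r\)-nilsystems \(N/\Delta\) with rotation \(\varphi(\omega)\in\operatorname{Hom}(\mathbb Z^r,N)\). Up to isomorphism there are only countably many pairs \((N,\Delta)\) with \(N\) a compactly generated \(k\)-step nilpotent Lie group and \(\Delta\) a cocompact lattice. This holds because \(\Delta\) is a finitely generated nilpotent group, and \(N\) is determined by \(\Delta\) together with the countable data of the finite component group. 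Enumerating these pairs as \((N_j,\Delta_j)\) is what yields the partition \(\{\Omega_j\}\).

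\emph{Step 3: measurable assembly, which is the main obstacle.} The difficulty is choosing the fiberwise inverse systems and isomorphisms measurably in \(\omega\), in such a way that they glue into global factor maps \(X\to Y_n\) that are compatible with one another.

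My first attempt works at the level of the Host--Kra tower. Write \(Z_j\) as an extension \(Z_{j-1}\times_{\sigma_j}U_j\) by a measurable family of compact abelian groups \(U_j(\omega)\) and cocycles \(\sigma_j\); this description is valid relative to \(\Omega\). Fix a countable dense set \(\{f_m\}\subset L^\infty(Z_k)\). Let \(Y_n\) be the factor that, on each fiber, is generated by the maximal Lie quotients needed to approximate \(f_1,\dots,f_n\) to within \(1/n\).

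The fiberwise statement "\(Y_n(\omega)\) is isomorphic to \(N_j/\Delta_j\) with rotation \(\varphi\)" defines a Borel relation on \(\Omega\times\bigsqcup_j\operatorname{Hom}(\mathbb Z^r,N_j)\). It has nonempty \(\sigma\)-compact sections. Applying the Kuratowski--Ryll-Nardzewski (or Jankov--von Neumann) selection theorem then gives measurable maps \(\varphi_j\) on measurable sets \(\Omega_j\). One must also arrange that the sets are nested in \(n\) and that the maps satisfy \(\pi_{n,n+1}\circ p_{n+1}=p_n\); I will enforce this by making the selections inductively, conditioned on the previous stage.

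Finally, each \(Y_n\) is a bundle of nilsystems whose bundle projection is the projection to \(\Omega\), by construction. Their join is all of \(Z_k\), because \(\{f_m\}\) is dense. This gives the isomorphism in part (2).
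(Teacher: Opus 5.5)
The paper does not prove this statement. It imports it from Jamneshan--Machado, and what separates their theorem from the ergodic structure theorem is exactly the measurable assembly that you defer to Step~3.

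Your Steps~1 and~2 are the right inputs. The cube measures do localize, $\mu_X^{[k+1]}=\int_\Omega\mu_\omega^{[k+1]}\,d\beta(\omega)$, and the ergodic $\mathbb Z^r$ structure theorem is what one applies on each component. For the countability of the pairs $(N_j,\Delta_j)$, first pass to presentations with $N_j^\circ$ simply connected, as in \Cref{lem:simply-connected-algebraic-presentation}; otherwise $\Delta$ does not determine $N$ (e.g.\ $\Delta$ trivial and $N$ a torus). Step~3, however, describes what has to be done rather than doing it, and it has three concrete gaps.

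First, $Y_n$ is never actually defined. An ergodic system has no canonical nilsystem factor ``generated by the maximal Lie quotients needed to approximate $f_1,\dots,f_n$''; such factors exist but are far from unique. Lie quotients of the individual groups $U_j(\omega)$ in a tower $Z_{j-1}\times_{\sigma_j}U_j$ do not by themselves give a nilsystem factor, because they must be taken coherently through all the higher cocycles. Moreover, a measurable field $(U_j(\omega),\sigma_j)$ over $\Omega$ is itself a non-trivial relative structure result that you are assuming. So a choice must be made for each $\omega$, and making that choice measurably is the theorem.

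Second, a measurable selection of $\varphi_j(\omega)\in\operatorname{Hom}(\mathbb Z^r,N_j)$ produces only the abstract bundle $\bigsqcup_j\Omega_j\ltimes_{\varphi_j}N_j/\Delta_j$, not a factor map $X\to Y_n$. For that you need a jointly measurable choice $(\omega,x)\mapsto\psi_\omega(x)$ of equivariant measure-preserving maps $(X,\mu_\omega)\to(N_j/\Delta_j,m_{N_j/\Delta_j},\varphi_j(\omega))$ generating the chosen factor. These maps are highly non-unique: you can compose with any automorphism of the nilsystem. Selecting them is a problem in a space of maps whose domain measure $\mu_\omega$ varies with $\omega$, and a selection theorem over $\Omega\times\operatorname{Hom}(\mathbb Z^r,N_j)$ says nothing about it. You also do not justify that your relation is Borel, nor that it has $\sigma$-compact sections. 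Relations defined by the existence of an isomorphism are a priori only analytic.

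Third, you need the compatibility $\pi_{n,n+1}\circ p_{n+1}=p_n$, with connecting maps measurable in $\omega$ and fibrewise factor maps of nilsystems. This requires showing that the relation constrained by the stage-$n$ data still has non-empty sections; that uses the fact that the nilsystem factors of an ergodic system form a directed family. It also requires the relation to remain analytic in a standard Borel parametrization that includes the connecting maps. ``Making the selections inductively, conditioned on the previous stage'' restates this requirement without meeting it.
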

	
		
		In the rest of this subsection, we introduce the notion of tame
		nilfunctions as follows.
		\begin{definition}
			\label{def:tame-finite-stage-nilfunction}
			Fix \((X,\mathcal X,\mu,(T^v)_{v\in \Z^d})\). Let 
			\(\Lambda\leq\mathbb Z^d\), and let \(k\geq1\).  A function
			\(\phi\in L^\infty(\mu)\) is called a \emph{tame \(k\)-step
				nilfunction for \(T|_\Lambda\)} if there are a factor $Y$, which is  a bundle of \(k\)-step
			\(\Lambda\)-nilsystems, associated with the factor map
			\[
			\pi:X\to Y:=\bigsqcup_{j\ge0}B_j\times M_j,
			M_j=N_j/\Delta_j,
			\]
			and \(\Phi\in L^\infty(Y)\) such that \(\phi=\Phi\circ\pi\), and \(\Phi\) is supported on finitely many bundle pieces, and is smooth along
			the nilmanifold fibres with uniform fibrewise bounds: for every \(K\geq0\),
			\[
			\sup_{j}\operatorname*{ess\,sup}_{b\in B_j}
			\|\Phi(b,\cdot)\|_{C^K(M_j)}<\infty,
			\]
			where the supremum is over the finitely many pieces on which \(\Phi\) is
			non-zero.  When \(\Lambda=\mathbb Z^d\), we simply say that \(\phi\) is a tame
			\(k\)-step nilfunction.
		\end{definition}
		
		\section{Proving \Cref{cor:fixed-directions} assuming \Cref{thm:main-decoupling}}
		\label{sec:corners}
		In this section, we use \Cref{thm:main-decoupling} to imply \Cref{cor:fixed-directions}. The following standard correspondence and recurrence theorems are used to search for the targeted corner configurations.
		\begin{theorem}[Furstenberg's correspondence principle, {\cite[Chapter~7]{Furstenberg1981}}]
			\label{thm:correspondence}
			Fix $\ell \ge 1$. Let $A\subset\Z^\ell$ satisfy $\bd(A)>0$.  Then there exist a standard probability
			space $(X,\cX,\mu)$, commuting invertible measure-preserving transformations
			$T_1,\ldots,T_\ell$, and a set $B\in\cX$ with $\mu(B)=\bd(A)$ such that, for
			every finite $F\subset\Z^\ell$,
			\begin{equation}\label{furs_corr_pro1981}
				\bd\Bigl(\bigcap_{v\in F}(A-v)\Bigr)\geq
				\mu\Bigl(\bigcap_{v=(v_1,\ldots,v_\ell)\in F}
				T_1^{-v_1}\cdots T_\ell^{-v_\ell}B\Bigr).
			\end{equation}
		\end{theorem}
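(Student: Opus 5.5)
We follow Furstenberg's symbolic construction. Put $X=\{0,1\}^{\Z^\ell}$ with the product topology; it is a compact metrizable space, hence standard Borel. For $1\le i\le \ell$ let $T_i$ be the shift $(T_ix)_u=x_{u+e_i}$. These are commuting homeomorphisms, and for $v\in\Z^\ell$ we write $T^v=T_1^{v_1}\cdots T_\ell^{v_\ell}$, so that $(T^vx)_u=x_{u+v}$. Let $\omega=\one_A\in X$ and $B=\{x\in X: x_0=1\}$, which is clopen. Then $T^{-v}B=\{x: x_v=1\}$. Hence for finite $F\subset\Z^\ell$ the set $\bigcap_{v\in F}T^{-v}B=\{x: x_v=1\ \forall v\in F\}$ is a clopen cylinder. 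Moreover $T^t\omega$ lies in it if and only if $t+v\in A$ for all $v\in F$, that is, if and only if $t\in\bigcap_{v\in F}(A-v)$.

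The first step is to produce a single Følner sequence $\Phi$ along which $|A\cap\Phi_N|/|\Phi_N|\to\bd(A)$. The supremum in the definition of $\bd$ need not be attained by one sequence, so we use a diagonal argument. For each $k$ choose a Følner sequence $\Phi^{(k)}$ with $\limsup_N |A\cap\Phi^{(k)}_N|/|\Phi^{(k)}_N|>\bd(A)-1/k$. Then pick one set $\Phi_k:=\Phi^{(k)}_{N_k}$ satisfying two conditions: $|A\cap \Phi_k|/|\Phi_k|>\bd(A)-1/k$, and $|(\Phi_k+e_i)\triangle\Phi_k|<|\Phi_k|/k$ for all $i$. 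This is possible because the Følner condition holds for all large $N$, while the density condition holds for infinitely many $N$. The resulting $(\Phi_k)$ is Følner, and its density limsup is at least $\bd(A)-o(1)$, hence exactly $\bd(A)$. Passing to a subsequence, the densities converge to $\bd(A)$.

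The second step defines $\mu_N=\frac1{|\Phi_N|}\sum_{t\in\Phi_N}\delta_{T^t\omega}$ and takes a weak$^*$ limit point $\mu$ along a further subsequence, using compactness of the space of probability measures on $X$. For continuous $f$, $|\int f\circ T_i\,d\mu_N-\int f\,d\mu_N|\le 2\|f\|_\infty|(\Phi_N+e_i)\triangle\Phi_N|/|\Phi_N|\to0$, so $\mu$ is invariant under every $T_i$. Since indicators of clopen sets are continuous, $\mu(B)=\lim \mu_N(B)=\lim|A\cap\Phi_N|/|\Phi_N|=\bd(A)$. Likewise, for finite $F$,
\[
\mu\Bigl(\bigcap_{v\in F}T^{-v}B\Bigr)=\lim_{N}\frac{|\Phi_N\cap\bigcap_{v\in F}(A-v)|}{|\Phi_N|}\le \bd\Bigl(\bigcap_{v\in F}(A-v)\Bigr),
\]
where the limit is taken along the chosen subsequence, which is itself a Følner sequence. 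This is exactly \eqref{furs_corr_pro1981}.

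The only genuinely delicate point is the first step, namely realizing $\bd(A)$ as the limiting density along a single Følner sequence. This is what gives the equality $\mu(B)=\bd(A)$ rather than merely $\mu(B)\ge \bd(A)-\varepsilon$. The remaining steps are routine weak$^*$ compactness arguments, which apply here because all the relevant sets are clopen cylinders.
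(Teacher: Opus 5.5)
Your proof is correct. It is the standard symbolic-shift and weak$^*$-limit argument behind the cited result; the paper gives no proof of its own and simply refers to Furstenberg's book. Your diagonal extraction of a single F\o lner sequence along which the density of $A$ converges to $d^*(A)$ is exactly the extra step needed to obtain the equality $\mu(B)=d^*(A)$ under the paper's F\o lner-based definition of upper Banach density.
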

		\begin{theorem}[Furstenberg--Katznelson, {\cite[Theorem~A]{FK}}]
			\label{thm:fk}
			Let $U_1,\ldots,U_m$ be commuting invertible measure-preserving
			transformations on a probability space $(X,\cX,\mu)$.  If $C\in\cX$ and
			$\mu(C)>0$, then
			\[
			\liminf_{N\to\infty}\frac1N\sum_{n=1}^N
			\mu(C\cap U_1^{-n}C\cap\cdots\cap U_m^{-n}C)>0 .
			\]
		\end{theorem}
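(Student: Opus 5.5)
Since this is the multidimensional Szemerédi theorem of Furstenberg and Katznelson, in the paper it will simply be quoted. If I had to supply a proof, I would follow their structure-theoretic argument. Let $\Gamma$ be the $\mathbb Z^m$-action generated by $U_1,\ldots,U_m$. Call a $\Gamma$-invariant factor $\mathcal Y\subseteq\mathcal X$ \emph{SZ} if the conclusion holds for every $C\in\mathcal Y$ with $\mu(C)>0$. For all $C\in\mathcal X$ with $\mu(C)>0$, the corresponding statement is
\[
\liminf_{N\to\infty}\frac1N\sum_{n=1}^N
\mu\Bigl(C\cap U_1^{-n}C\cap\cdots\cap U_m^{-n}C\Bigr)>0 .
\]
The trivial factor is SZ. The goal is to show that a maximal SZ factor, which exists by Zorn's lemma once SZ is shown to pass to inverse limits, must be all of $\mathcal X$.

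The steps, in order, are as follows.

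(i) \emph{Inverse limits.} Given $C$ in the limit, approximate it by a set $C'$ in some stage with $\mu(C\triangle C')$ small. Pass to a subset $C''\subseteq C'$ of positive measure on which the relative density of $C$ is at least $1-\varepsilon$. A union bound over the $m+1$ translates then transfers positivity from $C''$ to $C$.

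(ii) \emph{Relative dichotomy.} If $\mathcal Y\subsetneq\mathcal X$, find an intermediate factor $\mathcal Y\subsetneq\mathcal Y'$ such that $\mathcal Y'$ is a \emph{primitive} extension of $\mathcal Y$. This means $\mathbb Z^m=\Gamma_c\oplus\Gamma_w$, where the extension is relatively compact for $\Gamma_c$ and relatively weakly mixing for $\Gamma_w$. This uses the relative Furstenberg--Zimmer dichotomy applied to subgroups, together with a maximality argument over subgroups.

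(iii) \emph{Relatively weakly mixing directions.} For directions in $\Gamma_w$, a van der Corput argument over $\mathcal Y$ shows that the multiple averages of functions with zero conditional expectation vanish. Hence the averages for $C$ are governed by $\mathbb E(\one_C\mid\mathcal Y)$.

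(iv) \emph{Relatively compact directions.} For directions in $\Gamma_c$, approximate $\one_C$ by finite-rank module elements over $\mathcal Y$. Apply the Gallai (multidimensional van der Waerden) theorem to a colouring induced by the finite-dimensional fibre data. Combined with the SZ property of $\mathcal Y$, this produces many $n$ for which all translates nearly coincide fibrewise on a set of positive measure.

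(v) \emph{Conclusion.} Steps (iii) and (iv) together show that $\mathcal Y'$ is SZ, which contradicts the maximality of $\mathcal Y$.

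The main obstacle is step (iv) in combination with the mixed primitive case. The compact and weakly mixing directions have to be handled simultaneously for a single common $n$, with uniform lower bounds on the set of good $n$. My first attempt would be to follow Furstenberg--Katznelson: formulate a stronger, uniform SZ property, namely a positive lower density of good $n$ holding uniformly over sets of measure at least $\delta$, and run the colouring argument on that.

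For the purposes of this paper, none of this needs redoing. \Cref{thm:fk} will be applied only as a black box, in the proof of \Cref{cor:fixed-directions}, to $U_i$ chosen as the transformations $T^{u_1},T^{u_2}$ obtained from \Cref{thm:correspondence}.
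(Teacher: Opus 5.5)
Your proposal agrees with the paper: \Cref{thm:fk} is not proved there but quoted from Furstenberg--Katznelson, and it is used only as a black box in the proof of \Cref{cor:fixed-directions}, applied to $R_1=T^{u_1}$, $R_2=T^{u_2}$ and the set $C=B\cap\{q\ge\delta/2\}$. If you ever want your sketch to stand as a proof, two points need fixing: the inductive SZ property must be required for all finite configurations $\gamma_1,\dots,\gamma_k\in\Gamma$, not just for $U_1,\dots,U_m$, because the Gallai colouring in step (iv) consumes the SZ property of $\mathcal Y$ for a large box in $\Gamma$; and in the mixed primitive case, step (iii) only reduces the averages to conditional expectations over $\mathcal Y$ of products of $\Gamma_c$-translates of $\one_C$, not to $\E_\mu(\one_C\mid\mathcal Y)$.
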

			Next, we finish the core task of this section.
			\begin{proof}[Proof of \Cref{cor:fixed-directions}]
				Put \(\delta=d^*(A)>0\).  By \Cref{thm:correspondence}, there are commuting
				invertible measure-preserving transformations \(T_1,\ldots,T_\ell\) acting on the standard probability space $(X,\cX,\mu)$ and a set
				\(B\) with \(\mu(B)=\delta\) for which \eqref{furs_corr_pro1981} holds.  For
				\(u=(u_1',\ldots,u_\ell')\in\mathbb Z^\ell\), write
				$
				T^u=T_1^{u_1'}\cdots T_\ell^{u_\ell'},
				$
				and set
				$
				R_1=T^{u_1},R_2=T^{u_2},S=T^{u_3}.
				$
				Let \(q=\E_{\mu}(\one_B\mid\mathcal I(S))\).  Applying
				\Cref{thm:main-decoupling} to $R_1,R_2,S$ and \(f_1=f_2=g=\one_B\) gives
				\begin{align}
					&\frac1N\sum_{n=1}^N
					\mu(B\cap R_1^{-n}B\cap R_2^{-n}B\cap S^{-\Omega(n)}B)
					-
					\frac1N\sum_{n=1}^N
					\int_{X} \one_B\cdot q\,R_1^n\one_B\,R_2^n\one_B\,d\mu
					\longrightarrow0\label{eq:corners-weighted-unweighted}
				\end{align} as $N\to\infty$.
				Moreover, by Jensen's inequality,
				\[
				\int_Bq\,d\mu=\int_{X} q^2\,d\mu\geq \left(\int_{X}qd\mu\right)^{2}=\delta^2.
				\]
				So, \(C:=B\cap\{q\geq\delta/2\}\) has positive measure.  This implies that for every \(n\),
				\[
				\one_B\cdot q\,R_1^n\one_B\,R_2^n\one_B
				\geq \frac\delta2\,\one_C\,R_1^n\one_C\,R_2^n\one_C.
				\]
				Thus \Cref{thm:fk} and \eqref{eq:corners-weighted-unweighted} imply
				\begin{equation}\label{eq:fixed-direction-positive-average}
					\liminf_{N\to\infty}\frac1N\sum_{n=1}^N
					\mu(B\cap R_1^{-n}B\cap R_2^{-n}B\cap S^{-\Omega(n)}B)>0.
				\end{equation}
				Then for every \(n\), the correspondence inequality applied to
				\[
				\{0,nu_1,nu_2,\Omega(n)u_3\}
				\]
				gives
				\[
				d^*\bigl(A\cap(A-nu_1)\cap(A-nu_2)
				\cap(A-\Omega(n)u_3)\bigr)
				\geq
				\mu(B\cap R_1^{-n}B\cap R_2^{-n}B\cap S^{-\Omega(n)}B).
				\]
				Hence
				\eqref{eq:fixed-direction-positive-average} implies that the set on which
				they are positive has positive lower density.
			\end{proof}
			
			\section{Reduction of \Cref{thm:main-decoupling}: characteristic factors}
			\label{sec:orthogonality-austin}
			This section is the first reduction step in the proof of \Cref{thm:main-decoupling}. Its core task is to prove \Cref{prop:prime-dilation-austin-reduction}, which gives the structures of corresponding characteristic factors.
			\subsection{Two preliminary results}
			The first one is a standard orthogonality criterion.
			\begin{lemma}[{Cf. \cite[Theorem 2]{BSZ}}]
				\label{lem:orthogonality}
				Let $(v_n)$ be a bounded sequence in a Hilbert space.  If for a set of primes
				$\cP_0$ of positive lower relative density inside the primes, one has that
				\[
				\lim_{N\to\infty}\left|
				\frac1N\sum_{n=1}^N\langle v_{pn},v_{qn}\rangle
				\right|=0
				\]
				for all distinct $p,q\in\cP_0$, then
				$N^{-1}\sum_{n\leq N}v_n\to0$ in norm.
			\end{lemma}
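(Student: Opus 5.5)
This is the Bourgain--Sarnak--Ziegler (Kátai-type) orthogonality criterion. I would prove it through a Turán--Kubilius-type averaging over primes, followed by a van der Corput/Cauchy--Schwarz step. Write $A_N=\frac1N\sum_{n\le N}v_n$, assume $\|v_n\|\le1$, and let $\mathcal P_0$ have lower relative density $c>0$. Choose $P_k=\mathcal P_0\cap[1,k]$ large, and put $w_p=1/p$ and $W=\sum_{p\in P_k}1/p$. Since $\mathcal P_0$ has positive lower relative density in the primes, partial summation gives $W\to\infty$ as $k\to\infty$ (up to a constant depending on $c$).

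\emph{Step 1 (Turán--Kubilius).} I would compare $A_N$ with the weighted sum
\[
B_N=\frac1N\sum_{n\le N}\frac1W\sum_{p\in P_k}\one_{p\mid n}\,v_n .
\]
By Cauchy--Schwarz and the Turán--Kubilius inequality $\frac1N\sum_{n\le N}\bigl|\sum_{p\in P_k}\one_{p\mid n}-W\bigr|^2\ll W+O_k(1/N)$, one gets
\[
\limsup_N\|A_N-B_N\|\ll W^{-1/2}.
\]

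\emph{Step 2 (rewrite and Cauchy--Schwarz).} Writing $n=pm$,
\[
B_N=\frac1W\sum_{p\in P_k}\frac1p\cdot\frac{p}{N}\sum_{m\le N/p}v_{pm}.
\]
Up to an error $O_k(1/N)$, I would rewrite this as $\frac1N\sum_{m\le N}\frac1W\sum_{p\in P_k}\one_{pm\le N}\,v_{pm}$. Applying Cauchy--Schwarz in $m$ gives
\[
\|B_N\|^2\le\frac1N\sum_{m\le N}\Bigl\|\frac1W\sum_{p}\one_{pm\le N}v_{pm}\Bigr\|^2=\frac1{W^2}\sum_{p,q\in P_k}\frac1N\sum_{m\le N/\max(p,q)}\langle v_{pm},v_{qm}\rangle .
\]
The diagonal terms $p=q$ contribute at most $\frac1{W^2}\sum_p\frac1p=W^{-1}$. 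For $p\ne q$, the inner average is, after rescaling $N\mapsto N/\max(p,q)$, a constant multiple of the averages in the hypothesis, so it tends to $0$. There are finitely many such pairs for fixed $k$, so the off-diagonal part vanishes in the limit. Hence
\[
\limsup_N\|A_N\|\ll W^{-1/2}+W^{-1/2}.
\]
Letting $k\to\infty$ finishes the proof.

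\emph{Main obstacle.} The delicate step is the bookkeeping in Step 2: handling the weights $p/N$ versus $1/N$ and the cutoffs $pm\le N$. The cleanest way is to keep the weights exactly as $\frac1p\cdot\frac{p}{N}\sum_{m\le N/p}$, note that $\|\cdot\|^2$ of an average over $m\le N$ is dominated via Cauchy--Schwarz by the average of squared norms, and observe that the cross terms become $\frac1N\sum_{m\le N/\max(p,q)}\langle v_{pm},v_{qm}\rangle$. These are $o(1)$ by hypothesis, since $\frac{M}{N}\cdot\frac1M\sum_{m\le M}$ with $M=N/\max(p,q)$ is still a vanishing average. Step 1 also needs the standard variance bound for $\sum_{p\in P_k}\one_{p\mid n}$, which uses only $\frac1N\#\{n\le N:pq\mid n\}\le\frac1{pq}$.
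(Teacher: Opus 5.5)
Your proof is correct and complete. The paper does not prove this lemma; it only cites \cite[Theorem 2]{BSZ}. That theorem is stated differently: for a scalar sequence twisted by a multiplicative function, with the hypothesis imposed on all pairs of primes below a threshold. The ``Cf.'' therefore signals that the Hilbert-space, positive-density-subset version used here is a variant and not a literal quotation.

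Your Tur\'an--Kubilius plus Cauchy--Schwarz argument is the same mechanism that underlies the cited theorem, written out for this setting. It is in fact simpler here, because there is no multiplicative twist to remove. The synchronization $\Omega(pn)=\Omega(qn)$ has already been absorbed into the hypothesis on $\langle v_{pn},v_{qn}\rangle$ in \Cref{lem:prime-dilation-to-three-direction}. So a single Cauchy--Schwarz in $m$ immediately exposes the hypothesized correlations.

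Three small remarks on the bookkeeping:
\begin{enumerate}
\item The identity $B_N=\frac1N\sum_{m\le N}\frac1W\sum_{p\in P_k}\one_{pm\le N}v_{pm}$ is exact, so no $O_k(1/N)$ error arises in Step 2. Consequently the ``main obstacle'' you flag does not really occur.
\item The variance bound in Step 1 needs, besides the upper bound $\frac1N\#\{n\le N:pq\mid n\}\le\frac1{pq}$, the lower bound $\frac1N\#\{n\le N:p\mid n\}\ge\frac1p-\frac1N$. This is used for the cross term $-2W\cdot\frac1N\sum_{n\le N}\omega_k(n)$, and it yields $\frac1N\sum_{n\le N}(\omega_k(n)-W)^2\le W+2W|P_k|/N$.
\item Your argument only uses $\sum_{p\in\cP_0}1/p=\infty$, which is weaker than positive lower relative density. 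You have therefore proved a slightly more general statement than the one recorded in the paper.
\end{enumerate}
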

			The second one describes the characteristic factors of triple commuting ergodic averages.
			\begin{theorem}[Austin, {\cite[Theorem~1.1]{Austin2009II}}]
				\label{thm:austin-pleasant}
				Let \((X,\cX,\mu,(T^v)_{v\in \Z^2})\) be a \(\mathbb Z^2\)-system.
				Then it has an extension
				$
				\pi:(\widetilde X,\widetilde{\cX},\widetilde\mu,(\widetilde T^v)_{v\in \Z^2})
				\to (X,\cX,\mu,(T^v)_{v\in \Z^2})
				$
				and a factor \(\mathcal N_{\widetilde X}\) whose target is an inverse limit of direct
				integrals of two-step \(\mathbb Z^2\)-pronilsystems, such that the
				following holds. For all distinct \(p_1,p_2,p_3\in\mathbb Z^2\setminus\{0\}\) for which
				they are in general position with $0$ \footnote{The vectors \(p_1,p_2,p_3\in\mathbb Z^2\setminus\{0\}\) are in general position with $0$
					if no three of \(0,p_1,p_2,p_3\) lie on a line.}, 
				the ergodic averages
				\[
				\frac1N\sum_{n=1}^N
				\widetilde T^{np_1}f_1\,
				\widetilde T^{np_2}f_2\,
				\widetilde T^{np_3}f_3
				\]
				admit, in the \(i\)-th coordinate, the characteristic factor
				\(\cI(\widetilde T^{p_i})\vee
				\cI(\widetilde T^{p_i-p_j})\vee
				\cI(\widetilde T^{p_i-p_k})\vee\mathcal N_{\widetilde X}\), where
				\(\{i,j,k\}=\{1,2,3\}\).
			\end{theorem}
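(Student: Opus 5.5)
This result is quoted from Austin, so here is how I would reconstruct his argument rather than a new proof. The approach has two parts: first build an extension that is ``sated'' with respect to a countable family of joins of isotropy factors, and then in that extension run a van der Corput argument that pushes the characteristic factor down to the stated form.

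\emph{Step 1: the sated extension.} Enumerate the countably many directions \(w\in\mathbb Z^2\setminus\{0\}\) that can occur as \(p_i\) or \(p_i-p_j\). Consider the class of factors of the form \(\cI(T^{w_1})\vee\cI(T^{w_2})\vee\cI(T^{w_3})\vee\mathcal Z\), where \(\mathcal Z\) is a two-step pronilfactor of the \(\mathbb Z^2\)-action. This class is idempotent and closed under joinings.

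I would build an increasing tower of extensions. At each stage, pass to an extension in which the relevant maximal factor of the above type is as large as possible among all joinings, measured by the supremum of \(\|\E(f\mid\cdot)\|_2\) over a countable dense set of \(f\). The inverse limit of this tower is then sated: in any further extension, the relevant factor is relatively independent over its restriction to \(\widetilde X\). This is an abstract compactness and exhaustion argument using only standard Borel machinery.

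\emph{Step 2: characteristic factors in the sated extension.} Fix \(p_1,p_2,p_3\) in general position with \(0\). Apply van der Corput twice, as in the Host--Kra or Tao-type reductions. This bounds the \(L^2\) norm of the average by a seminorm of \(f_1\) built from the cube measure associated to the subgroups generated by \(p_1\), \(p_1-p_2\), \(p_1-p_3\). Concretely, it is an iterated relatively independent self-joining over \(\cI(T^{p_1})\), \(\cI(T^{p_1-p_2})\) and \(\cI(T^{p_1-p_3})\).

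If \(f_1\) has nonzero seminorm, then \(f_1\) correlates with a function on the joining extension that is measurable with respect to a join of isotropy factors of that extension together with a ``diagonal'' factor. By satedness, this correlation descends to \(\widetilde X\). That gives the characteristic factor \(\cI(T^{p_1})\vee\cI(T^{p_1-p_2})\vee\cI(T^{p_1-p_3})\vee\mathcal M\), where \(\mathcal M\) is the contribution from the full \(\mathbb Z^2\)-action (the diagonal part).

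\emph{Step 3 (main obstacle): identifying \(\mathcal M\).} The diagonal part \(\mathcal M\) must be shown to lie in a two-step pronilfactor \(\mathcal N_{\widetilde X}\), independent of the directions. For this I would use the general position hypothesis: any factor invariant under the three independent difference actions and controlling the average must be characteristic for a \(U^3\)-type seminorm of the whole \(\mathbb Z^2\)-action.

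I would first try to prove a Host--Kra type inequality relative to the isotropy factors: if \(f\) is orthogonal to all \(\cI(T^w)\), then the seminorm from Step 2 is controlled by \(\|f\|_{U^3(T)}\). Then apply the Host--Kra structure theorem, in its non-ergodic form via direct integrals (as in \Cref{thm:jm-structure}), to realize \(Z_2(T)\) as an inverse limit of direct integrals of two-step \(\mathbb Z^2\)-pronilsystems.

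The delicate point is making this comparison uniform across the countably many direction triples while keeping a single factor \(\mathcal N_{\widetilde X}\). To handle this, I would take \(\mathcal N_{\widetilde X}=Z_2(\widetilde T)\) itself and absorb the direction-dependent remainders into the isotropy factors via satedness.
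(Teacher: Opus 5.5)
The paper gives no proof of this statement; it is quoted from Austin's Part~II. So the only question is whether your sketch proves it, and it does not. Steps~1 and~2 are the standard opening: a sated extension, then van der Corput twice to bound the average by the box seminorm $\|f_1\|_{\square}$ along $p_1$, $p_1-p_2$, $p_1-p_3$. But the entire content of the theorem is hidden in the undefined ``diagonal factor'' $\mathcal M$ and in Step~3, for which you give no argument.

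The difficulty is concrete. The cube extension on which the dual function of $\|\cdot\|_{\square}$ splits into pieces invariant under the three side transformations is an extension of $X$ as a $\mathbb Z^3$-system. Its side transformations do not satisfy the nontrivial integer relation that $p_1$, $p_1-p_2$, $p_1-p_3$ satisfy in $\mathbb Z^2$. Hence they are not the elements $\widetilde T^{p_1},\widetilde T^{p_1-p_2},\widetilde T^{p_1-p_3}$ of any $\mathbb Z^2$-action lifting $T$. Coping with this relation is exactly where the two-step nilpotent part of the characteristic factor comes from.

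Satedness cannot substitute for this. Satedness with respect to a class containing two-step pronilsystems lets a correlation descend to $\widetilde X$ only once you already know that the correlating function on the extension is measurable with respect to (isotropy joins)$\,\vee Z_2$ of that extension. That is precisely what Step~3 was supposed to establish, so the argument is circular at the decisive point.

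The inequality you propose for Step~3 also has the wrong shape and is false. Let $X=X_1\times\mathbb T$. Let $T^v$ act on $X_1$ by $R^{\det(p_1,v)}$ with $R$ a Bernoulli shift, and on $\mathbb T$ by $x\mapsto x+\theta\cdot v$ with $1,\theta_1,\theta_2$ rationally independent. Take $f=a\otimes e^{2\pi i x}$ with $|a|=1$ and $\int a=0$.
\begin{itemize}
\item For $w\nparallel p_1$ the map $T^w$ is ergodic, and for $w\parallel p_1$ one has $\cI(T^w)=\mathcal B(X_1)\otimes\{\emptyset,\mathbb T\}$. Hence $f$ is orthogonal even to the join of all isotropy factors.
\item Since $X_1$ is weakly mixing as a $\mathbb Z^2$-system and $\int a=0$, also $\|f\|_{U^3(T)}=0$.
\item Yet $T^{p_1}$ acts trivially on $X_1$, and multiplicative derivatives of $e^{2\pi i x}$ are unimodular constants, so $\|f\|_{\square}=1$.
\end{itemize}
Isotropy and nilpotent pieces interact multiplicatively. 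Orthogonality to the isotropy factors together with $\|f\|_{U^3}=0$ therefore says nothing about orthogonality to their join. What is actually needed is that, in a suitable extension, orthogonality to $\cI(\widetilde T^{p_1})\vee\cI(\widetilde T^{p_1-p_2})\vee\cI(\widetilde T^{p_1-p_3})\vee Z_2(\widetilde T)$ kills the average, and that is the theorem itself.

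Your choice $\mathcal N_{\widetilde X}=Z_2(\widetilde T)$ is harmless: Austin's factor lies in $Z_2$ by \Cref{prop:austin-pronil-domination}, and \Cref{thm:jm-structure} gives $Z_2$ the required inverse-limit structure. But constructing an extension in which this join is characteristic is the long structural analysis of Austin's Parts~I and~II. The phrase ``absorb the remainders into the isotropy factors via satedness'' does not replace it. For the purposes of this paper, the right move is simply to cite Austin, as the authors do.
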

			\subsection{On the factor $\mathcal N_{\widetilde X}$ in \Cref{thm:austin-pleasant}}
			Austin's theorem produces an inverse limit $\mathcal N_{\widetilde X}$ of direct integrals of two-step
			$\Z^2$-pronilsystems in the sense of \cite[Definition~3.3 and
			Subsection~3.3]{Austin2009II}.
			
			For the inverse limit $Y$ of direct integrals $Y_{n}$ of $2$-step $\Z^2$-pronilsystems, according to \cite[Definition 3.10]{Austin2009II}, we have the following structure:
			$$Y_{0}\longleftarrow Y_{1}\longleftarrow \cdots \longleftarrow Y_{\eta}\longleftarrow Y,$$ where almost every ergodic component of $Y_{n}$ is an ergodic $2$-step $\Z^2$-pronilsystem. 
			
			The above structure motivates the following result, which implies that $\mathcal N_{\widetilde X}\subset Z_{2}(\widetilde{T})$.
			\begin{proposition}
				\label{prop:austin-pronil-domination}
				Let \(\pi:(X,\mathcal X,\mu,(T^v)_{v\in \Z^r})\to
				(Y,\mathcal Y,\nu,(R^v)_{v\in \Z^r})\) be a factor map.  Suppose
				that \(Y\) is an inverse limit of factors \(Y_m\) such that, for every
				\(m\), almost every ergodic component of \(Y_m\) is a \(k\)-step
				pronilsystem.  Then
				$
				\pi^{-1}\mathcal Y\subseteq Z_k(T).
				$
				In particular, the factor $\mathcal N_{\widetilde X}$ in \Cref{thm:austin-pleasant} is contained in
				\(Z_2\) of the corresponding extension.
			\end{proposition}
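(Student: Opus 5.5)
The argument is in two stages. First I show that, on $Y$ itself, every function in $L^\infty(\nu)$ is measurable with respect to $Z_k(R)$. Then I transfer this upstairs through the factor map $\pi$, using the functoriality of the Host--Kra seminorms.

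\emph{Step 1: each $Y_m$ lies in its own $Z_k$.} Fix $m$ and take the ergodic decomposition $\nu_m=\int_\Omega\nu_{m,\omega}\,d\lambda(\omega)$ of the $\mathbb Z^r$-action on $Y_m$. Let $F\in L^\infty(Y_m)$ with $\|F\|_{U^{k+1}(Y_m)}=0$. The $\mathbb Z^r$-analogue of \Cref{lem:HK-seminorm-ergodic-decomposition} (the same Host--Kra proof works for $\mathbb Z^r$-actions, since cube measures disintegrate over the invariant factor) gives $\|F\|_{U^{k+1}(Y_m,\nu_{m,\omega})}=0$ for $\lambda$-a.e.\ $\omega$. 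For a.e.\ $\omega$ the component $(Y_m,\nu_{m,\omega})$ is an ergodic $k$-step pronilsystem, and the standard ergodic theory says such a system equals its own $Z_k$. Hence $F=0$ in $L^2(\nu_{m,\omega})$ for a.e.\ $\omega$, and so $F=0$ in $L^2(\nu_m)$. Since $\|F\|_{U^{k+1}}=0$ iff $\E(F\mid Z_k)=0$, this shows $Z_k(Y_m)$ is the full $\sigma$-algebra.

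\emph{Step 2: pass to $X$.} Seminorms are compatible with factor maps: if $p:X\to W$ is a factor map and $F\in L^\infty(W)$, then $\|F\circ p\|_{U^{k+1}(X)}=\|F\|_{U^{k+1}(W)}$, because the cube measures push forward and the invariant $\sigma$-algebras correspond. It follows that $p^{-1}Z_k(W)\subseteq Z_k(X)$. To see this, take $F$ that is $Z_k(W)$-measurable and any $f\in L^\infty(\mu)$ with $\E(f\mid Z_k(X))=0$. Write $\tilde f=\E(f\mid p^{-1}\mathcal W)=h\circ p$. Conditional expectation onto a factor does not increase the seminorm (the standard inequality via the cube measures), so $\|h\|_{U^{k+1}(W)}\le\|f\|_{U^{k+1}(X)}=0$. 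Therefore $\E(h\mid Z_k(W))=0$, and $\int f\,\overline{F\circ p}\,d\mu=\int h\overline F\,d\nu=0$. This gives $F\circ p\perp\{f:\E(f\mid Z_k(X))=0\}$, so $F\circ p$ is $Z_k(X)$-measurable. Apply this with $p=\pi_m\circ\pi$, where $\pi_m:Y\to Y_m$. By Step 1, $(\pi_m\circ\pi)^{-1}\mathcal Y_m\subseteq Z_k(T)$ for every $m$. Since $Y$ is the inverse limit, $\mathcal Y=\bigvee_m\pi_m^{-1}\mathcal Y_m$ modulo null sets. Hence $\pi^{-1}\mathcal Y=\bigvee_m(\pi_m\circ\pi)^{-1}\mathcal Y_m\subseteq Z_k(T)$, because $Z_k(T)$ is a $\sigma$-algebra.

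For the ``in particular'' clause, I apply this with $k=2$, $r=2$, and $Y=\mathcal N_{\widetilde X}$, using the structure recalled above from \cite[Definition~3.10]{Austin2009II}: a chain $Y_n$ whose a.e.\ ergodic component is a $2$-step $\mathbb Z^2$-pronilsystem.

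The main obstacle is Step 1. It needs two facts outside the stated preliminaries: the $\mathbb Z^r$ version of the ergodic-decomposition formula for the seminorms, and the fact that an ergodic $k$-step $\mathbb Z^r$-pronilsystem has $Z_k$ equal to the whole $\sigma$-algebra. For the second I would cite Host--Kra (\cite{HostKraBook}, and Griesmer or Candela--Szegedy for $\mathbb Z^r$). Alternatively, one can argue it directly: continuous functions on a $k$-step nilsystem satisfy the dual-function characterization, i.e.\ they are uniform limits of $(k)$-step dual functions, and so they are $Z_k$-measurable.
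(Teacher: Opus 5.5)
Your proposal is correct and follows essentially the paper's route: you show that each $Y_m$ coincides with its own $Z_k$ by passing through the ergodic decomposition and using that ergodic $k$-step pronilsystems are of order $k$, then apply factor domination $p^{-1}Z_k\subseteq Z_k$ and take the join over $m$. The differences are minor: you handle the decomposition step with the seminorm disintegration formula instead of \cite[Lemma~2.1]{JamneshanMachado2026}, and you prove factor domination directly, via pushforward of cube measures and the contraction $\|\E(f\mid p^{-1}\mathcal W)\|_{U^{k+1}}\le\|f\|_{U^{k+1}}$, rather than reducing to the ergodic case as in \Cref{lem:factor-domination}, which avoids matching ergodic decompositions.
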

			Before proving the above, we need a lemma.
			\begin{lemma}
				\label{lem:factor-domination}
				Let \(\pi:(X,\mu,(T^v)_{v\in \Z^r})\to(Y,\nu,(R^v)_{v\in \Z^r})\) be a factor map.  Then, for every \(k\geq0\),
				$
				\pi^{-1}Z_k(R)\subseteq Z_k(T).
				$
			\end{lemma}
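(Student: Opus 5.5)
The plan is to compare Host--Kra seminorms along the factor map: for $f\in L^\infty(\nu)$ I will show $\|f\circ\pi\|_{U^{k+1}(T)}=\|f\|_{U^{k+1}(R)}$, and then use the characterization of $Z_k$ by the vanishing of $U^{k+1}$ seminorms. For $k=0$ the statement reads $\pi^{-1}\mathcal I((R^v))\subseteq\mathcal I((T^v))$ (since $Z_0$ is the invariant factor), which is immediate: if $A$ is $R$-invariant, then $\pi^{-1}A$ is $T$-invariant modulo null sets because $\pi\circ T^v=R^v\circ\pi$ a.e. So I assume $k\ge1$.

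\textbf{Step 1: cube measures push forward.} By induction on $j$ I will prove that $(\pi^{[j]})_*\mu_X^{[j]}=\nu_Y^{[j]}$, where $\pi^{[j]}=\pi^{\otimes\{0,1\}^j}$. The case $j=0$ is $\pi_*\mu=\nu$. For the inductive step, the key input is that conditional expectation onto the invariant $\sigma$-algebra commutes with the factor map:
\[
\E_{\mu_X^{[j]}}\bigl(F\circ\pi^{[j]}\mid\mathcal I_X^{[j]}\bigr)=\E_{\nu_Y^{[j]}}\bigl(F\mid\mathcal I_Y^{[j]}\bigr)\circ\pi^{[j]}.
\]
I will verify this with the mean ergodic theorem for the diagonal $\Z^r$-action along a F\o lner sequence $(\Phi_N)$. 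The averages $|\Phi_N|^{-1}\sum_{t\in\Phi_N}F\circ T_{[j]}^t$ converge in $L^2$ to the conditional expectation onto the invariant factor. Averaging $F\circ\pi^{[j]}$ is the same as averaging $F$ and then composing with $\pi^{[j]}$, and composition with $\pi^{[j]}$ is an $L^2$ isometry by the induction hypothesis. Plugging this identity into the defining formula for the relatively independent joining, with $F\circ\pi^{[j]}$ and $G\circ\pi^{[j]}$, gives equality of the two measures on product functions $F\otimes G$. Since these generate, the pushforward identity holds at level $j+1$.

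\textbf{Step 2: conclude.} From Step 1, $\|f\circ\pi\|_{U^{k+1}(T)}=\|f\|_{U^{k+1}(R)}$ for every bounded $f$ on $Y$. Now take $f$ that is $Z_k(R)$-measurable, and suppose $h\in L^\infty(\mu)$ satisfies $\E_\mu(h\mid Z_k(T))=0$, i.e.\ $\|h\|_{U^{k+1}(T)}=0$. I need $\int h\,\overline{f\circ\pi}\,d\mu=0$. Set $h'=\E_\mu(h\mid\pi^{-1}\mathcal Y)=\tilde h\circ\pi$. Then $\int h\,\overline{f\circ\pi}\,d\mu=\int_Y\tilde h\,\overline f\,d\nu$. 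It therefore suffices to show $\|\tilde h\|_{U^{k+1}(R)}=0$, because then $\E(\tilde h\mid Z_k(R))=0$ and $\tilde h\perp f$.

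\textbf{Main obstacle.} The step I expect to be hardest is the bound $\|\E(h\mid\pi^{-1}\mathcal Y)\|_{U^{k+1}}\le\|h\|_{U^{k+1}}$ for a factor that is not assumed characteristic. I would prove it by the standard route: Gowers--Cauchy--Schwarz gives the dual-function representation
\[
\|\tilde h\|_{U^{k+1}}^{2^{k+1}}=\int \tilde h\cdot\mathcal D\tilde h\,d\nu,
\]
where the dual function is built from the cube measure, hence commutes with $\pi$ by Step 1. This yields
\[
\|\tilde h\|^{2^{k+1}}=\int h\cdot(\mathcal D\tilde h)\circ\pi\,d\mu.
\]
Applying Gowers--Cauchy--Schwarz on $X^{[k+1]}$ then bounds this by $\|h\|_{U^{k+1}}\,\|\tilde h\circ\pi\|^{2^{k+1}-1}_{U^{k+1}}=0$.

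Alternatively, one can avoid dual functions: $Z_k(T)$ is characterized as the smallest factor $\mathcal Z$ with $\|h\|_{U^{k+1}}=0$ whenever $\E(h\mid\mathcal Z)=0$. Applied to $f\circ\pi$, which is orthogonal to all such $h$ by the argument above, this gives $f\circ\pi\in L^2(Z_k(T))$.
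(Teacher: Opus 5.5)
Your proposal is correct in outline but takes a different route from the paper. The paper never works with cube measures. It quotes the ergodic case (Jamneshan--Shalom--Tao, Proposition~A.3(ii), or Host--Kra for $r=1$). It then passes to ergodic components, using $\pi_*\mu_\omega=\nu_{\sigma(\omega)}$ and invoking Jamneshan--Machado's Lemma~2.1 twice; that lemma says $Z_k$ of a non-ergodic system restricts to $Z_k$ of almost every ergodic component. You instead argue directly for possibly non-ergodic $\Z^r$-actions: you show that cube measures push forward, and that $\|\E(h\mid\pi^{-1}\mathcal Y)\|_{U^{k+1}}\le\|h\|_{U^{k+1}}$, which is essentially what the paper imports in the ergodic case. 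Your route is self-contained, avoids the ergodic decomposition and the selection map $\sigma$, and gives a quantitative monotonicity statement. The cost is that you need Gowers--Cauchy--Schwarz and dual functions for non-ergodic $\Z^r$ cube measures; these are standard, with the same proofs as in Host--Kra. The paper's route is shorter but rests on two external results.

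One step needs a stronger justification than you give. The equality $\|\tilde h\|^{2^{k+1}}=\int h\cdot(\mathcal D_Y\tilde h)\circ\pi\,d\mu$ is fine, since $(\mathcal D_Y\tilde h)\circ\pi$ is automatically $\pi^{-1}\mathcal Y$-measurable. To apply Gowers--Cauchy--Schwarz on $X^{[k+1]}$, however, you need $(\mathcal D_Y\tilde h)\circ\pi=\mathcal D_X(\tilde h\circ\pi)$. This does not follow from $(\pi^{[k+1]})_*\mu_X^{[k+1]}=\nu_Y^{[k+1]}$ alone. The pushforward fixes the joint law of $(\pi x_\omega)_\omega$, so it controls conditional expectations given $\pi(x_{\mathbf 0})$. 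The dual function, by contrast, conditions on $x_{\mathbf 0}$ itself.

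The fix is a second induction run alongside Step~1: for every bounded $\Psi$ on $Y^{[j]}$, show $\E_{\mu_X^{[j]}}(\Psi\circ\pi^{[j]}\mid x_{\mathbf 0})=\E_{\nu_Y^{[j]}}(\Psi\mid y_{\mathbf 0})\circ\pi$. Take $\Psi=\Psi'\otimes\Psi''$ at level $j+1$. Conditioning first on the lower face $x'$ gives $\bigl(\Psi'\cdot\E_{\nu_Y^{[j]}}(\Psi''\mid\mathcal I_Y^{[j]})\bigr)\circ\pi^{[j]}(x')$, by the relative independence and your intertwining identity. The level-$j$ statement then applies, and $L^2$-density extends it to all $\Psi$. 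Alternatively, express dual functions as $L^2$-limits of averages of products of shifts, which visibly commute with $\pi$.

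Finally, your closing ``alternatively'' does not avoid dual functions. The orthogonality of $f\circ\pi$ to every $h$ with $\|h\|_{U^{k+1}(T)}=0$ is exactly what required $\|\tilde h\|_{U^{k+1}(R)}=0$, i.e.\ the monotonicity inequality. That paragraph only rephrases the last step.
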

			\begin{proof}
				For \(k=0\), the claim is immediate from the fact that for every $R$-invariant $f\in L^{\infty}(\nu)$, $f\circ \pi$ is $T$-invariant.  Assume \(k\geq1\).
				For the ergodic case, it follows from
				\cite[Proposition~A.3.(ii)]{JamneshanShalomTao2026}. (When $r=1$, it is from \cite[Chapter~9, Proposition 11]{HostKraBook}.)
				
				Next, we deal wit the general case. Write the ergodic decompositions as
				\[
				\mu=\int_{\Omega_X}\mu_\omega\,d\lambda(\omega),\ \text{and}\ 
				\nu=\int_{\Omega_Y}\nu_\eta\,d\beta(\eta).
				\]
				For \(\lambda\)-a.e. \(\omega\in \Omega_X\),
				\(\pi_*\mu_\omega\) is ergodic. By the uniqueness of the
				ergodic decomposition, there is, after discarding some null sets, a measurable
				map \(\sigma:\Omega_X\to\Omega_Y\) such that
				$
				\pi_*\mu_\omega=\nu_{\sigma(\omega)}.
				$
				Thus \(\pi\) restricts on almost every ergodic component to a factor map
				$$
				(X,\mu_\omega,(T^v)_{v\in \Z^r})\longrightarrow
				(Y,\nu_{\sigma(\omega)},(R^v)_{v\in \Z^r}).
				$$
				
				Let \(F\in L^\infty(Z_k(R))\).  By
				\cite[Lemma~2.1]{JamneshanMachado2026}, the restriction of \(F\) to
				\((Y,\nu_\eta,(R^v)_{v\in \Z^r})\) is \(Z_k(Y,\nu_\eta,(R^v)_{v\in \Z^r})\)-measurable for
				\(\beta\)-a.e. \(\eta\in \Omega_Y\). Then by the ergodic case, \(F\circ\pi\), restricted to \((X,\mu_\omega,(T^v)_{v\in \Z^r})\), is
				\(Z_k(X,\mu_\omega,(T^v)_{v\in \Z^r})\)-measurable for \(\lambda\)-a.e.
				\(\omega\in \Omega_X\). By \cite[Lemma~2.1]{JamneshanMachado2026} again,
				\(F\circ\pi\) is \(Z_k(T)\)-measurable.  Hence,
				\(\pi^{-1}Z_k(R)\subseteq Z_k(T)\). This proves the lemma.
			\end{proof}
			\begin{proof}[Proof of \Cref{prop:austin-pronil-domination}]
				First, we consider \(Y_m\), and write its ergodic decomposition as
				$
				\displaystyle\nu_m=\int_{\Omega_{m}} \nu_{m,\omega}\,d\beta_m(\omega).
				$
				For \(\beta_m\)-a.e. \(\omega\in \Omega_m\),
				\((Y_m,\mathcal{Y}_{m},\nu_{m,\omega},(R^v)_{v\in \Z^r})\) is a \(k\)-step pronilsystem. By \cite[Theorem~1.1]{JamneshanMachado2026} or \cite[Section~5]{CandelaSzegedy},
				$\mathcal{Y}_{m}$ coincides with $Z_{k}(Y_m,\mathcal{Y}_{m},\nu_{m,\omega},(R^v)_{v\in \Z^r})$ for $(Y_m,\mathcal{Y}_{m},\nu_{m,\omega},(R^v)_{v\in \Z^r})$.  
				
				By \cite[Lemma~2.1]{JamneshanMachado2026},
				for
				\(\beta_m\)-a.e. \(\omega\in \Omega_m\), 
				$$Z_{k}(Y_m,\mathcal{Y}_{m},\nu_{m,\omega},(R^v)_{v\in \Z^r})=Z_{k}(Y_m,\mathcal{Y}_{m},\nu_{m},(R^v)_{v\in \Z^r}).$$
				If \(F\in L^\infty(Y_m,\mathcal{Y}_{m},\nu_m)\), it follows that
				$
				\mathbb E_{\nu_m}(F\mid Z_{k}(Y_m,\mathcal{Y}_{m},\nu_{m},(R^v)_{v\in \Z^r}))=F
				$ in $L^2(\nu_{m,\omega})$
				for $\beta_m$-a.e. \(\omega\in \Omega_m\).  This means that integrating over the ergodic decomposition
				gives the same equality in \(L^2(\nu_m)\).  Hence, for $(Y_m,\mathcal{Y}_{m},\nu_{m},(R^v)_{v\in \Z^r})$,
				$
				\mathcal Y_m=Z_k(Y_m,\mathcal{Y}_{m},\nu_{m},(R^v)_{v\in \Z^r}).
				$
				
				Let \(\rho_m:Y\to Y_m\) be the factor map.  By
				\Cref{lem:factor-domination},
				$
				\rho_m^{-1}\mathcal Y_m\subseteq Z_k(Y,\mathcal Y,\nu,(R^v)_{v\in \Z^r}).
				$
				By the definition of the inverse limit,
				$
				\displaystyle\mathcal Y=\bigvee_m\rho_m^{-1}\mathcal Y_m.
				$
				Hence, \(\mathcal Y\subseteq Z_k(Y,\mathcal Y,\nu,(R^v)_{v\in \Z^r})\). Then  \(\mathcal Y=Z_k(Y,\mathcal Y,\nu,(R^v)_{v\in \Z^r})\) for $(Y,\mathcal Y,\nu,(R^v)_{v\in \Z^r})$. Applying
				\Cref{lem:factor-domination} again finishes the proof.
			\end{proof}
			
			%
			
			\subsection{Two midterm lemmas}
			Now, we give the first lemma.
			\begin{lemma}
				\label{lem:prime-dilation-to-three-direction}
				Let \(T_1,T_2,S\) be three commuting invertible measure-preserving transformations
				of a standard probability space \((X,\mathcal X,\mu)\). Write $	T^{(a,b)}$ for the action $T_1^aT_2^b$. Let $f_1,f_2\in L^\infty(\mu)$, and let $\cP_0$ be a set of primes of positive lower relative density inside the primes such that, for every pair of distinct primes $p,q\in \cP_0$,
				\begin{equation}
					\label{eq:prime-dilation-three-direction}
					\lim_{N\to\infty}\frac1N\sum_{n=1}^N
					\int T^{n(p,-q)}f_1\,
					T^{n(q,-q)}\overline{f_1}\,
					T^{n(0,p-q)}f_2\,\cdot
					\overline{f_2}\,d\mu=0.
				\end{equation}
				Then, for every $g\in L^\infty(\mu)$,
				\[
				\lim_{N\to\infty}\frac1N\sum_{n=1}^N
				T_1^n f_1\,T_2^n f_2\,S^{\Omega(n)}g
				=0
				\]
				in $L^{2}(\mu)$.
			\end{lemma}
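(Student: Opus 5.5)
Set $v_n:=T_1^nf_1\,T_2^nf_2\,S^{\Omega(n)}g\in L^2(\mu)$. This is a bounded sequence with $\|v_n\|_{L^2}\le\|f_1\|_\infty\|f_2\|_\infty\|g\|_\infty$. The plan is to apply \Cref{lem:orthogonality} with the given set of primes $\cP_0$. It therefore suffices to show that, for distinct $p,q\in\cP_0$,
\[
\lim_{N\to\infty}\frac1N\sum_{n=1}^N\langle v_{pn},v_{qn}\rangle=0 .
\]

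\emph{Step 1: cancel the $S$-coordinate.} Since $\Omega(pn)=\Omega(qn)=\Omega(n)+1$, the inner product is
\[
\langle v_{pn},v_{qn}\rangle=\int T_1^{pn}f_1\,T_2^{pn}f_2\,S^{\Omega(n)+1}g\;\overline{T_1^{qn}f_1}\,\overline{T_2^{qn}f_2}\,\overline{S^{\Omega(n)+1}g}\,d\mu .
\]
The two $S$-terms combine to $S^{\Omega(n)+1}|g|^2$, so this dependence on $n$ does not yet disappear. The fix is to use Cauchy--Schwarz on the average in $n$ before cancelling; this is the step requiring care.

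Concretely, write $\langle v_{pn},v_{qn}\rangle=\int \Phi_n\cdot S^{\Omega(n)+1}|g|^2\,d\mu$, where
\[
\Phi_n=T_1^{pn}f_1\,\overline{T_1^{qn}f_1}\,T_2^{pn}f_2\,\overline{T_2^{qn}f_2}.
\]
The weight $S^{\Omega(n)+1}|g|^2$ does not factor out of the average. However, $|g|^2$ is bounded and nonnegative, so I would bound
\[
\Bigl|\frac1N\sum_n\langle v_{pn},v_{qn}\rangle\Bigr|\le \|g\|_\infty^2\,\frac1N\sum_n\Bigl|\int\Phi_n\,d\mu\Bigr|\cdot(\ldots)
\]
only in a weak form. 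This fails, because the weight sits inside the integral.

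The better route is to decide which pairing of the six factors to use. We may take $g$ real and apply $T^{-n(q,q)}$-type measure-preserving changes of variables: composing with $T_2^{-qn}$ removes one $f_2$-shift, and composing with $T_1^{-qn}$ removes one $f_1$-shift. Each composition also moves the $S$-term, but that term is shifted by the same element for both copies of $g$. The trick is to apply \Cref{lem:orthogonality} not to $v_n$ directly but after multiplying out so that the two $S$-terms become $S^{\Omega(n)+1}g$ and $\overline{S^{\Omega(n)+1}g}$ evaluated at the same point. They then combine into $S^{\Omega(n)+1}|g|^2$, a single function $G_n$ with $0\le G_n\le\|g\|_\infty^2$.

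\emph{Step 2: van der Corput-free bound.} With $G_n=S^{\Omega(n)+1}|g|^2$, apply Cauchy--Schwarz in $x$ for each fixed $n$ together with the $L^2$ average in $n$ (treating $(n,x)$ jointly):
\[
\Bigl|\frac1N\sum_n\int\Phi_nG_n\Bigr|\le\|g\|_\infty^2\Bigl(\frac1N\sum_n\int|\Phi_n|\Bigr).
\]
This is too lossy. Instead, I would move everything to a form where $\Phi_n$ appears composed with $T_2^{-qn}T_1^{\ldots}$ so that it matches \eqref{eq:prime-dilation-three-direction}. Composing with $T^{-n(0,q)}$ gives
\[
\Phi_n\circ T^{-n(0,q)}=T^{n(p,-q)}f_1\,\overline{T^{n(q,-q)}f_1}\,T^{n(0,p-q)}f_2\,\overline{f_2},
\]
which is exactly the integrand in \eqref{eq:prime-dilation-three-direction}. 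I would then write
\[
\frac1N\sum_n\int\Phi_nG_n=\int\Bigl(\frac1N\sum_n \Psi_n\,\widetilde G_n\Bigr),
\]
with $\Psi_n$ that integrand and $\widetilde G_n=T^{-n(0,q)}S^{\Omega(n)+1}|g|^2$. To remove $\widetilde G_n$, I would split it into its $S$-invariant conditional expectation and the remainder, or simply invoke the hypothesis in its natural strengthened reading (the averages $\frac1N\sum_n\Psi_n\to0$ weakly against bounded weights). This is the main obstacle.

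As a fallback, apply \Cref{lem:orthogonality} to the rescaled vectors $w_n:=v_n$ with the Hilbert space $L^2(\mu)$, and note that the inner products are dominated through Cauchy--Schwarz over $n$ by $\bigl(\frac1N\sum_n|\int\Psi_n\widetilde G_n|^2\bigr)^{1/2}$. The expected key point remains the identity $\Omega(pn)=\Omega(qn)$, which makes the $S$-dependence a bounded common weight.
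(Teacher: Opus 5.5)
Your proposal has a genuine gap, and you locate it yourself. After using $\Omega(pn)=\Omega(qn)=\Omega(n)+1$, the correlation is $\langle v_{pn},v_{qn}\rangle=\int\Phi_n\,S^{\Omega(n)+1}|g|^2\,d\mu$. None of your subsequent attempts removes the $n$-dependent weight $S^{\Omega(n)+1}|g|^2$:
\begin{itemize}
\item Cauchy--Schwarz loses everything, as you note.
\item Splitting $|g|^2$ into $\E_\mu(|g|^2\mid\mathcal I(S))$ plus a remainder does not help. Even the $S$-invariant part is a non-constant weight, and it becomes $n$-dependent after your change of variables by $T_2^{-qn}$. The hypothesis \eqref{eq:prime-dilation-three-direction} only concerns the unweighted integral.
\item The \emph{strengthened reading} of the hypothesis is simply not available. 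Even norm convergence $\frac1N\sum_n\Psi_n\to0$ would not control $\frac1N\sum_n\int\Psi_n W_n\,d\mu$ for weights $W_n$ that vary with $n$.
\end{itemize}
As written, the argument does not close.

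The missing idea is a one-line linearity reduction, and it is exactly how the paper's proof begins. The conclusion is linear in $g$, and every $g\in L^\infty(\mu)$ is a finite linear combination of functions of modulus one. For real $g$ with $\|g\|_\infty\le1$, write $g=\frac12(u+\overline u)$ with $u=g+i\sqrt{1-g^2}$; for complex $g$, normalize and treat real and imaginary parts separately. So you may assume $|g|\equiv1$. Then $S^{\Omega(n)+1}g\cdot\overline{S^{\Omega(n)+1}g}\equiv1$, the weight disappears identically, and $\langle v_{pn},v_{qn}\rangle=\int\Phi_n\,d\mu$.

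From there the rest of your proposal is correct and coincides with the paper. The measure-preserving change of variables by $T_2^{-qn}$ (your $T^{-n(0,q)}$) turns $\int\Phi_n\,d\mu$ into exactly the integrand of \eqref{eq:prime-dilation-three-direction}. The orthogonality criterion \Cref{lem:orthogonality}, applied with the given $\cP_0$, then finishes the proof.
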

			\begin{proof}
				It suffices to consider the case where $|g|\equiv 1$. Put
				$
				v_n=T_1^n f_1\,T_2^n f_2\,S^{\Omega(n)}g .
				$
				Since $\Omega(pn)=\Omega(qn)=\Omega(n)+1$,
				$$
				\langle v_{pn},v_{qn}\rangle
				=
				\int_{X} T_1^{pn}f_1\,\overline{T_1^{qn}f_1}\,
				T_2^{pn}f_2\,\overline{T_2^{qn}f_2}\,d\mu .
				$$
				The $\mu$-invariance of $T_2$ gives
				\[
				\langle v_{pn},v_{qn}\rangle
				=
				\int T^{n(p,-q)}f_1\,
				T^{n(q,-q)}\overline{f_1}\,
				T^{n(0,p-q)}f_2\,
				\overline{f_2}\,d\mu .
				\]
				Hence, \eqref{eq:prime-dilation-three-direction} and \Cref{lem:orthogonality} yield the lemma.
			\end{proof}
			Next, we introduce the second technique lemma, whose proof is standard.
			\begin{lemma}
				\label{lem:S-preserving-lift}
				Fix a \(\mathbb Z^2\)-system \((X,\mathcal X,\mu,(T^v)_{v\in \Z^2})\) and an invertible measure-preserving transformation $S:X\to X$ commuting
				with \(T\). Fix a factor map
				$
				\pi:(Y,\mathcal Y,\nu,({\widetilde{T}}^v)_{v\in \Z^2})\to (X,\mathcal X,\mu,(T^v)_{v\in \Z^2}).
				$
				Then there exist a standard probability
				space \((\widehat X,\widehat{\mathcal X},\widehat\mu)\), a
				measure-preserving \(\mathbb Z^2\)-action \(\widehat T\), and an invertible
				measure-preserving transformation \(\widehat S:\widehat X\to \widehat X\) commuting with \(\widehat T\),
				together with factor maps
				$
				\theta:(\widehat X,\widehat{\mathcal X},\widehat\mu,({\widehat{T}}^v)_{v\in \Z^2})
				\to
				(Y,\mathcal Y,\nu,({\widetilde{T}}^v)_{v\in \Z^2})
				$
				and
				$
				\rho:(\widehat X,\widehat{\mathcal X},\widehat\mu,({\widehat{T}}^v)_{v\in \Z^2})
				\to
				(X,\mathcal X,\mu,(T^v)_{v\in \Z^2}),
				$
				such that \(\pi\circ\theta=\rho\) and $\rho \circ \widehat S=S\circ \rho$. Moreover, \(\mathcal I(\widehat S)=\rho^{-1}\mathcal I(S)\), 
				and for every \(g\in L^2(\mu)\),
				$
				\E_{\widehat{\mu}}(g\circ\rho\mid\mathcal I(\widehat S))
				=
				\E_{\mu}(g\mid\mathcal I(S))\circ\rho .
				$
			\end{lemma}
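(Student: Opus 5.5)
The natural construction is a relatively independent joining over $X$, which lets the extension carry the auxiliary transformation $S$ along with the $\mathbb Z^2$-action. Since $S$ commutes with $T$, the triple $(T_1,T_2,S)$ defines a $\mathbb Z^3$-action on $X$, and I will write $T^{(v,m)}=T^vS^m$. The extension $Y$ is only a $\mathbb Z^2$-system, so first I will induce it to a $\mathbb Z^3$-system. Take $Y^{\mathbb Z}$ with the $\mathbb Z^3$-action in which $\widetilde T^v$ acts coordinatewise and the third generator is the shift $\sigma$. Put on it the measure obtained by gluing copies of $\nu$ over $X$. Concretely, let $\nu=\int_X\nu_x\,d\mu(x)$ be the disintegration of $\nu$ over $\pi$, and define
\[
\widehat\mu=\int_X\bigotimes_{m\in\mathbb Z}\nu_{S^mx}\,d\mu(x)
\]
on $\widehat X:=X\times Y^{\mathbb Z}$, viewed as a measure supported on $\{(x,(y_m)_m):\pi(y_m)=S^mx\ \text{for all } m\}$. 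Then I set
\[
\widehat S(x,(y_m))=(Sx,(y_{m+1})),\qquad \widehat T^v(x,(y_m))=(T^vx,(\widetilde T^vy_m)),
\]
$\rho(x,(y_m))=x$ and $\theta(x,(y_m))=y_0$. Standardness is clear, since $\widehat X$ is a countable product of standard spaces.

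I will then verify the structural properties in the following order.

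First, $\widehat S$ and $\widehat T^v$ commute and are invertible. This is immediate from the fact that the shift commutes with coordinatewise maps, together with $ST^v=T^vS$.

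Second, $\widehat S$ preserves $\widehat\mu$. This follows from reindexing: the integrand at $Sx$ is the shift of the integrand at $x$, and $\mu$ is $S$-invariant.

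Third, $\widehat T^v$ preserves $\widehat\mu$. Here I use the equivariance $(\widetilde T^v)_*\nu_x=\nu_{T^vx}$ of the disintegration, which holds because $\pi$ is a $\mathbb Z^2$-factor map. Combined with $T^vS^m=S^mT^v$, it gives $(\widetilde T^v)_*\nu_{S^mx}=\nu_{S^mT^vx}$, so the product measure at $x$ is carried to the product measure at $T^vx$.

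Fourth, the marginals and intertwining relations. The $\theta$-marginal is $\int\nu_x\,d\mu=\nu$, and the $\rho$-marginal is $\mu$. On the support of $\widehat\mu$ we have $\pi(y_0)=x$, so $\pi\circ\theta=\rho$ holds $\widehat\mu$-a.e. Finally, $\rho\circ\widehat S=S\circ\rho$ holds by definition.

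The one step requiring care is the identity $\mathcal I(\widehat S)=\rho^{-1}\mathcal I(S)$, together with the resulting formula for conditional expectations. The inclusion $\supseteq$ is trivial. For $\subseteq$, I plan to condition on $x$. Fix an $\widehat S$-invariant function $F\in L^2(\widehat\mu)$ and disintegrate over $\rho$; the fibre measure over $x$ is the product measure $P_x=\bigotimes_m\nu_{S^mx}$. The key point is that $\widehat S$ carries the fibre over $x$ to the fibre over $Sx$ by a shift of product measures. Working on $X\times Y^{\mathbb Z}$, I would aim for a relative Kolmogorov $0$–$1$ law argument. Approximate $F$ in $L^2(\widehat\mu)$ by a function $F_0$ depending on $x$ and on finitely many coordinates $y_m$ with $|m|\le M$. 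Then $F=F\circ\widehat S^{k}$ is approximated by $F_0\circ\widehat S^k$, which depends on the coordinates with $|m-k|\le M$. For $|k|>2M$ these two coordinate blocks are disjoint, hence conditionally independent given $x$, so
\[
\E(F_0\,\overline{F_0\circ\widehat S^k}\mid\rho)=\E(F_0\mid\rho)\,\overline{\E(F_0\circ\widehat S^k\mid\rho)}.
\]
From this it follows that $\|F\|_2^2$ is approximately $\|\E(F\mid\rho^{-1}\mathcal X)\|_2^2$. This forces $F$ to be $\rho^{-1}\mathcal X$-measurable, i.e. $F=f\circ\rho$. Since $\rho$ intertwines the two maps, $f$ is then $S$-invariant. (A Hewitt–Savage or Kolmogorov argument applied fibrewise is the alternative if the approximation bookkeeping is awkward.)

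Once $\mathcal I(\widehat S)=\rho^{-1}\mathcal I(S)$ is established, the expectation identity
\[
\E_{\widehat\mu}(g\circ\rho\mid\mathcal I(\widehat S))=\E_\mu(g\mid\mathcal I(S))\circ\rho
\]
is formal. It follows from $\rho_*\widehat\mu=\mu$ and the fact that $\E_\mu(g\mid\mathcal I(S))\circ\rho$ is $\rho^{-1}\mathcal I(S)$-measurable and has the correct integrals against all functions $h\circ\rho$ with $h$ $\mathcal I(S)$-measurable.
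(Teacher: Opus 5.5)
Your proposal is correct and is essentially the paper's proof. It uses the same fibred measure $\widehat\mu=\int_X\delta_x\otimes\bigotimes_{m\in\mathbb Z}\nu_{S^mx}\,d\mu(x)$ on $\{(x,(y_m)):\pi(y_m)=S^mx\}$, with the shift and coordinatewise actions, and the same relative Kolmogorov argument (finite-coordinate approximation, disjoint coordinate blocks after applying $\widehat S^k$, conditional independence given $x$). The paper first subtracts $\E(F\mid\rho^{-1}\mathcal X)$ and shows the remainder is orthogonal to its $\widehat S^k$-translate, whereas you compare $\|F\|_2^2$ with $\|\E(F\mid\rho^{-1}\mathcal X)\|_2^2$ directly; the two are equivalent.
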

			\begin{proof}
				Disintegrate \(\nu\) over \(\mu\) with respect to \(\pi\), say that 
				$\displaystyle \nu=\int_X\nu_x\,d\mu(x)$. 
				Define
				$
				\widehat X
				=
				\Bigl\{(x,(y_m)_{m\in\mathbb Z})\in X\times Y^{\mathbb Z}:
				\pi(y_m)=S^m x\ \text{for all }m\in\mathbb Z\Bigr\}.
				$
				On \(X\times Y^{\mathbb Z}\), define
				\[
				\widehat\mu
				=
				\int_X \delta_x\otimes
				\bigotimes_{m\in\mathbb Z}\nu_{S^m x}\,d\mu(x).
				\]
				Since \(S\) preserves \(\mu\), for \(\mu\)-a.e. \(x\in X\),
				\(\nu_{S^m x}(\pi^{-1}\{S^m x\})=1\) for all \(m\in\mathbb Z\).  Hence,
				\(\widehat\mu(\widehat X)=1\).
				
				For each \(v\in\mathbb Z^2\), set
				$$
				\widehat T^v(x,(y_m)_{m\in\mathbb Z})
				=
				\bigl(T^v x,(\widetilde T^v y_m)_{m\in\mathbb Z}\bigr),\ \text{and}\ 
				\widehat S(x,(y_m)_{m\in\mathbb Z})
				=
				\bigl(Sx,(y_{m+1})_{m\in\mathbb Z}\bigr).
				$$
				These maps preserve \(\widehat X\), because
				\(\pi(\widetilde T^v y_m)=T^v\pi(y_m)=T^vS^m x=S^mT^v x\), and
				\(\pi(y_{m+1})=S^{m+1}x=S^m(Sx)\).  Moreover,
				\(\widehat T^v\widehat S=\widehat S\widehat T^v\).
				
				For any \(v\in\mathbb Z^2\),
				\[
				\begin{aligned}
					(\widehat T^v)_*\widehat\mu
					&=
					\int_X
					\delta_{T^v x}\otimes
					\bigotimes_{m\in\mathbb Z}(\widetilde T^v)_*\nu_{S^m x}\,d\mu(x) 
					=
					\int_X
					\delta_{T^v x}\otimes
					\bigotimes_{m\in\mathbb Z}\nu_{T^vS^m x}\,d\mu(x)  \\
					&=
					\int_X
					\delta_{T^v x}\otimes
					\bigotimes_{m\in\mathbb Z}\nu_{S^m(T^v x)}\,d\mu(x) 
					=
					\int_X
					\delta_z\otimes
					\bigotimes_{m\in\mathbb Z}\nu_{S^m z}\,d\mu(z)
					=
					\widehat\mu .
				\end{aligned}
				\]
				Here, the last line uses the change of variables \(z=T^v x\).  
				
				Similarly, write 
				\(\sigma((y_m)_{m\in \Z})=(y_{m+1})_{m\in \Z}\), then
				\[
				\begin{aligned}
					(\widehat S)_*\widehat\mu
					&=
					\int_X
					\delta_{Sx}\otimes
					\sigma_*\Bigl(\bigotimes_{m\in\mathbb Z}\nu_{S^m x}\Bigr)\,d\mu(x)
					=
					\int_X
					\delta_{Sx}\otimes
					\bigotimes_{m\in\mathbb Z}\nu_{S^{m+1}x}\,d\mu(x)  \\
					&=
					\int_X
					\delta_{Sx}\otimes
					\bigotimes_{m\in\mathbb Z}\nu_{S^m(Sx)}\,d\mu(x)  
					=
					\int_X
					\delta_z\otimes
					\bigotimes_{m\in\mathbb Z}\nu_{S^m z}\,d\mu(z)
					=
					\widehat\mu.
				\end{aligned}
				\]
				Thus \(\widehat T\) and \(\widehat S\) are $\widehat \mu$-preserving.
				
				Let $\widehat{\X}=\left(\X\otimes \mathcal{Y}^{\otimes\Z}\right)\Big|_{\widehat{X}}$. 
				Define two measurable maps \(\rho:(\widehat{X},\widehat{\X})\to (X,\X),(x,(y_m)_{m\in \Z})\mapsto x\) and \(\theta:(\widehat{X},\widehat{\X})\to (Y,\mathcal{Y}), (x,(y_m)_{m\in \Z})\mapsto y_0\).
				Since
				$$
				\rho_*\widehat\mu=\int_X\delta_x\,d\mu(x)=\mu,\ \text{and}\ 
				\theta_*\widehat\mu
				=\int_X\nu_x\,d\mu(x)=\nu,
				$$
				\(\rho\) and \(\theta\) are measure-preserving. And
				\(\rho\circ\widehat T^v=T^v\circ\rho\),
				and 
				\(\theta\circ\widehat T^v=\widetilde T^v\circ\theta\).
				Then $\rho$ and $\theta$ are two factor maps. Moreover, \(\rho\circ\widehat S=S\circ\rho\) and \(\pi\circ\theta=\rho\).
				
				Next, we verify the last assertion. Put
				\(\mathcal B=\rho^{-1}\mathcal X\).  Since
				\(\rho\circ\widehat S=S\circ\rho\),
				\(\mathcal B\) is \(\widehat S\)-invariant; hence conditional
				expectation onto \(\mathcal B\) commutes with \(\widehat S\).
				First, every \(\widehat S\)-invariant \(L^2\)-function is
				\(\mathcal B\)-measurable.
				
				For \(m\in\mathbb Z\), let \(p_m:\widehat X\to Y\) be the coordinate projection
				\(p_m(x,(y_j)_{j\in \Z})=y_m\).  For any non-empty finite \(I\subset\mathbb Z\), put
				$\displaystyle\mathcal C_I=\mathcal B\vee\bigvee_{m\in I}p_m^{-1}\mathcal Y.$
				Since
				$
				\widehat{\mathcal X}
				=
				\mathcal B\vee\bigvee_{m\in\mathbb Z}p_m^{-1}\mathcal Y,
				$
				$\{f\in L^{\infty}(\widehat X,\mathcal C_I,\widehat\mu):\varnothing\neq I\subset \Z\ 
				\text{finite}\}$ are dense in \(L^2(\widehat\mu)\).
				
				Let \(F\in L^2(\widehat\mu)\) satisfy \(F\circ\widehat S=F\), and set
				\(F_0=F-\E_{\widehat\mu}(F\mid\mathcal B)\).  Then \(F_0\circ\widehat S=F_0\) and
				\(\E_{\widehat\mu}(F_0\mid\mathcal B)=0\)
				.  Fix a sufficiently small \(\varepsilon>0\).  Choose a  non-empty finite \(I\subset\mathbb Z\) and a bounded
				\(\mathcal C_I\)-measurable function \(C'\)
				such that \(\|F_0-C'\|_{L^{2}(\widehat\mu)}<\varepsilon/2\).  Set
				\(C=C'-\E_{\widehat\mu}(C'\mid\mathcal B)\).  Then \(\E_{\widehat\mu}(C\mid\mathcal B)=0\), and
				\[
				\begin{aligned}
					\|F_0-C\|_{L^{2}(\widehat\mu)}
					=
					\bigl\|F_0-C'+\E_{\widehat\mu}(C'-F_0\mid\mathcal B)\bigr\|_{L^{2}(\widehat\mu)}
					\leq
					2\|F_0-C'\|_{L^{2}(\widehat\mu)}
					<\varepsilon .
				\end{aligned}
				\]
				
				Choose \(k\in\mathbb Z\) such that \(I\cap(I+k)=\varnothing\).  Since
				\(\widehat S^k(x,(y_m)_{m\in \Z})=(S^kx,(y_{m+k})_{m\in \Z})\),
				\(C\circ\widehat S^k\) is \(\mathcal C_{I+k}\)-measurable.  Since \(C\) is
				\(\mathcal C_I\)-measurable, we can write it as \(C(x,(y_m)_{m\in I})\).  Then,
				after identifying \(\mathcal B\)-measurable functions with functions of \(x\),
				\[
				\begin{aligned}
					&\E_{\widehat\mu}\bigl(C\cdot \,\overline{C\circ\widehat S^k}\mid\mathcal B\bigr)(x)  \\
					= &
					\int_{Y^{\Z}}
					C(x,(y_m)_{m\in I})\,
					\overline{C(S^kx,(y_{m+k})_{m\in I})}\,
					d\Bigl(\bigotimes_{m\in\mathbb Z}\nu_{S^m x}\Bigr)((y_m)_{m\in \Z})  \\
					= &
					\left(
					\int_{Y^{I}}
					C(x,(y_m)_{m\in I})\,
					d\Bigl(\bigotimes_{m\in I}\nu_{S^m x}\Bigr)
					\right)
					\left(
					\int_{Y^{I+k}}
					\overline{C(S^kx,(y_m)_{m\in I+k})}\,
					d\Bigl(\bigotimes_{m\in I+k}\nu_{S^m x}\Bigr)
					\right)  \\
					=&
					\E_{\widehat\mu}(C\mid\mathcal B)(x)\,
					\E_{\widehat\mu}(\overline{C\circ\widehat S^k}\mid\mathcal B)(x)
					=0.
				\end{aligned}
				\]
				Hence, \(\langle C,C\circ\widehat S^k\rangle_{L^{2}(\widehat\mu)}=0\).
				
				Since \(F_0\circ\widehat S^k=F_0\), the triangle inequality and
				\(\langle C,C\circ\widehat S^k\rangle_{L^{2}(\widehat\mu)}=0\) give
				\[
				\begin{aligned}
					&\|F_0\|_{L^{2}(\widehat\mu)}^2
					=\bigl|\langle F_0,F_0\circ\widehat S^k\rangle_{L^{2}(\widehat\mu)}\bigr|
					\leq
					\bigl|\langle F_0-C,F_0\circ\widehat S^k\rangle_{L^{2}(\widehat\mu)}\bigr|
					+\bigl|\langle C,(F_0-C)\circ\widehat S^k\rangle_{L^{2}(\widehat\mu)}\bigr|  \\
					\leq &
					\|F_0-C\|_{L^{2}(\widehat\mu)}(\|F_0\|_{L^{2}(\widehat\mu)}+\|C'\|_{L^{2}(\widehat\mu)})
					<
					\varepsilon\|F_0\|_{L^{2}(\widehat\mu)}+\varepsilon(\|F_0\|_{L^{2}(\widehat\mu)}+\varepsilon/2) .
				\end{aligned}
				\]
				This means that every \(\widehat S\)-invariant \(L^2\)-function is \(\rho^{-1}\mathcal X\)-measurable. That is, $\mathcal I(\widehat S)\subseteq \rho^{-1}\mathcal X$. Let
				\(F\in L^2(\widehat\mu)\) be \(\widehat S\)-invariant. Then \(F=H\circ\rho\) for some \(H\in L^2(\mu)\).
				The \(\widehat S\)-invariance of
				\(F\) gives \((H\circ S)\circ\rho=H\circ\rho\).  Since
				\(\rho_*\widehat\mu=\mu\), it follows that \(H\circ S=H\) in \(L^2(\mu)\).
				Therefore,
				\(\mathcal I(\widehat S)\subseteq\rho^{-1}\mathcal I(S)\). Since \(\rho\circ\widehat S=S\circ\rho\), one has
				\(\rho^{-1}\mathcal I(S)\subseteq\mathcal I(\widehat S)\). To sum up, \(\rho^{-1}\mathcal I(S)=\mathcal I(\widehat S)\).
				Consequently, for every \(g\in L^2(\mu)\),
				\(\E_{\widehat\mu}(g\circ\rho\mid\mathcal I(\widehat S))
				=\E_{\mu}(g\mid\mathcal I(S))\circ\rho\). The proof is complete.
			\end{proof}
			\subsection{First reduction}
			Let \(\mathbb P\) denote the set of primes.  Now, based on \Cref{thm:austin-pleasant} and \Cref{lem:prime-dilation-to-three-direction}, we give the following notations: For distinct primes \(p,q\), set
			\[
			\begin{aligned}
				W_1^{p,q}
				&=\{(p,-q),(p-q,0),(p,-p),(q,-q),(q-p,0),(q,-p)\},\\
				W_2^{p,q}
				&=\{(0,p-q),(-p,p),(-q,p)\}.
			\end{aligned}
			\]
			For a \(\mathbb Z^2\)-system \((X,\cX,\mu,(T^v)_{v\in \Z^2})\) and a
			\(T\)-invariant factor \(\mathcal N_X\subseteq Z_2(T)\), set
			\begin{equation}\label{eq:char-factors}
				\mathcal Z_{i,\mathcal N_X}
				=
				\mathcal N_X
				\vee
				\bigvee_{\substack{p,q\in\mathbb P\\p\neq q}}
				\bigvee_{w\in W_i^{p,q}}
				\cI(T^w),
				i=1,2.
			\end{equation}
			\begin{proposition}
				\label{prop:prime-dilation-austin-reduction}
				To prove \(\Cref{thm:main-decoupling}\), it suffices to prove that, for
				any \(\mathbb Z^2\)-system \((X,\cX,\mu,(T^v)_{v\in \Z^2})\), any
				invertible measure-preserving transformation \(S:X\to X\) commuting with \(T\),
				any \(T\)-invariant factor \(\mathcal N_X\subseteq Z_2(T)\), one has that for all
				\(f_i\in L^\infty(X,\mathcal Z_{i,\mathcal N_X},\mu)\), \(i=1,2\), and all
				\(g\in L^\infty(\mu)\),
				\[
				\frac1N\sum_{n=1}^N
				T^{n(1,0)}f_1\,T^{n(0,1)}f_2\,S^{\Omega(n)}g
				-
				\left(
				\frac1N\sum_{n=1}^N
				T^{n(1,0)}f_1\,T^{n(0,1)}f_2
				\right)
				\E_{\mu}(g\mid\cI(S))
				\longrightarrow0
				\]
				in \(L^2(\mu)\) as $N\to\infty$.
			\end{proposition}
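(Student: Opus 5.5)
We reduce \Cref{thm:main-decoupling} to the stated structured statement in three moves: lift to Austin's pleasant extension, split $f_1,f_2$ into structured and orthogonal parts, and kill the orthogonal parts with \Cref{lem:prime-dilation-to-three-direction}.

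\emph{Lifting.} Given $T_1,T_2,S$ on $(X,\mathcal X,\mu)$, put $T^{(a,b)}=T_1^aT_2^b$. Apply \Cref{thm:austin-pleasant} to obtain an extension $\pi:\widetilde X\to X$ and the factor $\mathcal N_{\widetilde X}$. Then apply \Cref{lem:S-preserving-lift} to $\pi$ and $S$. This gives a system $\widehat X$ with commuting $\widehat T,\widehat S$ and factor maps $\theta:\widehat X\to\widetilde X$ and $\rho:\widehat X\to X$, with $\pi\circ\theta=\rho$. We set $\mathcal N_{\widehat X}:=\theta^{-1}\mathcal N_{\widetilde X}$. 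By \Cref{prop:austin-pronil-domination}, $\mathcal N_{\widetilde X}\subseteq Z_2(\widetilde T)$, and \Cref{lem:factor-domination} then gives $\mathcal N_{\widehat X}\subseteq Z_2(\widehat T)$. Moreover $\theta^{-1}\mathcal I(\widetilde T^w)\subseteq\mathcal I(\widehat T^w)$.

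Because $\rho$ is a factor map and $\E_{\widehat\mu}(g\circ\rho\mid\mathcal I(\widehat S))=\E_\mu(g\mid\mathcal I(S))\circ\rho$, the $L^2$ norm in \eqref{eq:main-decoupling} computed on $X$ equals the same norm for the lifted functions on $\widehat X$. So it suffices to prove \eqref{eq:main-decoupling} on $\widehat X$, and we may pull back Austin's characteristic factors along $\theta$.

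\emph{Splitting.} Write $f_i=\E(f_i\mid\mathcal Z_{i,\mathcal N_{\widehat X}})+f_i'$ for $i=1,2$, where $\mathcal Z_{i,\mathcal N_{\widehat X}}$ is as in \eqref{eq:char-factors}. Expanding by multilinearity, the term with both $f_i$ replaced by their projections is exactly the assumed statement. Replacing $g$ by $g-\E(g\mid\mathcal I(\widehat S))$ in that statement kills the subtracted term, since $\E(g-\E(g\mid\mathcal I(\widehat S))\mid\mathcal I(\widehat S))=0$. What remains is to show that every term containing some $f_i'$ tends to $0$ in $L^2$, for arbitrary $g$.

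\emph{Orthogonal terms.} By \Cref{lem:prime-dilation-to-three-direction}, applied with $\cP_0=\mathbb P\setminus\{2,3\}$, it suffices to verify \eqref{eq:prime-dilation-three-direction} for distinct primes $p,q$. That is, we need
\[
\frac1N\sum_{n=1}^N\int \widehat T^{n(p,-q)}f_1\,\widehat T^{n(q,-q)}\overline{f_1}\,\widehat T^{n(0,p-q)}f_2\,\overline{f_2}\,d\widehat\mu\to0
\]
whenever $f_1=f_1'$ or $f_2=f_2'$. We apply \Cref{thm:austin-pleasant} on $\widetilde X$ and pull back along $\theta$.

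First, use $\widehat T$-invariance of $\widehat\mu$ to turn the integral into a three-term average composed with the fourth function. If $f_1'$ is the orthogonal part, move $\overline{f_2}$ to the identity position, keeping $p_1=(p,-q)$, $p_2=(q,-q)$, $p_3=(0,p-q)$. The differences $p_1-p_2=(p-q,0)$ and $p_1-p_3=(p,-p)$ both lie in $W_1^{p,q}$, as does $p_1$ itself. The role of $p_2$ is symmetric and gives $(q,-q)$, $(q-p,0)$ and $(q,-p)$, again in $W_1^{p,q}$.

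If $f_2'$ is the orthogonal part, compose with $\widehat T^{-n(0,p-q)}$ so that $f_2$ sits at $0$ and $\overline{f_2}$ at direction $(0,q-p)$. The directions then become $(p,-p)$, $(q,-p)$ and $(0,q-p)$. Isolating the coordinate of $\overline{f_2}$, we should obtain the factors $\mathcal I(\widehat T^{w})$ for $w=(0,p-q),(-p,p),(-q,p)$, up to sign, which is $W_2^{p,q}$.

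General position holds for $p\ne q$ with $p,q\ne0$: no three of $0,(p,-q),(q,-q),(0,p-q)$ are collinear. I expect checking this, and matching each coordinate's Austin factor to the correct $W_i^{p,q}$ (possibly with sign changes, which are harmless since $\mathcal I(T^{w})=\mathcal I(T^{-w})$), to be the main bookkeeping obstacle.

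One further point needs care. Austin's factor lives on $\widetilde X$, but $f_i'$ is orthogonal to the larger $\sigma$-algebra on $\widehat X$. So we should condition $f_i'$ onto $\theta^{-1}\widetilde{\mathcal X}$ first. The averages, apart from the orthogonal function, can be arranged to involve only $\widetilde X$-measurable data after such conditioning, or we could instead apply Austin's theorem directly to $\widehat X$, which is itself an extension. The cleaner option is the latter: Austin's pleasant property is inherited by further extensions when the factor is pulled back. If that is unavailable, reorder the steps: first take Austin's extension of the $\mathbb Z^2$-system already lifted by \Cref{lem:S-preserving-lift}.
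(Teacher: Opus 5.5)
Your architecture (Austin's extension, then \Cref{lem:S-preserving-lift}, then splitting and killing the orthogonal terms with \Cref{lem:prime-dilation-to-three-direction}) is the paper's. However, there is a genuine gap in where you split.

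You write $f_i\circ\rho=\E(f_i\circ\rho\mid\mathcal Z_{i,\mathcal N_{\widehat X}})+f_i'$ on $\widehat X$, but \Cref{thm:austin-pleasant} only speaks about functions on $\widetilde X$. Take a term such as $\bigl(f_1',\E(f_2\circ\rho\mid\mathcal Z_{2,\mathcal N_{\widehat X}})\bigr)$. None of the four functions in its prime-dilated correlation need be $\theta^{-1}\widetilde{\mathcal X}$-measurable. Conditional expectation onto a single $\cI(\widehat T^w)$ does commute with pullback along $\theta$, by the mean ergodic theorem. For the join with $\theta^{-1}\mathcal N_{\widetilde X}$ and other isotropy factors, however, this would require $\theta^{-1}\widetilde{\mathcal X}$ and $\mathcal Z_{i,\mathcal N_{\widehat X}}$ to be relatively independent over $\theta^{-1}\mathcal Z_{i,\mathcal N_{\widetilde X}}$, and nothing available provides that.

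None of your three remedies closes this.
\begin{itemize}
\item Conditioning one orthogonal function onto $\theta^{-1}\widetilde{\mathcal X}$ leaves the integral unchanged only if every other function in it is $\theta^{-1}\widetilde{\mathcal X}$-measurable. That fails here, because $\overline{f_i'}$ appears too, and the other coordinate's structured part need not be $\theta^{-1}\widetilde{\mathcal X}$-measurable.
\item The claim that pleasantness passes to $\widehat X$ with the pulled-back $\mathcal N$ does not follow from \Cref{thm:austin-pleasant}, and you give no argument for it.
\item Reordering is circular. Austin's extension of the lifted $\mathbb Z^2$-system carries no lift of $\widehat S$, and lifting it again with \Cref{lem:S-preserving-lift} puts you back where you started.
\end{itemize}

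The fix, which is what the paper does, is to split before lifting. On $\widetilde X$ write $f_i\circ\pi=\E_\nu(f_i\circ\pi\mid\mathcal Z_{i,\mathcal N_{\widetilde X}})+u_i$. Then $f_i\circ\rho=\phi_i+u_i\circ\theta$, with $\phi_i$ measurable with respect to $\theta^{-1}\mathcal Z_{i,\mathcal N_{\widetilde X}}\subseteq\mathcal Z_{i,\mathcal N_{\widehat X}}$ (the inclusion you already noted).
\begin{itemize}
\item The hypothesis on $\widehat X$, applied with $g\circ\rho-\E(g\circ\rho\mid\cI(\widehat S))$, handles $(\phi_1,\phi_2)$.
\item In the three remaining terms, $\Omega(pn)=\Omega(qn)$ removes $\widehat S$ from $\langle v_{pn},v_{qn}\rangle$. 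So every function in the correlation is a pullback from $\widetilde X$. Since $\theta_*\widehat\mu=\nu$ and $\theta$ intertwines the actions, the correlation equals the corresponding one on $\widetilde X$, where \Cref{thm:austin-pleasant} applies directly.
\end{itemize}

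There is also a minor slip. After your shift by $-n(0,p-q)$, isolating $\overline{f_2}$ at $(0,q-p)$ produces $(0,q-p),(-p,q),(-q,q)$, i.e.\ $W_2^{q,p}$ rather than $W_2^{p,q}$. This is harmless, because $\mathcal Z_{2,\mathcal N}$ joins over all ordered pairs of distinct primes. With no shift at all (keep $\overline{f_2}$ at the identity and $f_2$ at $(0,p-q)$) you get exactly $W_2^{p,q}$.
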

			
			\begin{proof}
				Assume the structured statement in the above proposition. Next, we prove that  \(\Cref{thm:main-decoupling}\) holds.
				
				Let \(T_1,T_2,S\) be as in \Cref{thm:main-decoupling}, and define the
				\(\mathbb Z^2\)-action \(T\) by \(T^{(a,b)}=T_1^aT_2^b\).  Then \(S\)
				commutes with \(T\).  Apply \Cref{thm:austin-pleasant} to
				\((X,\cX,\mu,(T^v)_{v\in \Z^2})\), obtaining an extension
				\[
				\pi:(Y,\cY,\nu,({\widetilde{ T}}^v)_{v\in \Z^2})\to (X,\cX,\mu,T)
				\]
				and the Austin's factor \(\mathcal N_Y\).  By
				\Cref{prop:austin-pronil-domination},
				$
				\mathcal N_Y\subseteq Z_2(\widetilde T).
				$
				Let
				\(\mathcal Z_{i,Y}\), \(i=1,2\), be the factors defined by
				\eqref{eq:char-factors} for \((Y,\cY,\nu,({\widetilde{ T}}^v)_{v\in \Z^2})\) and
				\(\mathcal N_Y\).
				
				Apply \Cref{lem:S-preserving-lift} to \(\pi\) and \(S\), obtaining the standard probability space 
				\((\widehat X,\widehat{\cX},\widehat\mu)\), a \(\mathbb Z^2\)-action
				\(\widehat T\) on $\widehat X$, an invertible measure-preserving transformation \(\widehat S:\widehat X\to \widehat X\) commuting with
				\(\widehat T\), and two factor maps
				\[
				\theta:(\widehat X,\widehat{\cX},\widehat\mu,({\widehat{ T}}^v)_{v\in \Z^2})
				\to (Y,\cY,\nu,({\widetilde{ T}}^v)_{v\in \Z^2}),
				\rho:(\widehat X,\widehat{\cX},\widehat\mu,({\widehat{ T}}^v)_{v\in \Z^2})
				\to (X,\cX,\mu,(T^v)_{v\in \Z^2})
				\]
				such that \(\pi\circ\theta=\rho\), $\rho \circ \widehat S=S\circ \rho$ and
				\(\mathcal I(\widehat S)=\rho^{-1}\mathcal I(S)\).
				
				Set \(\mathcal N_{\widehat X}=\theta^{-1}\mathcal N_Y\).  Since
				\(\mathcal N_Y\subseteq Z_2(\widetilde T)\),
				\Cref{lem:factor-domination} gives
				$
				\mathcal N_{\widehat X}\subseteq Z_2(\widehat T).
				$
				Let
				\(\mathcal Z_{i,\widehat X}\), \(i=1,2\), be the factors defined by
				\eqref{eq:char-factors} for
				\((\widehat X,\widehat{\cX},\widehat\mu,({\widehat{ T}}^v)_{v\in \Z^2})\) and
				\(\mathcal N_{\widehat X}\).  Since \(\theta\) intertwines \(\widehat T\) and
				\(\widetilde T\), one has
				$
				\theta^{-1}\mathcal Z_{i,Y}\subseteq \mathcal Z_{i,\widehat X},
				i=1,2.
				$
				
				Take \(f_1,f_2,g\in L^\infty(\mu)\). By \Cref{lem:S-preserving-lift}, one has
				\(\E_{\mu}(g\mid\mathcal I(S))\circ\rho
				=\E_{\widehat\mu}(g\circ\rho\mid\mathcal I(\widehat S))\).  Put
				$
				G_0=g\circ\rho-\E_{\widehat\mu}(g\circ\rho\mid\mathcal I(\widehat S)).
				$
				Note that $\rho_{*}\widehat{\mu}=\mu$ and \(G_0=(g-\E_{\mu}(g\mid\mathcal I(S)))\circ\rho\).
				Hence, the proof of \Cref{thm:main-decoupling} is reduced to proving
				\begin{equation}\label{eq:lifted-centered-average}
					\lim_{N\to\infty}
					\norm{
						\frac1N\sum_{n=1}^N
						\widehat T^{n(1,0)}(f_1\circ\rho)\,
						\widehat T^{n(0,1)}(f_2\circ\rho)\,
						\widehat S^{\Omega(n)}G_0
					}_{L^2(\widehat\mu)}
					=0.
				\end{equation}
				
				Write
				$
				f_i\circ\pi
				=
				\E_{\nu}(f_i\circ\pi\mid\mathcal Z_{i,Y})+u_i$, where 
				$
				\E_{\nu}(u_i\mid\mathcal Z_{i,Y})=0.
				$
				Then
				$
				f_i\circ\rho=\phi_i+u_i\circ\theta$, where
				$
				\phi_i=\E_{\nu}(f_i\circ\pi\mid\mathcal Z_{i,Y})\circ\theta .
				$
				Since \(\theta^{-1}\mathcal Z_{i,Y}\subseteq\mathcal Z_{i,\widehat X}\),
				we have \(\phi_i\in L^\infty(\mathcal Z_{i,\widehat X})\).  Also
				\(\E_{\widehat \mu}(G_0\mid\mathcal I(\widehat S))=0\).  Hence, by the starting  hypothesis of the proof, 
				\begin{equation}\label{eq:structured-part-vanishes}
					\lim_{N\to\infty}\frac1N\sum_{n=1}^N
					\widehat T^{n(1,0)}\phi_1\,
					\widehat T^{n(0,1)}\phi_2\,
					\widehat S^{\Omega(n)}G_0=0
					\quad\text{in }L^2(\widehat\mu).
				\end{equation}
				
				For extra three error terms, we need the following claim to eliminate them.
				
				\medskip
				\noindent\emph{Claim.}
				Let \(h_1,h_2\in L^\infty(\nu)\).  If
				\(\E_{\nu}(h_1\mid\mathcal Z_{1,Y})=0\) or
				\(\E_{\nu}(h_2\mid\mathcal Z_{2,Y})=0\), then, for every
				\(G\in L^\infty(\widehat\mu)\),
				\[
				\lim_{N\to\infty}\frac1N\sum_{n=1}^N
				\widehat T^{n(1,0)}(h_1\circ\theta)\,
				\widehat T^{n(0,1)}(h_2\circ\theta)\,
				\widehat S^{\Omega(n)}G
				=0
				\quad\text{in }L^2(\widehat\mu).
				\]
				\medskip
				
				\begin{proof}[Proof of the claim]
					By \Cref{lem:prime-dilation-to-three-direction}, it suffices to prove that, for
					distinct primes \(p,q\),
					\[
					\lim_{N\to\infty}\frac1N\sum_{n=1}^N
					\int_{\widehat X}
					\widehat T^{n(p,-q)}(h_1\circ\theta)\,
					\widehat T^{n(q,-q)}(\overline{h_1}\circ\theta)\,
					\widehat T^{n(0,p-q)}(h_2\circ\theta)\,\cdot
					(\overline{h_2}\circ\theta)\,d\widehat\mu
					=0.
					\]
					Since \(\theta_*\widehat\mu=\nu\), we need to prove that 
					\[
					\lim_{N\to\infty}\frac1N\sum_{n=1}^N
					\int_Y
					\widetilde T^{n(p,-q)}h_1\,
					\widetilde T^{n(q,-q)}\overline{h_1}\,
					\widetilde T^{n(0,p-q)}h_2\,\cdot
					\overline{h_2}\,d\nu=0.
					\]
					Put \(v_1=(p,-q)\), \(v_2=(q,-q)\), and \(v_3=(0,p-q)\).  The points
					\(v_1,v_2,v_3\) are in general position with $0$. By \Cref{thm:austin-pleasant} and \eqref{eq:char-factors}, if
					\(\E_{\nu}(h_i\mid\mathcal Z_{i,Y})=0\), then 
					$$\lim_{N\to\infty}\frac1N\sum_{n=1}^N
					\widetilde T^{n(p,-q)}h_1\,
					\widetilde T^{n(q,-q)}\overline{h_1}\,
					\widetilde T^{n(0,p-q)}h_2=0$$ in $L^{2}(\nu)$. This proves the claim.
				\end{proof}
				Applying the claim with \(G=G_0\), first to
				\((h_1,h_2)=(\E_{\nu}(f_1\circ\pi\mid\mathcal Z_{1,Y}),u_2)\), second to
				\((h_1,h_2)=(u_1,\E_{\nu}(f_2\circ\pi\mid\mathcal Z_{2,Y}))\), and third to $(u_1,u_2)$ shows that the
				three triple ergodic averages containing \(u_1\) or \(u_2\) vanish in
				\(L^2(\widehat\mu)\).  Combining this with
				\eqref{eq:structured-part-vanishes} and
				\(f_i\circ\rho=\phi_i+u_i\circ\theta\) gives
				\eqref{eq:lifted-centered-average}. The proof is complete.
			\end{proof}
			
			\section{Weighted multiple one-action ergodic averages}
			\label{sec:nil-one-action}
			
			This section collects some useful lemmas and isolates the one-action engine after the characteristic factor reduction, which are crucial ingredients in the proofs of \Cref{prop:cyclic-tower-one-action-decoupling} and \Cref{prop:structured-decoupling}.
			%
			\subsection{Approximation by tame nilfunctions}
			\label{subsec:approximation-nil-factors}
			\begin{lemma}
				\label{lem:smooth-bundle-approx}
				Let \(k\geq1\), \((X,\mathcal X,\mu,(T^v)_{v\in \Z^d})\) be a
				\(\mathbb Z^d\)-system,
				\(\Lambda\leq\mathbb Z^d\),
				and
				\(\mathcal N\subseteq Z_k(T|_\Lambda)\) be a
				\(T|_\Lambda\)-invariant subfactor, and
				\(\varnothing\neq \mathcal F\subseteq L^\infty(X,\mathcal N,\mu)\) be finite.  For every
				\(\varepsilon>0\), there are a factor $Y$, which is 
				a bundle of \(k\)-step \(\Lambda\)-nilsystems, associated with a factor map
				$
				\pi:(X,\mathcal X,\mu,(T^v)_{v\in \Lambda})\to(Y,\mathcal Y,\nu,(R^v)_{v\in \Lambda}),
				$
				and \(\Phi_f\in L^\infty(\nu)\), \(f\in\mathcal F\), such that
				\(\Phi_f\circ\pi\) is a tame \(k\)-step nilfunction for
				\(T|_\Lambda\), 
				and
				$
				\|f-\Phi_f\circ\pi\|_{L^2(\mu)}<\varepsilon,\ \text{and}\ 
				\|\Phi_f\|_{L^{\infty}(\nu)}\leq\|f\|_{L^{\infty}(\mu)}+\varepsilon .
				$
			\end{lemma}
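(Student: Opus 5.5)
We will combine the Jamneshan--Machado structure theorem, \Cref{thm:jm-structure}, with the martingale convergence theorem and a truncation and smoothing step on each bundle piece.

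\emph{Step 1: reduce to a single bundle factor.} Apply \Cref{thm:jm-structure} to the \(\Lambda\)-system \((X,\mathcal X,\mu,(T^v)_{v\in\Lambda})\) with step \(k\). This gives an inverse system of factors \(Y_1\leftarrow Y_2\leftarrow\cdots\), each a bundle of \(k\)-step \(\Lambda\)-nilsystems with factor maps \(\pi_n:X\to Y_n\), such that \(Z_k(T|_\Lambda)=\bigvee_n\pi_n^{-1}\mathcal Y_n\). Since \(\mathcal N\subseteq Z_k(T|_\Lambda)\), every \(f\in\mathcal F\) is \(Z_k(T|_\Lambda)\)-measurable. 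The increasing martingale theorem then gives \(\E_\mu(f\mid\pi_n^{-1}\mathcal Y_n)\to f\) in \(L^2(\mu)\). Because \(\mathcal F\) is finite, we may fix one \(n\) with \(\|f-\Psi_f\circ\pi_n\|_{L^2}<\varepsilon/3\) for all \(f\in\mathcal F\) simultaneously. Here \(\Psi_f\in L^\infty(\nu_n)\) satisfies \(\|\Psi_f\|_\infty\le\|f\|_\infty\), since conditional expectation is an \(L^\infty\)-contraction. Set \(Y=Y_n=\bigsqcup_j B_j\times M_j\) and \(\pi=\pi_n\).

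\emph{Step 2: finitely many pieces.} Since \(\sum_j\nu(B_j\times M_j)=1\), we can choose \(J\) so that the tail \(\bigcup_{j>J}B_j\times M_j\) has measure so small that \(\|\Psi_f\one_{\text{tail}}\|_{L^2}<\varepsilon/3\) for each \(f\). Replace \(\Psi_f\) by its restriction to the pieces \(j\le J\).

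\emph{Step 3: fibrewise smoothing.} This is the main obstacle, because we need smoothness in \(y\) with bounds that hold uniformly in \(b\), while \(\Psi_f\) is merely measurable in \(b\). We would mollify along the fibre. On each \(M_j=N_j/\Delta_j\), fix a smooth approximate identity \(\kappa_\delta\in C^\infty(N_j)\): nonnegative, of integral one, and supported in a \(\delta\)-neighbourhood of \(e\). Define
\[
\Phi_f(b,y)=\int_{N_j}\kappa_\delta(h)\,\Psi_f(b,h^{-1}y)\,dm_{N_j}(h).
\]
Then \(y\mapsto\Phi_f(b,y)\) is smooth. Each of its derivatives falls on \(\kappa_\delta\), so \(\|\Phi_f(b,\cdot)\|_{C^K(M_j)}\le C_{K,\delta,j}\|\Psi_f\|_\infty\) uniformly in \(b\), which is exactly the tame condition of \Cref{def:tame-finite-stage-nilfunction}. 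Averaging also preserves the sup bound, giving \(\|\Phi_f\|_\infty\le\|f\|_\infty\). Measurability in \(b\) follows from Fubini.

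\emph{Step 4: the \(L^2\) error.} Left translation \(h\mapsto\Psi_f(b,h^{-1}\cdot)\) is continuous into \(L^2(M_j)\) for each fixed \(b\). Hence \(\|\Phi_f(b,\cdot)-\Psi_f(b,\cdot)\|_{L^2(M_j)}\to0\) as \(\delta\to0\). Dominated convergence, with the bound \(2\|f\|_\infty\), integrates this over \(b\in B_j\) and \(j\le J\). Choosing \(\delta\) small makes the error less than \(\varepsilon/3\) for all \(f\in\mathcal F\) at once. Note that the measure-preserving \(\Lambda\)-action plays no role in this step: we only approximate in \(L^2\) and never require \(\Phi_f\) to be invariant.

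Summing the three errors gives \(\|f-\Phi_f\circ\pi\|_{L^2(\mu)}<\varepsilon\) and \(\|\Phi_f\|_{L^\infty(\nu)}\le\|f\|_{L^\infty(\mu)}\le\|f\|_{L^\infty(\mu)}+\varepsilon\). If the nilmanifolds \(N_j\) are disconnected, we would mollify only along the identity component \(N_j^\circ\). This still yields smooth functions on \(M_j\), because \(M_j\) is a finite union of \(N_j^\circ\)-orbits, each an open submanifold.
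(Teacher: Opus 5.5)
Your proof is correct. Steps 1 and 2 match the paper: apply \Cref{thm:jm-structure} to the $\Lambda$-system, use the martingale theorem to fix one bundle factor that works for all of $\mathcal F$, and discard all but finitely many bundle pieces. The two arguments differ only in the fibrewise smoothing step, which is the heart of the lemma.

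\emph{The paper's smoothing.} On each retained piece $B_j\times M_j$, the paper approximates $H_f$ in $L^2$ by finite sums $\sum_\ell a_{f,j,\ell}(b)\psi_{f,j,\ell}(y)$ with $a_{f,j,\ell}\in L^\infty(B_j)$ and $\psi_{f,j,\ell}\in C^\infty(M_j)$, using density of the algebraic tensor product. Uniform fibrewise $C^K$ bounds are then automatic, since only finitely many smooth $\psi$'s with bounded coefficients occur. The sup-norm control is lost, however, so the paper composes with a smooth Lipschitz cutoff $\tau_f$. This cutoff is the source of the $+\varepsilon$ in $\|\Phi_f\|_{L^\infty(\nu)}\le\|f\|_{L^\infty(\mu)}+\varepsilon$.

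\emph{Your smoothing.} You convolve along the fibre with a smooth approximate identity on $N_j$. This is a positive averaging operator, so you get the sharper bound $\|\Phi_f\|_{L^\infty(\nu)}\le\|f\|_{L^\infty(\mu)}$ with no cutoff. Your $C^K$ bounds depend only on $(\delta,j,K)$ and $\|\Psi_f\|_\infty$. The $L^2$ error is handled by strong continuity of translation on $L^2(M_j)$ plus dominated convergence in $b$.

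\emph{What each route costs.} Your route uses the homogeneous structure of the fibres. To make ``each derivative falls on $\kappa_\delta$'' precise, differentiate along the vector fields $y\mapsto\frac{d}{dt}\exp(tX)y|_{t=0}$ with $X\in\operatorname{Lie}(N_j)$. Left invariance of Haar measure gives $X\Phi_f(b,\cdot)=\int(\widetilde X\kappa_\delta)(h)\,\Psi_f(b,h^{-1}\cdot)\,dm_{N_j}(h)$, where $\widetilde X$ is the corresponding right-invariant field on $N_j$; iterating this is immediate. These fields span $T_yM_j$ at every point, because the orbit map $N_j\to M_j$ is a local diffeomorphism. Hence, on the compact manifold $M_j$, the paper's covariant-derivative $C^K$ norm is dominated by sup norms of iterated derivatives along a finite family of such fields.

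Your remark about disconnected $N_j$ is harmless but unnecessary: $N_j^\circ$ is open, so a kernel supported in a small enough neighbourhood of $e$ already lives in it.

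The paper's tensor-product route needs no group structure and works for arbitrary compact manifold fibres. Its price is the nonlinear truncation, together with a chain-rule estimate to keep $\tau_f\circ\Psi_{f,j}$ uniformly bounded in $C^K$.
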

			
			\begin{proof}
				Set
				\(\mathcal Z=Z_k(T|_\Lambda)\).  By
				\Cref{thm:jm-structure}, there is an inverse system of \(k\)-step bundle nilfactors 
				$(Y_1,\mathcal Y_1,\nu_1,(T^v)_{v\in \Lambda})\longleftarrow (Y_2,\mathcal Y_2,\nu_2,(T^v)_{v\in \Lambda})\longleftarrow \cdots \longleftarrow (X,\mathcal X,\mu,(T^v)_{v\in \Lambda})$
				such that 
				\(\mathcal Z\) is the inverse limit of the inverse system. Then
				$
				\mathcal Z=\bigvee_{m}\mathcal Y_m.
				$
				Since
				\(\mathcal F\subseteq L^\infty(X,\mathcal N,\mu)\) and
				\(\mathcal N\subseteq\mathcal Z\), the Martingale Theorem gives
				$
				\E_{\mu}(f\mid\mathcal Y_m)\to f
				$
				in $L^2(\mu)$ for every \(f\in\mathcal F\).  As \(\mathcal F\) is finite, choose \(m\) such
				that for all  \(f\in\mathcal F\),
				$
				\|f-\E(f\mid\mathcal Y_m)\|_{L^2(\mu)}<\varepsilon/2$.
				
				Let
				$
				\pi_m:(X,\mathcal X,\mu,(T^v)_{v\in \Lambda})\to(Y_m,\mathcal Y_m,\nu_m,(T^v)_{v\in \Lambda})
				$
				be the corresponding factor map.  Write
				$
				\E_{\mu}(f\mid\mathcal Y_m)=H_f\circ\pi_m$ for some
				$H_f\in L^\infty(\nu_m)$.
				Then
				$
				\|H_f\|_{L^{\infty}(\nu_m)}\leq \|f\|_{L^{\infty}(\mu)} .
				$
				
				By \Cref{def:jm-bundle-nilsystems}, $Y_m$ has the following bundle
				representation over the base space $(\Omega,\mathcal{B},\beta)$:
				$$
				Y_m=\bigsqcup_{j\ge0}B_j\times M_j,M_j=G_j/\Gamma_j,\ \text{with}\ 
				\nu_m=\sum_{j\ge0}\beta|_{B_j}\otimes m_{M_j}.
				$$
				Fix a smooth Riemannian metric $g_j$ on each \(M_j\), and put
				$
				A=1+\max_{f\in\mathcal F}\|H_f\|_{L^{\infty}(\nu_m)} .
				$
				Choose a finite set \(J\subset\{0,1,2,\ldots\}\) such that
				$
				A^2\sum_{j\notin J}\beta(B_j)<\varepsilon^2/16 .
				$ This is possible by the setting that \(\{B_j\}_{j\ge0}\) a measurable partition of \(\Omega\).
				
				For each \(f\in\mathcal F\), choose a smooth Lipschitz map
				\(\tau_f:\mathbb C\to\mathbb C\), identifying \(\mathbb C\) with
				\(\mathbb R^2\), such that
				$
				\tau_f(z)=z\ \text{if } |z|\leq\|H_f\|_{L^{\infty}(\nu_m)};
				|\tau_f(z)|\leq \|H_f\|_{L^{\infty}(\nu_m)}+\varepsilon\ \text{for all }z\in\mathbb C .
				$
				Let \(L_f\) be the correspoding Lipschitz constant of \(\tau_f\).  For each \(j\in J\),
				the linear span of functions shaped like
				$
				(b,y)\mapsto a(b)\psi(y)
				(a\in L^\infty(B_j),\psi\in C^\infty(M_j))
				$
				is dense in \(L^2(B_j\times M_j)\).  Hence, for every \(f\in\mathcal F\) and
				\(j\in J\), we may choose
				$$
				\Psi_{f,j}(b,y)=
				\sum_{\ell=1}^{r_{f,j}} a_{f,j,\ell}(b)\psi_{f,j,\ell}(y),
				a_{f,j,\ell}\in L^\infty(B_j),
				\psi_{f,j,\ell}\in C^\infty(M_j),
				$$
				so that
				\[
				\sum_{j\in J}\|\Psi_{f,j}-H_f\|_{L^2(B_j\times M_j)}^2
				<\frac{\varepsilon^2}{16(1+L_f)^2}.
				\]
				Define, for \(b\in B_j\),
				\[
				\Phi_f(b,y)=
				\begin{cases}
					\tau_f(\Psi_{f,j}(b,y)), & j\in J,\\
					0, & j\notin J .
				\end{cases}
				\]
				Then \(\Phi_f\in L^\infty(\nu_m)\) and
				$
				\|\Phi_f\|_{L^\infty(\nu_m)}\leq \|H_f\|_{L^\infty(\nu_m)}+\varepsilon
				\leq \|f\|_{L^\infty(\mu)}+\varepsilon .
				$
				By the definition of $\tau_f$, for every \(j\), the fibre function \(\Phi_f(b,\cdot)\) is smooth on \(M_j\) for \(\beta\)-a.e. \(b\in B_j\). By the definition of $\Psi_{f,j}$ and the fact that  \(J\) and \(\mathcal F\) are
				finite, 
				for every \(K'\geq0\),
				\[
				\sup_{f\in\mathcal F}\sup_{j\ge0}
				\operatorname*{ess\,sup}_{b\in B_j}
				\|\Phi_f(b,\cdot)\|_{C^{K'}(M_j)}<\infty .
				\]
				Moreover, \(H_f=\tau_f(H_f)\) for $\nu_m$-a.e. $z\in Y_m$; hence,
				$
				\sum_{j\in J}\|\Phi_f-H_f\|_{L^2(B_j\times M_j)}^2
				<\varepsilon^2/16 .
				$
				On the remaining bundle pieces, \(\Phi_f=0\). So, the choice of \(J\) gives
				\[
				\sum_{j\notin J}\|\Phi_f-H_f\|_{L^2(B_j\times M_j)}^2
				\leq \|H_f\|_{L^\infty(\nu_m)}^{2}\sum_{j\notin J}\beta(B_j)<A^2\sum_{j\notin J}\beta(B_j)
				<\varepsilon^2/16 .
				\]
				Thus
				$
				\|H_f-\Phi_f\|_{L^2(\nu_m)}<\varepsilon/2\ \text{and}
				$
				\[
				\begin{aligned}
					\|f-\Phi_f\circ\pi_m\|_{L^2(\mu)}
					&\leq
					\|f-H_f\circ\pi_m\|_{L^2(\mu)}
					+
					\|(H_f-\Phi_f)\circ\pi_m\|_{L^2(\mu)}  \\
					&=
					\|f-\E_{\mu}(f\mid\mathcal Y_m)\|_{L^2(\mu)}
					+
					\|H_f-\Phi_f\|_{L^2(\nu_m)}
					<\varepsilon .
				\end{aligned}
				\]
				The proof is complete.
			\end{proof}
			The following lemma give a characterization of the product of finitely many tame nilfunctions.
			\begin{lemma}
				\label{lem:bundle-weight-ezk-stable}
				Let \((X,\mathcal X,\mu,(T^v)_{v\in \Z^r})\) be a \(\mathbb Z^r\)-system,
				let \(v,u_1,\ldots,u_Q\in\mathbb Z^r\), and put \(U=T^v\).  Let
				\(\phi_1,\ldots,\phi_Q\) be tame nilfunctions. 
				(They may come from different bundle nilfactors).  
				Then
				\[
				c(x,n):=\prod_{q=1}^{Q}\phi_q(T^{nu_q}x),
				(x,n)\in X\times\mathbb Z,
				\]
				is finitely Eisner--Zorin-Kranich stable with respect to \(U\) in the sense of \Cref{def:finite-ezk-stability}.
			\end{lemma}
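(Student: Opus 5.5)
The plan is to show that, for almost every $x$, the sequence $n\mapsto c(x,n)$ is a finite sum of basic polynomial nilsequences $F_i(g_i(n)\Gamma_i)$. These should live on a \emph{fixed finite} family of Eisner--Zorin-Kranich admissible filtered nilmanifolds and have $C^K$-norms bounded uniformly in $x$. Since \Cref{def:finite-ezk-stability} is stated in the appendix rather than above, I will assume it asks for exactly this data; this is what \Cref{cor:ezk-finite-Ck-family} consumes.

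\textbf{Step 1 (fibre coordinates).} Write each $\phi_q=\Phi_q\circ\pi_q$ as in \Cref{def:tame-finite-stage-nilfunction}. Here $\pi_q:X\to Y_q=\bigsqcup_j B^{(q)}_j\times N^{(q)}_j/\Delta^{(q)}_j$, and $\Phi_q$ is supported on finitely many pieces $j\in J_q$. Write $\pi_q(x)=(b_q(x),y_q(x))$. Then equivariance of $\pi_q$ and the bundle action give, for a.e.\ $x$ with $b_q(x)\in B^{(q)}_{j_q}$ and all $n\in\Z$,
\[
\phi_q(T^{nu_q}x)=\Phi_q\bigl(b_q(x),\,a_q(x)^n y_q(x)\bigr),\qquad a_q(x):=\varphi^{(q)}_{j_q}(b_q(x))(u_q)\in N^{(q)}_{j_q}.
\]
Here I use that the bundle coordinate $b_q$ is invariant. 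If $\phi_q$ is only a nilfunction for a subgroup $\Lambda$, I first pass to a finite-index sublattice or residue classes so that $nu_q$ lies in $\Lambda$. Consequently $c(x,n)=F_x(g_x(n)\Gamma)$, where:
\begin{itemize}
\item the nilmanifold is $G/\Gamma=\prod_q N^{(q)}_{j_q}/\Delta^{(q)}_{j_q}$;
\item the orbit is linear, $g_x(n)=(a_1(x)^n,\dots,a_Q(x)^n)\cdot(\tilde y_q(x))_q$;
\item the function is $F_x=\bigotimes_q\Phi_q(b_q(x),\cdot)$.
\end{itemize}
Only finitely many tuples $(j_1,\dots,j_Q)\in\prod_q J_q$ occur, since $c(x,\cdot)\equiv0$ otherwise. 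So only finitely many nilmanifolds appear. By the Leibniz rule and the tameness bounds, $\sup_x\|F_x\|_{C^K}<\infty$ for every $K$. The product filtration has degree at most the maximal step.

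\textbf{Step 2 (admissibility).} The groups $N_j$ need not have simply connected identity component, and they need not be connected. I would invoke the lifting result of \Cref{app:covering-presentation}: a linear orbit $n\mapsto a^n y$ on a nilmanifold $G/\Gamma$ is realized as $\tilde F(\tilde g(n)\tilde\Gamma)$. Here $\tilde g$ is a polynomial sequence on an Eisner--Zorin-Kranich admissible filtered nilmanifold $(\tilde G/\tilde\Gamma,\tilde G_\bullet)$, and $\tilde F=F\circ p$ for a smooth covering/projection $p$. The key requirement is that $(\tilde G/\tilde\Gamma,\tilde G_\bullet)$ depend only on $G/\Gamma$, not on $a$ or $y$. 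Because $p$ is a fixed smooth map, $\|\tilde F\|_{C^K}\lesssim_{G/\Gamma,K}\|F\|_{C^K}$. Applying this to each of the finitely many product nilmanifolds yields the required finite family $\mathscr N$ with uniform $C^K$ bounds.

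\textbf{Step 3 (measurability and residue classes).} I will check that $x\mapsto(a_q(x),y_q(x),F_x)$ is measurable, which follows from the measurability of $\varphi_j$ and of the bundle maps. If the appendix's lift requires splitting $n$ into residue classes, for example to move $a$ into the identity component, I absorb this into finitely many additional nilsequences. This uses the periodic indicator $\one_{n\equiv r\bmod m}$, itself a nilsequence on a finite nilmanifold.

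\textbf{Main obstacle.} I expect Step 2 to be the hardest part. The lift to a simply connected presentation must be uniform over the fibres $b\in B_j$, that is, over all $a\in N_j$ and all base points, and must keep uniform $C^K$ control. My first attempt is to use the universal cover of $N_j^\circ$ combined with a semidirect product by $\Z$ to handle the component of $a$. I would then verify that one Mal'cev basis serves all fibres simultaneously.
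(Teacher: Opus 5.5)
Your Steps 1--2 follow the paper's route: fibre coordinates on the finitely many relevant bundle pieces, one linear orbit on a product nilmanifold, then \Cref{lem:malcev-orbit-replacement} with the $C^K$ control \eqref{eq:malcev-Ck-pullback}. The gap is that your assumption about \Cref{def:finite-ezk-stability} is too weak, so you prove a weaker statement. You only represent $c(x,n)$ itself. Finite Eisner--Zorin-Kranich stability asks for more. For every $R$, every $\boldsymbol\epsilon\in\{0,1\}^R$ and every list $(r_\nu,t_\nu,a_\nu)_{\nu\le R}$, all derived weights $c^{\boldsymbol\epsilon}_{\mathbf h}(x,n)=\prod_{\nu}\mathcal C^{\epsilon_\nu}c(U^{r_\nu n+t_\nu}x,a_\nu n+h_\nu)$ must admit finite representations. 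These must use one finite family $\mathscr N$ and one triple $(s,K,B)$ that are \emph{independent of} $\mathbf h\in\Z^R$.

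This uniformity is exactly what the rest of the paper uses. The van der Corput induction in \Cref{prop:multislope-nilsequence-estimate} needs the weights $c(U^{-a_*n}y,n+h)\overline{c(U^{-a_*n}y,n)}$ to have representation data independent of $h$. \Cref{prop:cyclic-tower-multislope,prop:cyclic-tower-one-action-decoupling} use $c(x,Lm+r)$ and $c(x,pn)\overline{c(x,qn)}$. A representation of $c$ alone gives none of this.

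The repair stays inside your framework. Put $\rho_q=\varphi^{(q)}_{j_q}(b_q(x))\in\operatorname{Hom}(\Z^r,N^{(q)}_{j_q})$. Since $b_q$ is invariant and $\rho_q$ is a homomorphism,
\[
c^{\boldsymbol\epsilon}_{\mathbf h}(x,n)=\prod_{\nu=1}^{R}\prod_{q=1}^{Q}\mathcal C^{\epsilon_\nu}\Phi_q\bigl(b_q(x),\rho_q(r_\nu v+a_\nu u_q)^n\,\rho_q(t_\nu v+h_\nu u_q)\,y_q(x)\bigr).
\]
This is one linear orbit on $\prod_q (N^{(q)}_{j_q}/\Delta^{(q)}_{j_q})^{R}$, with coordinates indexed by $(\nu,q)$. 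It is evaluated at the function $\prod_{\nu,q}\mathcal C^{\epsilon_\nu}\Phi_q(b_q,z_{\nu,q})$. The vector $\mathbf h$ enters only through the initial point. Hence the nilmanifolds, their admissible lifts, $s$, $K$ and $B$ do not depend on $\mathbf h$; this is precisely the paper's proof.

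The obstacle you anticipate in Step 2 does not arise. \Cref{lem:malcev-orbit-replacement} gives a presentation depending only on the nilmanifold. The covering $\vartheta$ maps onto all of $N$, and the lower central series filtration has $\widehat N_1=\widehat N$. So $n\mapsto\widehat a^{\,n}\widehat y$ is polynomial for every $\widehat a\in\widehat N$, with no residue classes, semidirect products by $\Z$, or fibre-dependent Mal'cev bases. No passage to sublattices is needed either, since the $\phi_q$ are tame nilfunctions for the full $\Z^r$-action.
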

			
			\begin{proof}
				By the definitions of bundle nilfactor and tame nilfunction (see \Cref{def:jm-bundle-nilsystems} and \Cref{def:tame-finite-stage-nilfunction}), for each \(q\), we have the following structures:
				\[
				\pi_q:X\longrightarrow
				Y_q=\bigsqcup_{\ell\geq0}B_{q,\ell}\times M_{q,\ell}, M_{q,\ell}=N_{q,\ell}/\Delta_{q,\ell},
				\]
				and \(\Phi_q\), which is supported on  finitely many bundle pieces and is smooth along the nilmanifold fibres with uniform fibrewise bounds, such that
				\(\phi_q=\Phi_q\circ\pi_q\).  Let \(E_q\) be the finite set of bundle
				pieces on which \(\Phi_q\) is supported.  On
				\(B_{q,\ell}\times M_{q,\ell}\), write the action as
				\[
				H_q^t(b,y)=(b,\rho_{q,\ell,b}(t)y),
				\rho_{q,\ell,b}\in \text{Hom}(\mathbb Z^r,N_{q,\ell}).
				\]
				
				Fix \(R\geq1\), \(\boldsymbol\epsilon\in\{0,1\}^R\), and triples
				\((r_i,t_i,a_i)\in\mathbb Z^3\), \(1\leq i\leq R\).  For
				\(\mathbf h=(h_1,\ldots,h_R)\), 
				\begin{equation}\label{eq:bundle-derived-separate-factors}
					\begin{aligned}
						c_{\mathbf h}^{\boldsymbol\epsilon}(x,n)
						&:=\prod_{i=1}^{R}\prod_{q=1}^{Q}
						\mathcal C^{\epsilon_i}
						\Phi_q\Bigl(
						H_q^{(r_i n+t_i)v+(a_i n+h_i)u_q}\pi_q(x)
						\Bigr).
					\end{aligned}
				\end{equation}
				
				If \(\pi_q(x)\) lies outside the finite support pieces for some \(q\), then the
				sequence in \eqref{eq:bundle-derived-separate-factors} is identically zero
				and can be represented by a trivival Eisner--Zorin-Kranich admissible filtered nilmanifold.  It remains to consider
				points for which
				$
				\pi_q(x)=(b_q,y_q)\in B_{q,\ell_q}\times M_{q,\ell_q},
				\ell_q\in E_q,
				$
				for every \(q\).
				
				For a fixed tuple \(\boldsymbol\ell=(\ell_1,\ldots,\ell_Q)\in {\bf E}:=E_1\times \cdots \times E_{Q}\), we assign a nilmanifold
				$
				M_{\boldsymbol\ell}
				:=\prod_{q=1}^{Q}M_{q,\ell_q}^{R}.
				$
				Its coordinates are indexed by \((i,q)\).  Put
				$
				\xi_{i,q}=r_i v+a_i u_q,
				\eta_{i,q}(\mathbf h)=t_i v+h_i u_q.
				$
				Since each \(\rho_{q,\ell_q,b_q}\) is a homomorphism, the product of the
				fibre points occurring in \eqref{eq:bundle-derived-separate-factors} is the
				single linear orbit in $M_{\boldsymbol\ell}$,
				where the \((i,q)\)-coordinates of the translating element and the
				initial point are respectively
				$
				\rho_{q,\ell_q,b_q}(\xi_{i,q})\ \text{and}\ 
				\rho_{q,\ell_q,b_q}(\eta_{i,q}(\mathbf h))y_q.
				$
				On \(M_{\boldsymbol\ell}\), for each ${\bf b}\in {\bf B}_{\ell}:=B_{1,\ell_1}\times \cdots\times B_{Q,\ell_Q}$, we put
				\[
				F_{\ell,\mathbf b}\bigl((z_{i,q})_{i,q}\bigr)
				=\prod_{i=1}^{R}\prod_{q=1}^{Q}
				\mathcal C^{\epsilon_i}\Phi_q(b_q,z_{i,q}).
				\]
				
				Apply \Cref{lem:malcev-orbit-replacement} to each \(M_{\boldsymbol\ell}\).
				Then by \eqref{eq:malcev-linear-orbit-lift}, the lemma lifts the targeted 
				linear orbit in $M_{\boldsymbol\ell}$ to some polynomial orbit on its some Eisner--Zorin-Kranich admissible representation $(G_{\ell}/\Gamma_{\ell},G_{\ell,\bullet},\mathcal M_{\ell})$ with the induced map $p_{\ell}:G_{\ell}/\Gamma_{\ell}\to M_{\boldsymbol\ell}$.
				Collecting these Eisner--Zorin-Kranich admissible representations gives a finite family $\mathscr N:=\{(G_{\ell}/\Gamma_{\ell},G_{\ell,\bullet},\mathcal M_{\ell}):\ell\in {\bf E}\}$.
				Now, based on $\mathscr N$ and the family ${\mathscr F}:=\{F_{\ell,\mathbf b}:\ell\in {\bf E},{\bf b}\in {\bf B}_{\ell}\}$, we choose three parameters $s,K,B$. Take the maximum of filtration degrees of all elements of $\mathscr N$ as $s$. Based on $\mathscr N$ and the requirement \eqref{eq:finite-ezk-order-condition}, choose \(K\). Put 
				$$B=1+	\sup_{\ell\in {\bf E}}\operatorname*{ess\,sup}_{{\bf b}\in {\bf B}_{\ell}}
				\|F_{\ell,\mathbf b}\circ p_{\ell}\|_{C^K(G_{\ell}/\Gamma_{\ell})}.$$
				By the choice of each $\Phi_q$ and \eqref{eq:malcev-Ck-pullback}, $0<B<\infty$. Clearly, three parameters $s,K,B$ are independent of \(\mathbf h\).
				
				Now, add the trivival Eisner--Zorin-Kranich admissible filtered nilmanifold to $\mathscr N$ to get the new finite family ${\mathscr N}'$. Then by the above constructions of $s,K,B,{\mathscr N}'$, for every ${\bf h}$, $ c_{\mathbf h}^{\boldsymbol\epsilon}$ has a finite Eisner--Zorin-Kranich representation with parameters \((s,K,B)\) and in this representation, all Eisner--Zorin-Kranich admissible filtered nilmanifolds are from ${\mathscr N}'$.
				This finishes the proof.
				%
			\end{proof}
			\subsection{Two classes of weighted ergodic averages}
			\label{subsec:weighted-one-dimensional-estimates}
			In this subsection, we give the related characteristic factors for two classes of weighted ergodic averages. The first one is on non--conventional ergodic averages weighted by Eisner--Zorin-Kranich regular items. 
			\begin{proposition}
				\label{prop:multislope-nilsequence-estimate}
				Let \((X,\mathcal X,\mu,U)\) be a $\Z$-system,
				\(A\subset\mathbb Z\setminus\{0\}\) be finite and non-empty, and
				\(c:X\times\mathbb Z\to\mathbb C\) be a bounded measurable weight, which is
				Eisner--Zorin-Kranich regular with respect to \(U\) in the sense of
				\Cref{def:finite-ezk-stability}.
				Let \(H_a\in L^\infty(\mu)\), \(a\in A\), with \(\|H_a\|_{L^\infty(\mu)}\le1\).  If
				\(\E_{\mu}(H_{a_0}\mid Z_\infty(U))=0\) for some \(a_0\in A\), then
				\[
				\frac1N\sum_{n=1}^N c(x,n)\prod_{a\in A}U^{an}H_a
				\longrightarrow0
				\]
				in \(L^2(\mu)\) as $N\to\infty$.
			\end{proposition}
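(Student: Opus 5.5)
The plan is to prove the statement by induction on \(|A|\), using the van der Corput lemma in \(L^2(\mu)\). The key structural input is that Eisner--Zorin-Kranich regularity (\Cref{def:finite-ezk-stability}) is closed under the operations a van der Corput step performs. More precisely, the derived weights \(c^{\boldsymbol\epsilon}_{\mathbf h}\), which are products of conjugates of \(c(U^{r_in+t_i}\,\cdot\,, a_in+h_i)\), all admit finite EZK representations. These representations use a fixed finite family of admissible filtered nilmanifolds and fixed parameters \((s,K,B)\) that do not depend on \(\mathbf h\). This is exactly the shape produced in the proof of \Cref{lem:bundle-weight-ezk-stable}, with triples \((r_i,t_i,a_i)\). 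I will keep track of the weight only through this stability property. I will prove a slightly stronger statement: the conclusion holds with \(\E_\mu(H_{a_0}\mid Z_\infty(U))=0\) replaced by \(\|H_{a_0}\|_{U^m(U)}=0\) for every \(m\), for every weight in the stable class.

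\emph{Induction step.} Put \(v_n(x)=c(x,n)\prod_{a\in A}U^{an}H_a(x)\). By van der Corput it suffices to show, for each fixed \(h\), that \(\limsup_N|\frac1N\sum_n\langle v_{n+h},v_n\rangle|\) is small on average over \(h\). Fix some \(b\in A\), choosing \(b\neq a_0\) if \(|A|\ge2\). Write the inner product as an integral and change variables \(x\mapsto U^{-bn}x\). This gives
\[
\int c(U^{-bn}x,n+h)\,\overline{c(U^{-bn}x,n)}\prod_{a\in A\setminus\{b\}}U^{(a-b)n}\bigl(U^{ah}H_a\,\overline{H_a}\bigr)\,(U^{bh}H_b\,\overline{H_b})\,d\mu .
\]
The new weight \(c'_h(x,n)=c(U^{-bn}x,n+h)\overline{c(U^{-bn}x,n)}\) is again of the derived form. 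Stability, applied to its further derivatives, makes it regular with parameters uniform in \(h\). The remaining product has the \(|A|-1\) distinct nonzero slopes \(a-b\). The function attached to slope \(a_0-b\) is \(U^{a_0h}H_{a_0}\overline{H_{a_0}}\).

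By Cauchy--Schwarz and the induction hypothesis, in its quantitative form, this correlation is controlled by \(\|U^{a_0h}H_{a_0}\overline{H_{a_0}}\|_{U^{m}(U)}\) for a suitable \(m\) depending only on \(|A|\) and \(s\). The first part of \Cref{lem:one-dimensional-power-comparison} lets me pass between \(U^{a}\) and \(U\) whenever needed. Averaging over \(h\) with the second part of \Cref{lem:one-dimensional-power-comparison} gives a bound by \(|a_0|\,\|H_{a_0}\|_{U^{m+1}(U)}^{2^{m+1}}\), which vanishes. For this to work, the induction must actually be run for the quantitative statement
\[
\limsup_N\Bigl\|\frac1N\sum_n c(x,n)\prod_a U^{an}H_a\Bigr\|_{L^2}\le C\,\|H_{a_0}\|_{U^{m}(U)}^{\gamma},
\]
with \(C\) depending only on the EZK parameters. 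That is why uniformity in \(h\) is essential.

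\emph{Base case} (\(|A|=1\), or after the slopes collapse). I need to bound \(\limsup_N\int\bigl|\frac1N\sum_n c(x,n)H(U^{an}x)\bigr|\,d\mu\). First pass to the ergodic decomposition of \(U^{a}\). By the pointwise ergodic theorem on a countable generating algebra, \(\mu\)-a.e. \(x\) is fully generic for \(H\) along \(\{1,\dots,N\}\) for its ergodic component. For each such \(x\), the sequence \(n\mapsto c(x,n)\) is a finite sum \(\sum_iF_i(g_i(n)\Gamma_i)\) drawn from the fixed family with \(C^K\)-bounds \(\le B\). \Cref{cor:ezk-finite-Ck-family} then gives the bound \(C\|H\|_{U^{s+1}(\mu_\omega,U^a)}\) with a uniform \(C\). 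Integrate over \(\omega\) using Hölder and \Cref{lem:HK-seminorm-ergodic-decomposition}, then apply the first part of \Cref{lem:one-dimensional-power-comparison}. This bounds the average by \(C'\|H\|_{U^{s+1}(U)}\). Dominated convergence handles the \(\limsup\) inside the integral. Finally, \(\E(H_{a_0}\mid Z_\infty(U))=0\) forces all \(U^m\)-seminorms to vanish.

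\emph{Main obstacle.} The main obstacle is the bookkeeping that the derived weights stay in one uniform finite EZK class, with degree \(s\) and nilmanifold family independent of \(h\), through all van der Corput steps. Otherwise the constants in \Cref{cor:ezk-finite-Ck-family} cannot be summed over \(h\). I would handle this by making \Cref{def:finite-ezk-stability} require stability under arbitrary finite families of triples \((r_i,t_i,a_i)\). Then \(R\) van der Corput steps simply produce a derived weight with \(2^R\) factors, and one application of the definition covers every step at once.
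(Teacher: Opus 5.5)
Your argument is essentially the paper's proof: induction on $|A|$ for a quantitative bound by a power of a Host--Kra seminorm, van der Corput with the substitution $x\mapsto U^{-bn}x$, uniform Eisner--Zorin-Kranich data for the new weight, H\"older with the second part of \Cref{lem:one-dimensional-power-comparison} to average over $h$, and a base case that is exactly \Cref{lem:one-slope}. Choosing $b\neq a_0$ and tracking only $H_{a_0}$ is a harmless simplification of the paper's symmetric bound $\min_{a\in A}\|H_a\|_{U^r(U)}^{\kappa}$, which the paper gets by running the step with two different choices of $a_*$.

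One caveat, which the paper shares when it asserts that the weights $C_h$ are regular with data independent of $h$. A further van der Corput shift $k$, followed by $y\mapsto U^{-b'n}y$, turns the factor $c(U^{-bn}y,n+h)$ into $c(U^{-(b+b')n-bk}y,n+k+h)$, so the $t$-entries of the triples depend on the shift parameters. Since \Cref{def:finite-ezk-stability} only gives data uniform in $\mathbf h$ for a fixed list $(r_\nu,t_\nu,a_\nu)$, your claim that ``one application of the definition covers every step'' does not follow as stated. The fix is to require uniformity in the $t_\nu$ as well. The proof of \Cref{lem:bundle-weight-ezk-stable} actually delivers this, since its $s,K,B$ and $\mathscr N$ do not depend on the triples at all.
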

			\begin{proof}
				We prove by induction on \(|A|\) that the following hold:
				
				({\bf S}) Suppose that $c$ is bounded by 1. Then there are \(r\ge1\),
				\(C>0\), and \(\kappa>0\) such that
				\[
				\limsup_{N\to\infty}
				\left\|
				\frac1N\sum_{n=1}^N c(x,n)\prod_{a\in A}U^{an}H_a
				\right\|_{L^{2}(\mu)}
				\le
				C\min_{a\in A}\|H_{a}\|_{U^r(U)}^\kappa .
				\]
				The constants depend only on \(A\) and the Eisner--Zorin-Kranich regularity data of $c$
				needed in the induction. 
				
				Clearly, ({\bf S}) can finish the proof by the definition of $Z_\infty(U)$.
				For \(|A|=1\), we apply \Cref{def:finite-ezk-stability} with
				\(R=1\), \(\epsilon_1=0\), \((r_1,t_1,a_1)=(0,0,1)\), and \(h_1=0\), and
				then use \Cref{lem:one-slope}. Suppose that ({\bf S}) holds for all $k$-element subsets of $\Z\backslash \{0\}$. Assume that $|A|=k+1$. Choose \(a_*\in A\), and put
				\[
				u_n(x)=c(x,n)\prod_{a\in A}U^{an}H_a(x).
				\]
				By the van der Corput Lemma, we have that
				\[
				\limsup_{N\to\infty}\left\|\frac1N\sum_{n=1}^Nu_n\right\|_{L^2(\mu)}^2
				\le 
				C_1\limsup_{H\to\infty}H^{-1}\sum_{h=1}^H
				\limsup_{N\to\infty}\left|
				\frac1N\sum_{n=1}^N\langle u_{n+h},u_n\rangle
				\right|,
				\] where the constant $C_1$ depends only on the bound of $c$.
				Set \(G_{a,h}=U^{ah}H_a\cdot\overline{H_a}\).  The change of variables
				\(x=U^{-a_*n}y\) gives
				\[
				\frac1N\sum_{n=1}^N\langle u_{n+h},u_n\rangle
				=
				\int G_{a_*,h}B_{N,h}\,d\mu,
				\]
				where
				\[
				B_{N,h}(y)=
				\frac1N\sum_{n=1}^N C_{h}(y,n)
				\prod_{a\in A\setminus\{a_*\}}U^{(a-a_*)n}G_{a,h}(y),
				\]
				\[
				C_{h}(y,n)=c(U^{-a_*n}y,n+h)\overline{c(U^{-a_*n}y,n)}.
				\]
				By \Cref{def:finite-ezk-stability}, the weights
				\(C_h\) are Eisner--Zorin-Kranich regular with representation data independent of \(h\).  Applying the induction hypothesis to
				$
				A'=\{a-a_*:a\in A,\ a\ne a_*\},
				$
				we get
				\[
				\limsup_{N\to\infty}\|B_{N,h}\|_{L^2(\mu)}
				\le
				C'\min_{a\in A\setminus\{a_*\}}\|G_{a,h}\|_{U^{r'}(U)}^{\kappa'} .
				\]
				By \cite[Chapter 8, Lemma 12]{HostKraBook}, we may assume that $2^{r'}>\kappa'$.
				Hence,
				\[
				\limsup_{N\to\infty}\left\|\frac1N\sum_{n=1}^Nu_n\right\|_{L^2(\mu)}^2
				\le
				C_1C'\min_{a\in A\setminus\{a_*\}}\limsup_{H\to\infty}H^{-1}\sum_{h=1}^H 
				\|U^{ah}H_{a}\overline{H_{a}}\|_{U^{r'}(U)}^{\kappa'}.
				\]
				By the H\"older inequality and \Cref{lem:one-dimensional-power-comparison}, for any $a\in A\setminus\{a_*\}$,
				\[
				\limsup_{H\to\infty}
				\frac1H\sum_{h=1}^H
				\|U^{ah}H_{a}\overline{H_{a}}\|_{U^{r'}(U)}^{\kappa'}\le 
				C_{2}\|H_{a}\|_{U^{r'+1}(U)}^{2\kappa'},
				\]
				where $C_2=C_2(a,\kappa',r')$.  
				To sum up, 
				$$	\limsup_{N\to\infty}\left\|\frac1N\sum_{n=1}^Nu_n\right\|_{L^2(\mu)}^2
				\le C_1C'\max_{a\in A\setminus\{a_*\}}C_2(a,\kappa',r')\min_{a\in A\setminus\{a_*\}}\|H_{a}\|_{U^{r'+1}(U)}^{2\kappa'}.$$ 
				
				Choose $a'\in A$ with $a'\neq a_{*}$. By following the above arguments, we have that 
				$$	\limsup_{N\to\infty}\left\|\frac1N\sum_{n=1}^Nu_n\right\|_{L^2(\mu)}^2
				\le C_1C''\max_{a\in A\setminus\{a'\}}C_2(a,\kappa'',r'')\min_{a\in A\setminus\{a'\}}\|H_{a}\|_{U^{r''+1}(U)}^{2\kappa''},$$ where parameters $C'',r'',\kappa''$ are from the application of  the induction hypothesis.
				By the above two estimates, \cite[Chapter 8, Lemma 12]{HostKraBook}, and the fact that for each $a\in A,d\in \N$, $\|H_{a}\|_{U^{d}(U)}\le 1$, ({\bf S}) holds for $A$.
				This finishes the proof.
			\end{proof}
			
			The second one is on ergodic averages along $\Omega(n)$ weighted by some analogue of nilsequences. Before stating it, we introduce an ergodic theorem.
			\begin{theorem}[Bergelson-Richter, {\cite[Corollary 1.28 and Lemma 6.3]{BergelsonRichter}}]
				\label{thm:beg-omega-nilsequence}
				Let \((Y,R)\) be a uniquely ergodic system with the
				unique $R$-invariant Borel probability measure \(\nu\).  Let \({\bf b}=(b_n)_{n\ge1}\)
				be a nilsequence.
				Then, for every \(G\in C(Y)\) and every \(y\in Y\),
				\[
				\lim_{N\to\infty}\frac1N\sum_{n=1}^N b_n\,G(R^{\Omega(n)}y)=
				\int_YG\,d\nu \left(\lim_{N\to\infty}\frac1N\sum_{n=1}^N b_n\right).
				\]
			\end{theorem}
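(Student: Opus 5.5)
The plan is to reduce the statement to a \emph{shift-invariance} property of the weighted averages along $\Omega$, and then use unique ergodicity to average out the shift. Since $b$ is a nilsequence, its Ces\`aro mean exists, so the right-hand side is well defined. By linearity, and by splitting $G=\int_Y G\,d\nu+(G-\int_Y G\,d\nu)$, it suffices to show
\[
\lim_{N\to\infty}\frac1N\sum_{n=1}^N b_n\,G(R^{\Omega(n)}y)=0\qquad\text{whenever }\textstyle\int_Y G\,d\nu=0 .
\]
Since $b$ is a uniform limit of basic polynomial nilsequences and $G$ is bounded, I would also first reduce to a basic nilsequence $b_n=F(g(n)\Gamma)$ with $F$ smooth.

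\textbf{Step 1 (shift-invariance).} The core claim is that for every fixed $m\ge 1$,
\[
\frac1N\sum_{n\le N} b_n\,\bigl(G(R^{\Omega(n)}y)-G(R^{\Omega(n)+m}y)\bigr)\longrightarrow 0 .
\]
It suffices to treat $m=1$ and iterate. The mechanism is the Bergelson--Richter double-counting argument: for a suitable finite set of primes $P$ with logarithmic weights, the Tur\'an--Kubilius inequality gives
\[
\frac1N\sum_{n\le N} a(n)\approx \frac1N\sum_{n\le N}\frac{1}{\sum_{p\in P}1/p}\sum_{p\in P}\frac{1}{p}\cdot p\,\one_{p\mid n}\,a(n),
\]
with error controlled by $\bigl(\sum_{p\in P}1/p\bigr)^{-1/2}$. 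Substituting $n=pm'$ and using $\Omega(pm')=\Omega(m')+1$ turns $a(n)=b_nG(R^{\Omega(n)}y)$ into an average of $b_{pm'}G(R^{\Omega(m')+1}y)$. For this to give the shifted average of $b_{m'}$, I need $b_{pm'}$ to be replaceable by $b_{m'}$ on average over $p\in P$.

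\textbf{Step 2 (handling the dilations of $b$), which I expect to be the main obstacle.} The replacement $b_{pm'}\approx b_{m'}$ is false in general. I would decompose $b$ via the Green--Tao factorization into a periodic (rational) part $b^{\mathrm{per}}$ of some period $Q$, plus a part that is ``minor arc'', i.e.\ an equidistributed irrational nilsequence.
\begin{itemize}
\item[(a)] For the periodic part, I choose $P\subset\{p\equiv 1 \pmod Q\}$. This still has divergent $\sum 1/p$ by Dirichlet's theorem, and then $b^{\mathrm{per}}_{pm'}=b^{\mathrm{per}}_{m'}$ exactly.
\item[(b)] For the irrational part, I would instead show that its correlation with $G(R^{\Omega(n)}y)$ vanishes. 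The tool is a Daboussi--K\'atai/BSZ-type orthogonality criterion (compare \Cref{lem:orthogonality}): the pair correlations along $pn,qn$ cancel the $\Omega$-term, exactly as in \Cref{lem:prime-dilation-to-three-direction}. What remains is $\frac1N\sum_n b_{pn}\overline{b_{qn}}$, which tends to $0$ for distinct $p,q$ by equidistribution of the product nilsequence, after removing its mean.
\end{itemize}
This is essentially the content of the cited Lemma~6.3.

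\textbf{Step 3 (conclusion).} From Step 1, for every $M$ the average $\frac1N\sum_{n\le N} b_nG(R^{\Omega(n)}y)$ is asymptotically equal to
\[
\frac1N\sum_{n\le N} b_n\,\frac1M\sum_{m=1}^M G\bigl(R^{m}R^{\Omega(n)}y\bigr).
\]
Unique ergodicity gives $\sup_{z\in Y}\bigl|\frac1M\sum_{m=1}^M G(R^mz)\bigr|\to 0$ as $M\to\infty$ when $\int_Y G\,d\nu=0$. Since $b$ is bounded, the limsup in $N$ is at most $\|b\|_\infty\sup_z\bigl|\frac1M\sum_{m\le M}G(R^mz)\bigr|$. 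Letting $M\to\infty$ finishes the proof.
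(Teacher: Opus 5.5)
The paper gives no proof of this statement; it quotes it from Bergelson--Richter. The overall shape of your reconstruction is reasonable: a periodic/irrational split, prime dilations cancelling $\Omega$ for the irrational part, and shift invariance plus unique ergodicity for the periodic part. However, Step~1 has a genuine gap that is already present when $b\equiv1$. Tur\'an--Kubilius with a single set of primes $P$, $W=\sum_{p\in P}1/p$, gives
\[
\frac1N\sum_{n\le N}a(n)=\sum_{p\in P}\frac{1}{pW}\Bigl(\frac pN\sum_{m\le N/p}a(pm)\Bigr)+O(W^{-1/2}).
\]
This is a convex combination of Ces\`aro averages of $m\mapsto b_{pm}G(R^{\Omega(m)+1}y)$ over the \emph{shorter ranges} $m\le N/p$, not over $m\le N$.

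Step~3 needs the comparison at the same scale $N$. To get it from this identity you would have to know that $\frac pN\sum_{m\le N/p}c(m)-\frac1N\sum_{m\le N}c(m)\to0$ for $c(m)=b_mG(R^{\Omega(m)+1}y)$, and that is essentially what is being proved. Ces\`aro means are not dilation invariant. Your argument would go through for logarithmic averages, but the statement is about Ces\`aro averages, and iterating the one-sided identity only relates averages at different scales.

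The missing idea is the Bergelson--Richter comparison of primes with products of two primes:
\begin{itemize}
\item Expand $\frac1N\sum_{n\le N}b_nG(R^{\Omega(n)+1}y)$ over a set $B_1$ of primes, and $\frac1N\sum_{n\le N}b_nG(R^{\Omega(n)}y)$ over a set $B_2$ of products of two primes. Both expansions produce the same sequence $b_mG(R^{\Omega(m)+2}y)$.
\item If $B_1$ and $B_2$ carry almost equal logarithmic weight in every short multiplicative interval $[\rho^j,\rho^{j+1})$ with $\rho$ close to $1$, the two convex combinations of averages over $m\le N/p$ and over $m\le N/q$ agree up to $O(\rho-1)$, uniformly over bounded sequences.
\item For your periodic part you would take $B_1,B_2\subset\{n\equiv1\pmod Q\}$. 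This needs equidistribution of primes and of products of two primes in progressions at these scales, plus a Tur\'an--Kubilius bound for $B_2$.
\end{itemize}

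Step~2(b) is also incorrect as stated. The limit $\lim_N\frac1N\sum_{n\le N}b^{\mathrm{irr}}_{pn}\overline{b^{\mathrm{irr}}_{qn}}$ is an integral of $F\otimes\overline F$ over the orbit closure of $n\mapsto(g(pn)\Gamma,g(qn)\Gamma)$ in $X\times X$. That closure is typically a proper sub-nilmanifold, so the limit need not vanish. For example, take $\alpha\notin\mathbb Q$, $F(t)=\sum_{k\neq0}2^{-|k|}e^{2\pi i kt}$ and $b_n=F(n\alpha)$; then $b$ has mean zero on every progression, so $b=b^{\mathrm{irr}}$. The limit equals $\sum_{j\neq0}2^{-(p+q)|j|}\neq0$ for \emph{every} pair of distinct primes, so \Cref{lem:orthogonality} cannot be applied.

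What is true is that these correlations tend to $0$ as $\min(p,q)\to\infty$. You therefore need two things:
\begin{itemize}
\item the quantitative Daboussi--K\'atai--BSZ inequality, i.e.\ correlations at most $\delta$ for all distinct primes in a finite set with large $\sum 1/p$, such as all primes in $[P_0,P_1]$;
\item an actual proof of that decay. For non-abelian nilsequences this requires analysing the orbit closures of $(g(pn),g(qn))$ through the horizontal torus and the vertical Fourier modes, not just equidistribution of the product sequence.
\end{itemize}
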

			Next, we apply \Cref{thm:beg-omega-nilsequence} to derive the following.
			\begin{lemma}
				\label{lem:xiao-nilsequence-family}
				Let \((X,\mathcal X,\mu,S)\) be a $\Z$-system.
				Let \(b:X\times\mathbb N\to\mathbb C\) be such that \(x\mapsto b(x,n)\)
				is measurable for every \(n\), \(\sup_n\|b(\cdot,n)\|_{L^\infty(\mu)}<\infty\),
				and \(b_x:=(b(x,n))_{n\ge1}\) is a nilsequence for \(\mu\)-a.e.\ \(x\in X\).
				Then
				$\displaystyle M_b(x):=\lim_{N\to\infty} N^{-1}\sum_{n=1}^N b(x,n)$ exists for \(\mu\)-a.e. \(x\in X\),
				is measurable, and
				for every \(g\in L^\infty(\mu)\),
				\[
				\frac1N\sum_{n=1}^N b(x,n)S^{\Omega(n)}g(x)
				-
				M_b(x)\E_{\mu}(g\mid\mathcal I(S))(x)
				\to0
				\]
				in \(L^2(\mu)\) as $N\to\infty$.
			\end{lemma}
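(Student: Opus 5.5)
First, existence and measurability of $M_b$: for $\mu$-a.e.\ $x$ the sequence $b_x$ is a nilsequence, so by \cite[Chapter 14, Corollary 16]{HostKraBook} its Ces\`aro mean exists. $M_b$ is then the pointwise a.e.\ limit of the measurable functions $x\mapsto N^{-1}\sum_{n\le N}b(x,n)$, hence measurable, and $|M_b|\le \sup_n\|b(\cdot,n)\|_\infty$.

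For the main assertion, I would split $g=\E_\mu(g\mid\cI(S))+g_0$ with $\E_\mu(g_0\mid\cI(S))=0$. The invariant part contributes exactly $\bigl(N^{-1}\sum_{n\le N} b(x,n)\bigr)\E_\mu(g\mid\cI(S))(x)$. This converges a.e.\ to $M_b\,\E_\mu(g\mid\cI(S))$, and by bounded convergence it converges in $L^2(\mu)$. So it suffices to show $N^{-1}\sum_{n\le N}b(x,n)g_0(S^{\Omega(n)}x)\to0$ in $L^2(\mu)$.

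By density, $g_0$ can be approximated in $L^2$ by $h-\E_\mu(h\mid\cI(S))$ with $h$ continuous on a topological model. The errors are controlled uniformly in $N$ because $b$ is bounded and $S^{\Omega(n)}$ preserves $\mu$, so
\[
\Bigl\|N^{-1}\sum_{n\le N} b(\cdot,n)S^{\Omega(n)}e\Bigr\|_{L^2}\le \sup_n\|b(\cdot,n)\|_\infty\,\|e\|_{L^2}.
\]
Next I would take a topological model $(Y,R)$ of $(X,\mu,S)$ with $Y$ compact metric and pass to the ergodic decomposition $\mu=\int\mu_\omega\,d\lambda(\omega)$. The aim is to apply \Cref{thm:beg-omega-nilsequence} fiberwise. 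The obstacle is that \Cref{thm:beg-omega-nilsequence} requires unique ergodicity, whereas an ergodic component is merely ergodic.

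The cleanest fix, and the step I expect to be the main obstacle, is to replace pointwise statements by $L^2$ statements and approximate $S$-dynamics by uniquely ergodic ones. Concretely, I would first use Jewett--Krieger only in the ergodic case: each $(X,\mu_\omega,S)$ is isomorphic to a strictly ergodic $(Y_\omega,R_\omega)$. For continuous $G$ with $\int G\,d\nu_\omega=0$, \Cref{thm:beg-omega-nilsequence} gives, for every $y$ and every nilsequence $b_y$,
\[
N^{-1}\sum_{n\le N} b_y(n)\,G(R_\omega^{\Omega(n)}y)\to0 .
\]
Here the nilsequence is indexed by the point, so it must be transported through the isomorphism. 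This is fine because the conclusion holds for every point and every nilsequence. By density of continuous functions in $L^2(\nu_\omega)$ and the uniform bound above, the $L^2(\mu_\omega)$ norm of the average tends to zero for every $g_0$ with $\int g_0\,d\mu_\omega=0$.

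Finally I would integrate over $\omega$. Since $\E_\mu(g_0\mid\cI(S))=0$, we have $\int g_0\,d\mu_\omega=0$ for a.e.\ $\omega$. The squared $L^2(\mu)$ norm of the average equals the $\lambda$-integral of the fiberwise squared norms, and these are uniformly bounded and tend to $0$ for a.e.\ $\omega$. Dominated convergence then concludes.

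Two measurability points need care. The restriction of $b$ to a component is still a.e.\ a nilsequence, because the exceptional null set is null for a.e.\ $\mu_\omega$. The fiberwise norms are measurable in $\omega$. Both are routine properties of disintegrations.
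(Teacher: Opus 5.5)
Your proof is correct and follows essentially the paper's route: Jewett--Krieger on each ergodic component, then \Cref{thm:beg-omega-nilsequence} at every point, then density of continuous functions in $L^2$ using the uniform bound, and finally integration over the ergodic decomposition with dominated convergence. The differences are cosmetic. You split off $\E_\mu(g\mid\mathcal I(S))$ at the start, whereas the paper keeps $g$ whole and carries the limit $M_b(x)\int g\,d\mu_z$ through each component. Your preliminary global topological model and the approximation of $g_0$ by $h-\E_\mu(h\mid\mathcal I(S))$ are never used and can be dropped.
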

			\begin{proof}
				Set \(B=\sup_n\|b(\cdot,n)\|_{L^\infty(\mu)}\). 
				By the assumption, there is a co-null set $X_0\subset X$ such that for any $x\in X_0$, \(|b(x,n)|\leq B\) for all \(n\in \N\) and 
				\(b_x=(b(x,n))_{n\geq1}\) is a nilsequence.  
				By the last sentence in \Cref{subsec:nilpotent-preliminaries}, \(M_b(x)\) exists for all $x\in X_0$.  Moreover, since \(M_b\) is the 
				limit of the measurable function \(N^{-1}\sum_{n=1}^{N}b(\cdot,n)\), it is measurable.
				
				For \(f\in L^2(\mu)\), write
				\[
				A_N f(x)=\frac1N\sum_{n=1}^N b(x,n)f(S^{\Omega(n)}x).
				\]
				First assume that \(S\) is ergodic.  By the Jewett--Krieger theorem
				(see \cite{Jewett,Krieger}), we choose a uniquely ergodic topological model
				\((Y,\nu,R)\) of \((X,\mu,S)\), with the measure-theoretic isomorphism
				\(\phi:X\to Y\).  If \(H\in C(Y)\) and \(h=H\circ\phi\), then for $\mu$-a.e. \(x\in X_0\),
				using \Cref{thm:beg-omega-nilsequence} with the nilsequence \(b_x\), the point
				\(y=\phi(x)\), and \(G=H\), we get
				\[
				A_Nh(x)
				=
				\frac1N\sum_{n=1}^N b_x(n)H(R^{\Omega(n)}\phi(x))
				\longrightarrow
				M_b(x)\int_Y H\,d\nu
				=
				M_b(x)\int_X h\,d\mu
				\] as $N\to\infty$.
				The functions of the form
				\(H\circ\phi\), \(H\in C(Y)\), are dense in \(L^2(\mu)\).  Also
				\(\|A_N f\|_{L^2(\mu)}\leq B\|f\|_{L^2(\mu)}\) and
				$\displaystyle \norm{M_b\int_{X} f\,d\mu}_{L^2(\mu)}\leq B\|f\|_{L^2(\mu)}$.  Hence, for
				every \(g\in L^\infty(\mu)\),
				$$A_Ng(x)\longrightarrow
				M_b(x)\int_X g\,d\mu$$ in \(L^2(\mu)\) as $N\to\infty$.
				
				For the general case, let \(\displaystyle\mu=\int_Z\mu_z\,d\eta(z)\) be the ergodic
				decomposition of \(\mu\) with respect to $S$.  For \(\eta\)-a.e. \(z\in Z\), 
				\((X,\mathcal X,\mu_z,S)\) is ergodic and then apply the above argument to
				\(b\) and \(g\) on this ergodic component.  Thus
				\[
				\int_X
				\left|
				A_Ng(x)-M_b(x)\int_X g\,d\mu_z
				\right|^2
				\,d\mu_z(x)
				\longrightarrow 0
				\] as $N\to\infty$
				for \(\eta\)-a.e. \(z\in Z\). Note that 
				\(\displaystyle\E_{\mu}(g\mid\mathcal I(S))(x)=\int_X g\,d\mu_z\) for \(\mu_z\)-a.e. \(x\in X\). Then 
				\[
				\left\|
				A_Ng-M_b\,\E_{\mu}(g\mid\mathcal I(S))
				\right\|_{L^2(\mu)}^{2}=\int_Z\int_X
				\left|
				A_Ng(x)-M_b(x)\int_X g\,d\mu_z
				\right|^2
				\,d\mu_z(x)d\eta(z).
				\]
				The above equality and the Dominated Convergence Theorem finish the proof.
			\end{proof}
			\section{Cyclic tower actions}\label{Sec6}
			In this section, first, we introduce the cyclic tower action over a given $\Z$-system by the following, which is from the coding process of isotropy factors.
			
			Fix a $\Z$-system $(X,\X,\mu,U)$. Fix the tower height $L\ge 1$. Endow $Y:=X\times (\Z/L\Z)$ with the product $\nu$ of \(\mu\) and normalized counting measure. Define $V:Y\to Y$ by 
			\[
			(x,t)\mapsto
			\begin{cases}
				(x,t+1),& t\neq L-1,\\
				(Ux,0),& t=L-1.
			\end{cases}
			\]
			Then the inverse $V^{-1}$ of $V$ is the map 
			\[
			(x,t)\mapsto
			\begin{cases}
				(x,t-1),& t\neq 0,\\
				(U^{-1}x,L-1),& t=0.
			\end{cases}
			\]
			Moreover, for any measurable $A\times B\subset Y$,
			$$\nu(V^{-1}(A\times B))=
			\begin{cases}
				\frac{\mu(U^{-1}A)+(|B|-1)\mu(A)}{L},& 0\in B,\\
				\frac{|B|\mu(A)}{L}& 0\notin B.
			\end{cases}
			$$
			To sum up, $(Y,\nu,V)$ is a $\Z$-system. And if \(k=Lq+t\),
			\(q\in\mathbb Z\), \(0\leq t<L\), then
			\begin{equation}\label{eq6-1}
				V^k(x,0)=(U^qx,t).
			\end{equation}
			
			{\bf In the rest of the section, we will work at two systems $(X,\mu,U)$ and $(Y,\nu,V)$ all the time. So, in the next results, we omit them in the statements.}
			\subsection{Two lemmas}
			Next, we will present two lemmas for cyclic tower actions. They display invariant factors and $Z_{s}(V^L)$, respectively.
			\begin{lemma}
				\label{lem:cyclic-tower-invariant-factor}
				Suppose that \(S:X\to X\) is an invertible
				measure preserving transformation commuting with \(U\), and define
				$
				\widetilde S(x,t)=(Sx,t).
				$
				Then \(\widetilde S\) is invertible and measure preserving,
				$
				\widetilde S V=V\widetilde S,
				$
				and
				$
				\mathcal I(\widetilde S)
				=
				\mathcal I(S)\otimes\mathcal P(\mathbb Z/L\mathbb Z).
				$
				And for \(g\in L^\infty(\mu)\) and \(\widetilde g(x,t)=g(x)\),
				$$
				\E_{\nu}(\widetilde g\mid\mathcal I(\widetilde S))(x,t)
				=
				\E_{\mu}(g\mid\mathcal I(S))(x).
				$$ Furthermore, for \(h\in L^\infty(\nu)\) and \( h_{t}(x)=h(x,t)\),
				$$
				\E_{\nu}(h\mid\mathcal I(\widetilde S))(x,t)
				=
				\E_{\mu}(h_t\mid\mathcal I(S))(x).
				$$
			\end{lemma}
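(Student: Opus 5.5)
The first assertions are direct verifications, and the real content is the identification $\mathcal I(\widetilde S)=\mathcal I(S)\otimes\mathcal P(\mathbb Z/L\mathbb Z)$. I will check commutation case by case on $t$. For $t\neq L-1$,
\[
\widetilde S V(x,t)=(Sx,t+1)=V\widetilde S(x,t).
\]
For $t=L-1$,
\[
\widetilde SV(x,L-1)=(SUx,0)=(USx,0)=V\widetilde S(x,L-1),
\]
using $SU=US$. Invertibility holds because $\widetilde S^{-1}(x,t)=(S^{-1}x,t)$. Measure preservation holds because $\widetilde S=S\times\mathrm{id}$ preserves the product measure $\nu=\mu\otimes(\text{normalized counting})$.

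For the invariant $\sigma$-algebra, I will use that $Y$ is a finite disjoint union of the copies $X\times\{t\}$, each preserved by $\widetilde S$, with $\widetilde S$ acting on each copy as $S$. Take $h\in L^2(\nu)$ and write $h_t(x)=h(x,t)$. Then $h\circ\widetilde S=h$ $\nu$-a.e. if and only if $h_t\circ S=h_t$ $\mu$-a.e. for every $t$, since $\nu$ restricted to $X\times\{t\}$ is $\mu/L$. So the $\widetilde S$-invariant functions are exactly those with every $h_t$ measurable with respect to $\mathcal I(S)$. These are precisely the functions measurable with respect to $\mathcal I(S)\otimes\mathcal P(\mathbb Z/L\mathbb Z)$: any such function is $\sum_t h_t(x)\one_{\{t\}}$, and conversely any function of this form with $\mathcal I(S)$-measurable $h_t$ is $\widetilde S$-invariant. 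Passing from invariant functions to invariant sets, via indicators and modulo null sets, gives the equality of $\sigma$-algebras.

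For the conditional expectations, I will prove the general formula for $h\in L^\infty(\nu)$; the case $\widetilde g$ follows by taking $h_t=g$ for all $t$. Define $\Psi(x,t)=\E_\mu(h_t\mid\mathcal I(S))(x)$. By the previous paragraph, $\Psi$ is $\mathcal I(\widetilde S)$-measurable. It remains to check $\int h\,F\,d\nu=\int \Psi\,F\,d\nu$ for every bounded $\mathcal I(\widetilde S)$-measurable $F$. Such an $F$ has the form $F(x,t)=F_t(x)$ with each $F_t$ being $\mathcal I(S)$-measurable. Both sides then split as
\[
\frac1L\sum_t\int_X h_tF_t\,d\mu
\quad\text{and}\quad
\frac1L\sum_t\int_X \E_\mu(h_t\mid\mathcal I(S))F_t\,d\mu,
\]
and these agree term by term by the defining property of $\E_\mu(\cdot\mid\mathcal I(S))$.

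There is no serious obstacle here. The only step needing care is the measure-theoretic identification of invariant sets, which must hold modulo null sets. Working with invariant $L^2$ functions rather than sets sidesteps this.
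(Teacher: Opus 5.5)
Your proof is correct. For the commutation relation and for $\mathcal I(\widetilde S)=\mathcal I(S)\otimes\mathcal P(\mathbb Z/L\mathbb Z)$ you argue as the paper does: $h\circ\widetilde S=h$ if and only if $h_t\circ S=h_t$ for every $t$. You simply write out the case $t=L-1$ and the null-set bookkeeping explicitly.

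The difference is in the two conditional expectation formulas. The paper obtains them from Birkhoff's pointwise ergodic theorem. Since $\widetilde S^n(x,t)=(S^nx,t)$, it writes $\E_\nu(h\mid\mathcal I(\widetilde S))(x,t)=\lim_{N}\frac1N\sum_{n=1}^N h_t(S^nx)=\E_\mu(h_t\mid\mathcal I(S))(x)$ almost everywhere, and handles $\widetilde g$ the same way. You instead verify the defining property of conditional expectation directly. The $\sigma$-algebra identification shows that your candidate $\Psi$ is $\mathcal I(\widetilde S)$-measurable. It also shows that every bounded $\mathcal I(\widetilde S)$-measurable test function is fibrewise $\mathcal I(S)$-measurable.

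Your route avoids the ergodic theorem entirely and makes both formulas a formal consequence of the $\sigma$-algebra identity. It also yields the $\widetilde g$ case as the special case $h_t=g$. The paper's route is a one-line orbit computation that needs no description of $\mathcal I(\widetilde S)$-measurable functions, but it relies on the pointwise ergodic theorem.
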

			\begin{proof}
				The setting of \(S\) gives that  \(\widetilde S\) is invertible and measure preserving.  \(\widetilde SV=V\widetilde S\) follows from \(SU=US\).
				For \(F\in L^2(\nu)\), write \(F_t(x)=F(x,t)\).  Then
				\(\widetilde SF=F\) if and only if \(SF_t=F_t\) for every
				\(t\in\mathbb Z/L\mathbb Z\).  So, $
				\mathcal I(\widetilde S)
				=
				\mathcal I(S)\otimes\mathcal P(\mathbb Z/L\mathbb Z).
				$ By Birkhoff's ergodic theorem, for $\nu$-a.e. $(x,t)\in Y$,
				$$\E_{\nu}(\widetilde g\mid\mathcal I(\widetilde S))(x,t)=\lim_{N\to\infty}\frac{1}{N}\sum_{n=1}^{N}\widetilde g(S^n x,t)=\lim_{N\to\infty}\frac{1}{N}\sum_{n=1}^{N} g(S^n x)=\E_{\mu}(g\mid\mathcal I(S))(x);$$
				$$
				\E_{\nu}(h\mid\mathcal I(\widetilde S))(x,t)=\lim_{N\to\infty}\frac{1}{N}\sum_{n=1}^{N}h(S^n x,t)=\lim_{N\to\infty}\frac{1}{N}\sum_{n=1}^{N}h_t(S^n x)	=
				\E_{\mu}(h_t\mid\mathcal I(S))(x).
				$$ 
				This finishes the proof.
			\end{proof}
			\begin{lemma}
				\label{lem:cyclic-tower-host-kra}
				For \(H\in L^\infty(\nu)\), write \(H_t(x)=H(x,t)\).  Then, for every
				\(m\geq1\),
				\begin{equation}\label{eq:cyclic-tower-power-seminorm}
					\|H\|_{U^m(V^L)}^{2^m}
					=\frac1L\sum_{t\in\mathbb Z/L\mathbb Z}
					\|H_t\|_{U^m(U)}^{2^m}.
				\end{equation}
				Consequently, for every \(s\geq1\),
				$
				Z_s(V^L)=Z_s(U)\otimes\mathcal P(\mathbb Z/L\mathbb Z)
				\subseteq Z_s(V).
				$
			\end{lemma}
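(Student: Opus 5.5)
The plan is to observe that $V^L$ acts fibrewise: by \eqref{eq6-1} (or directly from the definition), $V^L(x,t)=(Ux,t)$ for every $t\in\mathbb Z/L\mathbb Z$. Thus the $\Z$-system $(Y,\nu,V^L)$ is the disjoint union of $L$ copies of $(X,\mu,U)$, the $t$-th copy being $X\times\{t\}$ carrying the measure $\frac1L\mu$. Accordingly $\mathcal I(V^L)=\mathcal I(U)\otimes\mathcal P(\mathbb Z/L\mathbb Z)$, by the same argument as in \Cref{lem:cyclic-tower-invariant-factor}: $F\circ V^L=F$ if and only if $F_t\circ U=F_t$ for every $t$.

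I will then prove \eqref{eq:cyclic-tower-power-seminorm} by induction on the cube construction of \Cref{subsec:hk-cube-measures}. The claim to establish is
\[
\mu^{[j]}_{Y,V^L}=\frac1L\sum_{t}\mu_X^{[j]}\otimes\delta_{(t,\dots,t)},
\]
that is, the cube measure is supported on cubes whose $\Z/L\Z$-coordinates are all equal to a common $t$, and it restricts on each such slice to $\frac1L\mu_X^{[j]}$. For $j=0$ this is the definition of $\nu$. For the inductive step, note that the diagonal invariant $\sigma$-algebra of $V^L_{[j]}$ contains the function $t$ (constant along the cube), and on each slice it coincides with $\cI_X^{[j]}$. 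Hence the relatively independent self-joining over $\cI^{[j]}_{Y}$ glues only cubes with the same $t$, which reproduces the slice structure at level $j+1$. Integrating $\prod_\omega\mathcal C^{|\omega|}H(x_\omega,t)$ against this measure gives exactly $\frac1L\sum_t\|H_t\|_{U^m(U)}^{2^m}$.

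From \eqref{eq:cyclic-tower-power-seminorm}, $\|H\|_{U^{s+1}(V^L)}=0$ if and only if $\|H_t\|_{U^{s+1}(U)}=0$ for all $t$, which holds if and only if $\E_\mu(H_t\mid Z_s(U))=0$ for all $t$. Since conditional expectation onto $Z_s(U)\otimes\mathcal P(\mathbb Z/L\mathbb Z)$ acts slice by slice, the latter is equivalent to $\E_\nu(H\mid Z_s(U)\otimes\mathcal P(\mathbb Z/L\mathbb Z))=0$. The characterization of $Z_s$ then yields $Z_s(V^L)=Z_s(U)\otimes\mathcal P(\mathbb Z/L\mathbb Z)$.

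For the inclusion into $Z_s(V)$, I will use the first assertion of \Cref{lem:one-dimensional-power-comparison} applied to the system $(Y,\nu,V)$ with $a=L$: we have $\|H\|_{U^{s+1}(V^L)}\le C\|H\|_{U^{s+1}(V)}$. So if $\E(H\mid Z_s(V))=0$, then $H$ is $U^{s+1}(V^L)$-null, and hence orthogonal to $Z_s(V^L)$. This gives $Z_s(V^L)\subseteq Z_s(V)$. (For $s=0$ one would need $m\ge2$; the statement only asks $s\ge1$, so $m=s+1\ge2$ is fine.)

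The main obstacle I expect is the inductive identification of the cube measures, specifically showing that the invariant $\sigma$-algebra of the diagonal $V^L_{[j]}$-action on the slice-supported measure is exactly "slice label $\otimes$ $\cI_X^{[j]}$". If this becomes cumbersome, I would instead invoke \Cref{lem:HK-seminorm-ergodic-decomposition}: the ergodic components of $(Y,\nu,V^L)$ are $\mu_\omega\otimes\delta_t$, and the seminorm of $H$ on the component $\mu_\omega\otimes\delta_t$ is $\|H_t\|_{U^m(X,\mu_\omega,U)}$. Integrating over $\omega$ and averaging over $t$ then gives \eqref{eq:cyclic-tower-power-seminorm} directly.
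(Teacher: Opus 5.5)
Your proposal is correct. For the identity \eqref{eq:cyclic-tower-power-seminorm}, the paper uses exactly the route you hold in reserve as a fallback. It writes the ergodic decomposition of $(Y,\nu,V^L)$ as $\frac1L\sum_{t}\int_\Omega(\mu_\omega\otimes\delta_t)\,d\lambda(\omega)$ and applies \Cref{lem:HK-seminorm-ergodic-decomposition} twice, first to $(Y,\nu,V^L)$ and then to $(X,\mu,U)$.

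Your primary argument instead computes the cube measures directly, $\mu^{[j]}_{Y,V^L}=\frac1L\sum_t\mu_X^{[j]}\otimes\delta_{(t,\dots,t)}$, and the induction you sketch does go through; the step you flag as the main obstacle is routine. Since $V^L(x,t)=(Ux,t)$, the map sending a cube to the $\Z/L\Z$-coordinate of its $\mathbf 0$-vertex is genuinely $V^L_{[j]}$-invariant. Hence the relatively independent self-joining over $\cI^{[j]}$ gives zero mass to pairs of cubes from different slices. On a single slice the measure is $\frac1L\mu_X^{[j]}$ and the action is the diagonal $U$-action, so the invariant $\sigma$-algebra there is $\cI_X^{[j]}$ up to null sets.

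The direct route is self-contained and yields slightly more: it identifies the cube measures themselves, not just the seminorms, without appealing to the ergodic-decomposition formula for Host--Kra seminorms. The paper's route is shorter given that cited proposition.

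Your remaining two steps coincide with the paper's. You read off $Z_s(V^L)=Z_s(U)\otimes\mathcal P(\Z/L\Z)$ from the identity; the paper calls this clear, and your slice-by-slice conditional-expectation argument supplies the justification. You then obtain $Z_s(V^L)\subseteq Z_s(V)$ from the first part of \Cref{lem:one-dimensional-power-comparison} with $a=L$ and $m=s+1\geq 2$.
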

			
			\begin{proof}
				Let
				$
				\displaystyle\mu=\int_{\Omega}\mu_\omega\,d\lambda(\omega)
				$
				be the ergodic decomposition of $\mu$ with respect to $U$.  Since
				\(V^L(x,t)=(Ux,t)\), the ergodic decomposition of \((Y,\nu,V^L)\) can be written as
				\[
				\nu=\frac1L\sum_{t\in\mathbb Z/L\mathbb Z}
				\int_{\Omega}(\mu_\omega\otimes\delta_t)\,d\lambda(\omega).
				\]
				Applying \Cref{lem:HK-seminorm-ergodic-decomposition} first to
				\((Y,\nu,V^L)\) and then to \((X,\mu,U)\) gives
				\[
				\begin{aligned}
					\|H\|_{U^m(V^L)}^{2^m}
					=\frac1L\sum_{t\in\mathbb Z/L\mathbb Z}\int_\Omega
					\|H_t\|_{U^m(X,\mu_\omega,U)}^{2^m}
					\,d\lambda(\omega)
					=\frac1L\sum_{t\in\mathbb Z/L\mathbb Z}\|H_t\|_{U^m(U)}^{2^m},
				\end{aligned}
				\]
				which is \eqref{eq:cyclic-tower-power-seminorm}.
				Clearly, \eqref{eq:cyclic-tower-power-seminorm} gives $
				Z_s(V^L)=Z_s(U)\otimes\mathcal P(\mathbb Z/L\mathbb Z).
				$
				On the other hand, \Cref{lem:one-dimensional-power-comparison}, applied to
				\(V\) and its power \(V^L\), gives
				\[
				\|H\|_{U^{s+1}(V^L)}
				\leq C_{L,s}\|H\|_{U^{s+1}(V)}.
				\]
				This means that \(Z_s(V^L)\subseteq Z_s(V)\). The proof is complete.
			\end{proof}
			\subsection{One-action decoupling on a cyclic tower action}
			In this subsection, we give two decoupling results involving $\Omega(n)$ for cyclic tower actions.
			\begin{proposition}
				\label{prop:cyclic-tower-multislope}
				Let \(c:X\times\mathbb Z\to\mathbb C\) be a bounded measurable weight, which is Eisner--Zorin-Kranich regular with respect to \(U\).
				Put
				$
				\widetilde c((x,t),n)=c(x,n).
				$
				Let \(A\subset\mathbb Z\setminus\{0\}\) be finite and non-empty, and let $1$-bounded
				\(H_a\in L^\infty(\nu),a\in A\).  If
				$
				\E_{\nu}(H_{a_0}\mid Z_\infty(V))=0
				$
				for some \(a_0\in A\), then
				\[
				\frac1N\sum_{n=1}^N
				\widetilde c((x,t),n)\prod_{a\in A}V^{an}H_a
				\longrightarrow0
				\]
				in \(L^2(\nu)\) as $N\to\infty$.
			\end{proposition}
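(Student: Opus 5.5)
The plan is to reduce \Cref{prop:cyclic-tower-multislope} to \Cref{prop:multislope-nilsequence-estimate} on the base system $(X,\mu,U)$. We decompose $n$ by residues modulo $L$ so that $V^{an}$ becomes an honest power of $U$ on each fibre $X\times\{t\}$.

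\textbf{Step 1: reduce to each fibre.} By \eqref{eq6-1}, for $n=Lm+r$ with $0\le r<L$ we have $V^{an}(x,t)=V^{aLm}V^{ar}(x,t)=(U^{am}\cdot,\cdot)$ composed with the fixed map $V^{ar}$. Explicitly, $V^{ar}(x,t)=(U^{\lfloor (t+ar)/L\rfloor}x,\,t+ar \bmod L)$. Hence for fixed $r$ and $t$,
\[
V^{an}H_a(x,t)=U^{am}\bigl(U^{\lfloor (t+ar)/L\rfloor}H_{a,(t+ar)\bmod L}\bigr)(x).
\]
It therefore suffices to show, for each residue $r$ and each $t$, that
\[
\frac1M\sum_{m=1}^M c(x,Lm+r)\prod_{a\in A}U^{am}\widetilde H_{a}^{(r,t)}\longrightarrow0
\]
in $L^2(\mu)$, where $\widetilde H_a^{(r,t)}:=U^{\lfloor (t+ar)/L\rfloor}H_{a,(t+ar)\bmod L}$. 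The original average is a finite combination of these averages (with $N\approx LM$), and $\|\cdot\|_{L^2(\nu)}^2$ is the average over $t$ of the fibre norms.

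\textbf{Step 2: transfer the hypothesis.} Since $Z_\infty(V)\supseteq Z_\infty(V^L)=Z_\infty(U)\otimes\mathcal P(\mathbb Z/L\mathbb Z)$ by \Cref{lem:cyclic-tower-host-kra}, the condition $\E_\nu(H_{a_0}\mid Z_\infty(V))=0$ implies $\E_\nu(H_{a_0}\mid Z_\infty(V^L))=0$. By \eqref{eq:cyclic-tower-power-seminorm} this gives $\|H_{a_0,s}\|_{U^k(U)}=0$ for every $s$ and $k$. The $U^k(U)$ seminorms are $U$-invariant, so $\E_\mu(\widetilde H_{a_0}^{(r,t)}\mid Z_\infty(U))=0$ for all $r,t$.

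\textbf{Step 3: apply the base estimate.} We apply \Cref{prop:multislope-nilsequence-estimate} on $(X,\mu,U)$ with the weight $c_r(x,m):=c(x,Lm+r)$. This is the main point to check: $c_r$ must again be Eisner--Zorin-Kranich regular with respect to $U$. The family in \Cref{def:finite-ezk-stability} is indexed by affine data $(r_i,t_i,a_i)$ in $n$, and the substitution $n=Lm+r$ is affine, so the derived weights of $c_r$ are derived weights of $c$ with rescaled parameters. The same finite representation data with parameters $(s,K,B)$ should therefore work. I would verify this directly from the definition; if it only allows the second argument in a restricted form, I would instead rerun the van der Corput induction in the proof of \Cref{prop:multislope-nilsequence-estimate} with $c$ sampled along $Lm+r$, using \Cref{cor:ezk-finite-Ck-family}.

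With $c_r$ regular, \Cref{prop:multislope-nilsequence-estimate} gives the vanishing of each fibre average. Summing over the finitely many $r$ and $t$, and absorbing the $O(L/N)$ boundary terms, completes the proof.
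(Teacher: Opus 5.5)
Your proposal is correct and follows essentially the same route as the paper: split $n=Lm+r$, rewrite $V^{an}H_a$ on the fibre $t$ as $U^{am}$ applied to a $U$-shift of the fibre function $H_{a,\tau}$, transfer orthogonality to $Z_\infty(U)$ via \Cref{lem:cyclic-tower-host-kra}, and apply \Cref{prop:multislope-nilsequence-estimate} with $c_r(x,m)=c(x,Lm+r)$. The fallback in your Step 3 is unnecessary: the derived weights of $c_r$ are exactly those of $c$ with data $(r_\nu,t_\nu,La_\nu)$ and shifts $Lh_\nu+r$, so \Cref{def:finite-ezk-stability} gives the regularity of $c_r$ directly, which is what the paper uses.
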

			
			\begin{proof}
				Fix \(t\in\mathbb Z/L\mathbb Z\) and
				\(r\in\{0,\ldots,L-1\}\).  For every \(a\in A\), write uniquely
				$
				t+ar=L\ell_{a,r,t}+\tau_{a,r,t},
				\ell_{a,r,t}\in\mathbb Z,0\leq\tau_{a,r,t}<L.
				$
				If \(n=Lm+r\), then by \eqref{eq6-1},
				$$
				(V^{an}H_a)(x,t)
				=H_a\bigl(U^{am+\ell_{a,r,t}}x,\tau_{a,r,t}\bigr)\\
				=U^{am}K_{a,r,t}(x),
				$$
				where
				$
				K_{a,r,t}=U^{\ell_{a,r,t}}H_{a,\tau_{a,r,t}}.
				$
				By \Cref{lem:cyclic-tower-host-kra},
				\(H_{a_0,\tau_{a_0,r,t}}\) is orthogonal to \(Z_\infty(U)\). 
				Then \(K_{a_0,r,t}\) is orthogonal to \(Z_\infty(U)\).
				
				Set
				$
				c_r(x,m)=c(x,Lm+r).
				$ By \Cref{def:finite-ezk-stability}, $c_r(x,m)$ is Eisner--Zorin-Kranich regular with respect to $U$.
				Then \Cref{prop:multislope-nilsequence-estimate} gives
				\[
				\frac1M\sum_{m=1}^M
				c_r(x,m)\prod_{a\in A}U^{am}K_{a,r,t}
				\longrightarrow0
				\]
				in \(L^2(\mu)\) as $N\to\infty$.  
				
				Split the original average into the finitely many residue classes
				\(n\equiv r\pmod L\).  If \(M_{N,r}\) denotes the number of integers
				\(n\leq N\) in the residue class \(r\), the corresponding block is
				\[
				\frac{M_{N,r}}{N}
				\left(
				\frac1{M_{N,r}}\sum_{m}
				c_r(x,m)\prod_{a\in A}U^{am}K_{a,r,t}
				\right),
				\]
				up to at most one term.  Since \(M_{N,r}/N\leq1\), each block
				tends to zero in \(L^2(\mu)\).  Summing over \(r\) and then averaging over \(t\) proves the proposition.
			\end{proof}
			\begin{proposition}
				\label{prop:cyclic-tower-one-action-decoupling}
				Let $S$ and $\widetilde{S}$ be defined as \Cref{lem:cyclic-tower-invariant-factor}.
				Let $c$ and $\widetilde{c}$ be defined as \Cref{prop:cyclic-tower-multislope}.
				Then for integer affine forms \(L_i(n):=a_i n+r_i\), and all 
				\(h_1,\ldots,h_m,g\in L^\infty(\nu)\), one has
				\begin{align*}
					\frac1N\sum_{n=1}^N
					\widetilde c((x,t),n)
					\Bigl(\prod_{i=1}^mV^{L_i(n)}h_i\Bigr)
					\widetilde S^{\Omega(n)}g-
					\left(\frac1N\sum_{n=1}^N
					\widetilde c((x,t),n)
					\prod_{i=1}^mV^{L_i(n)}h_i\right)
					\E_{\nu}(g\mid\mathcal I(\widetilde S))
					\longrightarrow0
				\end{align*}
				in \(L^2(\nu)\) as $N\to\infty$.
			\end{proposition}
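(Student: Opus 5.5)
The plan is a splitting argument: decompose the functions \(h_i\) along the characteristic factor \(Z_\infty(V)\), dispose of the orthogonal part with \Cref{prop:cyclic-tower-multislope}, and treat the structured part with \Cref{lem:xiao-nilsequence-family}.

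\emph{Reduction to a clean \(g\) and non-degenerate \(h_i\).} Both averages are linear in \(g\), and the second expression is unchanged when \(g\) is replaced by \(g-\E_{\nu}(g\mid\mathcal I(\widetilde S))\). For \(\widetilde S\)-invariant \(g\) the two expressions coincide. So we may assume \(\E_{\nu}(g\mid\mathcal I(\widetilde S))=0\) and aim to show the first average tends to \(0\).

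Factors with \(a_i=0\) are fixed functions \(V^{r_i}h_i\); we absorb them into the weight by multiplying. This is harmless because \(L^2\) convergence to \(0\) is preserved under multiplication by a fixed bounded function. Next, group the \(i\) with equal slope \(a_i\): \(V^{a n+r}h\cdot V^{an+r'}h'=V^{an}(V^rh\,V^{r'}h')\). We may thus assume the slopes are distinct and nonzero, with \(L_i(n)=a_in\).

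\emph{Orthogonal part.} Write \(h_i=\E(h_i\mid Z_\infty(V))+(h_i-\E(h_i\mid Z_\infty(V)))\) and expand by multilinearity. For any term containing some \(h_{i_0}\) orthogonal to \(Z_\infty(V)\), we want to apply \Cref{prop:cyclic-tower-multislope}. The obstacle is the extra factor \(\widetilde S^{\Omega(n)}g\), which is not a polynomial weight. I would remove it by the orthogonality criterion \Cref{lem:orthogonality}, exactly as in \Cref{lem:prime-dilation-to-three-direction}. Put \(v_n\) equal to the summand. Since \(\Omega(pn)=\Omega(qn)\), in \(\langle v_{pn},v_{qn}\rangle\) the \(\widetilde S\)-terms combine to \(|g|^2\). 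After composing with \(V^{-a_{i_0}qn}\), one obtains an average of the form \(\frac1N\sum_n C(y,n)\prod V^{b n}H_b\), where \(C\) is built from \(\widetilde c(\cdot,pn)\overline{\widetilde c(\cdot,qn)}\). This weight is again Eisner--Zorin-Kranich regular by the stability in \Cref{def:finite-ezk-stability}, allowing dilations \(n\mapsto pn\) through \((r_i,t_i,a_i)\).

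The slopes \(pa_i\) and \(qa_j\) form a finite set. After merging coincident slopes, the slot of \(h_{i_0}\) at slope \(pa_{i_0}\) is generically unique for \(p\ne q\) outside finitely many primes. The merged function is still orthogonal to \(Z_\infty(V)\) after multiplying with functions at the same slope, once we use a Host--Kra seminorm bound in place of exact orthogonality, as (\textbf{S}) in the proof of \Cref{prop:multislope-nilsequence-estimate} does.

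The weight involves \(|g|^2\) and the \(\widetilde S\) direction only through \(\int\), which is constant in \(n\), so it drops out. Hence \Cref{prop:cyclic-tower-multislope} gives that the correlations vanish. We discard the finitely many bad primes, since \(\cP_0\) only needs positive relative density.

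\emph{Structured part.} Now all \(h_i\) are \(Z_\infty(V)\)-measurable. Using \Cref{lem:cyclic-tower-host-kra} and the martingale theorem, approximate them in \(L^2\) by \(Z_s(U)\otimes\mathcal P(\mathbb Z/L\mathbb Z)\)-measurable tame nilfunctions via \Cref{lem:smooth-bundle-approx}. The error terms are controlled by boundedness of the weight.

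For such tame nilfunctions, \(n\mapsto \prod_i V^{a_in}h_i(x,t)\) is, for a.e. \((x,t)\), a nilsequence: split into residues mod \(L\) and use \eqref{eq6-1}, which produces linear nilorbits \(U^{a m}\). The weight \(\widetilde c((x,t),n)\) is also a nilsequence for a.e. point, since EZK-regular weights are fibrewise nilsequences (product of tame nilfunctions along orbits, as in \Cref{lem:bundle-weight-ezk-stable}). So \(b((x,t),n)\) is a bounded family of nilsequences.

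Then \Cref{lem:xiao-nilsequence-family}, applied on \((Y,\nu,\widetilde S)\), gives convergence to \(M_b\,\E(g\mid\mathcal I(\widetilde S))=0\). The main obstacle I expect is the orthogonal step: verifying that the prime-dilated weights remain EZK regular with uniform data, and handling coincident slopes after dilation.
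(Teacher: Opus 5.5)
Your architecture is the paper's: reduce to distinct nonzero slopes, split along $Z_\infty(V)$, kill the orthogonal part with \Cref{lem:orthogonality} plus \Cref{prop:cyclic-tower-multislope} using the weight $c(x,pn)\overline{c(x,qn)}$, and finish the structured part with tame nilfunctions and \Cref{lem:xiao-nilsequence-family}. However, two steps fail as written.

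\emph{The orthogonal step.} In $\langle v_{pn},v_{qn}\rangle$ the $\widetilde S$-factors combine to $\widetilde S^{\Omega(n)+1}|g|^2$. This still depends on $n$ through $\Omega(n)$ and sits inside the integral next to non-$\widetilde S$-invariant factors. So it does not drop out unless $|g|\equiv1$.

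The paper first reduces to $|g|\equiv1$ by linearity. You can do the same, but it clashes with your normalization $\E_\nu(g\mid\mathcal I(\widetilde S))=0$. So you must prove the orthogonal part for arbitrary bounded $g$; this is possible, since its conclusion is just that the average tends to $0$. Then bound the Ces\`aro mean of the correlations by the $L^2(\nu)$-norm of $\frac1N\sum_n \widetilde c((x,t),pn)\overline{\widetilde c((x,t),qn)}\prod_{a}V^{apn}H_a\,V^{aqn}\overline{H_a}$, and apply \Cref{prop:cyclic-tower-multislope} directly.

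Drop your change of variables by $V^{-a_{i_0}qn}$. The resulting weight $\widetilde c(V^{-bn}(x,t),pn)$ depends on $t$ and on $n \bmod L$, so it is no longer of the form $c'(x,n)$ that the proposition requires. Your fallback that a merged function ``is still orthogonal to $Z_\infty(V)$'' is false in general (take $h\overline h$). It is also unnecessary: if $p,q$ divide no element of $A$, the $2|A|$ slopes $pa,qa$ are pairwise distinct, because $pa=qa'$ forces $q\mid a$.

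\emph{The structured part.} \Cref{lem:cyclic-tower-host-kra} gives $Z_s(V^L)=Z_s(U)\otimes\mathcal P(\mathbb Z/L\mathbb Z)\subseteq Z_s(V)$. This is the wrong inclusion for approximating $Z_\infty(V)$-measurable functions by $Z_s(U)\otimes\mathcal P(\mathbb Z/L\mathbb Z)$-measurable ones; you need $Z_s(V)\subseteq Z_s(V^L)$. That inclusion is true: split the cube parameters modulo $L$ and use Gowers--Cauchy--Schwarz to get $\|H\|_{U^k(V)}\le\|H\|_{U^k(V^L)}$. But it is not in the paper and you do not prove it.

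You also need a gluing fact: if a sequence is a nilsequence in $m$ on each class $n=Lm+r$, then it is a nilsequence in $n$. Your route cannot avoid this, because $\Omega(Lm+r)$ cannot be handled classwise. It is not automatic and needs an argument, e.g.\ an $L$-th root of the translation in a connected simply connected model.

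The paper avoids both issues. It applies \Cref{lem:smooth-bundle-approx} directly to $(Y,\nu,V)$ and $\E_\nu(H_a\mid Z_k(V))$, so the approximants $K_a$ are tame nilfunctions for $V$ itself, and $n\mapsto K_a(V^{an}y)$ is immediately a nilsequence in $n$. Finally, $\widetilde c((x,t),\cdot)$ is a nilsequence because EZK regularity with $R=1$, $(r_1,t_1,a_1)=(0,0,1)$, $h_1=0$ gives a finite EZK representation. It is not because the weight is a product of tame nilfunctions.
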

			\begin{proof}
				Replacing \(h_i\) by \(V^{r_i}h_i\), we may assume \(r_i=0\) for all \(i\).
				We may also assume \(a_i\neq0\) for all \(i\). Indeed, the terms with
				\(a_i=0\) give the bounded \(H_0:=\prod_{1\le i\le m,a_i=0}h_i\), independent of
				\(n\). If all \(a_i=0\), applying \Cref{def:finite-ezk-stability} with
				\(R=1\), \(\epsilon_1=0\), \((r_1,t_1,a_1)=(0,0,1)\), and \(h_1=0\), gives a finite Eisner--Zorin-Kranich representation of
				\(c(x,n)\). So, for $\mu$-a.e. $x\in X$, $c(x,n)$ is a nilsequence.  Write \(g_t(x)=g(x,t)\). Apply \Cref{lem:xiao-nilsequence-family} and then
				\[
				\frac1N\sum_{n=1}^Nc(x,n)S^{\Omega(n)}g_t
				-
				\left(\frac1N\sum_{n=1}^Nc(x,n)\right)
				\E_{\mu}(g_t\mid\mathcal I(S))
				\longrightarrow0
				\]
				in \(L^2(\mu)\) as $N\to\infty$. Note that $S^{\Omega(n)}g_t(x)=\widetilde S^{\Omega(n)}g(x,t)$. By \Cref{lem:cyclic-tower-invariant-factor},
				$$
				\frac1N\sum_{n=1}^N
				\widetilde c((x,t),n)
				\widetilde S^{\Omega(n)}g-
				\left(\frac1N\sum_{n=1}^N
				\widetilde c((x,t),n)\right)
				\E_{\nu}(g\mid\mathcal I(\widetilde S))
				\longrightarrow0
				$$
				in \(L^2(\nu)\) as $N\to\infty$; multiplication by \(H_0\) proves this case. In the mixed
				case, the difference is \(H_0\) times the
				corresponding difference after deleting the indices with \(a_i=0\).
				Thus it remains to
				consider finite $A\subset\mathbb Z\setminus\{0\}$.
				
				For each $a\in A$, let $H_a=\prod_{1\le i\le m,a_i=a}h_i$.  We may suppose
				\(\|H_a\|_{L^{\infty}(\nu)}\leq1\).
				Suppose first that
				\(\E_{\nu}(H_{a_0}\mid Z_\infty(V))=0\) for some \(a_0\in A\).
				Then \Cref{prop:cyclic-tower-multislope} gives
				\[
				\frac1N\sum_{n=1}^N
				\widetilde c((x,t),n)\prod_{a\in A}V^{an}H_a
				\longrightarrow0
				\]
				in $L^2(\nu)$ as $N\to\infty$.
				By the linear property of ergodic averages, we may assume that \(|g|\equiv1\). Set
				\[
				v_n=\widetilde c((x,t),n)
				\prod_{a\in A}V^{an}H_a\,
				\widetilde S^{\Omega(n)}g.
				\]
				For distinct primes \(p,q\), 
				\[
				\langle v_{pn},v_{qn}\rangle=\int_Y
				\widetilde c((x,t),pn)
				\overline{\widetilde c((x,t),qn)}
				\prod_{a\in A}V^{apn}H_a\,V^{aqn}\overline{H_a}\,d\nu.
				\]
				Note that 
				$
				\widetilde c((x,t),pn)
				\overline{\widetilde c((x,t),qn)}=c(x,pn)\overline{c(x,qn)}.
				$ By \Cref{def:finite-ezk-stability}, 
				$c(x,pn)\overline{c(x,qn)}$ is Eisner--Zorin-Kranich regular with respect to \(U\). 
				Only finitely many distinct prime pairs satisfy
				\(ap=a'q\) for some \(a,a'\in A\).  Delete all primes occurring in those
				pairs.  
				Then for any two remaining primes $p,q$, applying
				\Cref{prop:cyclic-tower-multislope} gives
				$$
				\frac1N\sum_{n=1}^N\widetilde c((x,t),pn)
				\overline{\widetilde c((x,t),qn)}
				\prod_{a\in A}V^{apn}H_a\,V^{aqn}\overline{H_a}
				\longrightarrow 0
				$$
				in $L^2(\nu)$ as $N\to\infty$.
				Hence,
				\Cref{lem:orthogonality} gives
				\[
				\frac1N\sum_{n=1}^N
				\widetilde c((x,t),n)\prod_{a\in A}V^{an}H_a\,
				\widetilde S^{\Omega(n)}g
				\longrightarrow0
				\]
				in $L^2(\nu)$ as $N\to\infty$.
				
				It remains to treat \(H_a\in L^\infty(Y,Z_\infty(V),\nu)\) for every
				\(a\in A\). Fix sufficiently small \(\eta>0\). Note that 
				\(Z_\infty(V)=\bigvee_{k\geq1}Z_k(V)\) and by \cite[Chapter 8, Lemma 12]{HostKraBook} and the definition of $Z_k(V)$, $Z_k(V)$ is increasing. So, the Martingale Theorem gives \(k\geq1\) such that for all $a\in A$,
				$
				\|H_a-\E_{\nu}(H_a\mid Z_k(V))\|_{L^{2}(\nu)}<\eta.
				$
				Applying \Cref{lem:smooth-bundle-approx} to \((Y,\nu,V)\) and
				\(\{\E_{\nu}(H_a\mid Z_k(V)):a\in A\}\) gives tame
				nilfunctions \(K_a,a\in A\) such that 
				\(\|H_a-K_a\|_{L^{2}(\nu)}<\eta\) and $\norm{K_a}_{{L^{\infty}(\nu)}}\le 1+\eta$.  
				
				As mentioned before, for $\mu$-a.e. $x\in X$, \(c(x,n)\) is a nilsequence. So, for $\nu$-a.e. $(x,t)\in Y$,
				\(\widetilde c((x,t),n)\) is a nilsequence. For every \(a\in A\), by \Cref{def:tame-finite-stage-nilfunction}, for $\nu$-a.e. $y\in Y$,
				\(n\mapsto K_a(V^{an}y)\) is a nilsequence. Note that 
				any finite product of
				nilsequences is still a nilsequence. So, $\nu$-a.e. $(x,t)\in Y$,
				\[
				d((x,t),n):=\widetilde c((x,t),n)
				\prod_{a\in A}K_a(V^{an}(x,t))
				\]
				is a uniformly bounded nilsequence.  Applying \Cref{lem:xiao-nilsequence-family} to $\widetilde{S}$ and \(d\) gives
				\begin{equation}\label{eq2222}
					\frac1N\sum_{n=1}^N d((x,t),n)	\widetilde S^{\Omega(n)}g
					-
					\left(\frac1N\sum_{n=1}^N d((x,t),n)\right)\E_{\nu}(g\mid\mathcal I(\widetilde S))(x,t)
					\to0
				\end{equation}
				in \(L^2(\nu)\) as $N\to\infty$.
				On the other hand, note that the following estimate:
				\begin{equation}\label{eq3333}
					\begin{split}
						&\norm{\frac1N\sum_{n=1}^N d((x,t),n)\widetilde S^{\Omega(n)}g(x,t)-\frac1N\sum_{n=1}^N \widetilde c((x,t),n)\prod_{a\in A}V^{an}H_a(x,t)\widetilde S^{\Omega(n)}g(x,t)}_{L^2(\nu)}
						\\ & \hspace{0.5cm} \le \norm{g}_{L^{\infty}(\nu)}\sup_{n}\norm{c(x,n)}_{L^{\infty}_x(\mu)}\frac1N\sum_{n=1}^N\norm{\prod_{a\in A}H_a(V^{an}(x,t))-\prod_{a\in A}K_a(V^{an}(x,t))}_{L^2(\nu)}.
					\end{split}
				\end{equation}
				
				and
				
				\begin{equation}\label{eq4444}
					\begin{split}
						&\norm{\frac1N\sum_{n=1}^N d((x,t),n)\E_{\nu}(g\mid\mathcal I(\widetilde S))-\frac1N\sum_{n=1}^N \widetilde c((x,t),n)\prod_{a\in A}V^{an}H_a(x,t)\E_{\nu}(g\mid\mathcal I(\widetilde S))}_{L^2(\nu)}
						\\ & \hspace{0.5cm} \le \norm{g}_{L^{\infty}(\nu)}\sup_{n}\norm{c(x,n)}_{L^{\infty}_x(\mu)}\frac1N\sum_{n=1}^N\norm{\prod_{a\in A}H_a(V^{an}(x,t))-\prod_{a\in A}K_a(V^{an}(x,t))}_{L^2(\nu)}.
					\end{split}
				\end{equation}
				Clearly, \eqref{eq2222}, \eqref{eq3333} and \eqref{eq4444} finish the proof.
			\end{proof}
			\section{Reduction of \Cref{thm:main-decoupling}: finite structures}
			\label{sec:isotropy-completion}
			
			This section returns to the characteristic factors obtained in
			\Cref{sec:orthogonality-austin}.  The finite isotropy factors are coded, on residue
			classes, by affine iterates of a single transformation \(U=T^v\).  The nilfactor
			part becomes an Eisner--Zorin-Kranich regular weight, so the one action theorem from
			\Cref{Sec6} applies.  
			
			\subsection{Coding isotropy factors}
			\begin{lemma}
				\label{lem:isotropy-coding}
				Let \((X,\mathcal X,\mu,(T^v)_{v\in \Z^2})\) be a \(\mathbb Z^2\)-system.  Let
				\(w\in\mathbb Z^2\setminus\{0\}\), let \(v,e\in\mathbb Z^2\), and assume
				\(\det(v,w)\neq0\).  Put \(U=T^v\),
				$
				D_w=\det(v,w),A_{e,w}=\det(e,w), L_w=|D_w|.
				$
				If \(h\in L^\infty(X,\mathcal I(T^w),\mu)\), then for every
				\(r\in\{0,\ldots,L_w-1\}\) and every \(n\equiv r\pmod{L_w}\),
				$
				T^{ne}h
				=
				U^{P_{e,w,r}(n)}h_{e,w,r},
				$
				where
				$
				h_{e,w,r}=T^{re}h,
				P_{e,w,r}(n)=
				\operatorname{sgn}(D_w)A_{e,w}\frac{n-r}{L_w}.
				$
			\end{lemma}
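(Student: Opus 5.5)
The proof is a lattice computation inside $\mathbb Z^2$. It combines the $T^w$-invariance of $h$ with Cramer's rule for the basis $\{v,w\}$ of $\mathbb Q^2$.

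Since $\det(v,w)\neq 0$, the vectors $v,w$ span $\mathbb Q^2$. The first step is the identity
\[
D_w\,e=\det(e,w)\,v+\det(v,e)\,w ,
\]
valid for every $e\in\mathbb Z^2$. It is Cramer's rule, and one checks it directly by taking $\det(\cdot,w)$ and $\det(v,\cdot)$ of both sides. Hence
\[
L_w e=\operatorname{sgn}(D_w)\bigl(A_{e,w}\,v+\det(v,e)\,w\bigr).
\]
This is an integer combination of $v$ and $w$.

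Next fix $r\in\{0,\ldots,L_w-1\}$ and $n\equiv r\pmod{L_w}$, and write $n=r+kL_w$ with $k=(n-r)/L_w\in\mathbb Z$. Then
\[
ne=re+k\operatorname{sgn}(D_w)A_{e,w}\,v+k\operatorname{sgn}(D_w)\det(v,e)\,w .
\]
Because $T$ is a $\mathbb Z^2$-action, $T^{ne}$ factors as the commuting product
\[
U^{k\operatorname{sgn}(D_w)A_{e,w}}\;T^{k\operatorname{sgn}(D_w)\det(v,e)\,w}\;T^{re}.
\]

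The key point is the following. Since $h$ is $\mathcal I(T^w)$-measurable, $T^{jw}h=h$ in $L^\infty(\mu)$ for all $j\in\mathbb Z$. The function $T^{re}h$ is also $T^w$-invariant, because $T^w$ commutes with $T^{re}$. Therefore the $w$-component acts trivially on it, and
\[
T^{ne}h=U^{k\operatorname{sgn}(D_w)A_{e,w}}\,T^{re}h=U^{P_{e,w,r}(n)}h_{e,w,r}.
\]

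I expect no real obstacle. The only care needed is tracking the sign $\operatorname{sgn}(D_w)$, which arises from dividing by $D_w$ rather than $L_w$, and noting that the invariance holds almost everywhere, so the identity is an equality in $L^\infty(\mu)$.
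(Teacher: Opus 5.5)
Your proof is correct and is essentially the paper's argument. The paper uses the same Cramer identity, written as $\det(v,w)e-\det(e,w)v=\det(v,e)w$, to get $T^{D_we}h=U^{A_{e,w}}h$, iterates this to $T^{L_wme}h=U^{\operatorname{sgn}(D_w)A_{e,w}m}h$, and then splits off $T^{re}$; you instead decompose $ne$ directly and drop the $w$-component acting on the $T^w$-invariant function $T^{re}h$, which is only a cosmetic reordering.
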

			\begin{proof}
				$
				\det(v,w)e-\det(e,w)v=\det(v,e)w
				$
				gives, since \(h\) is \(T^w\)-invariant,
				\[
				T^{D_we}h=T^{A_{e,w}v}h=U^{A_{e,w}}h .
				\]
				Thus, for every \(m\in\mathbb Z\),
				$
				T^{L_wme}h
				=
				U^{\operatorname{sgn}(D_w)A_{e,w}m}h .
				$
				Writing \(n=L_wm+r\), we obtain
				\[
				T^{ne}h
				=
				T^{re}T^{L_wme}h
				=
				U^{\operatorname{sgn}(D_w)A_{e,w}m}T^{re}h
				=
				U^{P_{e,w,r}(n)}h_{e,w,r}.
				\]
				This finishes the proof.
			\end{proof}

			\subsection{Finite structured decoupling}
			First, we introduce a class of bounded functions, which will be used to approximate the characteristic factors obtained in
			\Cref{sec:orthogonality-austin}.
			\begin{definition}
				\label{def:finite-structured-algebra}
				\begin{samepage}
					Let \((X,\mathcal X,\mu,(T^v)_{v\in \Z^2})\) be a \(\mathbb Z^2\)-system.  An
					\emph{elementary finite structured function} is a finite product shaped like
					$
					\displaystyle\left(\prod_{\alpha\in I}h_\alpha\right)
					\left(\prod_{\beta\in J}\phi_\beta\right),
					$
					where \(I,J\) are finite, empty products are allowed,
					$
					h_\alpha\in L^\infty(X,\mathcal I(T^{w_\alpha}),\mu),
					w_\alpha\in\mathbb Z^2\setminus\{0\},
					$
					and each \(\phi_\beta\) is a tame $2$-step nilfunction.
					Let \(\mathscr A_{\rm str}(X,\mathcal X,\mu,(T^v)_{v\in \Z^2})\) be the finite linear span of all elementary
					finite structured functions.
				\end{samepage}
			\end{definition}
			Next, we build a decoupling result for the family $\mathscr A_{\rm str}$.
			\begin{proposition}
				\label{prop:finite-nil-decoupling}
				Let \((X,\mathcal X,\mu,(T^v)_{v\in \Z^2})\) be a \(\mathbb Z^2\)-system, \(S:X\to X\) be an
				invertible measure preserving transformation  commuting with \(T\), and put
				\(e_1=(1,0)\), \(e_2=(0,1)\).  If
				$
				F_i\in\mathscr A_{\rm str}(X,\mathcal X,\mu,(T^v)_{v\in \Z^2}),i=1,2,
				$
				then, for every \(g\in L^\infty(\mu)\),
				\[
				\frac1N\sum_{n=1}^{N}
				T^{ne_1}F_1\,T^{ne_2}F_2\,S^{\Omega(n)}g
				-
				\left(
				\frac1N\sum_{n=1}^{N}
				T^{ne_1}F_1\,T^{ne_2}F_2
				\right)
				\E_{\mu}(g\mid\mathcal I(S))
				\to0
				\]
				in \(L^2(\mu)\) as $N\to\infty$.
			\end{proposition}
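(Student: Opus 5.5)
By bilinearity of the expression in $(F_1,F_2)$ and linearity in the averages, it suffices to treat the case where $F_1,F_2$ are elementary finite structured functions:
\[
F_i=\Bigl(\prod_{\alpha\in I_i}h_{i,\alpha}\Bigr)\Bigl(\prod_{\beta\in J_i}\phi_{i,\beta}\Bigr),
\qquad h_{i,\alpha}\in L^\infty(X,\mathcal I(T^{w_{i,\alpha}}),\mu).
\]
Let $W$ be the finite set of all directions $w_{i,\alpha}$. I would choose $v\in\mathbb Z^2$ with $\det(v,w)\neq0$ for every $w\in W$; this is possible since only finitely many lines must be avoided. Set $U=T^v$. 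Let $L$ be a common multiple of all $L_w=|\det(v,w)|$, $w\in W$.

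Split the average over $n\le N$ into the residue classes $n\equiv r\pmod L$. For each such class, \Cref{lem:isotropy-coding} (applied with modulus $L_w$, which divides $L$) rewrites every $T^{ne_i}h_{i,\alpha}$ as $U^{P(n)}\tilde h$, with $P$ affine in $(n-r)/L$ and $\tilde h=T^{re_i}h_{i,\alpha}$. The nilfunction part
\[
c(x,n):=\prod_{\beta}\phi_{1,\beta}(T^{ne_1}x)\prod_\beta\phi_{2,\beta}(T^{ne_2}x)
\]
is finitely Eisner--Zorin-Kranich stable, hence regular, with respect to $U$ by \Cref{lem:bundle-weight-ezk-stable}. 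Regularity is preserved under passing to the subsequence $n=Lm+r$, by \Cref{def:finite-ezk-stability}.

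To put everything on a single action, I would work in the cyclic tower $Y=X\times\mathbb Z/L\mathbb Z$ with $V$ as in \Cref{Sec6}, which satisfies $V^k(x,0)=(U^qx,t)$ for $k=Lq+t$. On the level $t=0$, an affine iterate $U^{am+b}\tilde h$ with $n=Lm+r$ equals $V^{a(n-r)+Lb}$ applied to the lift $(x,s)\mapsto\tilde h(x)\one_{s=0}$, up to the appropriate level bookkeeping. Alternatively, and more simply, I would apply \Cref{prop:cyclic-tower-one-action-decoupling} with the tower of height $1$, i.e.\ directly to $U$, for each residue class separately in the variable $m$. The weight $c_r(x,m)=c(x,Lm+r)$ is EZK regular, and the affine forms are $L_i(m)=a_im+b_i$. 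This yields, for each $r$,
\[
\frac1M\sum_{m\le M}c_r(x,m)\prod U^{L_i(m)}\tilde h_i\,S^{\Omega(Lm+r)}g-\Bigl(\frac1M\sum_{m\le M}\cdots\Bigr)\E(g\mid\mathcal I(S))\to0 .
\]
The obstacle is that the exponent is $\Omega(Lm+r)$ and not $\Omega(m)$. Here the tower is essential: writing $n=Lm+r$ and replacing $m$ by $n$ itself converts $U^{a m}$ into $V^{a n}$ on the tower, up to the residue shifts. The indicator $\one_{n\equiv r}$ is handled as a periodic, hence nilsequence, factor absorbed into the weight $\widetilde c$, while $\widetilde S^{\Omega(n)}$ stays indexed by $n$. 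So I would encode the class-$r$ summand as
\[
\widetilde c((x,t),n)\,\one_{n\equiv r}\prod_i V^{a_i n+b_i}\hat h_i\;\widetilde S^{\Omega(n)}\widetilde g
\]
evaluated at the level $t=0$, with $\hat h_i(x,s)=\tilde h_i(x)$. I then need to check that $V^{a n}$ applied to such lifts at level $0$ equals $U^{a(n-r)/L}$ up to a fixed shift. This holds because $an\equiv ar\pmod L$ is constant on the class.

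Then \Cref{prop:cyclic-tower-one-action-decoupling}, applied on $Y$ with the weight $\widetilde c\cdot\one_{n\equiv r}$, gives decoupling in $L^2(\nu)$. Regularity of this weight follows from \Cref{def:finite-ezk-stability}, since periodic sequences are $1$-step nilsequences. \Cref{lem:cyclic-tower-invariant-factor} then identifies $\E_\nu(\widetilde g\mid\mathcal I(\widetilde S))$ with $\E_\mu(g\mid\mathcal I(S))$. Restricting to level $t=0$ costs only a factor $L$ in the $L^2$ norm, and summing over the $L$ residue classes finishes the proof.

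The main obstacle I expect is exactly this bookkeeping. One must verify that the residue-class indicator times the nilfunction weight remains EZK regular. One must also make sure the lifted functions reproduce $T^{ne_i}h$ exactly on level $0$ for every $n$ in the class, and not only for $n=Lm$.
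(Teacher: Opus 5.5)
Your proposal is correct and follows the paper's proof essentially step for step. The paper also uses a transverse $v$ with $U=T^v$, $L$ the lcm of the $|\det(v,w)|$, and isotropy coding on residue classes via \Cref{lem:isotropy-coding}. It makes the nilfunction weight EZK regular by \Cref{lem:bundle-weight-ezk-stable}, works on a cyclic tower of height $L$ with iterates $V^{A(n-r)+LB}$, and then applies \Cref{prop:cyclic-tower-one-action-decoupling}, restricts to level $t=0$, and invokes \Cref{lem:cyclic-tower-invariant-factor}.

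The only real difference is where the residue indicator lives. The paper encodes it as an extra tower factor $V^{n-r}\chi_0$ with $\chi_0(x,t)=\one_{\{0\}}(t)$, so the weight stays $w(x,n)$ and no new regularity check is needed. Putting $\one_{n\equiv r}$ into the weight instead requires the fact that multiplying an EZK regular weight by an $L$-periodic sequence keeps it EZK regular. That fact is true but is not literally contained in \Cref{def:finite-ezk-stability}. It is cleanest to see by representing periodic sequences on the zero-dimensional nilmanifold $\mathbb Z/L\mathbb Z$, so that the order $\kappa$ in \eqref{eq:finite-ezk-order-condition} does not increase.
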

			
			\begin{proof}
				By the definition of \(\mathscr A_{\rm str}(X,T)\), it is enough
				to prove the assertion when
				\[
				F_i=
				\left(\prod_{\alpha\in I_i}h_{i,\alpha}\right)
				\left(\prod_{\beta\in J_i}\phi_{i,\beta}\right),
				i=1,2,
				\]
				is an elementary finite structured function, where 
				$
				h_{i,\alpha}\in L^\infty(X,\mathcal I(T^{w_{i,\alpha}}),\mu),
				w_{i,\alpha}\in\mathbb Z^2\setminus\{0\},
				$
				and each \(\phi_{i,\beta}\) is a tame $2$-step nilfunction.
				
				The rest of the proof is divided into four steps.
				
				\medskip
				
				\noindent\emph{Step 1: Choose one transverse action and isolate the nilfunction weight.}
				
				\medskip
				
				Let
				\(
				W=\{w_{i,\alpha}: i=1,2,\ \alpha\in I_i\}.
				\)
				If \(W=\emptyset\), choose any non-zero \(v\in\mathbb Z^2\).  If
				\(W\neq\emptyset\), choose \(v\in\mathbb Z^2\) such that for each $w\in W$, 
				$
				\det(v,w)\neq0.
				$
				Put \(U=T^v\).
				
				Let
				$
				\mathcal Q=\{(i,\beta):i=1,2,\ \beta\in J_i\}.
				$
				Define
				\[
				w(x,n)=
				\prod_{(i,\beta)\in\mathcal Q}
				\bigl(T^{ne_i}\phi_{i,\beta}\bigr)(x).
				\]
				If \(\mathcal Q=\emptyset\), then \(w(x,n)\equiv 1\), and \(w\) is Eisner--Zorin-Kranich regular with respect to \(U\).  If \(\mathcal Q\neq
				\emptyset\), enumerate \(\mathcal Q=\{q_1,\ldots,q_Q\}\), write
				\(q_j=(i_j,\beta_j)\), and set
				$
				u_j=e_{i_j},
				\psi_j=\phi_{i_j,\beta_j},
				1\leq j\leq Q .
				$
				By \Cref{lem:bundle-weight-ezk-stable}, applied to
				\((\psi_j,u_j)_{j=1}^{Q}\) and \(U=T^v\), \(w\) is
				Eisner--Zorin-Kranich regular with respect to \(U\).  
				
				\medskip
				
				\noindent\emph{Step 2: Code the finite isotropy part on residue classes.}
				
				\medskip
				
				If \(W=\emptyset\), put \(L=1\).  If
				\(W\neq\emptyset\), put
				\[
				L=\operatorname{lcm}\{|\det(v,w_{i,\alpha})|:
				i=1,2,\ \alpha\in I_i\}.
				\]
				Fix \(r\in\{0,\ldots,L-1\}\).  For each pair \((i,\alpha)\), put
				$
				D_{i,\alpha}=\det(v,w_{i,\alpha}),
				L_{i,\alpha}=|D_{i,\alpha}|,
				$
				and let
				$
				r_{i,\alpha}\in\{0,\ldots,L_{i,\alpha}-1\}
				$
				be the residue class of \(r\) modulo \(L_{i,\alpha}\).  Since
				\(L_{i,\alpha}\mid L\), every \(n\equiv r\pmod L\) satisfies
				$
				n\equiv r_{i,\alpha}\pmod {L_{i,\alpha}}.
				$
				Applying \Cref{lem:isotropy-coding} with \(e=e_i\) and
				\(w=w_{i,\alpha}\), for all $n\equiv r\pmod L$,
				$
				T^{ne_i}h_{i,\alpha}
				=
				U^{P_{i,\alpha,r}(n)}a_{i,\alpha,r},
				$
				where
				$
				a_{i,\alpha,r}=T^{r_{i,\alpha}e_i}h_{i,\alpha},
				$
				and
				$
				P_{i,\alpha,r}(n)
				=
				\operatorname{sgn}(D_{i,\alpha})
				\det(e_i,w_{i,\alpha})
				\frac{n-r_{i,\alpha}}{L_{i,\alpha}}.
				$
				Since
				$
				n-r_{i,\alpha}
				=
				(n-r)+(r-r_{i,\alpha}),
				$
				we can write
				$
				P_{i,\alpha,r}(n)
				=
				A_{i,\alpha,r}\frac{n-r}{L}
				+
				B_{i,\alpha,r},
				$
				where
				$
				A_{i,\alpha,r}
				=
				\operatorname{sgn}(D_{i,\alpha})
				\det(e_i,w_{i,\alpha})
				\frac{L}{L_{i,\alpha}}
				\in\mathbb Z,
				$
				and
				$
				B_{i,\alpha,r}
				=
				\operatorname{sgn}(D_{i,\alpha})
				\det(e_i,w_{i,\alpha})
				\frac{r-r_{i,\alpha}}{L_{i,\alpha}}
				\in\mathbb Z.
				$
				Enumerating the finitely many pairs \((i,\alpha)\) as
				\(j=1,\ldots,J_r\), we obtain \(a_{j,r}\in L^\infty(\mu)\) and
				\(A_{j,r},B_{j,r}\in \Z\) such that for all $n\equiv r\pmod L$,
				\[
				\prod_{i=1,2}\prod_{\alpha\in I_i}
				T^{ne_i}h_{i,\alpha}
				=
				\prod_{j=1}^{J_r}
				U^{A_{j,r}(n-r)/L+B_{j,r}}a_{j,r}.
				\]
				
				\medskip
				
				\noindent\emph{Step 3: Remove the residue restriction by a cyclic tower action.}
				
				\medskip
				
				Fix a residue class \(r\). Define the cyclic tower action $(Y,\nu,V)$ with the height $L$ over $(X,\mu,U)$ as the beginning of \Cref{Sec6}. Define $\widetilde S:Y\to Y,(x,t)\mapsto (Sx,t)$. Define $\widetilde g(x,t)=g(x)$ and $\widetilde w((x,t),n)=w(x,n)$.
				Also put
				$
				\chi_0(x,t)=\mathbf 1_{\{0\}}(t),
				\widetilde a_{j,r}(x,t)=\mathbf 1_{\{0\}}(t)a_{j,r}(x).
				$
				Let 
				$
				Q_{0,r}(n)=n-r,
				$
				and, for \(1\leq j\leq J_r\), put
				$
				Q_{j,r}(n)=A_{j,r}(n-r)+LB_{j,r}.
				$
				Define
				\[
				\mathcal H_r(n,x)=
				\begin{cases}
					\displaystyle
					\prod_{j=1}^{J_r}
					U^{A_{j,r}(n-r)/L+B_{j,r}}a_{j,r}(x),
					& n\equiv r\pmod L,\\[1ex]
					0,& n\not\equiv r\pmod L.
				\end{cases}
				\]
				By \eqref{eq6-1},
				$$
				\mathcal H_r(n,x)
				=
				\bigl(V^{Q_{0,r}(n)}\chi_0\bigr)(x,0)
				\prod_{j=1}^{J_r}
				\bigl(V^{Q_{j,r}(n)}\widetilde a_{j,r}\bigr)(x,0).
				$$
				
				\medskip
				
				\noindent\emph{Step 4: Apply the one action decoupling theorem.}
				
				\medskip
				
				Put 
				\begin{align*}
					D_{N,r}(x,t)
					=&
					\frac1N\sum_{n=1}^{N}
					\widetilde w((x,t),n)
					\bigl(V^{Q_{0,r}(n)}\chi_0\bigr)
					\prod_{j=1}^{J_r}
					\bigl(V^{Q_{j,r}(n)}\widetilde a_{j,r}\bigr)
					\widetilde S^{\Omega(n)}\widetilde g                                      \\
					& \hspace{1cm} -
					\left(
					\frac1N\sum_{n=1}^{N}
					\widetilde w((x,t),n)
					\bigl(V^{Q_{0,r}(n)}\chi_0\bigr)
					\prod_{j=1}^{J_r}
					\bigl(V^{Q_{j,r}(n)}\widetilde a_{j,r}\bigr)
					\right)
					\E_{\nu}(\widetilde g\mid\mathcal I(\widetilde S)).
				\end{align*}
				
				Apply \Cref{prop:cyclic-tower-one-action-decoupling} to
				\((Y,\nu,V)\) with \(w\), \(S\),
				$
				Q_{0,r},Q_{1,r},\ldots,Q_{J_r,r},
				$
				and
				$
				\chi_0,\widetilde a_{1,r},\ldots,\widetilde a_{J_r,r}.
				$
				We obtain
				$
				D_{N,r}\to0
				\ \text{in }L^2(\nu).
				$
				Restricting to the fibre \(t=0\), \Cref{lem:cyclic-tower-invariant-factor}, and the preceding identities give
				\begin{align*}
					& \lim_{N\to\infty}\norm{\frac1N\sum_{n=1}^{N}
						w(x,n)\mathcal H_r(n,x)S^{\Omega(n)}g(x)                   
						-
						\left(
						\frac1N\sum_{n=1}^{N}
						w(x,n)\mathcal H_r(n,x)
						\right)
						\E_{\mu}(g\mid\mathcal I(S))(x)}_{L^{2}(\mu)}=0.
				\end{align*}
				
				Note that for every \(n\),
				$
				\displaystyle\prod_{i=1,2}\prod_{\alpha\in I_i}
				T^{ne_i}h_{i,\alpha}(x)
				=
				\sum_{r=0}^{L-1}\mathcal H_r(n,x).
				$
				Therefore, 
				\begin{align*}
					& \frac1N\sum_{n=1}^{N}
					w(x,n)\prod_{i=1,2}\prod_{\alpha\in I_i}
					T^{ne_i}h_{i,\alpha}(x)S^{\Omega(n)}g(x)               
					\\ & \hspace{3cm} -
					\left(
					\frac1N\sum_{n=1}^{N}
					w(x,n)\prod_{i=1,2}\prod_{\alpha\in I_i}
					T^{ne_i}h_{i,\alpha}(x)	\right)
					\E_{\mu}(g\mid\mathcal I(S))(x)\to 0
				\end{align*}
				in $L^{2}(\mu)$ as $N\to\infty$. This finishes the proof.
			\end{proof}

			\subsection{Passage to the characteristic factors}
			
			\begin{proposition}
				\label{prop:structured-decoupling}
				Let \(e_1=(1,0)\), \(e_2=(0,1)\). Let \((X,\mathcal X,\mu,(T^v)_{v\in \Z^2})\) be a \(\mathbb Z^2\)-system, and \(S:X\to X\) be
				an invertible measure-preserving transformation commuting with \(T\).  Let
				\(\mathcal N_{X}\subseteq Z_2(T)\) be a \(T\)-invariant factor. Let
				\(\mathcal Z_{1,\mathcal N_{X}},\mathcal Z_{2,\mathcal N_{X}}\) be two factors defined from \(\mathcal N_{X}\) in
				\eqref{eq:char-factors}.  If
				$
				f_i\in L^\infty(X,\mathcal Z_{i,\mathcal N_{X}},\mu),i=1,2,
				$
				then, for every \(g\in L^\infty(\mu)\),
				\[
				\frac1N\sum_{n=1}^{N}
				T^{ne_1}f_1\,T^{ne_2}f_2\,S^{\Omega(n)}g
				-
				\left(
				\frac1N\sum_{n=1}^{N}
				T^{ne_1}f_1\,T^{ne_2}f_2
				\right)
				\E_{\mu}(g\mid\mathcal I(S))
				\to0
				\]
				in \(L^2(\mu)\) as $N\to\infty$.
			\end{proposition}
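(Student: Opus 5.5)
The plan is an approximation argument that reduces \Cref{prop:structured-decoupling} to \Cref{prop:finite-nil-decoupling}. For $f_1,f_2,g$ bounded, write
\[
\Delta_N(f_1,f_2)=\frac1N\sum_{n=1}^{N}T^{ne_1}f_1\,T^{ne_2}f_2\,S^{\Omega(n)}g-\Bigl(\frac1N\sum_{n=1}^{N}T^{ne_1}f_1\,T^{ne_2}f_2\Bigr)\E_{\mu}(g\mid\mathcal I(S)).
\]
The key observation is that $\Delta_N$ is bilinear. It also satisfies a uniform Lipschitz bound:
\[
\|\Delta_N(f_1,f_2)-\Delta_N(f_1',f_2')\|_{L^2(\mu)}\le 2\|g\|_{L^\infty(\mu)}\bigl(\|f_1-f_1'\|_{L^2(\mu)}\|f_2\|_{L^\infty(\mu)}+\|f_1'\|_{L^\infty(\mu)}\|f_2-f_2'\|_{L^2(\mu)}\bigr).
\]
This bound holds uniformly in $N$, because each $T^{ne_i}$ preserves $\mu$ and one can bound $\|T^{ne_1}a\,T^{ne_2}b\|_{L^2}\le\|a\|_{L^2}\|b\|_{L^\infty}$. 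Hence it suffices to approximate each $f_i$ in $L^2(\mu)$ by elements $F_i\in\mathscr A_{\rm str}(X,\mathcal X,\mu,(T^v)_{v\in \Z^2})$ with $L^\infty$ bounds controlled independently of the approximation error. Then \Cref{prop:finite-nil-decoupling} gives $\Delta_N(F_1,F_2)\to0$, and a $3\varepsilon$-argument finishes.

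\emph{Step 1 (Martingale reduction).} The index set in \eqref{eq:char-factors} is countable. Enumerate the isotropy factors $\cI(T^w)$, $w\in W_i^{p,q}$, as $\mathcal J_1,\mathcal J_2,\dots$. Then $\mathcal Z_{i,\mathcal N_X}=\bigvee_m\bigl(\mathcal N_X\vee\mathcal J_1\vee\cdots\vee\mathcal J_m\bigr)$ is an increasing join, so by the Martingale Theorem $\E(f_i\mid\mathcal N_X\vee\mathcal J_1\vee\cdots\vee\mathcal J_m)\to f_i$ in $L^2$, with $L^\infty$ norm at most $\|f_i\|_\infty$. Next, $L^2$ of a finite join $\mathcal B_0\vee\cdots\vee\mathcal B_m$ is approximated by finite linear combinations of products $H\prod_k h_k$ of bounded (indeed indicator) functions measurable with respect to each $\mathcal B_k$; products of indicators of generating sets form a $\pi$-system generating the join. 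To keep $L^\infty$ control, I compose the approximant with a smooth Lipschitz truncation $\tau$ as in \Cref{lem:smooth-bundle-approx}. The catch is that truncation destroys the product form. To avoid this, I will take the approximants to be linear combinations of products of indicator functions and accept an $L^\infty$ bound that depends on $\varepsilon$. That is harmless if I order the approximations correctly: first approximate $f_2$ by $F_2$, with $\|F_2\|_\infty\le C_2(\varepsilon)$. Then approximate $f_1$ by $F_1$ with error $\varepsilon/C_2(\varepsilon)$, using the Lipschitz bound with the roles arranged so that only $\|f_1\|_\infty$ and $\|F_2\|_\infty$ appear. Concretely,
\[
\Delta(f_1,f_2)-\Delta(F_1,F_2)=\Delta(f_1,f_2-F_2)+\Delta(f_1-F_1,F_2).
\]
The first term is bounded by $\|f_1\|_\infty\|f_2-F_2\|_2$, and the second by $\|f_1-F_1\|_2\|F_2\|_\infty$, so no $L^\infty$ bound on $F_1$ is needed.

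\emph{Step 2 (nilfunction approximation).} Each approximant now has the form $\sum_\ell H_\ell\prod_k h_{\ell,k}$ with $H_\ell\in L^\infty(\mathcal N_X)$ and $h_{\ell,k}$ isotropy-measurable and bounded. Since $\mathcal N_X\subseteq Z_2(T)$ is $T$-invariant, \Cref{lem:smooth-bundle-approx} (with $\Lambda=\Z^2$, $k=2$) applied to the finite family $\{H_\ell\}$ yields tame $2$-step nilfunctions $\phi_\ell$ with $\|H_\ell-\phi_\ell\|_2$ small and $\|\phi_\ell\|_\infty\le\|H_\ell\|_\infty+1$. The products of the $h$'s are bounded by a constant fixed before this step, so replacing $H_\ell$ by $\phi_\ell$ costs only a small $L^2$ error, and the resulting function lies in $\mathscr A_{\rm str}$. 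The same ordering trick as in Step 1 handles $L^\infty$ control: fix $F_2$ completely, including its nil part, before choosing $F_1$.

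\emph{Step 3 (conclusion).} Applying \Cref{prop:finite-nil-decoupling} to $F_1,F_2$ gives $\limsup_N\|\Delta_N(f_1,f_2)\|_{L^2}\lesssim\varepsilon$ for every $\varepsilon>0$, which proves the proposition. I expect the main obstacle to be the bookkeeping of $L^\infty$ norms, since approximation in a join does not automatically preserve a uniform sup bound. The asymmetric bilinear estimate above, with $F_2$ fixed first and $F_1$ needing only $L^2$ closeness, is the point at which this has to be handled carefully.
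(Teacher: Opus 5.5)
Your proposal is correct and follows essentially the same route as the paper: a Martingale reduction to finitely many isotropy factors, approximation of each $f_i$ by finite sums of products $H\prod_w h_w$, replacement of the $\mathcal N_X$-measurable parts by tame $2$-step nilfunctions via \Cref{lem:smooth-bundle-approx}, and then \Cref{prop:finite-nil-decoupling} combined with the bilinear $L^2$--$L^\infty$ continuity of the averages. The only difference is the $L^\infty$ bookkeeping: the paper asserts, without detail, product approximants with $\|G_i^{(m)}\|_{L^\infty(\mu)}\le 1$, whereas your splitting $\Delta(f_1,f_2-F_2)+\Delta(f_1-F_1,F_2)$, with $F_2$ fixed first, removes any need for a uniform sup bound on the approximants.
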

			
			\begin{proof}
				We may assume that $\max(\|f_1\|_{L^{\infty}(\mu)},\|f_2\|_{L^{\infty}(\mu)},\|g\|_{L^{\infty}(\mu)})\le 1/4$.
				For \(i=1,2\), the factor \(\mathcal Z_{i,\mathcal N_{X}}\) is the join of \(\mathcal N_{X}\)
				with countably many isotropy factors.  Hence, by the Martingale Theorem,
				for any \(\varepsilon>0\), there are finite sets
				\(W_i\subset\mathbb Z^2\setminus\{0\}\) such that, with
				\[
				\mathcal Z_{i,\mathcal N_{X},W_i}
				=
				\mathcal N_{X}\vee\bigvee_{w\in W_i}\mathcal I(T^w)\ \text{and}\ f_{i,0}=\E_{\mu}(f_i\mid\mathcal Z_{i,\mathcal N_{X},W_i}),
				\]
				$
				\|f_i-f_{i,0}\|_{L^{2}(\mu)}<\varepsilon.
				$
				
				Then for each \(i=1,2\), we choose a
				sequence \(G_i^{(m)}\) of finite linear combinations of products, shaped like
				$
				H_i\prod_{w\in W_i}h_{i,w},
				H_i\in L^\infty(X,\mathcal N_{X},\mu),
				h_{i,w}\in L^\infty(X,\mathcal I(T^w),\mu),
				$
				such that
				$
				\|f_{i,0}-G_i^{(m)}\|_{L^{2}(\mu)}\to0,
				\|G_i^{(m)}\|_{L^{\infty}(\mu)}\leq 1.
				$
				Fix \(m\).  Only finitely many \(\mathcal N_{X}\)-measurable functions occur
				in the representation of $G_i^{(m)}$.  Since
				\(\mathcal N_{X}\subseteq Z_2(T)\), we apply
				\Cref{lem:smooth-bundle-approx} to precisely this finite family, with
				\(\Lambda=\mathbb Z^2\).  Replacing those functions by the resulting
				tame $2$-step nilfunctions produces
				\(G_{i}^{(m,\delta)}\in\mathscr A_{\rm str}(X,\mathcal X,\mu,(T^v)_{v\in \Z^2})\) such that
				$
				\|G_i^{(m,\delta)}-G_i^{(m)}\|_{L^{2}(\mu)}\to0
				$ as $\delta\downarrow0$.
				Note that the approximating functions supplied by
				\Cref{lem:smooth-bundle-approx} have uniformly bounded
				\(L^\infty(\mu)\)-norms for all \(0<\delta<1\). Then, 
				$
				\sup_{0<\delta<1}\|G_i^{(m,\delta)}\|_{L^\infty(\mu)}<\infty.
				$
				
				Applying \Cref{prop:finite-nil-decoupling} to \(G_1^{(m,\delta)},G_2^{(m,\delta)}\) gives
				\[
				\frac1N\sum_{n=1}^{N}
				T^{ne_1}G_1^{(m,\delta)}\,T^{ne_2}G_2^{(m,\delta)}\,S^{\Omega(n)}g
				-
				\left(
				\frac1N\sum_{n=1}^{N}
				T^{ne_1}G_1^{(m,\delta)}\,T^{ne_2}G_2^{(m,\delta)}
				\right)
				\E_{\mu}(g\mid\mathcal I(S))
				\to0
				\]
				in \(L^2(\mu)\) as $N\to\infty$. Since $
				\sup_{0<\delta<1}\|G_i^{(m,\delta)}\|_{L^\infty(\mu)}<\infty,
				$ the standard approximation argument gives
				\begin{equation}\label{eq7-1}
					\frac1N\sum_{n=1}^{N}
					T^{ne_1}G_1^{(m)}\,T^{ne_2}G_2^{(m)}\,S^{\Omega(n)}g
					-
					\left(
					\frac1N\sum_{n=1}^{N}
					T^{ne_1}G_1^{(m)}\,T^{ne_2}G_2^{(m)}
					\right)
					\E_{\mu}(g\mid\mathcal I(S))
					\to0
				\end{equation}
				in \(L^2(\mu)\) as $N\to\infty$. Again, since $ \|G_i^{(m)}\|_{L^{\infty}(\mu)}\leq 1$, we can replace $G_{i}^{m}$ by $f_{i,0}$ in \eqref{eq7-1}. Lastly, we replace 
				$f_{i,0}$ by $f_i$. This finishes the proof.
			\end{proof}
			
			

			\section{Further discussions and directions}
			\label{sec:further-questions}
			In this section, some further discussions and directions are presented.
			\subsection{A general principle}
			The proof separates two arithmetic inputs from the dynamical arguments.  The
			first is some kind of synchronization under prime dilations, and the second is a
			nilsequence weighted orbit theorem.  This gives the following conditional
			extension of the main result.
			\begin{proposition}
				\label{prop:arithmetic-replacement}
				Let \(\vartheta:\mathbb N\to\mathbb Z\) satisfy the following two
				conditions.
				\begin{enumerate}
					\item There is a set of primes \(\mathcal P_0\) of positive lower relative
					density such that
					$
					\vartheta(pn)=\vartheta(qn)
					$
					for all distinct \(p,q\in\mathcal P_0\) and all \(n\in\mathbb N\).
					\item For every uniquely ergodic system \((Y,R)\) with the unique
					$R$-invariant Borel probability measure \(\nu\), all \(F\in C(Y)\), all
					\(y\in Y\), and all nilsequence \({\bf b}=(b_n)_{n\geq1}\),
					\[
					\lim_{N\to\infty}\frac1N\sum_{n=1}^N b_n F(R^{\vartheta(n)}y)
					=
					\left(\lim_{N\to\infty}\frac1N\sum_{n=1}^N b_n\right)
					\int_Y F\,d\nu .
					\]
				\end{enumerate}
				Then all our main results remain valid with every occurrence of
				\(\Omega\) replaced by \(\vartheta\).\footnote{Hypothesis (2) is substantially stronger than an unweighted uniquely
					ergodic theorem.  Recent work proves the unweighted conclusion for a broad
					Gaussian distribution class containing both \(\Omega\) and \(\omega\)
					\cite{BergelsonReillyRichter2026}. }
			\end{proposition}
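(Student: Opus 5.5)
The plan is to re-run the proof of \Cref{thm:main-decoupling} with \(\Omega\) replaced by \(\vartheta\). The arithmetic properties of \(\Omega\) entered that proof in exactly two places. The first is the identity \(\Omega(pn)=\Omega(qn)\) for distinct primes, used in \Cref{lem:prime-dilation-to-three-direction} and again in the orthogonal part of \Cref{prop:cyclic-tower-one-action-decoupling}. The second is the Bergelson--Richter theorem \Cref{thm:beg-omega-nilsequence}, used only through \Cref{lem:xiao-nilsequence-family}. Every other ingredient (\Cref{thm:austin-pleasant}, \Cref{prop:austin-pronil-domination}, \Cref{lem:S-preserving-lift}, \Cref{lem:smooth-bundle-approx}, \Cref{lem:bundle-weight-ezk-stable}, \Cref{prop:multislope-nilsequence-estimate}, the cyclic tower lemmas, \Cref{lem:isotropy-coding}, and the approximation arguments) concerns only the \(\mathbb Z^2\)-action \(T\) and never sees the exponent sequence. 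It is therefore enough to re-verify those two places and then observe that the rest of the chain goes through verbatim.

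\emph{Step 1 (prime dilations).} In \Cref{lem:prime-dilation-to-three-direction}, hypothesis (1) gives \(\vartheta(pn)=\vartheta(qn)\) for distinct \(p,q\in\mathcal P_0\). Then \(S^{\vartheta(pn)}g\,\overline{S^{\vartheta(qn)}g}=|g|^2\), which is \(1\) when \(|g|\equiv1\), exactly as before. One has to apply \Cref{lem:orthogonality} with the set \(\mathcal P_0\) from hypothesis (1), intersected with the set of primes used in the original argument. The intersection matters in \Cref{prop:cyclic-tower-one-action-decoupling}, where finitely many primes (those with \(ap=a'q\)) are deleted. Deleting finitely many primes does not change positive lower relative density, so the lemma still applies. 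The same substitution handles the claim in the proof of \Cref{prop:prime-dilation-austin-reduction}.

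\emph{Step 2 (nilsequence orbit theorem).} In \Cref{lem:xiao-nilsequence-family}, the only input was \Cref{thm:beg-omega-nilsequence} applied to a Jewett--Krieger model \((Y,R)\), a continuous \(G\), and the nilsequence \(b_x\). Hypothesis (2) is precisely that statement with \(\vartheta\) in place of \(\Omega\). The remaining steps are unchanged: density of \(C(Y)\)-functions in \(L^2\), the uniform bound \(\|A_N f\|_{L^2}\le B\|f\|_{L^2}\), ergodic decomposition, and dominated convergence. Note that \(\vartheta\) takes values in \(\mathbb Z\) and \(S,R\) are invertible, so negative exponents cause no difficulty.

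\emph{Step 3 (assembly).} With Steps 1 and 2, \Cref{prop:cyclic-tower-one-action-decoupling}, \Cref{prop:finite-nil-decoupling}, \Cref{prop:structured-decoupling} and \Cref{prop:prime-dilation-austin-reduction} hold with \(\vartheta\). This gives the \(\vartheta\)-analogue of \Cref{thm:main-decoupling}. \Cref{cor:fixed-powers-coefficients} follows by the same skew-product trick, since \(\widetilde S\) is unchanged. \Cref{cor:fixed-directions} and \Cref{thm:main-corners} follow by Furstenberg correspondence and \Cref{thm:fk}, exactly as in \Cref{sec:corners}; only the configuration set \(\{0,nu_1,nu_2,\vartheta(n)u_3\}\) changes. \Cref{cor:divisible-common-differences} follows in the same way.

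\emph{Main obstacle.} The main risk is hidden uses of \(\Omega\) beyond these two, so the plan must include checking that none exist. The point I expect to need care is the mixed case \(a_i=0\) in \Cref{prop:cyclic-tower-one-action-decoupling}: when all \(a_i=0\), the weight alone is handled by \Cref{lem:xiao-nilsequence-family}, so hypothesis (2) is needed there too, and it suffices. I should also confirm that the deleted-prime step keeps a set of positive lower relative density inside \(\mathcal P_0\), which is clear because only finitely many primes are removed.
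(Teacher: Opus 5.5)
Your proposal is correct and follows the paper's route. The paper's proof simply re-traces the argument for \Cref{thm:main-decoupling} and then the corollaries. You make explicit that $\Omega$ enters only through the synchronization $\Omega(pn)=\Omega(qn)$, used in \Cref{lem:prime-dilation-to-three-direction} and in the orthogonal part of \Cref{prop:cyclic-tower-one-action-decoupling}, and through \Cref{thm:beg-omega-nilsequence} inside \Cref{lem:xiao-nilsequence-family}; hypotheses (1) and (2) replace these two inputs verbatim. Your remark that the primes must be taken inside $\mathcal P_0$, with only finitely many removed (which keeps positive lower relative density), is exactly the bookkeeping the paper leaves implicit.
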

			
			\begin{proof}
				Re-tracing the proof process of \Cref{thm:main-decoupling} gives its $\vartheta$-analogue. Likely, re-tracing the proofs of \Cref{cor:fixed-powers-coefficients}, \Cref{cor:fixed-directions}, and 
				\Cref{cor:divisible-common-differences} and replacing \Cref{thm:main-decoupling} by its $\vartheta$-analogue gives the proposition.
			\end{proof}
			
			\subsection{Further directions I}
			Here, we present three questions on common differences and dimensions. The first one is the Higher-dimensional version of  \Cref{thm:main-corners}.
			\begin{question}[see {\cite[Question~7.1]{Xiao2026}}]
				\label{ques:higher-dimensional-linear-corners}
				Let \(k\geq3\).  If \(A\subset\mathbb N^{k+1}\) has positive upper Banach
				density, does the set of \(d\in\mathbb N\) for which
				\[
				d^*\left(
				A\cap\bigcap_{j=1}^k(A-d e_j)
				\cap(A-\Omega(d)e_{k+1})
				\right)>0
				\]
				have positive upper Banach density?
			\end{question}
			
			There are two separate obstacles.  A prime correlation of \(k\) common linear
			iterates produces, after one coordinate is translated to the origin,
			\(2k-1\) non-zero directions in a \(\mathbb Z^k\)-action.  The
			three-direction characteristic-factor theorem used in the present paper does
			not cover this family.  Moreover, the isotropy coding is genuinely
			two-dimensional.  If \(0\neq w\in\mathbb Z^2\), then
			\(\mathbb Z^2/\mathbb Zw\) is virtually cyclic, so a transverse vector and a
			finite residue decomposition reduce the quotient action to one
			transformation.  For \(w\in\mathbb Z^k\), \(k\geq3\), the quotient has rank
			\(k-1\).  One would therefore need a multi-action weighted decoupling theorem
			in addition to new characteristic factors.  
			
			The second one is a polynomial extension of \Cref{thm:main-corners}.
			\begin{question}
				\label{ques:common-monomial}
				Fix \(r\geq2\).  If \(A\subset\mathbb N^3\) has positive upper Banach
				density, does the set of \(d\in\mathbb N\) for which
				\[
				d^*\bigl(A\cap(A-d^r e_1)\cap(A-d^r e_2)
				\cap(A-\Omega(d)e_3)\bigr)>0
				\]
				have positive upper Banach density?  
				More generally, what is the corresponding
				statement with \(d^r\) replaced by an integer-valued polynomial \(P(d)\) satisfying
				\(P(0)=0\) ?
			\end{question}
			
			Recent work gives seminorm control and popular common-difference results for
			equal or pairwise dependent polynomial corners
			\cite{FrantzikinakisKucaDependent,FrantzikinakisKucaSparse}.  In the mixed
			problem, however, prime dilation creates the polynomial families
			\(P(pn)\) and \(P(qn)\).  The present proof may require a polynomial
			characteristic factor reduction and a polynomial analogue of cyclic tower action.  Even after the isotropy factors are coded,
			the resulting iterates of the transverse action are polynomial rather than
			affine, so the existing proof engine does not apply directly.
			
			The third one is a Hardy sequence extension of \Cref{thm:main-corners}.
			\begin{question}
				\label{ques:hardy-corners}
				Let \(h\) be a Hardy field function of polynomial growth such that
				\[
				\lim_{t\to\infty}\frac{|h(t)-cP(t)|}{\log t}=\infty
				\qquad\text{for every }c\in\mathbb R\text{ and }P\in\mathbb Q[t].
				\]
				If \(A\subset\mathbb N^3\) has positive upper Banach density, does the set of
				\(d\in\mathbb N\) for which
				\[
				d^*\bigl(A\cap(A-\lfloor h(d)\rfloor e_1)
				\cap(A-\lfloor h(d)\rfloor e_2)
				\cap(A-\Omega(d)e_3)\bigr)>0
				\]
				have positive upper Banach density?
			\end{question}
			
			Recent work establishes unweighted common Hardy sequences corner results for broad
			non-polynomial classes, building on the earlier theory of Hardy field
			recurrence \cite{FrantzikinakisHardy2010,FrantzikinakisKucaSparse}.  In the
			mixed case, the prime correlations contain both
			\(\lfloor h(pn)\rfloor\) and \(\lfloor h(qn)\rfloor\); these do not form a
			fixed finite family of affine or polynomial directions in general.  A solution
			would require some uniform Hardy sequences seminorm estimates and a structured weighted
			theorem compatible with the \(\Omega\)-orbit.
			
			\subsection{Further directions II}
			Here, we present the following question on replacements of $\Omega(n)$.
			\begin{question}
				\label{ques:other-additive-functions}
				
				\begin{enumerate}
					\item Does \Cref{thm:main-decoupling} remain
					valid with \(\Omega(n)\) replaced by \(\omega(n)\), the number of distinct
					prime factors of \(n\)?
					\item More generally, let \(\alpha:\mathbb N\to\mathbb Z\) be completely
					additive. 
					Under what general hypotheses on \(\alpha\), does
					\[
					\frac1N\sum_{n=1}^N
					T_1^nf_1\,T_2^nf_2\,
					S^{\alpha(n)}g
					\]
					converge in \(L^2(\mu)\)?  
				\end{enumerate}
			\end{question}

			If \(\alpha(p)\) is constant on a positive relative density subset of primes,
			then by
			\Cref{prop:arithmetic-replacement}, in substantive remaining tasks, the first crucial work is a nilsequences weighted orbit theorem. 
			
			For \(\omega\), exact synchronization is replaced by
			$
			\omega(pn)-\omega(qn)=\one_{q\mid n}-\one_{p\mid n}.
			$
			The exceptional set has upper density at most \(1/p+1/q\).  This seems to suggest a
			quantitative prime dilation argument in which the auxiliary primes tend to
			infinity. Such an argument would still have to be combined with the required
			nilsequences weighted orbit theorem. (The unweighted uniquely ergodic theorem
			for \(\omega\) is known \cite{BergelsonRichter}.)
			
			
			\appendix
			\section{Lifting orbits on nilmanifolds}\label{app:covering-presentation}
			Only linear orbits are needed in this paper.  \Cref{thm:ezk-original} is stated for
			polynomial nilsequences. So, we have to represent such linear orbits as polynomial
			sequences on an Eisner--Zorin-Kranich admissible presentation.  
			
			First, we introduce a covering property of compactly generated nilpotent Lie groups.
			\begin{lemma}[Cf. {\cite[Proposition B.1 and Lemma B.2]{KanigowskiLemanczykRichterTeravainen2024}}]
				\label{lem:simply-connected-algebraic-presentation}
				Let \(X=G/\Gamma\), where \(G\) is a compactly generated nilpotent Lie group
				and \(\Gamma\leq G\) is discrete and co-compact. Then there are a compactly
				generated nilpotent Lie group \(\bar G\), a discrete and co-compact 
				\(\bar\Gamma\leq\bar G\), and a surjective covering homomorphism
				\(\rho:\bar G\to G\) such that the connect component of the identity
				$
				\bar G^\circ \text{ is simply connected},\ 
				\bar\Gamma=\rho^{-1}(\Gamma).
				$
				Moreover, the induced map
				$
				\bar\rho:\bar G/\bar\Gamma\to G/\Gamma,
				\bar g\bar\Gamma\mapsto\rho(\bar g)\Gamma
				$
				is a diffeomorphism and for all $\bar a,\bar g\in\bar G$,
				$
				\bar\rho(\bar a\bar g\bar\Gamma)
				=\rho(\bar a)\bar\rho(\bar g\bar\Gamma).
				$
			\end{lemma}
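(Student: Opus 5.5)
The plan is to build $\bar G$ by a standard universal-cover-type construction, adjusted to handle the disconnectedness of $G$. First I would pass to the identity component. Since $G$ is a compactly generated nilpotent Lie group and $\Gamma$ is cocompact, the group $G^\circ\Gamma$ is open of finite index in $G$; hence $G/(G^\circ\Gamma)$ is a finitely generated discrete nilpotent group. I would let $\widetilde{G^\circ}$ be the universal cover of $G^\circ$, with covering homomorphism $p:\widetilde{G^\circ}\to G^\circ$. Its kernel $K=\ker p\cong\pi_1(G^\circ)$ is a discrete central subgroup, and $\widetilde{G^\circ}$ is a simply connected nilpotent Lie group.

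The main step, and the main obstacle, is extending this cover to all of $G$. We need an action of $G$ on $\widetilde{G^\circ}$ lifting conjugation, together with a group $\bar G$ fitting into an extension
\[
1\to K\to \bar G\to G\to 1
\]
that contains $\widetilde{G^\circ}$ as its identity component. Conjugation by $g\in G$ is an automorphism of $G^\circ$, and it lifts uniquely to an automorphism of $\widetilde{G^\circ}$ because $\widetilde{G^\circ}$ is simply connected. This gives a homomorphism $G\to\operatorname{Aut}(\widetilde{G^\circ})$. I would then define
\[
\bar G=(\widetilde{G^\circ}\rtimes G)/\{(\tilde h^{-1},p(\tilde h)):\tilde h\in\widetilde{G^\circ}\},
\]
after checking that this set is a normal subgroup. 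The map $\rho([\tilde h,g])=p(\tilde h)g$ is then a surjective homomorphism with discrete central kernel $K$, and it is a local diffeomorphism.

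Next come the structural checks on $\bar G$. Its identity component is the image of $\widetilde{G^\circ}$, which is simply connected. $\bar G$ is compactly generated, being generated by a compact neighbourhood of the identity together with lifts of finitely many generators of $G$. Nilpotency needs an argument: $\bar G$ is a central extension of the nilpotent group $G$ by $K$, so it is nilpotent, of step at most one more than $G$. That is acceptable, since the lemma does not require the step to be preserved. If one wanted to preserve the step, one would cite \cite[Proposition B.1]{KanigowskiLemanczykRichterTeravainen2024}.

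Then I would set $\bar\Gamma=\rho^{-1}(\Gamma)$. It is discrete, because $\rho$ is a covering map with discrete kernel and $\Gamma$ is discrete. It is cocompact: take a compact $C\subset G$ with $C\Gamma=G$, lift it to a compact $\bar C$ using local sections of the covering, and note that $\bar C\bar\Gamma=\bar G$ because $\ker\rho\subset\bar\Gamma$.

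Finally, consider the induced map $\bar\rho$. It is well defined and surjective. It is injective because $\rho(\bar g)\Gamma=\rho(\bar g')\Gamma$ implies $\rho(\bar g^{-1}\bar g')\in\Gamma$, so $\bar g^{-1}\bar g'\in\bar\Gamma$. It is a local diffeomorphism because $\rho$ is one and the quotient maps are coverings. A bijective local diffeomorphism is a diffeomorphism. The equivariance identity $\bar\rho(\bar a\bar g\bar\Gamma)=\rho(\bar a)\bar\rho(\bar g\bar\Gamma)$ is immediate because $\rho$ is a homomorphism.
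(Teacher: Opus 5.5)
The paper gives no argument for this lemma; it imports it from \cite[Proposition~B.1 and Lemma~B.2]{KanigowskiLemanczykRichterTeravainen2024}. Your self-contained route has a genuine gap at its central step: the group $\bar G$ you construct is isomorphic to $G$ itself. The map $\rho'(\tilde h,g)=p(\tilde h)g$ is a surjective homomorphism $\widetilde{G^\circ}\rtimes G\to G$. Moreover, $p(\tilde h)g=e$ holds exactly when $g=p(\tilde h^{-1})$, so $\ker\rho'=\{(\tilde h^{-1},p(\tilde h)):\tilde h\in\widetilde{G^\circ}\}$. This is precisely the subgroup you divide out. Hence the induced $\rho\colon\bar G\to G$ is injective, not a map with kernel $K$. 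Concretely, for $k\in K$ the element $(k^{-1},p(k))=(k^{-1},e)$ lies in that subgroup, so $K\times\{e\}$ is killed. The image of $\widetilde{G^\circ}$ in $\bar G$ is then $\widetilde{G^\circ}/K\cong G^\circ$, which is not simply connected whenever $K\neq\{e\}$. So the two assertions carrying the whole content of the lemma fail: $\ker\rho=K$, and $\bar G^\circ$ simply connected. Everything after them holds only trivially, because $\rho$ is an isomorphism. The defect is structural: keeping all of $G$ (with its topology) as the second factor reintroduces $G^\circ$, and identifying $G^\circ$ with the image of $\widetilde{G^\circ}$ forces $\pi_1(G^\circ)$ to die. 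Note also that the lemma does not require $\ker\rho=\pi_1(G^\circ)$, and you should not aim for it.

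A construction that does work is as follows.
\begin{itemize}
\item $G/G^\circ$ is discrete and compactly generated, hence finitely generated. Lift its generators to $\delta_1,\dots,\delta_m\in G$ and put $\Delta=\langle\delta_1,\dots,\delta_m\rangle$, so that $G=G^\circ\Delta$.
\item Give $\Delta$ the discrete topology and take $\bar G=\widetilde{G^\circ}\rtimes\Delta$ with the lifted conjugation action and \emph{no} quotient. Set $\rho(\tilde h,\delta)=p(\tilde h)\delta$.
\item Then $\rho$ is a surjective homomorphism with discrete kernel $\{(\tilde h,\delta):p(\tilde h)=\delta^{-1}\}$. Also $\bar G^\circ=\widetilde{G^\circ}$ is simply connected and $\bar G$ is compactly generated. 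Your last two paragraphs ($\bar\Gamma$ discrete and cocompact, $\bar\rho$ a diffeomorphism, equivariance) then go through as written.
\end{itemize}

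Your nilpotency argument also needs replacing. Even for a correct covering, the kernel need not be central when $G$ is disconnected. For example, $\mathbb T^2\rtimes_\alpha\Z$ with $\alpha(x,y)=(x+y,y)$ is $2$-step nilpotent, yet the generator of $\Z$ acts on $\pi_1(\mathbb T^2)=\Z^2$ by the nontrivial matrix of $\alpha$. Instead, let $\gamma_i(G)$ be the lower central series and use the $\operatorname{Ad}(G)$-invariant ideals $\mathfrak g_i=\operatorname{Lie}\overline{\gamma_i(G)}$. On their successive quotients both $\operatorname{ad}\mathfrak g$ and $\operatorname{Ad}(g)-1$ vanish. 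Their exponentials in $\widetilde{G^\circ}$ form a series of normal subgroups stabilized by $\bar G$. Together with the nilpotency of $\bar G/\bar G^\circ\cong\Delta$, this makes $\bar G$ nilpotent.
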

			Next, we apply the above lemma to lift linear orbits on nilmanifolds.
			\begin{lemma}
				\label{lem:malcev-orbit-replacement}
				Let \(M=N/\Delta\) be a nilmanifold, where \(N\) is a compactly
				generated nilpotent Lie group and \(\Delta\leq N\) is discrete and co-compact.
				Then there are a compactly generated nilpotent Lie group \(\widehat N\) with the simply connected \(\widehat N^\circ\), a
				discrete and co-compact \(\widehat\Delta\leq\widehat N\), a surjective covering homomorphism
				$
				\vartheta:\widehat N\longrightarrow N\ \text{with}\ 
				\widehat\Delta=\vartheta^{-1}(\Delta),
				$
				a \(\widehat\Delta\)-rational filtration \(\widehat N_\bullet\), and an
				Mal'cev basis \(\widehat{\mathcal M}\) for $\widehat N^\circ/(\widehat N^\circ\cap \widehat\Delta)$ adapted to \({\widehat N}^{\circ}_{\bullet}\) such that
				the induced map
				$
				p:\widehat N/\widehat\Delta\longrightarrow M,
				\widehat x\widehat\Delta\mapsto\vartheta(\widehat x)\Delta
				$
				is a diffeomorphism satisfying
				\begin{equation}\label{eq:linear-model-equivariance}
					p(\widehat a\widehat x\widehat\Delta)
					=\vartheta(\widehat a)p(\widehat x\widehat\Delta)\ \text{for all}\ \widehat a,\widehat x\in\widehat N,
				\end{equation}
				and the filtered nilmanifold
				$(\widehat N/\widehat\Delta,\widehat N_{\bullet})$ is Eisner--Zorin-Kranich admissible.  Moreover, we also have the following:
				\begin{itemize}
					\item 	For every
					\(K\geq0\), there is \(C_{M,K}>0\) such that
					\begin{equation}\label{eq:malcev-Ck-pullback}
						\|F\circ p\|_{C^K(\widehat N/\widehat\Delta)}
						\leq C_{M,K}\|F\|_{C^K(M)}\ \text{for all}\ F\in C^\infty(M).
					\end{equation}
					\item For every \(a\in N\) and \(y\in M\), there is
					\(\widehat g\in P(\mathbb Z,\widehat N_\bullet)\) such that for all $n\in \Z$,
					\begin{equation}\label{eq:malcev-linear-orbit-lift}
						p\bigl(\widehat g(n)\widehat\Delta\bigr)=a^ny.
					\end{equation}
				\end{itemize}
			\end{lemma}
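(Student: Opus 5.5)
The plan is to start from \Cref{lem:simply-connected-algebraic-presentation} applied to \(M=N/\Delta\). This gives \(\widehat N:=\bar G\), \(\widehat\Delta:=\bar\Gamma\) and \(\vartheta:=\rho\), with \(\widehat N^\circ\) simply connected and \(\widehat\Delta=\vartheta^{-1}(\Delta)\). The same lemma supplies the map \(p:=\bar\rho\), which is a diffeomorphism and satisfies \eqref{eq:linear-model-equivariance}. Since \(p\) is a diffeomorphism of compact manifolds, the \(C^K\) bound \eqref{eq:malcev-Ck-pullback} follows by the chain rule. The constant depends only on finitely many derivatives of \(p\) and on the chosen metrics, so it is independent of \(F\).

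Next I would build the filtration. Let \(s\) be the nilpotency step of \(\widehat N\). Take the lower central series and insert the whole group at level \(0\): \(\widehat N_0=\widehat N_1=\widehat N\) and \(\widehat N_{i+1}=[\widehat N,\widehat N_i]\), so that \(\widehat N_{s+1}=\{e\}\) and \([\widehat N_i,\widehat N_j]\subseteq\widehat N_{i+j}\). These groups must be closed and \(\widehat\Delta\)-rational. Passing to the identity component, \(\widehat N^\circ\) is a connected, simply connected nilpotent Lie group and \(\widehat\Delta\cap\widehat N^\circ\) is a lattice in it, since \(\widehat N^\circ\widehat\Delta\) is open and closed and \(\widehat N/\widehat\Delta\) is compact. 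The standard Mal'cev theory then shows that the lower central series of \(\widehat N^\circ\) consists of rational subgroups, and it yields a Mal'cev basis adapted to this filtration (Green--Tao, Corwin--Greenleaf).

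The main obstacle is that \(\widehat N\) may be disconnected. In that case \([\widehat N,\widehat N_i]\) need not be closed or rational, and the Eisner--Zorin-Kranich convention requires the filtration on \(\widehat N\) itself, not only on \(\widehat N^\circ\). I would first try to replace \(\widehat N\) by the subgroup \(\widehat N^\circ\langle a'\rangle\widehat\Delta\), where \(a'\) is a lift of \(a\). The problem is that this choice depends on \(a\), whereas the presentation must be uniform in \(a\).

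To avoid this dependence, I would use the cruder filtration with \(\widehat N_i=\widehat N\) for \(i\le 1\) and \(\widehat N_i=\) the closure of the \(i\)-th lower central term for \(i\ge 2\), intersected with \(\widehat N^\circ\) from level \(2\) on. This is legitimate because commutators of a compactly generated nilpotent group with \(\widehat N/\widehat N^\circ\) finitely generated land, up to closure, in connected rational subgroups after possibly enlarging \(\widehat\Delta\)-rational hulls. Rationality of these closures is exactly what KLRT Lemma B.2 provides, and I would cite it for this.

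Finally, given \(a\in N\) and \(y\in M\), choose lifts \(\widehat a\in\vartheta^{-1}(a)\) and \(\widehat y\in\widehat N\) with \(p(\widehat y\widehat\Delta)=y\). Set \(\widehat g(n)=\widehat a^n\widehat y\). Then \eqref{eq:linear-model-equivariance} gives \(p(\widehat g(n)\widehat\Delta)=\vartheta(\widehat a)^np(\widehat y\widehat\Delta)=a^ny\), which is \eqref{eq:malcev-linear-orbit-lift}.

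It remains to check that \(\widehat g\in P(\mathbb Z,\widehat N_\bullet)\), by induction on the degree as in the definition. First, \(D_h\widehat g(n)=\widehat y^{-1}\widehat a^{-n}\widehat a^{n+h}\widehat y=\widehat y^{-1}\widehat a^{h}\widehat y\) is constant in \(n\). It equals \(\widehat a^h[\widehat a^h,\widehat y]\), an element of \(\widehat N_1\). Next, constant sequences with values in \(\widehat N_1\) are \(\widehat N_{\bullet+1}\)-polynomial, because every derivative of a constant sequence is the identity and hence lies in all deeper levels. Therefore \(\widehat g\) is a polynomial sequence, of degree \(1\), for any filtration with \(\widehat N_0=\widehat N_1=\widehat N\).
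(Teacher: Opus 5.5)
Your use of \Cref{lem:simply-connected-algebraic-presentation}, the chain-rule bound \eqref{eq:malcev-Ck-pullback}, and the lift $\widehat g(n)=\widehat a^{\,n}\widehat y$ with its polynomiality check all match the paper. The gap is in the filtration you finally adopt.

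Your last proposal is $\widehat N_0=\widehat N_1=\widehat N$ and $\widehat N_i=\overline{\gamma_i(\widehat N)}\cap\widehat N^\circ$ for $i\ge2$, where $\gamma_i$ is the lower central series. This violates the axiom $[\widehat N_1,\widehat N_1]\subseteq\widehat N_2$ whenever $[\widehat N,\widehat N]\not\subseteq\widehat N^\circ$. In that case it is not a prefiltration, so $(\widehat N/\widehat\Delta,\widehat N_\bullet)$ cannot be Eisner--Zorin-Kranich admissible.

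This already happens under the lemma's hypotheses. Take $N=H_3(\mathbb Z)\times\mathbb R$, where $H_3(\mathbb Z)$ is the discrete Heisenberg group, and $\Delta=H_3(\mathbb Z)\times\mathbb Z$, so that $M=\mathbb R/\mathbb Z$. For any covering $\vartheta:\widehat N\to N$ one has $\vartheta(\widehat N^\circ)=N^\circ=\{e\}\times\mathbb R$. On the other hand, $\vartheta([\widehat N,\widehat N])=[N,N]=Z(H_3(\mathbb Z))\times\{0\}$, which meets $N^\circ$ trivially. Hence $[\widehat N,\widehat N]\not\subseteq\widehat N^\circ$.

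The appeal to KLRT for ``rationality of these closures'' does not rescue this; in the paper those results only supply the covering presentation. No rationality statement can repair a sequence of subgroups that fails the commutator condition. Since the lemma exists precisely to feed \Cref{thm:ezk-original}, this breaks the application.

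The fix is to change nothing. Keep the plain lower central series $\widehat N_{j+1}=[\widehat N,\widehat N_j]$ of the whole group $\widehat N$, as you first wrote it. It is intrinsic to $\widehat N$, so your worry about dependence on $a$ never arises: $\widehat a\in\widehat N=\widehat N_1$ for every $a$. The paper takes two facts from Eisner--Zorin-Kranich (the paragraph after their Definition~2.16), even when $\widehat N$ is disconnected:
\begin{itemize}
\item this filtration is $\widehat\Delta$-rational;
\item there is a Mal'cev basis for $\widehat N^\circ/(\widehat N^\circ\cap\widehat\Delta)$ adapted to $\widehat N^\circ_\bullet$.
\end{itemize}
It does not derive them from Mal'cev theory for $\widehat N^\circ$ alone.

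Your Mal'cev-theory sentence also has a separate problem. It concerns the lower central series of $\widehat N^\circ$, which in general differs from the filtration $\widehat N^\circ_\bullet$ induced by $\widehat N_\bullet$. For example, take $\mathbb R^2\rtimes\mathbb Z$ with $\mathbb Z$ acting by a unipotent shear and lattice $\mathbb Z^2\rtimes\mathbb Z$. There $\widehat N^\circ=\mathbb R^2$ is abelian, while $[\widehat N,\widehat N]=\mathbb R\times\{0\}$. So the basis your connected-component argument produces need not be adapted to the filtration that admissibility actually requires.
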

			
			\begin{proof}
				Apply \Cref{lem:simply-connected-algebraic-presentation} to \(N/\Delta\).
				This gives the required \(\widehat N,\widehat\Delta,\vartheta\), and \(p\), with
				\eqref{eq:linear-model-equivariance}. Take the lower central series
				filtration \(\widehat N_\bullet\) by letting
				$
				\widehat N_0=\widehat N_1=\widehat N;\ \text{for}\ j\geq1, 
				\widehat N_{j+1}=[\widehat N,\widehat N_j].
				$
				It has finite length because \(\widehat N\) is nilpotent. 
				By \cite[the paragraph after Definition~2.16]{EisnerZorinKranich}, 
				$\widehat N_\bullet$ is
				\(\widehat\Delta\)-rational and 
				we can fix a Mal'cev basis for $\widehat N^\circ/(\widehat N^\circ\cap \widehat\Delta)$ adapted to \({\widehat N}^{\circ}_{\bullet}\).  This gives an Eisner--Zorin-Kranich admissible representation.
				
				The estimate \eqref{eq:malcev-Ck-pullback} follows from the chain rule for the
				fixed smooth map \(p\) between compact manifolds directly.
				
				Fix \(a\in N\) and \(y\in M\).  Choose
				$
				\widehat a\in\vartheta^{-1}(a),
				\widehat y\in\widehat N,
				p(\widehat y\widehat\Delta)=y,
				$
				and put
				$
				\widehat g(n)=\widehat a^{\,n}\widehat y.
				$
				Note that
				\[
				D_h\widehat g(n)
				=\widehat g(n)^{-1}\widehat g(n+h)
				=\widehat y^{-1}\widehat a^{\,h}\widehat y
				\in\widehat N_1.
				\]
				This gives
				\(\widehat g\in P(\mathbb Z,\widehat N_\bullet)\).  Finally,
				\eqref{eq:linear-model-equivariance} gives
				$
				p(\widehat g(n)\widehat\Delta)
				=(\vartheta(\widehat a))^n p(\widehat y\widehat\Delta)
				=a^ny.
				$ This finishes the proof.
			\end{proof}
			\section{Finite Eisner--Zorin-Kranich representations}\label{appB}
			In this section, we focus on finite Eisner--Zorin-Kranich representations of bounded sequences from product spaces. First, we introduce the notion of finite Eisner--Zorin-Kranich representations.
			\begin{definition}
				\label{def:finite-ezk-representation}
				Let \((X,\mathcal X,\mu,U)\) be a $\Z$-system, and
				let \(c:X\times\mathbb Z\to\mathbb C\) be bounded and measurable.  Fix
				\(s\geq1\), \(K\geq0\), and \(B>0\). \(c\) is said to have a
				\emph{finite Eisner--Zorin-Kranich representation with parameters \((s,K,B)\)} if there are
				an integer \(J\geq1\), a co-null set \(X_0\subseteq X\), and a finite family
				$
				\mathscr N=
				\{(G_j/\Gamma_j,G_{j,\bullet},\mathcal M_j):1\leq j\leq J\}
				$
				of Eisner--Zorin-Kranich admissible filtered nilmanifolds of degree at most \(s\), such that the following hold:
				\begin{itemize}
					\item 	for all \(x\in X_0,n\in \Z\),
					\begin{equation}\label{eq:finite-ezk-representation}
						c(x,n)=\sum_{j=1}^{J}F_{j,x}(g_{j,x}(n)\Gamma_j),
					\end{equation}
					where
					$
					g_{j,x}\in P(\mathbb Z,G_{j,\bullet}),
					F_{j,x}\in C^\infty(G_j/\Gamma_j),
					$
					and
					\begin{equation}\label{eq:finite-ezk-uniform-Ck}
						\sup_{x\in X_0}\sup_{1\leq j\leq J}
						\|F_{j,x}\|_{C^K(G_j/\Gamma_j)}\leq B .
					\end{equation}
					\item $\mathscr N$ satisfies 
					\begin{equation}\label{eq:finite-ezk-order-condition}
						K\geq \max_{1\leq j\leq J}\kappa_j(s),\ \text{where}\ 
						\kappa_j(s)=
						\sum_{r=1}^{s}(d_{j,r}-d_{j,r+1})\binom{s}{r-1},
						d_{j,r}=\dim G_{j,r}.
					\end{equation}
				\end{itemize}
			\end{definition}
			The above definition can induce the following notion.
			\begin{definition}
				\label{def:finite-ezk-stability}
				Let \((X,\mathcal X,\mu,U)\) be a $\Z$-system.  A
				bounded measurable \(c:X\times\mathbb Z\to\mathbb C\) is
				\emph{Eisner--Zorin-Kranich regular with respect to \(U\)} (or \emph{finitely Eisner--Zorin-Kranich stable})
				if the following condition holds.  For every \(R\geq1\), every
				\(\boldsymbol\epsilon=(\epsilon_1,\ldots,\epsilon_R)\in\{0,1\}^R\), and every
				finite list
				$
				(r_\nu,t_\nu,a_\nu)\in\mathbb Z^3,
				1\leq\nu\leq R,
				$
				there exist parameters \((s,K,B)\) 
				and a finite family
				$
				\mathscr N
				$
				of Eisner--Zorin-Kranich admissible filtered nilmanifolds of degree at most \(s\)
				such that for every \(\mathbf h=(h_1,\ldots,h_R)\in\mathbb Z^R\), 
				\begin{equation}\label{eq:derived-ezk-stable-weight}
					c_{\mathbf h}^{\boldsymbol\epsilon}(x,n)
					:=
					\prod_{\nu=1}^{R}
					\mathcal C^{\epsilon_\nu}
					c(U^{r_\nu n+t_\nu}x,a_\nu n+h_\nu)
				\end{equation}
				has a finite Eisner--Zorin-Kranich representation with parameters \((s,K,B)\) and in this representation, all Eisner--Zorin-Kranich admissible filtered nilmanifolds are from $\mathscr N$.  
			\end{definition}

			Next, we give an estimate on weighted ergodic averages, where the weight has a finite Eisner--Zorin-Kranich representation.
			\begin{lemma}
				\label{lem:one-slope}
				Let \((X,\mathcal X,\mu,U)\) be a $\Z$-system,
				\(a\in\mathbb Z\setminus\{0\}\), and \(s\geq1\).  Let
				\(c:X\times\mathbb Z\to\mathbb C\) be bounded and measurable.  Assume that
				\(c\) has a finite Eisner--Zorin-Kranich representation with parameters \((s,K,B)\) in the
				sense of \Cref{def:finite-ezk-representation}.  Then, for every
				\(H\in L^\infty(\mu)\) with \(\|H\|_{L^\infty(\mu)}\leq1\),
				\[
				\limsup_{N\to\infty}
				\left\|
				\frac1N\sum_{n=1}^{N}c(x,n)U^{an}H
				\right\|_{L^2(\mu)}
				\leq
				C\|H\|_{U^{s+1}(X,\mu,U)},
				\]
				where \(C\) depends only on \(a\) and the finite Eisner--Zorin-Kranich representation of \(c\).
			\end{lemma}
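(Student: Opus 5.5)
The plan is to prove the bound pointwise in $x$, using the Eisner--Zorin-Kranich estimate on the ergodic components of $U^a$, and then integrate. For the second step I would use reverse Fatou, the local formula for Host--Kra seminorms, and the power comparison. The linear bound (rather than a square root) should come out because \Cref{cor:ezk-finite-Ck-family} is already linear in the seminorm and we never apply van der Corput.

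\emph{Step 1: pointwise setup.} Replacing $U$ by $U^{-1}$ if necessary, I may assume $a>0$. Put
\[
A_N(x)=\frac1N\sum_{n=1}^N c(x,n)\,H\bigl((U^{a})^nx\bigr).
\]
Let $\mu=\int_\Omega\mu_\omega\,d\lambda(\omega)$ be the ergodic decomposition of $\mu$ with respect to $U^a$. Let $X_0$ be the co-null set from \Cref{def:finite-ezk-representation}, on which
\[
c(x,n)=\sum_{j=1}^J F_{j,x}(g_{j,x}(n)\Gamma_j),
\]
with all nilmanifolds taken from the fixed finite family $\mathscr N$, uniform bounds $\|F_{j,x}\|_{C^K}\le B$, and $K\ge\max_j\kappa_j(s)$. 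The subalgebra of $L^\infty(\mu_\omega)$ generated by $H$ under $U^a$ is separable. So by Birkhoff's theorem applied to a countable dense subset, for $\lambda$-a.e.\ $\omega$, $\mu_\omega$-a.e.\ $x$ is fully generic for $H$ in $(X,\mu_\omega,U^a)$ along $\{1,\dots,N\}$. Intersecting with $X_0$ (co-null for a.e.\ $\mu_\omega$) gives a set of full $\mu$-measure of good points.

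\emph{Step 2: apply \Cref{cor:ezk-finite-Ck-family} at each good point.} For such an $x$ in the component $\omega$, I apply the corollary to the ergodic system $(X,\mu_\omega,U^a)$ with $f=H$ and $b(n)=c(x,n)$. The constant $C$ it produces depends only on $\mathscr N,s,B,K$, so it is the same for all $x$. This gives
\[
\limsup_{N\to\infty}|A_N(x)|\le C\,\|H\|_{U^{s+1}(X,\mu_\omega,U^a)}.
\]
This uniformity in $x$ is the key point, and it is exactly what the uniform $C^K$ bound and the finiteness of $\mathscr N$ in \Cref{def:finite-ezk-representation} provide. The other point to check is that "fully generic along $\{1,\dots,N\}$" holds for a.e.\ point of a.e.\ component; I expect this to be the only step needing care.

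\emph{Step 3: integrate.} We have $|A_N|\le\sup_n\|c(\cdot,n)\|_\infty$ uniformly, so reverse Fatou applies and gives
\[
\limsup_N\|A_N\|_{L^2(\mu)}^2\le\int\limsup_N|A_N|^2\,d\mu\le C^2\int_\Omega\|H\|_{U^{s+1}(\mu_\omega,U^a)}^2\,d\lambda(\omega).
\]
By Hölder (Jensen) with exponent $2^{s+1}/2$ and \Cref{lem:HK-seminorm-ergodic-decomposition}, the right-hand side is at most
\[
C^2\Bigl(\int_\Omega\|H\|_{U^{s+1}(\mu_\omega,U^a)}^{2^{s+1}}\,d\lambda\Bigr)^{2^{-s}}=C^2\|H\|_{U^{s+1}(X,\mu,U^a)}^2.
\]
Finally, since $s+1\ge2$, the first assertion of \Cref{lem:one-dimensional-power-comparison} gives
\[
\|H\|_{U^{s+1}(U^a)}\le C_{a,s+1}\|H\|_{U^{s+1}(U)}.
\]
Taking square roots yields the claimed bound, with constant $C\cdot C_{a,s+1}$, which depends only on $a$ and the representation of $c$.
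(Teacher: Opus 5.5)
Your proposal is correct and follows the paper's proof essentially step for step. Both arguments take the ergodic decomposition for $U^a$ and apply \Cref{cor:ezk-finite-Ck-family} at fully generic points, with a constant uniform in $x$ and $\omega$. Both then use Fatou, H\"older to pass from the $L^2$ average to the $L^{2^{s+1}}$ average of the component seminorms, \Cref{lem:HK-seminorm-ergodic-decomposition}, and finally the power comparison \Cref{lem:one-dimensional-power-comparison}. The only cosmetic differences are that the paper applies Fatou twice (first on each $\mu_\omega$, then over $\lambda$) instead of once on $\mu$, and that it does not need your harmless reduction to $a>0$.
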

			
			\begin{proof}
				By the finite Eisner--Zorin-Kranich representation of \(c\), we get a co-null set $X_0$ and the family
				$
				\mathscr N=\{(G_\ell/\Gamma_\ell,G_{\ell,\bullet},\mathcal M_\ell):
				1\leq\ell\leq L\}
				$.
				And, for every \(x\in X_0,n\in \Z\),
				\[
				c(x,n)=\sum_{\ell=1}^{L}F_{\ell,x}(g_{\ell,x}(n)\Gamma_\ell),
				\]
				with \(g_{\ell,x}\in P(\mathbb Z,G_{\ell,\bullet})\), and with the
				\(C^K\)-norms of all \(F_{\ell,x}\) bounded by $B$.  
				
				Put \(T=U^a\), and disintegrate \(\mu\) into ergodic components for \(T\) by
				\[
				\mu=\int_\Omega \mu_\omega\,d\lambda(\omega).
				\]
				Set
				\[
				A_N(x)=\frac1N\sum_{n=1}^{N}c(x,n)H(T^n x).
				\]
				For \(\lambda\)-a.e. \(\omega\in \Omega\), for \(\mu_\omega\)-a.e. \(x\in X\), \(x\) is fully generic for \(H\) in
				\((X,\mathcal X,\mu_\omega,T)\).  Hence,
				\Cref{cor:ezk-finite-Ck-family} gives, for such \(\omega\) and \(x\),
				\[
				\limsup_{N\to\infty}|A_N(x)|
				\leq
				C_0\|H\|_{U^{s+1}(X,\mu_\omega,T)},
				\]
				where \(C_0\) is independent of \(\omega,x\), and \(H\).
				By the Fatou Lemma,
				for \(\lambda\)-a.e. \(\omega\in \Omega\),
				\[
				\limsup_{N\to\infty}\|A_N\|_{L^2(\mu_\omega)}^2
				\leq
				C_0^2\|H\|_{U^{s+1}(X,\mu_\omega,T)}^2 .
				\]
				Applying the Fatou Lemma again yields
				\[
				\limsup_{N\to\infty}\|A_N\|_{L^2(\mu)}^2
				\leq
				C_0^2\int_\Omega
				\|H\|_{U^{s+1}(X,\mu_\omega,T)}^2\,d\lambda(\omega).
				\]
				Note that 
				\[
				\left(\int_\Omega
				\|H\|_{U^{s+1}(X,\mu_\omega,T)}^2\,d\lambda\right)^{1/2}
				\leq
				\left(\int_\Omega
				\|H\|_{U^{s+1}(X,\mu_\omega,T)}^{2^{s+1}}\,d\lambda\right)^{1/2^{s+1}} .
				\]
				By \Cref{lem:HK-seminorm-ergodic-decomposition}, the last expression is
				\(\|H\|_{U^{s+1}(X,\mu,U^a)}\).  Finally,
				\Cref{lem:one-dimensional-power-comparison} gives
				\[
				\|H\|_{U^{s+1}(X,\mu,U^a)}
				\lesssim_{a,s}
				\|H\|_{U^{s+1}(X,\mu,U)} .
				\]
				This proves the lemma.
			\end{proof}
			%
			%

		\end{document}